\documentclass[12pt]{article}
\usepackage[]{amsmath,amssymb}
\usepackage{amscd}
\usepackage{latexsym}
\usepackage{cite}
\usepackage{amsthm}

\newtheorem{definition}{Definition}[section]
\newtheorem{theorem}[definition]{Theorem}
\newtheorem{lemma}[definition]{Lemma}
\newtheorem{corollary}[definition]{Corollary}

\newtheorem{example}[definition]{Example}

\newtheorem{problem}[definition]{Problem}
\newtheorem{note}[definition]{Note}

\newtheorem{proposition}[definition]{Proposition}

\typeout{Substyle for letter-sized documents. Released 24 July 1992}


\setlength{\topmargin}{-1in}
\setlength{\headheight}{1.5cm}
\setlength{\headsep}{0.3cm}
\setlength{\textheight}{9in}
\setlength{\oddsidemargin}{0cm}
\setlength{\evensidemargin}{0cm}
\setlength{\textwidth}{6.5in}

\def\F{\mathbb F}

\begin{document}
\title{\bf  
Billiard Arrays and finite-dimensional irreducible 
$U_q(\mathfrak{sl}_2)$-modules
}
\author{
Paul Terwilliger}
\date{}

\maketitle
\begin{abstract}
We introduce the notion of a Billiard Array. This
is an equilateral triangular array of one-dimensional
subspaces of a vector space $V$, subject to several
conditions that specify which sums are direct.
We show that the Billiard Arrays on $V$ are in bijection
with the 3-tuples of totally opposite flags on $V$.
We classify the Billiard Arrays up to isomorphism.
We use Billiard Arrays to describe the finite-dimensional
irreducible modules for the quantum algebra
$U_q(\mathfrak{sl}_2)$ and the Lie algebra 
$\mathfrak{sl}_2$.

\bigskip
\noindent
{\bf Keywords}. 
Quantum group,
quantum enveloping algebra,
Lie algebra, flag.
\hfil\break
\noindent {\bf 2010 Mathematics Subject Classification}. 
Primary: 17B37. Secondary: 15A21.
 \end{abstract}

\section{Introduction}
Our topic is informally described as follows.
As in the game of Billiards,
we start with an array of billiard balls arranged to form an
equilateral triangle. We assume that there are $N+1$ balls
along each boundary, with $N\geq 0$. For $N=3$ the balls
are centered at the following locations:
\begin{eqnarray*}
    &&             \quad \quad \quad  \bullet
\\
    &&              \quad \quad  \bullet \quad \;\bullet
\\
    &&           \quad      \bullet \quad \; \bullet \quad \; \bullet
\\
    &&      \bullet \;\quad \bullet \;\quad  \bullet \;\,\quad \bullet
\end{eqnarray*}

\noindent For us, each ball in the array represents a one-dimensional
subspace of an $(N+1)$-dimensional vector space $V$ over a field
$\mathbb F$. 
\medskip

\noindent 
We impose two conditions on the array, that
specify which sums are direct. The first condition is that,
for each set of balls on a line parallel to a boundary, their sum
is direct. The second condition is described as follows.
Three mutually adjacent balls in the array are said to form
a 3-clique. There are two kinds of 3-cliques: $\Delta$ (black) and
$\nabla$ (white). The second condition is that, for any three
balls in the array that form a black 3-clique, their sum is
not direct.

\medskip
\noindent Whenever the above two conditions are met,
our array is called a Billiard Array on $V$.
We say that the Billiard
Array is over $\mathbb F$, and call $N$ the diameter.

\medskip
\noindent We have some remarks about notation. 
 For $1 \leq i \leq N+1$
let 
 ${\mathcal P}_i(V)$  denote the set of subspaces of $V$ that
 have dimension $i$. Recall the natural numbers
 $\mathbb N = \lbrace 0,1,2,\ldots\rbrace$. Let $\Delta_N$
 denote the set consisting of the 3-tuples of natural numbers
 whose sum is $N$. Thus
\begin{equation*}
\Delta_N = \lbrace (r,s,t) \; |\;  r,s,t \in \mathbb N, \;\;r+s+t=N\rbrace.
\end{equation*}
We arrange the elements of $\Delta_N$ in a triangular array.
For $N=3$, the array looks as follows after deleting all punctuation:
\begin{eqnarray*}
    &&                \centerline {030}
\\
    &&                \centerline {120 \;\; 021}
\\
    &&                 \centerline  {210 \;\; 111 \;\; 012}
\\
    &&                 \centerline{300 \;\; 201 \;\; 102 \;\; 003}
\end{eqnarray*}
 An element in $\Delta_N$ is called a location.
 We view our Billiard Array on $V$ as a function
 $B:\Delta_N \to {\mathcal P}_1(V), \lambda \mapsto B_\lambda$.
 For $\lambda \in \Delta_N$, $B_\lambda$ is the billiard ball/subspace
 at location $\lambda$.
\medskip

\noindent  In this paper we obtain three main results, which
are summarized as follows: (i) we show that the Billiard 
Arrays on $V$ are in bijection with the 3-tuples of totally
opposite flags on $V$; (ii) we classify the
Billiard Arrays up to isomorphism; (iii) we use Billiard
Arrays to describe the finite-dimensional irreducible modules
for the quantum algebra 
 $U_q(\mathfrak{sl}_2)$ and the Lie algebra 
 $\mathfrak{sl}_2$.
\medskip

\noindent 
We now describe our results in more detail.  By a flag on $V$
we mean  a sequence $\lbrace U_i \rbrace_{i=0}^N$
such that $U_i \in {\mathcal P}_{i+1}(V)$ for $0 \leq i \leq N$
and $U_{i-1}\subseteq U_i$ for $1 \leq i \leq N$. Suppose we are given
three flags on $V$, denoted 
\begin{equation}
\label{eq:Int3flags}
\lbrace U_i \rbrace_{i=0}^N, \qquad
\lbrace U'_i \rbrace_{i=0}^N, \qquad
\lbrace U''_i \rbrace_{i=0}^N.
\end{equation}
These flags are called totally opposite whenever
$U_{N-r}\cap 
U'_{N-s}\cap 
U''_{N-t} = 0$  
for all $r,s,t$ $(0 \leq r,s,t\leq N)$ such that
$r+s+t>N$.
Assume that the flags in line
(\ref{eq:Int3flags}) are totally opposite. Using these flags we now
construct a Billiard Array on $V$. For each location $\lambda = (r,s,t)$
in $\Delta_N$ define
\begin{eqnarray*}
B_\lambda = 
U_{N-r} \cap U'_{N-s} \cap U''_{N-t}.
\end{eqnarray*}
We will show that $B_\lambda$ has dimension one,
and the map $B:\Delta_N \to \mathcal P_1(V)$,
$\lambda \mapsto B_\lambda$ is a Billiard
Array on $V$.
We just went from flags to Billard Arrays; we now reverse the direction.
Let $B$ denote a Billiard Array on $V$. Using
$B$ we now construct a 3-tuple of totally opposite flags on $V$.
 By the 1-corner 
of $\Delta_N$ we mean the location $(N,0,0)$. The 2-corner and
3-corner of $\Delta_N$ are similarly defined. For $0 \leq i \leq N$
let $U_i$ (resp. $U'_i$) (resp. $U''_i$) denote the sum
of the balls in $B$ that are at most $i$ balls over from the $1$-corner
(resp. $2$-corner) (resp. $3$-corner). We will show that 
\begin{equation*}
\lbrace U_i \rbrace_{i=0}^N, \qquad
\lbrace U'_i \rbrace_{i=0}^N, \qquad
\lbrace U''_i \rbrace_{i=0}^N
\end{equation*}
are totally opposite flags on $V$.
Consider the following two sets:
\begin{enumerate}
\item[\rm (i)] the Billiard Arrays on $V$;
\item[\rm (ii)] the 3-tuples of totally opposite
flags on $V$.
\end{enumerate}
We just described 
a function from (i) to (ii) and a function from
(ii) to  (i). We will show that these functions are inverses, and
hence bijections.
\medskip

\noindent We now describe our classification of Billiard
Arrays up to isomorphism. Let $B$ denote a Billiard Array
on the above vector space $V$.
Let $V'$ denote a vector space over $\mathbb F$
with dimension $N+1$, and let $B'$ denote a Billiard
Array on $V'$. The Billiard Arrays $B,B'$ are called
isomorphic whenever there exists an $\mathbb F$-linear
bijection  $V\to V'$ that sends $B_\lambda \mapsto B'_\lambda$
for all $\lambda \in \Delta_N$. By a value function on
$\Delta_N$ we mean a function $\Delta_N \to \mathbb F\backslash \lbrace 0\rbrace$. 
For $N\leq 1$, up to isomorphism there exists a unique Billiard Array
over $\mathbb F$ that has diameter $N$. For $N\geq 2$ we will obtain
a bijection between the following two sets:
\begin{enumerate}
\item[\rm (i)] the isomorphism classes of Billiard Arrays over $\mathbb F$
that have
diameter $N$;
\item[\rm (ii)] the value functions on $\Delta_{N-2}$.
\end{enumerate}
We now describe the bijection. Let $V$ denote a vector space over
$\mathbb F$ with dimension $N+1$, and let $B$ denote
a Billiard Array on $V$. Given adjacent locations $\lambda, \mu$
in $\Delta_N$, we define an $\mathbb F$-linear map
$\tilde B_{\lambda,\mu}: B_\lambda \to B_\mu$ as follows.
There exists a unique location $\nu \in \Delta_N$  such that
$\lambda, \mu, \nu$ form a black 3-clique. Pick $0 \not=u \in B_\lambda$.
There exists a unique $v \in B_\mu$ such that $u+v\in B_\nu$. The
map 
$\tilde B_{\lambda,\mu}$ sends $u\mapsto v$.
By construction the maps
$\tilde B_{\lambda,\mu}:B_\lambda \to B_\mu$ and
$\tilde B_{\mu,\lambda}:B_\mu \to B_\lambda $
are inverses. 
Let $\lambda, \mu, \nu$ denote locations in $\Delta_N$
that form a $3$-clique, either black or white.
Consider the composition of
maps around the clique:
\begin{equation*}
\begin{CD} 
B_\lambda  @>>  \tilde B_{\lambda,\mu} >  
 B_\mu  @>> \tilde B_{\mu,\nu} > B_\nu 
               @>>\tilde B_{\nu,\lambda} > B_\lambda.
                  \end{CD}
\end{equation*}
This composition is a nonzero scalar multiple of the
identity map on $B_\lambda$. First assume that the 3-clique is
black. Then the scalar is 1. Next assume that the 3-clique
is white. Then the scalar is called the
 clockwise $B$-value 
(resp. counterclockwise $B$-value)
of the 3-clique whenever
the sequence 
 $\lambda, \mu, \nu$ runs clockwise (resp. counterclockwise)
 around the clique.
The clockwise $B$-value and counterclockwise $B$-value are reciprocal.
By the $B$-value of the 3-clique, we mean its clockwise $B$-value.
For $N\leq 1$ the set $\Delta_N$ has no white $3$-clique. For $N\geq 2$
we now give a bijection from $\Delta_{N-2}$ to the set of
white 3-cliques in $\Delta_N$. The bijection sends
each element $(r,s,t)$ in $\Delta_{N-2}$ to the white 
3-clique in $\Delta_N$ consisting of the locations
\begin{eqnarray*}
(r,s+1,t+1), \qquad
(r+1,s,t+1), \qquad
(r+1,s+1,t).
\end{eqnarray*}
Using $B$ we define a function ${\hat B}:\Delta_{N-2} \to \mathbb F$
as follows: $\hat B$ sends each element $(r,s,t)$ in $\Delta_{N-2}$
to the $B$-value of the corresponding white 3-clique in $\Delta_N$.
By construction $\hat B$ is a value function on $\Delta_{N-2}$. The
map $B\mapsto \hat B$ induces our bijection, from the set of
isomorphism classes of Billiard Arrays over $\mathbb F$ that have
diameter $N$, to the set of value functions on $\Delta_{N-2}$.
\medskip

\noindent We now use Billiard Arrays to describe the finite-dimensional
irreducible
 $U_q(\mathfrak{sl}_2)$-modules.
We recall 
 $U_q(\mathfrak{sl}_2)$. We will use the equitable presentation,
 which was introduced in
\cite{equit}. See also
\cite{alnajjar, neubauer, huang, irt, uqsl2hat, nonnil, qtet, tersym, uawe,fduq,boyd}.
Fix a nonzero $q \in \mathbb F$ such that $q^2\not=1$. By
\cite[Theorem~2.1]{equit}
the equitable presentation of 
 $U_q(\mathfrak{sl}_2)$ has generators
$x,y^{\pm 1},z$ and relations $yy^{-1} =1$, $y^{-1}y = 1$,
\begin{eqnarray*}
\frac{qxy-q^{-1}yx}{q-q^{-1}}=1,
\qquad
\frac{qyz-q^{-1}zy}{q-q^{-1}}=1,
\qquad
\frac{qzx-q^{-1}xz}{q-q^{-1}}=1.
\end{eqnarray*}
Following \cite[Definition~3.1]{uawe}
define
\begin{eqnarray*}
\nu_x = q(1-yz), \qquad
\nu_y = q(1-zx), \qquad
\nu_z = q(1-xy).
\end{eqnarray*}
By \cite[Lemma~3.5]{uawe},
\begin{eqnarray*}
&&x \nu_y = q^2 \nu_y x, \qquad x \nu_z = q^{-2} \nu_z x,
\\
&&y \nu_z = q^2 \nu_z y, \qquad y \nu_x = q^{-2} \nu_x y,
\\
&&z \nu_x = q^2 \nu_x z, \qquad z \nu_y = q^{-2} \nu_y z.
\end{eqnarray*}
For the moment, assume that $q$ is not a root of unity.
Pick $N \in \mathbb N$ and let $V$ denote an irreducible
 $U_q(\mathfrak{sl}_2)$-module of dimension $N+1$.
 By \cite[Lemma~8.2]{fduq} each of
 $
 \nu^{N+1}_x,
 \nu^{N+1}_y,
 \nu^{N+1}_z
 $ 
is zero on $V$. Moreover by 
 \cite[Lemma~8.3]{fduq}, each of the following three sequences
 is a flag on $V$:
 \begin{equation*}
\lbrace \nu^{N-i}_{ x}V\rbrace_{i=0}^N,
\qquad
\lbrace \nu^{N-i}_{ y} V\rbrace_{i=0}^N,
\qquad
\lbrace \nu^{N-i}_{ z}V\rbrace_{i=0}^N.
\end{equation*}
We will show that these three flags
are totally opposite. We will further show
that for the corresponding Billiard Array on $V$, the
value of each white 3-clique is $q^{-2}$. 
\medskip

\noindent
We just obtained a Billiard Array from each finite-dimensional
irreducible 
 $U_q(\mathfrak{sl}_2)$-module, under the assumption that
 $q$ is not a root of unity. Similarly, for the
 Lie algebra    
 $\mathfrak{sl}_2$ over $\mathbb F$,
 we will obtain a Billiard Array from each finite-dimensional irreducible
 $\mathfrak{sl}_2$-module, under the assumption that
 $\mathbb F$ has characteristic 0. We will show that for these Billiard
 Arrays, the value of each white 3-clique is 1.
 \medskip
 
 \noindent We have been discussing 
 Billiard Arrays. Later in this paper we will introduce
 the concept of a Concrete Billiard Array
 and an edge-labelling of $\Delta_N$. These concepts help
 to clarify the Billiard Array theory, and will be used in our proofs.
 They may be of independent
 interest.
 \medskip

 \noindent This paper is organized as follows. Section 2 contains some
 preliminaries. In Sections 3--5 we consider the set $\Delta_N$
 from various points of view. Section 6 is about flags.
 Section 7 contains the definition and basic facts about
 Billiard Arrays. Section 8 contains a similar treatment
 for Concrete Billiard Arrays. Sections 9--12 are devoted to
 the correspondence between Billiard Arrays and 3-tuples of totally opposite
 flags. Sections 13--19 are devoted to our classification of Billiard Arrays
 up to isomorphism. Section 20 contains some examples of Concrete
  Billiard Arrays. In Section 21 we use Billiard Arrays to
 describe the finite-dimensional irreducible modules for $U_q(\mathfrak{sl}_2)$
 and 
  $\mathfrak{sl}_2$.

\section{Preliminaries}

\noindent 
We now begin our formal argument.
Let
$\mathbb R$ denote the field of real numbers.
We will be discussing the vector space $\mathbb R^3$
(row vectors). We will refer to the basis
\begin{equation*}
e_1 = (1,0,0),\qquad
e_2 = (0,1,0),\qquad
e_3 = (0,0,1).
\end{equation*}
\noindent 
Define a subset $\Phi \subseteq \mathbb R^3$ by
\begin{equation*}
\Phi = \lbrace e_i - e_j \;|\; 1 \leq i,j\leq 3, \; i\not=j\rbrace.
\end{equation*}
The set $\Phi$ is often called the root system $A_2$.
For notational convenience define
\begin{equation}
\label{eq:alphaBeta}
\alpha =  e_1-e_2,
\qquad \qquad 
\beta = e_2-e_3,
\qquad \qquad 
\gamma = e_3-e_1.
\end{equation}
Note that
\begin{eqnarray*}
\Phi = \lbrace \pm \alpha,\pm \beta,
\pm \gamma\rbrace,
\qquad \qquad
\alpha+\beta+\gamma=0.
\end{eqnarray*}
\noindent An element $(r,s,t)\in \mathbb R^3$ will be called
{\it nonnegative} 
whenever each of $r,s,t$ is nonnegative.
Define a partial order $\leq$ on $\mathbb R^3$
such that for $\lambda, \mu \in \mathbb R^3$,
$\mu \leq \lambda $ if and only if $\lambda-\mu$ is nonnegative.


\medskip
\noindent Let $\lbrace u_i\rbrace_{i=0}^n$ denote a  finite
sequence. We call $u_i$ the {\it $i$-component} or {\it $i$-coordinate}
of the
sequence. By the {\it inversion} of the sequence
$\lbrace u_i\rbrace_{i=0}^n$ we mean the sequence
$\lbrace u_{n-i}\rbrace_{i=0}^n$.

\section{The set $\Delta_N$}

Throughout this section fix $N \in \mathbb N$. 
\begin{definition}\rm
Let $\Delta_N$ denote the subset of $\mathbb R^3$ consisting of the three-tuples of natural
numbers
 whose sum is $N$. Thus
\begin{equation}
\Delta_N = \lbrace (r,s,t) \; |\;  r,s,t \in \mathbb N, \;\;r+s+t=N\rbrace.
\end{equation}
\noindent We arrange the elements of $\Delta_N$ in a triangular array,
as discussed in Section 1.
An element in $\Delta_N$ is called a {\it location}. 
For notational convenience define $\Delta_{-1}=\emptyset$.
\end{definition}

%


\begin{definition} 
\label{def:corner}
\rm
For $\eta \in \lbrace 1,2,3\rbrace$ the {\it $\eta$-corner}
of $\Delta_N$ is the location in $\Delta_N$
that has $\eta$-coordinate $N$ and all other
coordinates 0.  By a {\it corner} of $\Delta_N$
we mean the $1$-corner or
 $2$-corner or 
 $3$-corner.
The corners in $\Delta_N$ are listed below.
\begin{equation*}
Ne_1 = (N,0,0), \qquad Ne_2=(0,N,0), \qquad Ne_3=(0,0,N).
\end{equation*}
\end{definition}

\begin{definition}\rm 
For $\eta \in \lbrace 1,2,3\rbrace$ the
{\it $\eta$-boundary} of $\Delta_N$ is
the set of locations in $\Delta_N$ that have $\eta$-coordinate 0.
\end{definition}

\begin{example}
\rm
 The $1$-boundary of $\Delta_N$ consists of the locations
\begin{equation*}
(0,N-i,i) \qquad \qquad i=0,1,\ldots, N.
\end{equation*}
\end{example}

\begin{definition}\rm
The {\it boundary} of $\Delta_N$ is the union of its
1-boundary, 2-boundary, and 3-boundary.
By the {\it interior} of $\Delta_N$ we mean
the set of locations in $\Delta_N$
that are not on the boundary.
\end{definition}

\begin{definition}\rm
For $\eta \in \lbrace 1,2,3\rbrace$ we define a binary
relation on $\Delta_N$ called $\eta$-collinearity. 
By definition, locations $\lambda, \lambda' $ in $\Delta_N$
are {\it $\eta$-collinear} whenever the $\eta$-coordinate
  of $\lambda-\lambda'$ is 0.
Note that $\eta$-collinearity is an equivalence relation.
Each equivalence class will be called an {\it $\eta$-line}.
\end{definition}

\begin{example}
\rm
Pick $\eta \in \lbrace 1,2,3\rbrace$. For $0 \leq i \leq N$
there exists a unique $\eta$-line of $\Delta_N$ that has cardinality $i+1$.
For $i=0$ (resp. $i=N$) this $\eta$-line is the $\eta$-corner 
(resp. $\eta$-boundary) of $\Delta_N$.
\end{example}

\begin{example} 
\label{ex:line}
\rm For $0 \leq i \leq N$ the
following locations make up 
the unique $1$-line of $\Delta_N$ that has
cardinality $i+1$:
\begin{equation}
\label{eq:oneline}
(N-i, i-j, j) \qquad \qquad j=0,1,\ldots,i.
\end{equation}
\end{example}

\begin{definition}\rm
Locations $\lambda, \lambda'$ in $\Delta_N$ are called {\it collinear}
whenever they are 1-collinear or 2-collinear or 3-collinear. 
By a {\it line} in $\Delta_N$ we mean
a 1-line
 or 2-line
 or 3-line.
\end{definition}


\section{$\Delta_N$ as a graph}

\noindent  
Throughout this section fix $N \in \mathbb N$.
In this section we describe $\Delta_N$ using
notions from graph theory. For each result that we mention,
 the proof is routine and omitted.

\begin{definition}
\rm Locations $\lambda, \mu $ in $\Delta_N$ are called {\it adjacent}
whenever $\lambda - \mu \in \Phi$.
\end{definition}

\begin{example}\rm Assume $N\geq 1$, and pick a location 
$\lambda \in \Delta_N$.
\begin{enumerate}
\item[\rm (i)] Assume that $\lambda$ is a corner.
Then $\lambda$ is adjacent to exactly 2 locations in $\Delta_N$.
\item[\rm (ii)] Assume that $\lambda$ is on the boundary,
but not a corner.
Then $\lambda$ is adjacent to exactly 4 locations in $\Delta_N$.
\item[\rm (iii)] Assume that $\lambda$ is in the interior.
Then 
$\lambda$ is adjacent to exactly six locations in $\Delta_N$.
\end{enumerate}
\end{example}

\begin{definition}\rm By  an {\it edge} in $\Delta_N$
we mean a set of two adjacent locations.
\end{definition}

\begin{definition}\rm
For $n \in \mathbb N$, by a {\it walk of length $n$}
in $\Delta_N$ we mean a sequence of locations
$\lbrace\lambda_i\rbrace_{i=0}^n$  in $\Delta_N$
such that $\lambda_{i-1}, \lambda_i$
are adjacent for $1 \leq i \leq n$.
This walk is said to be {\it from $\lambda_0$ to $\lambda_n$}.
By a {\it path of length $n$}
in $\Delta_N$ we mean a walk
$\lbrace\lambda_i\rbrace_{i=0}^n$  in $\Delta_N$ 
such that $\lambda_{i-1} \not=\lambda_{i+1}$
for $1 \leq i \leq n-1$.
By a {\it cycle} in $\Delta_N$ we mean a path
$\lbrace \lambda_i\rbrace_{i=0}^n$ in $\Delta_N$ of length $n\geq 3$
such that 
$\lambda_0, \lambda_1, \ldots, \lambda_{n-1}$
are mutually distinct
and $\lambda_0 = \lambda_n$. 
\end{definition}

\begin{definition}\rm
For locations $\lambda, \lambda' \in \Delta_N$
let $\partial(\lambda, \lambda')$ denote the length of
a shortest path from $\lambda$ to $\lambda'$.
We call $\partial(\lambda,\lambda')$ the {\it distance}
between $\lambda$ and $\lambda'$.
\end{definition}

\begin{example}\rm
A location $\lambda = (r,s,t)$ in $\Delta_N$
is at distance $N-r$ (resp. $N-s$) (resp. $N-t$) from
the $1$-corner
(resp. $2$-corner) (resp. $3$-corner) of $\Delta_N$.
\end{example}

\begin{definition}\rm For a location $\lambda \in \Delta_N$
and a nonempty subset $S \subseteq \Delta_N$, define
\begin{equation*}
\partial (\lambda, S) = {\rm min} \lbrace \partial (\lambda, \lambda')
\;|\; \lambda' \in S\rbrace.
\end{equation*}
We call 
$\partial (\lambda, S)$ the {\it distance} between
$\lambda$ and $S$.
\end{definition}

\begin{example} \rm
A location $\lambda = (r,s,t)$ in $\Delta_N$ is
at distance $r$ (resp. $s$) (resp. $t$)
from the $1$-boundary
(resp. $2$-boundary) 
(resp. 
$3$-boundary) of $\Delta_N$.
\end{example}

\begin{lemma}
\label{lem:linedist}
 For $\eta \in \lbrace 1,2,3\rbrace$ and
$0 \leq n \leq N$ the following sets coincide:
\begin{enumerate}
\item[\rm (i)] the locations in $\Delta_N$
that have $\eta$-coordinate $n$;
\item[\rm (ii)] the locations in $\Delta_N$ at distance $n$ from
the $\eta$-boundary;
\item[\rm (iii)] the locations in $\Delta_N$ at distance $N-n$ from
the $\eta$-corner;
\item[\rm (iv)] the $\eta$-line of cardinality $N-n+1$.
\end{enumerate}
\end{lemma}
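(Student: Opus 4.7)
The plan is to fix $\eta = 1$ without loss of generality, since the three coordinates play symmetric roles; the cases $\eta = 2$ and $\eta = 3$ follow by the analogous argument after permuting coordinates. I would then show that each of the sets (ii), (iii), (iv) coincides with the set in (i), namely the locations $(r,s,t) \in \Delta_N$ with $r = n$.

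For (i) = (iv), I would observe that by the definition of 1-collinearity, two locations in $\Delta_N$ are 1-collinear precisely when their 1-coordinates agree. Hence the 1-lines are exactly the level sets of the 1-coordinate projection. For a location $(r,s,t) \in \Delta_N$, the 1-line through it is $\{(r,s',t') \in \Delta_N : s' + t' = N - r\}$, which has cardinality $N - r + 1$. Matching $N - r + 1 = N - n + 1$ yields $r = n$.

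For (i) = (iii) and (i) = (ii), I would invoke the two examples preceding the lemma, which assert that a location $(r,s,t)$ is at distance $N - r$ from the 1-corner $(N,0,0)$ and at distance $r$ from the 1-boundary. Setting $N - r = N - n$ or $r = n$ respectively gives the 1-coordinate equal to $n$. The only piece that needs genuine work is justifying those two distance formulas; the argument is that each step of a walk in $\Delta_N$ alters the 1-coordinate by an amount in $\{-1,0,+1\}$, since the root system is $\Phi = \{\pm\alpha, \pm\beta, \pm\gamma\}$ and only $\pm\alpha, \pm\gamma$ shift the 1-coordinate (by $\pm 1$), while $\pm\beta$ leaves it fixed. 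Consequently any walk from $(r,s,t)$ to a location whose 1-coordinate is $0$ (resp. $N$) has length at least $r$ (resp. $N - r$), and equality is attained by the path $(r,s,t), (r-1,s+1,t), \ldots, (0, s+r, t)$ in one direction and by the path $(r,s,t), (r+1, s-1, t) \text{ or } (r+1, s, t-1), \ldots, (N,0,0)$ in the other (where at each step one chooses whichever of $s', t'$ is positive to decrement).

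The main obstacle, such as it is, lies in this last verification of the distance formulas; the rest is bookkeeping with the definitions. Once those are in hand, the equivalences (i) $\Leftrightarrow$ (ii), (i) $\Leftrightarrow$ (iii), (i) $\Leftrightarrow$ (iv) each reduce to the trivial identity $r = n$, completing the proof.
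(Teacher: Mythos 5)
Your proof is correct; the paper explicitly omits the proof of this lemma as ``routine,'' and your argument (reducing to $\eta=1$ by coordinate symmetry, identifying $\eta$-lines as level sets of the $\eta$-coordinate, and establishing the two distance formulas via the observation that each adjacency step shifts the $\eta$-coordinate by at most $1$ together with explicit paths attaining the bound) is exactly the routine verification the author has in mind. No gaps.
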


\noindent We now consider the distance function $\partial$ in
more detail.

\begin{lemma}
\label{lem:distCalc}
Let $\lambda$ and $\lambda'$ denote
locations in $\Delta_N$, written $\lambda=(r,s,t)$
and $\lambda'=(r',s',t')$.
Then 
 $\partial(\lambda, \lambda')$ is equal to 
the maximum of the absolute values
\begin{equation}
|r-r'|, \qquad 
|s-s'|, \qquad 
|t-t'|.
\label{eq:abs}
\end{equation}
\end{lemma}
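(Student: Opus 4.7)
\noindent\emph{Proof plan.}
The plan is to verify both inequalities $\partial(\lambda,\lambda') \ge m$ and $\partial(\lambda,\lambda') \le m$, where $m$ denotes the maximum of the three quantities in (\ref{eq:abs}); taken together, these yield the claim.

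First I would establish the lower bound. Each root $\varphi \in \Phi$ has the form $e_i - e_j$, so its three coordinates lie in $\{-1, 0, 1\}$. Given any walk $\lambda = \lambda_0, \lambda_1, \ldots, \lambda_n = \lambda'$ in $\Delta_N$, the telescoping identity $\lambda' - \lambda = \sum_{k=1}^n (\lambda_k - \lambda_{k-1})$ together with the triangle inequality applied coordinate by coordinate gives $|r-r'| \le n$, $|s-s'| \le n$, and $|t-t'| \le n$. Taking $n = \partial(\lambda,\lambda')$ then forces $\partial(\lambda,\lambda') \ge m$.

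Next I would prove the upper bound by exhibiting an explicit walk of length $m$ from $\lambda$ to $\lambda'$. Set $a = r - r'$, $b = s - s'$, $c = t - t'$. Since $\lambda, \lambda' \in \Delta_N$, we have $a + b + c = 0$, so one of these three integers has sign opposite to the other two. Invoking the evident $S_3$-symmetry that permutes the basis vectors $e_1, e_2, e_3$ (and hence permutes $\Phi$ and preserves both $\Delta_N$ and $\partial$), I may reduce to the case $a, b \ge 0$ and $c = -(a+b) \le 0$, in which case $m = a + b$. Starting from $\lambda$, I apply $a$ consecutive steps in the direction $\gamma = (-1, 0, 1)$ to reach $(r', s, t + a)$, followed by $b$ consecutive steps in the direction $-\beta = (0, -1, 1)$ to arrive at $(r', s', t')$. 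This is a walk of length $a + b = m$, and every intermediate location lies in $\Delta_N$: along the way the first two coordinates only decrease (remaining $\ge r' \ge 0$ and $\ge s' \ge 0$ respectively) while the third only increases.

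No serious obstacle is expected. The only care needed is the sign reduction in the upper bound argument and the routine verification that the explicit walk does not exit $\Delta_N$; both are immediate from the setup.
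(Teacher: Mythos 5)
The paper gives no proof of this lemma---Section~4 declares all such proofs routine and omits them---so there is nothing to compare against; your argument is the standard one and is essentially correct. The lower bound via telescoping over the steps of a walk is airtight, and the explicit two-phase walk of length $a+b=m$ does stay inside $\Delta_N$ for the reasons you give. One small repair is needed in the reduction step: the $S_3$-symmetry permuting $e_1,e_2,e_3$ only permutes the entries of the triple $(a,b,c)$, so it cannot convert a sign pattern with two negative entries, such as $(2,-1,-1)$, into your normalized case $a,b\ge 0$, $c\le 0$. You must also invoke the symmetry $\partial(\lambda,\lambda')=\partial(\lambda',\lambda)$, which replaces $(a,b,c)$ by $(-a,-b,-c)$ and leaves the maximum in (\ref{eq:abs}) unchanged; with that extra observation every case does reduce to the one you treat, and the proof is complete.
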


\begin{lemma}
\label{lem:d1d2d3}
Given locations $(r,s,t)$
and $(r',s',t')$ in $\Delta_N$,
consider the three quantities in line
{\rm (\ref{eq:abs})}.
Let $d_1, d_2, d_3$ denote an ordering of these
quantities such that $d_1 \leq d_2 \leq d_3$.
Then $d_1 + d_2 = d_3$.
\end{lemma}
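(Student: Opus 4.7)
The plan is to convert this into a simple statement about real numbers summing to zero. Both locations lie in $\Delta_N$, so $r+s+t = N = r'+s'+t'$. Setting $a = r-r'$, $b = s-s'$, $c = t-t'$ gives the single identity $a+b+c = 0$, and the three quantities in (\ref{eq:abs}) become $|a|, |b|, |c|$. So the lemma reduces to the following claim about real numbers: if $a+b+c=0$, then among $|a|,|b|,|c|$, the largest equals the sum of the other two.

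The three coordinates play symmetric roles, so I may assume without loss of generality that $d_3 = |c|$; I then want to show $|c| = |a| + |b|$. From $a+b+c = 0$ I have $c = -(a+b)$, so $|c| = |a+b| \le |a|+|b|$ by the triangle inequality. The reverse inequality is where the hypothesis that $|c|$ is maximal enters. Specifically, I will show that $a$ and $b$ cannot have strictly opposite signs.

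Suppose instead $a > 0 > b$ (the other case is symmetric). If $|a| \ne |b|$, then $|a+b| = \big||a|-|b|\big| < \max(|a|,|b|) \le |c|$, contradicting $|a+b| = |c|$. If $|a| = |b|$, then $a + b = 0$, forcing $c = 0$, but then $|a| > 0 = |c|$, again contradicting maximality of $|c|$. Therefore $a,b$ are of the same (broad) sign, so $|a+b| = |a|+|b|$ and $|c| = |a|+|b|$, as required.

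I do not anticipate a real obstacle here; the only subtlety is making the ``without loss of generality'' reduction honest by noting that any permutation of $(r,s,t)$ paired with the corresponding permutation of $(r',s',t')$ preserves both the triple $\{|a|,|b|,|c|\}$ and the constraint $a+b+c = 0$, so I can freely relabel to put the maximum in the third slot.
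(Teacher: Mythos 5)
Your proof is correct. The paper gives no proof of this lemma (Section 4 states that all its proofs are routine and omitted), and your argument is exactly the intended routine verification: reduce to $a+b+c=0$ with $a=r-r'$, $b=s-s'$, $c=t-t'$, and observe that the two non-maximal differences cannot have strictly opposite signs, so the triangle inequality is an equality. The case analysis and the justification of the relabelling are both sound.
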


\begin{lemma}
\label{lem:mustbecorner}
For locations $\lambda, \lambda'$ in $\Delta_N$ we have
$\partial(\lambda, \lambda')\leq N$.  Moreover 
the following
are equivalent:
\begin{enumerate}
\item[\rm (i)]
$\partial(\lambda, \lambda')=N$;
\item[\rm (ii)]
there exists $\eta \in \lbrace 1,2,3\rbrace$ such that
one of 
$\lambda, \lambda'$ is the $\eta$-corner of $\Delta_N$, and
the other one is on the $\eta$-boundary of $\Delta_N$.
\end{enumerate}
\end{lemma}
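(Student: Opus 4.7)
The plan is to derive both assertions directly from Lemma~\ref{lem:distCalc}, which expresses the distance as the maximum of the three absolute coordinate differences. This reduces the whole statement to an elementary observation about coordinates of elements of $\Delta_N$.

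First I would handle the upper bound. Each location $(r,s,t) \in \Delta_N$ has $r,s,t \in \{0,1,\ldots,N\}$, since $r,s,t$ are nonnegative integers summing to $N$. So for any two locations $\lambda = (r,s,t)$ and $\lambda' = (r',s',t')$ in $\Delta_N$, each of the three quantities $|r-r'|, |s-s'|, |t-t'|$ lies in $\{0,1,\ldots,N\}$. By Lemma~\ref{lem:distCalc}, $\partial(\lambda,\lambda')$ equals the maximum of these three quantities, hence is at most $N$.

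Next I would prove the implication (ii)$\Rightarrow$(i). Suppose, say, that $\lambda$ is the $1$-corner and $\lambda'$ lies on the $1$-boundary. Then the $1$-coordinate of $\lambda$ is $N$ and the $1$-coordinate of $\lambda'$ is $0$, so $|r - r'| = N$. By Lemma~\ref{lem:distCalc}, $\partial(\lambda, \lambda') \geq N$, and combined with the upper bound just shown, $\partial(\lambda,\lambda') = N$. The cases $\eta = 2$ and $\eta = 3$ are symmetric.

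For the implication (i)$\Rightarrow$(ii), assume $\partial(\lambda,\lambda') = N$. By Lemma~\ref{lem:distCalc}, one of the three absolute differences equals $N$; say $|r - r'| = N$. Since $r, r' \in \{0,\ldots,N\}$, one of $r, r'$ equals $N$ and the other equals $0$. Say $r = N$; then $s = t = 0$ since $r+s+t = N$ and $s,t \geq 0$, so $\lambda$ is the $1$-corner of $\Delta_N$. Meanwhile $r' = 0$ means $\lambda'$ lies on the $1$-boundary. The other sign and the other two coordinate choices give the remaining cases with $\eta = 1, 2, 3$. There is no real obstacle here; the only thing to watch is the symmetric bookkeeping of the six cases ($\eta \in \{1,2,3\}$ together with the choice of which of $\lambda,\lambda'$ is the corner), all handled identically.
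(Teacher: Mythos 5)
Your proof is correct; the paper omits a proof of this lemma entirely (it appears in Section~4, where all proofs are declared routine and omitted), and your argument via Lemma~\ref{lem:distCalc} is exactly the intended routine verification. Both the upper bound and the two implications are handled cleanly, and the case analysis in (i)$\Rightarrow$(ii) is complete.
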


\noindent We mention several types of paths in $\Delta_N$.

\begin{definition}
\label{def:pathtype}
\rm
Pick $\eta \in \lbrace 1,2,3\rbrace$.
A path 
$\lbrace \lambda_i \rbrace_{i=0}^n$ in $\Delta_N$
is called an 
{\it $\eta$-boundary path}
whenever $\lambda_i$ is on the $\eta$-boundary 
for $0 \leq i \leq n$.
The path 
$\lbrace \lambda_i \rbrace_{i=0}^n$ is called {\it $\eta$-linear}
whenever there exists an $\eta$-line  that contains 
$\lambda_i$ for $0 \leq i \leq n$.
\end{definition}


\begin{definition}
\label{lem:distcoord2}
\rm
Pick $\eta \in \lbrace 1,2,3\rbrace$.
A subset $S$ of $\Delta_N$
is called
{\it $\eta$-geodesic}
whenever distinct elements in $S$ do not
have the same $\eta$-coordinate.
\end{definition}

\begin{lemma}
\label{def:etaGeo}
Pick $\eta \in \lbrace 1,2,3\rbrace$.
For a path
$\lbrace \lambda_i \rbrace_{i=0}^n$ in $\Delta_N$
the following are equivalent:
\begin{enumerate}
\item[\rm (i)]
the path $\lbrace \lambda_i \rbrace_{i=0}^n$ 
is $\eta$-geodesic;
\item[\rm (ii)]
there exists $\varepsilon \in \lbrace 1,-1\rbrace$ such that
for $1 \leq i \leq n$, 
the $\eta$-coordinate of $\lambda_{i}$ is equal to
$\varepsilon$ plus the $\eta$-coordinate of $\lambda_{i-1}$.
\end{enumerate}
\end{lemma}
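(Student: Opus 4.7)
The plan is to exploit the fact that every root in $\Phi$ has $\eta$-coordinate in $\{+1,0,-1\}$, and then to reduce the equivalence to a simple monotonicity argument on the integer sequence of $\eta$-coordinates along the path.

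First I would make the preliminary observation that for each step $\lambda_i - \lambda_{i-1}\in \Phi$, the $\eta$-coordinate of $\lambda_i$ differs from that of $\lambda_{i-1}$ by $+1$, $0$, or $-1$. This is immediate from the explicit description of $\Phi = \{\pm\alpha,\pm\beta,\pm\gamma\}$ in (\ref{eq:alphaBeta}).

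The direction (ii) $\Rightarrow$ (i) is then easy: if the $\eta$-coordinate of $\lambda_i$ equals the $\eta$-coordinate of $\lambda_0$ plus $i\varepsilon$ for all $i$, then these values are pairwise distinct, so in particular distinct $\lambda_i, \lambda_j$ in the path have distinct $\eta$-coordinates.

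For (i) $\Rightarrow$ (ii) I would argue in two moves. Since $\lambda_{i-1}$ and $\lambda_i$ are adjacent they are distinct, so by the $\eta$-geodesic hypothesis their $\eta$-coordinates differ; combined with the preliminary observation this forces the change at each step to lie in $\{+1,-1\}$. Writing $c_i$ for the $\eta$-coordinate of $\lambda_i$, we now have $c_i - c_{i-1}\in \{+1,-1\}$ for $1\leq i \leq n$. Suppose toward contradiction that these signs are not all equal, and let $k$ be the smallest index in $\{1,\ldots,n-1\}$ for which $c_{k+1}-c_k$ differs in sign from $c_k-c_{k-1}$. Then $c_{k+1}=c_{k-1}$, while the path definition gives $\lambda_{k-1}\neq \lambda_{k+1}$. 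This yields two distinct locations in the path with the same $\eta$-coordinate, contradicting (i), so a single sign $\varepsilon$ works throughout.

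I do not expect a genuine obstacle: the proof is essentially a discrete intermediate-value observation. The only point requiring a little care is making sure the local sign-change argument produces two \emph{distinct} path locations with equal $\eta$-coordinate, which is exactly the role of the condition $\lambda_{i-1}\neq \lambda_{i+1}$ built into the definition of a path.
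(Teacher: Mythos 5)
Your proof is correct. Note that the paper gives no proof of this lemma: it appears in Section 4, where the author states that ``for each result that we mention, the proof is routine and omitted.'' Your argument is exactly the intended routine verification --- each root in $\Phi$ changes the $\eta$-coordinate by $+1$, $0$, or $-1$; the $\eta$-geodesic hypothesis rules out $0$; and a sign change at step $k$ would force $\lambda_{k-1}$ and $\lambda_{k+1}$ (distinct by the definition of a path) to share an $\eta$-coordinate. No gaps.
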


\begin{lemma} 
Pick $\eta \in \lbrace 1,2,3\rbrace$. For
a path $\lbrace \lambda_i\rbrace_{i=0}^n$ in $\Delta_N$
the following are equivalent:
\begin{enumerate}
\item[\rm (i)]
the path 
 $\lbrace \lambda_i\rbrace_{i=0}^n$ 
is $\eta$-linear;
\item[\rm (ii)]
the path
 $\lbrace \lambda_i\rbrace_{i=0}^n$ 
is $\xi$-geodesic for each $\xi \in \lbrace 1,2,3\rbrace$
other than $\eta$.
\end{enumerate}
\end{lemma}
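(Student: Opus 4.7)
The plan is to deduce both implications directly from the description of $\Phi$ together with the preceding lemma (Lemma~\ref{def:etaGeo}). The key observation is that for adjacent locations $\lambda, \mu \in \Delta_N$, we have $\lambda-\mu \in \Phi = \{\pm\alpha,\pm\beta,\pm\gamma\} = \{e_i-e_j : 1\leq i\neq j\leq 3\}$. Hence in each step of a path, exactly one coordinate increases by $1$, exactly one coordinate decreases by $1$, and the remaining coordinate is unchanged.

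For the implication (i)$\Rightarrow$(ii), I would assume $\{\lambda_i\}_{i=0}^n$ is $\eta$-linear, so all $\lambda_i$ share a common $\eta$-coordinate. Then for $1\leq i\leq n$, the difference $\lambda_i-\lambda_{i-1} \in \Phi$ has zero $\eta$-coordinate, which forces it to be of the form $e_j-e_k$ with $\{j,k\}=\{1,2,3\}\setminus\{\eta\}$. Consequently, for each $\xi \in \{1,2,3\}$ with $\xi \neq \eta$, the $\xi$-coordinate of $\lambda_i$ differs from that of $\lambda_{i-1}$ by $+1$ or $-1$. By Lemma~\ref{def:etaGeo}~(ii)$\Rightarrow$(i), the path is $\xi$-geodesic.

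For (ii)$\Rightarrow$(i), I would apply Lemma~\ref{def:etaGeo}~(i)$\Rightarrow$(ii) to each of the two values $\xi\in\{1,2,3\}\setminus\{\eta\}$: at every step $1\leq i\leq n$, both non-$\eta$ coordinates of $\lambda_i$ differ from those of $\lambda_{i-1}$ by $\pm 1$. But as noted above, exactly one coordinate of $\lambda_i-\lambda_{i-1}$ is zero. Since neither non-$\eta$ coordinate can be the zero one, the $\eta$-coordinate must be unchanged at every step. Therefore all $\lambda_i$ share a common $\eta$-coordinate and lie on a single $\eta$-line, so the path is $\eta$-linear.

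There is no real obstacle: the argument is bookkeeping using the explicit form of $\Phi$ and Lemma~\ref{def:etaGeo}. The only mild subtlety is invoking the correct direction of Lemma~\ref{def:etaGeo} in each case, together with the simple but essential observation that at each step of an adjacency in $\Delta_N$, one and only one coordinate is preserved.
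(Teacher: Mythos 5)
Your overall strategy is the natural one (the paper omits this proof as routine, so there is nothing to compare against), and your direction (ii)$\Rightarrow$(i) is correct as written. However, your direction (i)$\Rightarrow$(ii) has a genuine gap at the point where you invoke Lemma \ref{def:etaGeo}. Condition (ii) of that lemma requires a \emph{single} $\varepsilon\in\{1,-1\}$ that works for every step $1\le i\le n$, whereas you have only established that at each step the $\xi$-coordinate changes by $+1$ \emph{or} $-1$, with the sign a priori allowed to vary from step to step. That weaker statement does not imply $\xi$-geodesicity: a walk whose $\xi$-coordinate goes $+1$ then $-1$ returns to the same $\xi$-coordinate, so distinct locations could share a $\xi$-coordinate. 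You need to rule out sign changes, and you have not done so.

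The gap is easy to close in either of two ways. (a) Use the path condition: on an $\eta$-line the only possible steps are $\pm(e_j-e_k)$ with $\{j,k\}=\{1,2,3\}\setminus\{\eta\}$, and two consecutive opposite steps would force $\lambda_{i+1}=\lambda_{i-1}$, contradicting the definition of a path; hence all steps are equal and the sign is uniform, so Lemma \ref{def:etaGeo}(ii) applies. (b) Bypass Lemma \ref{def:etaGeo} entirely and verify Definition \ref{lem:distcoord2} directly: if two locations of the path have the same $\xi$-coordinate, then since they also share the $\eta$-coordinate (by $\eta$-linearity) and the three coordinates sum to $N$, they agree in the remaining coordinate as well and therefore coincide; so distinct locations have distinct $\xi$-coordinates. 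Either repair makes your argument complete.
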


\begin{definition}
\label{def:Geo}
\rm A path
$\lbrace \lambda_i\rbrace_{i=0}^n$ 
in $\Delta_N$ is said to be {\it geodesic} whenever
$\partial(\lambda_0, \lambda_n)=n$.
\end{definition}

\begin{lemma} 
\label{lem:etaGeo}
For a path 
$\lbrace \lambda_i\rbrace_{i=0}^n$  in $\Delta_N$
the following are equivalent:
\begin{enumerate}
\item[\rm (i)] the path 
$\lbrace \lambda_i\rbrace_{i=0}^n$
is geodesic;
\item[\rm (ii)] there exists $\eta \in \lbrace 1,2,3\rbrace$
such that 
the path $\lbrace \lambda_i\rbrace_{i=0}^n$  is $\eta$-geodesic.
\end{enumerate}
\end{lemma}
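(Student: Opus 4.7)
The plan is to prove both directions by reducing to the coordinate-difference formula for $\partial$ in Lemma \ref{lem:distCalc}, together with the observation that adjacent locations differ by an element of $\Phi$, so each coordinate can change by at most one in absolute value across a single step.

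For the direction (ii) $\Rightarrow$ (i), suppose the path $\lbrace \lambda_i\rbrace_{i=0}^n$ is $\eta$-geodesic for some $\eta$. I invoke Lemma \ref{def:etaGeo} to obtain $\varepsilon \in \lbrace 1,-1\rbrace$ such that the $\eta$-coordinate of $\lambda_i$ equals the $\eta$-coordinate of $\lambda_{i-1}$ plus $\varepsilon$ for $1 \leq i \leq n$. Iterating, the $\eta$-coordinate of $\lambda_n$ differs from that of $\lambda_0$ by exactly $n$ in absolute value. By Lemma \ref{lem:distCalc} this forces $\partial(\lambda_0,\lambda_n)\geq n$. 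Since the path itself is a walk of length $n$ from $\lambda_0$ to $\lambda_n$, we also have $\partial(\lambda_0,\lambda_n)\leq n$. Hence equality holds, and the path is geodesic in the sense of Definition \ref{def:Geo}.

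For the direction (i) $\Rightarrow$ (ii), assume the path is geodesic, so $\partial(\lambda_0,\lambda_n)=n$. Write $\lambda_0=(r,s,t)$ and $\lambda_n=(r',s',t')$. By Lemma \ref{lem:distCalc}, at least one of $|r-r'|$, $|s-s'|$, $|t-t'|$ equals $n$; choose $\eta \in \lbrace 1,2,3\rbrace$ so that the $\eta$-coordinates of $\lambda_0$ and $\lambda_n$ differ by $n$ in absolute value. Because consecutive path locations differ by an element of $\Phi=\lbrace \pm\alpha,\pm\beta,\pm\gamma\rbrace$, the $\eta$-coordinate of $\lambda_i$ differs from that of $\lambda_{i-1}$ by an element of $\lbrace -1, 0, 1\rbrace$. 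Summing these $n$ differences must yield a quantity of absolute value $n$, which is possible only if every increment lies in $\lbrace 1\rbrace$ or every increment lies in $\lbrace -1\rbrace$. By Lemma \ref{def:etaGeo}, the path is $\eta$-geodesic, proving (ii).

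The argument is essentially a one-line pigeonhole on signed unit increments, so there is no serious obstacle; the only care required is to verify that every element of $\Phi$ has coordinates in $\lbrace -1,0,1\rbrace$, which is immediate from the description $\Phi=\lbrace e_i-e_j\mid i\neq j\rbrace$ given in Section 2.
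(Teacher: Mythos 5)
Your proof is correct. The paper omits the proof of this lemma as routine (see the remark at the start of Section 4), and your argument — bounding $\partial(\lambda_0,\lambda_n)$ via Lemma \ref{lem:distCalc} and observing that each step changes every coordinate by an element of $\lbrace -1,0,1\rbrace$, so a total displacement of absolute value $n$ over $n$ steps forces all increments to share the same sign — is exactly the intended routine argument.
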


\begin{lemma}  Let $\lambda$ and $\lambda'$ denote
locations in $\Delta_N$, written $\lambda=(r,s,t)$
and $\lambda'=(r',s',t')$.
Then the number of geodesic paths in $\Delta_N$ from
$\lambda$ to $\lambda'$ is equal to the binomial
coefficient $\binom{d_1+d_2}{d_1}$
with $d_1, d_2$  from Lemma
\ref{lem:d1d2d3}.
\end{lemma}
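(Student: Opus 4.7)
The plan is to use Lemma \ref{lem:etaGeo} to reduce the count to a lattice-path enumeration. By that lemma, a geodesic path is precisely an $\eta$-geodesic path for some $\eta\in\{1,2,3\}$, and by Lemma \ref{def:etaGeo} an $\eta$-geodesic path of length $n$ changes its $\eta$-coordinate by exactly $\pm n$. Since a geodesic path has length $\partial(\lambda,\lambda')=d_3$, the admissible $\eta$ must be one for which the $\eta$-coordinate of $\lambda-\lambda'$ has absolute value $d_3$, i.e.\ an index at which the maximum in (\ref{eq:abs}) is attained.

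I would fix such an $\eta$, say $\eta=3$ (so $d_3=|t-t'|$), and reverse the path if necessary to assume $t'\geq t$. A geodesic path from $\lambda$ to $\lambda'$ is then a sequence of $n=d_3$ root vectors $\mu_1,\dots,\mu_n\in\Phi$, each raising the $3$-coordinate by $1$. Reading off (\ref{eq:alphaBeta}), the only elements of $\Phi$ with this property are $-\beta=(0,-1,1)$ and $\gamma=(-1,0,1)$, so each $\mu_i\in\{-\beta,\gamma\}$.

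Let $a$ count the occurrences of $-\beta$ and $b$ those of $\gamma$; matching the total displacement to $\lambda'-\lambda$ yields $a+b=d_3$, $a=s-s'$, and $b=r-r'$. Both are nonnegative: since $(r-r')+(s-s')=t'-t=d_3$ while $|r-r'|,|s-s'|\leq d_3$, neither summand can be negative. Hence $\{a,b\}=\{|r-r'|,|s-s'|\}=\{d_1,d_2\}$, and the number of orderings of the multiset of moves is $\binom{a+b}{a}=\binom{d_1+d_2}{d_1}$. I would finish by verifying that each such ordering gives a valid path: the intermediate locations $(r-b_j,s-a_j,t+a_j+b_j)$ have coordinates in $\mathbb N$ (using $a_j\leq a\leq s$ and $b_j\leq b\leq r$), and any two consecutive moves drawn from $\{-\beta,\gamma\}$ have nonzero sum, so the non-backtracking condition $\lambda_{i-1}\neq\lambda_{i+1}$ holds automatically.

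The part that warrants care is the degenerate case in which two distinct indices share the maximum, forcing $d_1=0$ and $d_2=d_3$. The choice of $\eta$ above is then not unique, but the two candidate indices yield the same collection of geodesic paths (every geodesic path in this case is simultaneously $\eta$-geodesic for both maximal indices), so no double-counting arises; the formula collapses to $\binom{d_2}{0}=1$, matching the unique monotone path along the relevant line. Modulo this observation and the symmetry reductions to $d_3=|t-t'|$ and $t'\geq t$, the argument is a short bookkeeping exercise.
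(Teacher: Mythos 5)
Your argument is correct. Note that the paper itself supplies no proof of this lemma: it appears in Section 4, where the author states that every proof ``is routine and omitted,'' so there is nothing to compare against. Your reconstruction --- reducing to $\eta$-geodesic paths for an index $\eta$ attaining the maximum in (\ref{eq:abs}) via Lemma \ref{lem:etaGeo}, observing that the only steps raising the $\eta$-coordinate are two of the six roots, and counting words in those two letters --- is exactly the kind of routine verification the author had in mind, and you have handled the two points that actually need care: nonnegativity of the intermediate coordinates (so the words really are paths in $\Delta_N$) and the degenerate case where two coordinates tie for the maximum, where $d_1=0$ forces the unique straight path and no over- or under-counting occurs.
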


\begin{lemma}
\label{lem:collinGeo}
For locations $\lambda, \lambda'$ in  $\Delta_N$
the following are equivalent:
\begin{enumerate}
\item[\rm (i)] the locations $\lambda, \lambda'$ are collinear;
\item[\rm (ii)] there exists a unique geodesic path in $\Delta_N$ from
$\lambda$ to $\lambda'$.
\end{enumerate}
Assume that {\rm (i), (ii)} hold. Define $\eta \in \lbrace 1,2,3\rbrace$
such that $\lambda, \lambda'$ are $\eta$-collinear.
Then the geodesic path mentioned in {\rm (ii)}
is $\eta$-linear.
\end{lemma}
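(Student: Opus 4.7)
The plan is to deduce both implications from the geodesic-counting lemma that immediately precedes this statement, together with a direct construction for the $\eta$-linear path.

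Write $\lambda=(r,s,t)$, $\lambda'=(r',s',t')$, and let $d_1\le d_2\le d_3$ be the sorted values of $|r-r'|,|s-s'|,|t-t'|$. By the preceding counting lemma, the number of geodesic paths from $\lambda$ to $\lambda'$ equals $\binom{d_1+d_2}{d_1}$. Since a binomial coefficient $\binom{n}{k}$ with $0\le k\le n$ equals $1$ iff $k=0$ or $k=n$, and $d_1\le d_2$ forces the former case, uniqueness of a geodesic is equivalent to $d_1=0$. Thus (ii) is equivalent to the vanishing of at least one of $|r-r'|,|s-s'|,|t-t'|$, which by definition of $\eta$-collinearity is equivalent to (i); this handles the biconditional.

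For the last assertion, assume $\lambda,\lambda'$ are $\eta$-collinear, say $\eta=1$ (the other two cases are symmetric), so $r=r'$ and hence $|s-s'|=|t-t'|$. I will exhibit an $\eta$-linear path of length $\partial(\lambda,\lambda')$ and invoke uniqueness. Set $\varepsilon=1$ if $s'\ge s$ and $\varepsilon=-1$ otherwise, let $n=|s-s'|$, and define $\lambda_i=\lambda+i\varepsilon(e_2-e_3)$ for $0\le i\le n$. Each $\lambda_i$ has first coordinate $r$ and nonnegative integer entries summing to $N$, so it lies on the unique $1$-line through $\lambda$ and $\lambda'$; consecutive differences are in $\Phi$, so the sequence is a path; and its length $n=|s-s'|$ coincides with $\partial(\lambda,\lambda')$ by Lemma \ref{lem:distCalc}. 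Hence $\{\lambda_i\}_{i=0}^n$ is a geodesic path, and by the uniqueness just established it is the geodesic path of part (ii). By construction it is $\eta$-linear.

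There is no real obstacle; the only mild point is verifying that the manifest ``straight'' walk along the $\eta$-line is indeed a graph path (consecutive steps lie in $\Phi$) and attains the distance, both of which are immediate. The proof is therefore essentially a bookkeeping exercise built on the previous counting lemma and Lemma \ref{lem:distCalc}.
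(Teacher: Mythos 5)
Your argument is correct. Note that the paper itself offers no proof of this lemma (all results in Section 4 are stated with proofs "routine and omitted"), so there is nothing to compare against; your route—reading off uniqueness from the binomial count $\binom{d_1+d_2}{d_1}=1\Leftrightarrow d_1=0$ in the immediately preceding lemma, identifying $d_1=0$ with collinearity, and then exhibiting the explicit $\eta$-linear geodesic $\lambda_i=\lambda+i\varepsilon(e_2-e_3)$ to pin down the unique path—is a clean and complete way to fill in the omitted details, with all the small verifications (membership in $\Delta_N$, path versus walk, length equal to $\partial(\lambda,\lambda')$ via Lemma \ref{lem:distCalc}) correctly handled.
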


\noindent To motivate the next definition we make some comments.
Let $\lambda, \lambda'$ 
denote distinct corner locations
in $\Delta_N$. Then $\lambda, \lambda'$ are collinear, and
$\partial(\lambda,\lambda') = N$.
There exists a unique geodesic path
in $\Delta_N$ from
$\lambda $ to $\lambda' $.
This is a boundary path. 

\begin{definition}
\label{def:sixpaths}
\rm For distinct $\eta, \xi \in \lbrace 1,2,3\rbrace$ 
let $\lbrack \eta, \xi\rbrack$ denote the unique
geodesic path in $\Delta_N$
from the $\eta$-corner of $\Delta_N$ to the
 $\xi$-corner of $\Delta_N$.
\end{definition}

\begin{lemma}
\label{lem:boundarypathInv}
 For distinct $\eta, \xi \in \lbrace 1,2,3\rbrace$ 
the path 
$\lbrack \eta, \xi\rbrack$ is the inversion of
the path $\lbrack \xi, \eta\rbrack$.
\end{lemma}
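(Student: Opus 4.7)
The plan is to reduce the lemma to the uniqueness statement that has already been recorded in the paragraph immediately preceding Definition \ref{def:sixpaths}, namely that between two distinct corners of $\Delta_N$ there exists a unique geodesic path.

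First I would check the routine but crucial observation that inversion preserves the property of being a path. If $\{\lambda_i\}_{i=0}^n$ is a path in $\Delta_N$, then for $1 \leq i \leq n$ the locations $\lambda_{i-1},\lambda_i$ are adjacent, i.e.\ $\lambda_{i-1}-\lambda_i \in \Phi$. Since $\Phi = \{\pm\alpha,\pm\beta,\pm\gamma\}$ is closed under negation, the relation of being adjacent is symmetric, so $\lambda_i,\lambda_{i-1}$ are adjacent as well. Re-indexing, the inversion $\{\lambda_{n-i}\}_{i=0}^n$ is a path from $\lambda_n$ to $\lambda_0$ of the same length $n$.

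Next I would apply this to $[\eta,\xi]$. By Definition \ref{def:sixpaths}, $[\eta,\xi]$ is a geodesic path from the $\eta$-corner of $\Delta_N$ to the $\xi$-corner of $\Delta_N$; by the motivating remark before Definition \ref{def:sixpaths} (or directly from Lemma \ref{lem:distCalc} applied to $Ne_\eta$ and $Ne_\xi$), its length is $N$. Its inversion is therefore a path of length $N$ from the $\xi$-corner to the $\eta$-corner. Since $\partial$ is symmetric, $\partial(Ne_\xi,Ne_\eta)=N$, so the inversion is geodesic.

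Finally, the distinct corners $Ne_\xi$ and $Ne_\eta$ are collinear (for instance, if $\{\eta,\xi,\zeta\}=\{1,2,3\}$, then $Ne_\eta-Ne_\xi$ has $\zeta$-coordinate $0$, so they are $\zeta$-collinear), so by Lemma \ref{lem:collinGeo} the geodesic path between them is unique. By Definition \ref{def:sixpaths} this unique geodesic is $[\xi,\eta]$, and therefore the inversion of $[\eta,\xi]$ equals $[\xi,\eta]$. There is no real obstacle here: the proof is essentially the symmetry of adjacency plus the already-stated uniqueness.
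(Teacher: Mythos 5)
Your proof is correct. The paper gives no argument for this lemma at all (Section 4 declares all its results "routine and omitted"), and your route — inversion of a path is a path since $\Phi=-\Phi$, the inversion of $[\eta,\xi]$ is a geodesic path of length $N$ between the two corners, and the uniqueness of such a geodesic (the corners being collinear, via Lemma \ref{lem:collinGeo}) forces it to equal $[\xi,\eta]$ — is a clean and complete way to supply the omitted verification; alternatively one could just read the claim off the explicit coordinates in Lemma \ref{lem:sixpathsinfo}.
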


\begin{lemma} 
\label{lem:sixpathsinfo}
 Pick distinct $\eta, \xi \in \lbrace 1,2,3\rbrace$ 
and let 
$\lbrace \lambda_i \rbrace_{i=0}^N$
 denote the
 path
 $\lbrack \eta, \xi\rbrack$.
For $0 \leq i \leq N$ the location $\lambda_i$
is described in the table below:
\bigskip

\centerline{
\begin{tabular}[t]{c|c}
 $ \;\;\lbrack \eta,\xi\rbrack \;\;$ &
    $\lambda_i$ 
   \\ \hline  \hline
$\lbrack 1,2\rbrack$ &
$(N-i,i,0)$ 
  \\ 
$\lbrack 2,1\rbrack$ &
 $(i,N-i,0)$
   \\
\hline
$\lbrack 2,3\rbrack$ &
 $(0,N-i,i)$ 
  \\ 
$\lbrack 3,2\rbrack$ &
  $(0,i,N-i)$
   \\
\hline
$\lbrack 3,1\rbrack$ &
$(i,0,N-i)$ 
  \\ 
$\lbrack 1,3\rbrack$ &
 $(N-i,0,i)$
   \\
     \end{tabular}}
     \bigskip

\end{lemma}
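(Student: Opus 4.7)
The plan is to reduce everything to a single archetypal case, say $[1,2]$, and then read off the other five rows from the symmetry of $\Delta_N$ together with Lemma~\ref{lem:boundarypathInv}. First I would fix distinct $\eta,\xi \in \{1,2,3\}$ and let $\zeta$ denote the remaining element of $\{1,2,3\}$. Directly from Definition~\ref{def:corner}, the $\eta$-corner and the $\xi$-corner both have $\zeta$-coordinate $0$, so they are $\zeta$-collinear. Since they are at distance $N$ by Lemma~\ref{lem:mustbecorner} (and the commentary preceding Definition~\ref{def:sixpaths}), and since $\zeta$-collinearity forces the unique geodesic path between them to be $\zeta$-linear by Lemma~\ref{lem:collinGeo}, every location $\lambda_i$ on the path $[\eta,\xi]$ has $\zeta$-coordinate $0$.

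Next I would exploit $\zeta$-linearity in the form given by the lemma just before Definition~\ref{def:Geo}: a $\zeta$-linear path is $\eta$-geodesic and $\xi$-geodesic. Applying Lemma~\ref{def:etaGeo} in the $\eta$ direction, there is $\varepsilon \in \{1,-1\}$ such that the $\eta$-coordinate of $\lambda_i$ equals the $\eta$-coordinate of $\lambda_{i-1}$ plus $\varepsilon$ for $1 \le i \le n$. Since the $\eta$-coordinate of $\lambda_0$ is $N$ (the $\eta$-corner) and of $\lambda_N$ is $0$ (the $\xi$-corner), we must have $\varepsilon=-1$, forcing the $\eta$-coordinate of $\lambda_i$ to equal $N-i$. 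The identical argument in the $\xi$ direction gives $\xi$-coordinate $i$. Combined with $\zeta$-coordinate $0$, this pins down $\lambda_i$ and produces the entries for $[1,2]$, $[2,3]$, $[3,1]$ in the table.

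Finally I would obtain the remaining three rows $[2,1]$, $[3,2]$, $[1,3]$ from Lemma~\ref{lem:boundarypathInv}, which asserts that $[\xi,\eta]$ is the inversion of $[\eta,\xi]$; replacing $i$ by $N-i$ in the formulas already derived yields exactly the claimed entries. I would expect no genuine obstacle here — the argument is essentially bookkeeping — but the one spot that deserves care is the sign of $\varepsilon$ in the invocation of Lemma~\ref{def:etaGeo}, because the $\eta$-geodesic property guarantees only monotonicity, not direction, and one must read the direction off from the boundary values at $i=0$ and $i=N$.
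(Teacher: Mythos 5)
Your proof is correct. The paper itself gives no argument here — Section 4 opens by declaring that all proofs in it are routine and omitted — and your chain ($\zeta$-collinearity of the two corners, hence $\zeta$-linearity of the unique geodesic path via Lemma~\ref{lem:collinGeo}, hence $\eta$- and $\xi$-geodesic monotonicity with the sign of $\varepsilon$ forced by the endpoint coordinates) is exactly the natural verification, and it in fact yields all six rows of the table directly without needing the final appeal to Lemma~\ref{lem:boundarypathInv}.
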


\begin{definition}
\label{def:GC}
\rm 
A subset $S$ of $\Delta_N$ is
 said to be {\it geodesically closed}
whenever for each geodesic path $\lbrace \lambda_i\rbrace_{i=0}^n$
in $\Delta_N$, if $S$ contains $\lambda_0, \lambda_n$
then $S$ contains $\lambda_i$ for $0 \leq i \leq n$.
\end{definition}

\begin{example}
\label{ex:GC}
\rm
For $\eta \in \lbrace 1,2,3\rbrace $ and
$0 \leq n \leq N$, the set of
locations in $\Delta_N$ that have
$\eta$-coordinate at least $n$ is geodesically closed.
\end{example}

\begin{lemma} 
\label{lem:GCint}
Let $S$ and $S'$ denote geodesically
closed subsets of $\Delta_N$. Then
$S\cap S'$ is geodesically closed.
\end{lemma}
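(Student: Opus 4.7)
The plan is to apply the definition of geodesically closed directly to the intersection. Let $\lbrace \lambda_i \rbrace_{i=0}^n$ denote an arbitrary geodesic path in $\Delta_N$ with $\lambda_0, \lambda_n \in S \cap S'$. I need to show that $\lambda_i \in S \cap S'$ for $0 \leq i \leq n$.

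First I would observe that $\lambda_0, \lambda_n \in S$, so by the assumption that $S$ is geodesically closed (applied to this very path), $\lambda_i \in S$ for all $i$. Then I would repeat the argument verbatim with $S'$ in place of $S$ to conclude $\lambda_i \in S'$ for all $i$. Combining these two inclusions gives $\lambda_i \in S \cap S'$ for $0 \leq i \leq n$, which is exactly the defining property of $S \cap S'$ being geodesically closed, as in Definition \ref{def:GC}.

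There is essentially no obstacle: the statement is formal and follows immediately from the definition, because the geodesic closure condition is universally quantified over geodesic paths and the same path witnesses both hypotheses. The only mild subtlety worth noting is that we do not need to worry about $S \cap S'$ being empty, since the implication in Definition \ref{def:GC} is vacuous in that case.
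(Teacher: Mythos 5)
Your proof is correct and is exactly the routine definitional argument the paper has in mind (the paper omits the proof entirely, noting in Section 4 that all such results are routine). Nothing to add.
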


\begin{definition} 
\label{def:spanningT}
\rm By a {\it spanning tree} of $\Delta_N$
we mean a set $T$ of edges for  
$\Delta_N$ that has the following property: for any two 
distinct locations $\lambda, \lambda'$ in $\Delta_N$
there exists a unique path in $\Delta_N$ from 
$\lambda$ to $\lambda'$ that involves only edges in $T$. 
\end{definition}

\begin{note}\rm Let $T$ denote a spanning tree of $\Delta_N$.
Then the cardinality of $T$ is one less than the cardinality of
$\Delta_N$.
\end{note}

\begin{definition}\rm
By a {\it 3-clique} in $\Delta_N$ we mean a set of three mutually
adjacent locations in $\Delta_N$. 
There are two kinds of 3-cliques: $\Delta$ (black)
and $\nabla$ (white).
\end{definition}

\begin{example}\rm Assume $N=0$. Then $\Delta_N$ does not
contain a 3-clique. Assume $N=1$. Then $\Delta_N$ contains 
a unique black 3-clique and no white 3-clique.
Assume  $N=2$. Then $\Delta_N$ contains three black 3-cliques
and a unique white 3-clique.
\end{example}

\begin{lemma}
\label{lem:white}
Assume $N\geq 1$.
We describe a bijection from $\Delta_{N-1}$ to the
set of black 3-cliques in $\Delta_N$. The
bijection sends each
$(r,s,t) \in \Delta_{N-1}$ to the black 3-clique consisting
of the locations 
\begin{equation*}
(r+1,s,t), \qquad (r,s+1,t), \qquad (r,s,t+1).
\end{equation*}
\end{lemma}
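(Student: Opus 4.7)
My plan is to verify well-definedness, identify the image 3-clique as black, and then prove injectivity and surjectivity of the proposed map.

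For well-definedness, given $(r,s,t)\in\Delta_{N-1}$, each of the three locations $(r+1,s,t),(r,s+1,t),(r,s,t+1)$ has coordinate sum $(r+s+t)+1=N$, so lies in $\Delta_N$. The three pairwise differences are, up to sign, $\alpha=e_1-e_2$, $\beta=e_2-e_3$, and $\gamma=e_3-e_1$, all in $\Phi$, so the three locations are mutually adjacent and form a $3$-clique.

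To see this $3$-clique is black ($\Delta$) rather than white ($\nabla$), I observe that in each coordinate $j$, exactly one of the three locations attains the value $(r,s,t)_j+1$ while the two others attain $(r,s,t)_j$. Equivalently, the coordinatewise minimum of the $3$-clique is $(r,s,t)$, whose coordinate sum equals $N-1$. This contrasts with the white $3$-cliques described in the introduction, whose coordinatewise minimum has coordinate sum $N-2$. Making this distinction rigorous is the main obstacle, since the paper introduces $\Delta$ and $\nabla$ only visually. I would address it by classifying all $3$-cliques in $\Delta_N$ intrinsically: for any clique $\{\mu_1,\mu_2,\mu_3\}$ the three pairwise differences lie in $\Phi$ and sum to zero, and a short case analysis forces them (in a suitable cyclic order) to be either $(\alpha,\beta,\gamma)$ or $(-\alpha,-\beta,-\gamma)$. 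In the first case the coordinatewise minimum lies in $\Delta_{N-2}$ and the clique has shape $\nabla$ (white); in the second it lies in $\Delta_{N-1}$ and the clique has shape $\Delta$ (black). This simultaneously formalizes the black/white labels and supplies the surjectivity step.

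Injectivity is then immediate, since $(r,s,t)$ is recovered from its image as the coordinatewise minimum. For surjectivity, any black $3$-clique arises from the classification above with coordinatewise minimum some $(r,s,t)\in\Delta_{N-1}$, and its three locations are exactly $(r,s,t)+e_1$, $(r,s,t)+e_2$, $(r,s,t)+e_3$, i.e.\ the image of $(r,s,t)$ under the map. Thus the map is a bijection.
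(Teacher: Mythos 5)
The paper declares at the start of Section~4 that every proof in that section is routine and omitted, so there is no argument of record to compare against; your proposal supplies exactly the routine verification that is intended, and its skeleton (well-definedness, an intrinsic classification of $3$-cliques via the root system $\Phi$, injectivity by recovering the coordinatewise minimum, surjectivity from the classification) is sound. One bookkeeping slip should be fixed: in your classification sentence the two cases are transposed. If an ordering $\mu_1,\mu_2,\mu_3$ of a $3$-clique has cyclic differences $(\alpha,\beta,\gamma)$, i.e.\ $\mu_1-\mu_2=\alpha$, $\mu_2-\mu_3=\beta$, $\mu_3-\mu_1=\gamma$, then writing $\mu_2=(a,b,c)$ gives $\mu_1=(a+1,b-1,c)$ and $\mu_3=(a,b-1,c+1)$, so the coordinatewise minimum is $(a,b-1,c)\in\Delta_{N-1}$ and the three locations are $(a,b-1,c)+e_1$, $(a,b-1,c)+e_2$, $(a,b-1,c)+e_3$; this is the black ($\Delta$) case. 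The case $(-\alpha,-\beta,-\gamma)$ is the one whose minimum lies in $\Delta_{N-2}$ and whose shape is $\nabla$ (white). One can check the sign convention directly against the lemma being proved: $(r+1,s,t)-(r,s+1,t)=\alpha$, $(r,s+1,t)-(r,s,t+1)=\beta$, $(r,s,t+1)-(r+1,s,t)=\gamma$. Since the remainder of your argument uses only the correct pairing --- minimum of rank $N-1$ corresponds to the $\Delta$ shape and to the cliques $\{(r,s,t)+e_1,(r,s,t)+e_2,(r,s,t)+e_3\}$, minimum of rank $N-2$ to the $\nabla$ shape --- the transposition does not invalidate the proof, but the sentence should be corrected.
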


\begin{lemma}
\label{lem:black}
\rm Assume $N\geq 2$. 
We describe a bijection from $\Delta_{N-2}$ to the
set of white 3-cliques in $\Delta_N$. The
bijection sends each
$(r,s,t) \in \Delta_{N-2}$ to the white 3-clique consisting
of the locations 
\begin{equation*}
(r,s+1,t+1), \qquad (r+1,s,t+1), \qquad (r+1,s+1,t).
\end{equation*}
\end{lemma}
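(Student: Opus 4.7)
My plan is to show the proposed map from $\Delta_{N-2}$ to the set of white 3-cliques in $\Delta_N$ is well-defined, injective, and surjective, using the cyclic pattern of pairwise differences (which live in $\Phi=\lbrace\pm\alpha,\pm\beta,\pm\gamma\rbrace$) to distinguish the two orientations of 3-clique.

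First, I would check well-definedness. Given $(r,s,t)\in\Delta_{N-2}$, set $\lambda_1=(r,s+1,t+1)$, $\lambda_2=(r+1,s,t+1)$, $\lambda_3=(r+1,s+1,t)$. Each is a nonnegative integer triple with coordinate sum $N$, hence lies in $\Delta_N$. A direct computation gives $\lambda_2-\lambda_1=\alpha$, $\lambda_3-\lambda_2=\beta$, $\lambda_1-\lambda_3=\gamma$, each in $\Phi$, so the three locations are pairwise adjacent and form a 3-clique. To certify this clique is white and not black, I would compare with Lemma~\ref{lem:white}: the vertices of a black 3-clique associated to $(r',s',t')\in\Delta_{N-1}$ have cyclic differences $-\alpha,-\beta,-\gamma$, while ours have $\alpha,\beta,\gamma$; since every 3-clique falls into exactly one orientation class, the clique we produced is white.

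For injectivity, the componentwise minimum of $\lambda_1,\lambda_2,\lambda_3$ is exactly $(r,s,t)$, so the map can be inverted on its image. For surjectivity, start with an arbitrary white 3-clique $\lbrace a,b,c\rbrace\subseteq\Delta_N$. The white orientation criterion above lets me order the vertices so that $b-a=\alpha$ and $c-b=\beta$. Writing $a=(a_1,a_2,a_3)$, this forces $b=(a_1+1,a_2-1,a_3)$ and $c=(a_1+1,a_2,a_3-1)$. Set $(r,s,t)=(a_1,a_2-1,a_3-1)$; the containments $b,c\in\Delta_N$ give $s,t\geq 0$, while $r\geq 0$ is immediate from $a\in\Delta_N$, and $r+s+t=(a_1+a_2+a_3)-2=N-2$, so $(r,s,t)\in\Delta_{N-2}$ and its image is precisely $\lbrace a,b,c\rbrace$.

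The main obstacle is that the paper introduces ``black'' and ``white'' pictorially rather than algebraically, so the crux of the argument is fixing and justifying the algebraic characterization of white cliques (the cyclic difference pattern $\alpha,\beta,\gamma$ versus the black pattern $-\alpha,-\beta,-\gamma$). Once that orientation convention is pinned down, everything else reduces to the routine coordinate bookkeeping sketched above.
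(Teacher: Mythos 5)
The paper itself files this lemma among the Section~4 results whose proofs are ``routine and omitted,'' so there is no official proof to compare against; I am assessing your argument on its own terms. Your well-definedness computation, your injectivity argument via the componentwise minimum, and your surjectivity bookkeeping are all correct. The gap is exactly where you locate it --- the black/white dichotomy --- but the justification you offer there does not hold up as written. The sign pattern of the consecutive differences is \emph{not} an invariant of a 3-clique: reversing the direction of traversal negates all three differences. Concretely, the black 3-clique $\lbrace (r'+1,s',t'),\,(r',s'+1,t'),\,(r',s',t'+1)\rbrace$ traversed in one cyclic order has consecutive differences $-\alpha,-\beta,-\gamma$, but traversed in the opposite order has consecutive differences $\gamma,\beta,\alpha$ --- three elements of $\lbrace\alpha,\beta,\gamma\rbrace$ with no minus signs. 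So the test ``black cliques have differences $-\alpha,-\beta,-\gamma$ while white cliques have $\alpha,\beta,\gamma$'' is not well defined, and the sentence ``every 3-clique falls into exactly one orientation class'' is carrying the entire weight of the proof without being established. The true invariant is the \emph{cyclic order}: a white clique admits a traversal whose consecutive differences are $\alpha$, then $\beta$, then $\gamma$, whereas a black clique admits only the cyclic order $\alpha,\gamma,\beta$ (together with the negated, reversed versions of each).

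The repair is short. For any 3-clique $\lbrace a,b,c\rbrace$ the three consecutive differences of a cyclic traversal lie in $\Phi$, no two are opposite, and they sum to zero; the only three-element subsets of $\Phi$ with zero sum are $\lbrace\alpha,\beta,\gamma\rbrace$ and $\lbrace-\alpha,-\beta,-\gamma\rbrace$, so after choosing the traversal whose difference set is $\lbrace\alpha,\beta,\gamma\rbrace$ the cyclic order is either $(\alpha,\beta,\gamma)$ or $(\alpha,\gamma,\beta)$. In the first case the clique is $\lbrace \nu'-e_1,\nu'-e_2,\nu'-e_3\rbrace$ where $\nu'$ is the componentwise maximum of the three vertices; in the second it is $\lbrace \nu+e_1,\nu+e_2,\nu+e_3\rbrace$ where $\nu$ is the componentwise minimum. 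The two families are disjoint (their componentwise minima have coordinate sums $N-2$ and $N-1$ respectively), Lemma~\ref{lem:white} identifies the second family as precisely the black 3-cliques, and therefore the first family is precisely the white 3-cliques. Your map is the parametrization of that first family by $\Delta_{N-2}$ via $\nu'=(r+1,s+1,t+1)$, and your injectivity and surjectivity checks then go through verbatim.
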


\begin{lemma} 
\label{lem:edgegivesclique}
For $\Delta_N$, each edge is contained in a unique black 3-clique
and at most one white 3-clique.
\end{lemma}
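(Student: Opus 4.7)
The plan is to fix an arbitrary edge $\{\lambda,\mu\}$ in $\Delta_N$ and enumerate all $\nu\in\Delta_N$ such that $\{\lambda,\mu,\nu\}$ is a 3-clique, then identify each such triple as black or white. Since the symmetric group on $\{1,2,3\}$ acts on $\mathbb R^3$ by permuting coordinates, preserves both $\Delta_N$ and $\Phi$, and (as one checks directly from Lemmas \ref{lem:white} and \ref{lem:black}) preserves the color of each 3-clique, and since swapping $\lambda\leftrightarrow\mu$ does not affect the edge, I may assume without loss of generality that $\mu-\lambda=\alpha=e_1-e_2$. Writing $\lambda=(a,b,c)$, this forces $\mu=(a+1,b-1,c)$, and in particular $b\geq 1$.

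Next I would enumerate the candidates for $\nu$ in $\mathbb R^3$. The conditions that $\nu$ be adjacent to both $\lambda$ and $\mu$ read $\nu-\lambda\in\Phi$ and $\nu-\mu=(\nu-\lambda)-\alpha\in\Phi$. A short case check through the six elements of $\Phi$, using $\alpha+\beta+\gamma=0$, shows that exactly two choices of $\nu-\lambda$ work, namely $-\beta$ and $-\gamma$; these produce the candidates $\nu_1=(a,b-1,c+1)$ and $\nu_2=(a+1,b,c-1)$ respectively. Since $b\geq 1$ is already forced, $\nu_1$ automatically lies in $\Delta_N$, whereas $\nu_2$ lies in $\Delta_N$ if and only if $c\geq 1$.

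Finally I would identify the colors by matching against the parametrizations of the two earlier lemmas. Comparing with Lemma \ref{lem:white}, the triple $\{\lambda,\mu,\nu_1\}$ is the black 3-clique indexed by $(a,b-1,c)\in\Delta_{N-1}$, and comparing with Lemma \ref{lem:black}, the triple $\{\lambda,\mu,\nu_2\}$ (whenever $c\geq 1$) is the white 3-clique indexed by $(a,b-1,c-1)\in\Delta_{N-2}$. Since every 3-clique containing the edge arises from $\nu_1$ or $\nu_2$, it follows that $\{\lambda,\mu\}$ lies in exactly one black 3-clique and in at most one white 3-clique. The only real obstacle is the bookkeeping of identifying an unordered triple with the labelled pattern in each parametrization lemma, which requires re-indexing by the componentwise minimum of the coordinates; the root-system enumeration itself is short and mechanical.
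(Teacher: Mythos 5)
Your proof is correct. Note that the paper itself offers no proof to compare against: Lemma \ref{lem:edgegivesclique} sits in the section where the author declares every proof ``routine and omitted.'' Your argument supplies that routine verification in full and accurately. The reduction to $\mu-\lambda=\alpha$ is legitimate (the coordinate-permutation action of $S_3$ is transitive on $\Phi$ and, as you note, carries the parametrized black cliques of Lemma \ref{lem:white} to black cliques and likewise for Lemma \ref{lem:black}); the case check through $\Phi$ correctly isolates $\nu-\lambda\in\{-\beta,-\gamma\}$, since $-\beta-\alpha=\gamma$ and $-\gamma-\alpha=\beta$ while the other four choices leave $\Delta_N$'s root set; and the identifications $\{\lambda,\mu,\nu_1\}\leftrightarrow(a,b-1,c)\in\Delta_{N-1}$ and $\{\lambda,\mu,\nu_2\}\leftrightarrow(a,b-1,c-1)\in\Delta_{N-2}$ (the latter only when $c\geq 1$) are exactly right, with $\nu_1$ always in $\Delta_N$ because $\mu\in\Delta_N$ already forces $b\geq 1$. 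Nothing is missing.
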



\section{The poset $\Delta_{\leq N}$ }

\noindent Throughout this section fix $N \in \mathbb N$.
We now describe $\Delta_N$ 
using
notions from the theory of posets. We will follow
the notational conventions from
\cite{posets}.

\begin{definition}\rm Let 
$\Delta_{\leq N}$ denote the poset consisting of
the set $\cup_{n=0}^N \Delta_n$ together with the partial
order $\leq$ from Section 2. 
An element 
$\lambda$ in the poset is said to have
{\it rank $n$} whenever $\lambda \in \Delta_n$.
\end{definition}

\noindent We mention some facts about 
the poset
$\Delta_{\leq N}$. 
Pick elements $\lambda, \mu$ in the poset.
Then $\lambda$ covers $\mu$ if and only if
$\lambda-\mu$ is one of $e_1, e_2, e_3$. 
In this case ${\rm rank}(\lambda)=
1+ {\rm rank}(\mu)$.
For $0 \leq n \leq N-1$, each element in $\Delta_n$
is covered by precisely three elements in the poset.
An element in $\Delta_N$ is not covered by any element 
in the poset.
For $1 \leq n \leq N$, each element in $\Delta_n$
 covers at least one element in the poset.
The element in $\Delta_0$ does not cover any
element in the poset.

\medskip

\noindent Given elements $\lambda, \mu$ in 
$\Delta_{\leq N}$
we define an element $\lambda \wedge \mu $ in   
$\Delta_{\leq N}$ as follows: for $\eta \in \lbrace 1,2,3\rbrace$
the $\eta$-coordinate of 
 $\lambda \wedge \mu $ is the minimum of the
$\eta$-coordinates for $\lambda$ and $\mu$. 
Note that $\lambda \wedge\mu \leq \lambda $ and
$\lambda \wedge\mu \leq \mu$.
Moreover 
$\nu \leq \lambda\wedge\mu$ for all
elements $\nu$ in
$\Delta_{\leq N}$ such that
$\nu\leq \lambda$ and
$\nu \leq \mu$.
\medskip

\noindent Given elements $\lambda, \mu$ in 
$\Delta_{\leq N}$
we define an element
$\lambda\vee \mu$ in $\mathbb R^3$
as follows:
for $\eta \in \lbrace 1,2,3\rbrace$
the $\eta$-coordinate of 
 $\lambda \vee \mu $ is the maximum of the
$\eta$-coordinates for $\lambda$ and $\mu$. 
Note that 
$\lambda\vee \mu$ is contained in $\Delta_{\leq N}$ 
if and only if the sum of its coordinates is
at most $N$. In this case we say that
{\it $\lambda\vee \mu$ exists} in $\Delta_{\leq N}$. 
Assume that 
$\lambda\vee \mu$ exists in $\Delta_{\leq N}$. 
Then  $\lambda \leq \lambda\vee\mu$ and
$\mu \leq \lambda\vee\mu$.
Moreover 
$\lambda\vee\mu \leq \nu$ for all
elements $\nu$ in 
$\Delta_{\leq N}$ such that
$\lambda \leq \nu$ and
$\mu \leq \nu$. Now assume that $\lambda \vee \mu$ does
not exist in $\Delta_{\leq N}$. Then
$\Delta_{\leq N}$ does not contain an element
$\nu$ such that
$\lambda \leq \nu$ and
$\mu \leq \nu$.

\begin{lemma} 
\label{lem:findrank}
For locations $\lambda, \mu$ in
$\Delta_N$ the rank of 
$\lambda \wedge \mu$ is equal to $N-\partial(\lambda, \mu)$.
\end{lemma}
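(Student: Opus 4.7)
The plan is to reduce the identity to a direct computation using coordinates, then apply the two distance lemmas already established. First I would write $\lambda = (r,s,t)$ and $\mu = (r',s',t')$ with $r+s+t = r'+s'+t' = N$. By the definition of $\wedge$, the three coordinates of $\lambda \wedge \mu$ are $\min(r,r')$, $\min(s,s')$, $\min(t,t')$, so the rank of $\lambda \wedge \mu$ is their sum:
\begin{equation*}
\mathrm{rank}(\lambda \wedge \mu) = \min(r,r') + \min(s,s') + \min(t,t').
\end{equation*}

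Next I would use the elementary identity $\min(a,b) = \tfrac12(a + b - |a-b|)$ to rewrite this sum as
\begin{equation*}
\tfrac12\bigl((r+s+t) + (r'+s'+t') - |r-r'| - |s-s'| - |t-t'|\bigr) = N - \tfrac12\bigl(|r-r'| + |s-s'| + |t-t'|\bigr).
\end{equation*}
So it suffices to show $\tfrac12(|r-r'| + |s-s'| + |t-t'|) = \partial(\lambda,\mu)$.

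For this last step I would apply Lemma~\ref{lem:d1d2d3}: letting $d_1 \leq d_2 \leq d_3$ be the three absolute values in line~(\ref{eq:abs}) ordered increasingly, we have $d_1 + d_2 = d_3$, hence $d_1 + d_2 + d_3 = 2 d_3$. Therefore $\tfrac12(|r-r'| + |s-s'| + |t-t'|) = d_3$, which equals $\max(|r-r'|, |s-s'|, |t-t'|)$. By Lemma~\ref{lem:distCalc} this maximum is precisely $\partial(\lambda, \mu)$, completing the proof.

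The argument is almost entirely bookkeeping; there is no real obstacle, since the two nontrivial inputs (the formula $\partial = \max|\cdot|$ and the relation $d_1 + d_2 = d_3$) are already available as Lemmas~\ref{lem:distCalc} and \ref{lem:d1d2d3}. The only place where one has to be slightly careful is in confirming that $\lambda \wedge \mu$ actually lies in the poset $\Delta_{\leq N}$, i.e.\ that its coordinate sum is at most $N$; but this is immediate, since $\min(r,r') + \min(s,s') + \min(t,t') \leq r + s + t = N$.
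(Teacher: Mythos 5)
Your proof is correct. It differs from the paper's only in bookkeeping: the paper normalizes by permuting coordinates and possibly swapping $\lambda$ and $\mu$ so that $r-r'$, $s'-s$, $t'-t$ are all nonnegative, reads off $\lambda\wedge\mu=(r',s,t)$ directly, and identifies $\partial(\lambda,\mu)=r-r'$ via Lemma~\ref{lem:distCalc}; you instead keep things symmetric by using $\min(a,b)=\tfrac12(a+b-|a-b|)$ and then invoke Lemma~\ref{lem:d1d2d3} to convert $\tfrac12\sum|\cdot|$ into the maximum. The paper's route avoids any appeal to Lemma~\ref{lem:d1d2d3} at the cost of a ``without loss of generality'' reduction; yours avoids the case normalization at the cost of one extra lemma. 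Both are complete, and your closing remark that $\lambda\wedge\mu$ has coordinate sum at most $N$ (so it genuinely lies in $\Delta_{\leq N}$) is a reasonable sanity check, though the paper already guarantees this in its discussion of $\wedge$ in Section~5.
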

\begin{proof} Write $\lambda = (r,s,t)$ and
$\mu=(r',s',t')$. By construction $r+s+t=N$ 
and
$r'+s'+t'=N$. The sum of $r-r', s-s', t-t'$ is zero. 
Permuting the coordinates of $\mathbb R^3$  and
interchanging $\lambda, \mu$ if necessary,
we may assume without loss that each of
$r-r',s'-s,t'-t$ is nonnegative. 
By Lemma
\ref{lem:distCalc}
$\partial(\lambda, \mu)=r-r'$.
By construction
$\lambda\wedge \mu = (r',s,t)$ has
 rank
 $r'+s+t$.
The result follows.
\end{proof}

\noindent
For $\mu \in \Delta_{\leq N}$ we now describe the set
\begin{equation}
\label{eq:upset}
\lbrace \lambda \in \Delta_N\;|\; \mu \leq \lambda\rbrace.
\end{equation}

\begin{lemma}
\label{lem:posetmeaning}
For  $\mu=(r,s,t) \in  \Delta_{\leq N}$
and $\lambda \in \Delta_N$, the following are equivalent:
\begin{enumerate}
\item[\rm (i)] $\mu \leq \lambda$;
\item[\rm (ii)] the 
$1$-coordinate (resp. $2$-coordinate)
(resp. $3$-coordinate) of $\lambda$ is at least 
$r$ (resp. $s$) (resp. $t$);
\item[\rm (iii)] $\lambda$ is at distance at least 
$r$ (resp. $s$) (resp. $t$) from the $1$-boundary
(resp. $2$-boundary) (resp. $3$-boundary) of $\Delta_N$;
\item[\rm (iv)] $\lambda$ is at distance at most
$N-r$ (resp. $N-s$) (resp. $N-t$) from the $1$-corner
(resp. $2$-corner) (resp. $3$-corner) of $\Delta_N$;
\item[\rm (v)] the 
$1$-line (resp. $2$-line)
(resp. $3$-line) of $\Delta_N$ that contains $\lambda$ has cardinality
at most 
$N-r+1$ (resp. $N-s+1$) (resp. $N-t+1$).
\end{enumerate}
\end{lemma}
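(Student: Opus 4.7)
The plan is to prove the chain of equivalences by taking (ii) as the central condition and showing it is equivalent to each of the other four conditions in turn. Each of these equivalences will amount to unwinding a definition or quoting an earlier result, so the argument is really a matter of bookkeeping rather than of overcoming any genuine difficulty.

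First I would handle (i) $\Leftrightarrow$ (ii). This is a direct reading of the partial order $\leq$ introduced in Section 2: by definition $\mu \leq \lambda$ means $\lambda - \mu$ is nonnegative, i.e.\ each of its three coordinates is nonnegative. Writing $\lambda = (a,b,c)$, this amounts precisely to $a \geq r$, $b \geq s$, $c \geq t$, which is (ii). Note that the hypothesis $\mu \in \Delta_{\leq N}$ is used only to guarantee that $r+s+t \leq N$, so that the conditions are compatible with $a+b+c = N$; no further work is needed here.

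Next I would establish (ii) $\Leftrightarrow$ (iii) and (ii) $\Leftrightarrow$ (iv) together. By the example following Definition of distance to a set (which states that a location $(a,b,c) \in \Delta_N$ is at distance $a$, $b$, $c$ from the $1$-, $2$-, $3$-boundary respectively), the $\eta$-coordinate of $\lambda$ equals its distance from the $\eta$-boundary; so the coordinate inequalities in (ii) translate verbatim into the distance inequalities in (iii). Likewise, the earlier example identifying distance from the $\eta$-corner as $N$ minus the $\eta$-coordinate shows that the inequality ($\eta$-coord $\geq r$) is equivalent to (distance from $\eta$-corner $\leq N - r$), giving (ii) $\Leftrightarrow$ (iv). In both cases I would simply apply the observation coordinate-by-coordinate for $\eta \in \{1,2,3\}$.

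Finally I would dispatch (ii) $\Leftrightarrow$ (v) by invoking Lemma \ref{lem:linedist}. That lemma asserts that for each $\eta$ and each $0 \leq n \leq N$, the locations in $\Delta_N$ with $\eta$-coordinate $n$ form the unique $\eta$-line of cardinality $N - n + 1$. Hence the $\eta$-line through $\lambda$ has cardinality $N + 1$ minus the $\eta$-coordinate of $\lambda$. The inequality ($\eta$-coord of $\lambda$ $\geq r$) is therefore equivalent to (cardinality of the $\eta$-line through $\lambda$ $\leq N - r + 1$), which is (v). No step here presents an obstacle; the only thing to watch is keeping the three coordinate roles $(r,s,t)$ aligned correctly with the three choices of $\eta$, so I would state the argument once for a generic $\eta$ and then note that the three specific statements follow by specialization.
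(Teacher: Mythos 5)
Your proof is correct and follows essentially the same route as the paper: (i)$\Leftrightarrow$(ii) from the definition of the partial order, and the remaining equivalences by translating the $\eta$-coordinate into distances and line cardinalities, which is exactly the content of Lemma \ref{lem:linedist} that the paper cites for (ii)$\Leftrightarrow$(iii)$\Leftrightarrow$(iv)$\Leftrightarrow$(v). The only cosmetic difference is that you cite the earlier Examples for the boundary/corner distances where the paper cites Lemma \ref{lem:linedist} uniformly; the content is identical.
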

\begin{proof}
${\rm (i)}\Leftrightarrow {\rm (ii)}$ By the definition
of the partial order $\leq$ in Section 2.
\\
\noindent
${\rm (ii)}\Leftrightarrow {\rm (iii)}
\Leftrightarrow {\rm (iv)}
\Leftrightarrow {\rm (v)}$
By Lemma
\ref{lem:linedist}.
\end{proof}

\begin{lemma} 
\label{lem:upsetdesc}
\label{eq:upsetdesc}
For $0 \leq n \leq N$ and $\mu \in \Delta_{N-n}$ 
the set 
{\rm (\ref{eq:upset})} is equal to 
$\Delta_n +\mu$.
\end{lemma}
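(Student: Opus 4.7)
The plan is to prove the set equality by double inclusion, and the argument is essentially a direct unpacking of the definitions, leaning on the coordinate-wise characterization of $\leq$ already recorded in Lemma~\ref{lem:posetmeaning}.

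Write $\mu = (r,s,t)$, so that $r,s,t \in \mathbb{N}$ and $r+s+t = N-n$. For the forward inclusion, let $\lambda = (a,b,c) \in \Delta_N$ satisfy $\mu \leq \lambda$. By the equivalence (i)$\Leftrightarrow$(ii) in Lemma~\ref{lem:posetmeaning} (or directly by the definition of $\leq$ in Section~2), this forces $a \geq r$, $b \geq s$, $c \geq t$. Set $\nu := \lambda - \mu = (a-r, b-s, c-t)$. Then $\nu$ has nonnegative integer coordinates summing to $N - (N-n) = n$, so $\nu \in \Delta_n$ and $\lambda = \nu + \mu \in \Delta_n + \mu$.

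For the reverse inclusion, let $\lambda \in \Delta_n + \mu$, so $\lambda = \nu + \mu$ for some $\nu = (r',s',t') \in \Delta_n$. Then $\lambda$ has nonnegative integer coordinates, and the sum of its coordinates is $(r+r')+(s+s')+(t+t') = (N-n)+n = N$, so $\lambda \in \Delta_N$. Since $\lambda - \mu = \nu$ is nonnegative, $\mu \leq \lambda$ by the definition of the partial order.

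There is no real obstacle here; the only thing to be careful about is that $\Delta_n + \mu$ is interpreted as the pointwise sum in $\mathbb{R}^3$, and that one must verify membership in $\Delta_N$ (not merely in $\Delta_{\leq N}$) in the reverse direction, which is automatic from the sum of coordinates being $N$. The statement is essentially the observation that the up-set of $\mu$ in $\Delta_{\leq N}$ at rank $N$ is a translated copy of $\Delta_n$.
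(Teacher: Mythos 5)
Your proof is correct; the paper itself dismisses this lemma with "By construction," and your double-inclusion argument is just the careful unpacking of that same observation. Nothing further is needed.
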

\begin{proof} By construction.
\end{proof}

\begin{lemma}
\label{lem:gcsubset}
For $\mu \in \Delta_{\leq N}$ the set
{\rm (\ref{eq:upset})} is 
 geodesically closed.
\end{lemma}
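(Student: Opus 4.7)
The plan is to express the set in (\ref{eq:upset}) as a finite intersection of sets that are already known to be geodesically closed, and then invoke the intersection lemma.

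Write $\mu = (r,s,t) \in \Delta_{\leq N}$. By the equivalence ${\rm (i)}\Leftrightarrow {\rm (ii)}$ of Lemma \ref{lem:posetmeaning}, a location $\lambda \in \Delta_N$ satisfies $\mu \leq \lambda$ if and only if its $1$-coordinate is at least $r$, its $2$-coordinate is at least $s$, and its $3$-coordinate is at least $t$. Consequently the set (\ref{eq:upset}) can be written as
\begin{equation*}
S_1 \cap S_2 \cap S_3,
\end{equation*}
where $S_\eta$ denotes the set of locations in $\Delta_N$ whose $\eta$-coordinate is at least the $\eta$-coordinate of $\mu$, for $\eta \in \{1,2,3\}$.

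Next I would invoke Example \ref{ex:GC}, which states precisely that for each $\eta \in \{1,2,3\}$ and each integer threshold between $0$ and $N$, the set of locations in $\Delta_N$ whose $\eta$-coordinate is at least that threshold is geodesically closed. Thus each of $S_1$, $S_2$, $S_3$ is geodesically closed.

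Finally, applying Lemma \ref{lem:GCint} once gives that $S_1 \cap S_2$ is geodesically closed, and applying it a second time to $S_1 \cap S_2$ and $S_3$ yields that $S_1 \cap S_2 \cap S_3$ is geodesically closed, which is the desired conclusion. There is essentially no obstacle here; the statement is a direct packaging of Lemma \ref{lem:posetmeaning}, Example \ref{ex:GC}, and Lemma \ref{lem:GCint}, and the only mild care needed is to observe that the three coordinate thresholds $r,s,t$ all lie in the range $\{0,1,\ldots,N\}$ so that Example \ref{ex:GC} applies to each factor.
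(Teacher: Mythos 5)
Your proof is correct and follows exactly the same route as the paper, which cites Example \ref{ex:GC}, Lemma \ref{lem:GCint}, and Lemma \ref{lem:posetmeaning}(i),(ii) without further elaboration. You have simply spelled out the details the paper leaves implicit.
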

\begin{proof}
By 
Example \ref{ex:GC} and Lemma
\ref{lem:GCint},
along with
Lemma
\ref{lem:posetmeaning}(i),(ii).
\end{proof}

\noindent We have been discussing the set
{\rm (\ref{eq:upset})}.
We now consider
 a special case.

\begin{definition}\rm
Pick $\eta \in \lbrace 1,2,3\rbrace$.
Note by Definition
\ref{def:corner} that for $0 \leq n \leq N$
the element $n e_\eta$ is the $\eta$-corner of
$\Delta_n$. By an {\it $\eta$-corner} of $\Delta_{\leq N}$
we mean one of
$\lbrace n e_\eta\rbrace_{n=0}^N$.
By a {\it corner} of
$\Delta_{\leq N}$ we mean a $1$-corner or $2$-corner or
$3$-corner of $\Delta_{\leq N}$.
\end{definition}

\noindent Referring to Lemma 
\ref{lem:posetmeaning},
we now consider the case in
which $\mu$ is a corner of $\Delta_{\leq N}$.

\begin{lemma} 
\label{lem:moredetail}
For $\eta \in \lbrace 1,2,3\rbrace$ and
 $0 \leq n \leq N$ and 
 $\lambda \in \Delta_N$, the following are equivalent:
\begin{enumerate}
\item[\rm (i)] $\mu \leq \lambda$, where $\mu$ is
the $\eta$-corner of $\Delta_{N-n}$;
\item[\rm (ii)] the $\eta$-coordinate of 
$\lambda$ is at least $N-n$;
\item[\rm (iii)] $\lambda$ is at distance at least $N-n$ from
the $\eta$-boundary of $\Delta_N$;
\item[\rm (iv)] $\lambda $ is at distance  at most $n$ from
the $\eta$-corner of $\Delta_N$;
\item[\rm (v)] the $\eta$-line containing $\lambda$ has cardinality
at most $n+1$.
\end{enumerate}
\end{lemma}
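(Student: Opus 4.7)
The plan is to obtain this lemma as an immediate specialization of Lemma \ref{lem:posetmeaning}, by taking $\mu$ to be the $\eta$-corner of $\Delta_{N-n}$ and observing that two of the three coordinate constraints in Lemma \ref{lem:posetmeaning} become vacuous.

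First I would fix $\eta \in \{1,2,3\}$ and an integer $n$ with $0 \leq n \leq N$, and let $\mu$ denote the $\eta$-corner of $\Delta_{N-n}$. By Definition \ref{def:corner}, $\mu = (N-n)e_\eta$, so the $\eta$-coordinate of $\mu$ is $N-n$ and its remaining two coordinates are $0$. Without loss of generality one can take $\eta=1$ for bookkeeping, so $\mu = (N-n,0,0)$; the other two cases are identical after permuting coordinates.

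Next I would apply Lemma \ref{lem:posetmeaning} with this choice of $\mu$ and with arbitrary $\lambda \in \Delta_N$. Condition (ii) of that lemma, applied to $\mu = (N-n,0,0)$, reads: the $1$-coordinate of $\lambda$ is at least $N-n$, the $2$-coordinate of $\lambda$ is at least $0$, and the $3$-coordinate of $\lambda$ is at least $0$. Since $\lambda \in \Delta_N$ has nonnegative integer coordinates summing to $N$, the last two constraints are automatic, and what remains is exactly condition (ii) of the present lemma. The same reduction applies to conditions (iii), (iv), (v) of Lemma \ref{lem:posetmeaning}: the requirements tied to the non-$\eta$ boundaries say that $\lambda$ lies at distance at least $0$ from them, which is automatic; the requirements tied to the non-$\eta$ corners say that $\lambda$ lies at distance at most $N$ from them, automatic by Lemma \ref{lem:mustbecorner}; and the cardinality bound on the non-$\eta$ lines is at most $N+1$, which is automatic by Lemma \ref{lem:linedist}. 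Thus the surviving constraints in (iii), (iv), (v) of Lemma \ref{lem:posetmeaning} are precisely conditions (iii), (iv), (v) of the present lemma.

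Combining these observations, Lemma \ref{lem:posetmeaning} yields (i) $\Leftrightarrow$ (ii) $\Leftrightarrow$ (iii) $\Leftrightarrow$ (iv) $\Leftrightarrow$ (v) at once. There is no real obstacle: this is a clean corollary whose only substantive content is verifying that the degenerate coordinates of a corner make five of the six auxiliary inequalities in Lemma \ref{lem:posetmeaning} trivially satisfied on $\Delta_N$.
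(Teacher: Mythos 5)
Your proof is correct and follows exactly the paper's route: specialize Lemma \ref{lem:posetmeaning} to $\mu = (N-n)e_\eta$ and note that the constraints coming from the two zero coordinates are vacuous on $\Delta_N$. The paper states this in one line; you have simply spelled out the bookkeeping.
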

\begin{proof}
For the $\eta$-corner $\mu$ of $\Delta_{N-n}$, the
$\eta$-coordinate is $N-n$ and the other
two coordinates are 0. The result
follows in view of 
Lemma
\ref{lem:posetmeaning}.
\end{proof}

\begin{definition}\rm
 By a {\it flag} in $\Delta_{\leq N}$ we mean
a sequence $\lbrace \lambda_n \rbrace_{n=0}^N$
such that $\lambda_n \in \Delta_n$ for 
$0 \leq n\leq N$ and $\lambda_{n-1}\leq \lambda_{n}$
for $1 \leq n \leq N$.
\end{definition}

\begin{definition}
\label{eq:flagex}
\rm
Pick $\eta \in \lbrace 1,2,3\rbrace $.
Observe that the sequence of $\eta$-corners 
$\lbrace n e_{\eta} \rbrace_{n=0}^N$ is a flag in
 $\Delta_{\leq N}$. We denote this flag by
$\lbrack \eta \rbrack$.
\end{definition}

\section{The vector space $V$ and the poset $\mathcal P(V)$}

\noindent From now on, let $\mathbb F$ denote
a field. Fix $N \in \mathbb N$, and 
let $V$ denote a vector space over
$\mathbb F$ with dimension $N+1$.
Let
 ${\rm End}(V)$ denote the $\mathbb F$-algebra consisting
 of
 the $\mathbb F$-linear maps from $V$ to $V$.
 Let 
 $I \in {\rm End}(V)$ denote the identity map on $V$.
By a {\it decomposition} of $V$ we mean a sequence 
$\lbrace V_i\rbrace_{i=0}^N$
consisting 
of
one-dimensional subspaces 
of $V$ such that
$V = \sum_{i=0}^N V_i$ (direct sum).
For a decomposition of $V$, its inversion is a 
decomposition of $V$.
Let $\lbrace v_i\rbrace_{i=0}^N$ denote a basis for $V$
and let 
$\lbrace V_i\rbrace_{i=0}^N$ denote a decomposition of $V$.
We say that
$\lbrace v_i\rbrace_{i=0}^N$ {\it induces}
$\lbrace V_i\rbrace_{i=0}^N$
whenever $v_i \in V_i$ for $0 \leq i \leq N$.
\medskip

\noindent 
Let $\mathcal P(V)$ denote the set of
nonzero subspaces of $V$.
The containment relation $\subseteq$  is a partial order on $\mathcal P(V)$.
For $1 \leq i \leq N+1$
let $\mathcal P_i(V)$ denote the set of
$i$-dimensional subspaces of $V$.
This yields a partition
$\mathcal P(V) = \cup_{i=1}^{N+1} \mathcal P_i(V)$.
\medskip

\noindent 
By a {\it flag} in $\mathcal P(V)$
we mean a sequence
$\lbrace U_i\rbrace_{i=0}^N$ such that
 $U_i \in \mathcal P_{i+1}(V)$ for $0\leq i \leq N$
and $U_{i-1}\subseteq U_i$ for $1 \leq i \leq N$.
For notational convenience, a flag in
${\mathcal P}(V)$ is said to be {\it 
on 
$V$}.
The following construction yields a flag 
on $V$.
Let $\lbrace V_i\rbrace_{i=0}^N$ denote a decomposition
of $V$. For $0 \leq i \leq N$ define
$U_i = V_0 + \cdots + V_i$. Then the sequence
$\lbrace U_i\rbrace_{i=0}^N$ is a flag on $V$.
This flag is said to be {\it induced} by 
the decomposition $\lbrace V_i\rbrace_{i=0}^N$.
Suppose we are given two flags on $V$, denoted
$\lbrace U_i\rbrace_{i=0}^N$ and
$\lbrace U'_i\rbrace_{i=0}^N$.
Then the following are equivalent:
(i) $U_i \cap U'_j= 0 $  if $i+j<N$ $(0 \leq i,j\leq N)$;
(ii) 
there exists a decomposition 
$\lbrace V_i\rbrace_{i=0}^N$ of $V$ that induces 
$\lbrace U_i\rbrace_{i=0}^N$ and
whose inversion induces
$\lbrace U'_i\rbrace_{i=0}^N$.
The flags
are called {\it opposite} whenever (i), (ii) hold.
In this case $V_i = U_i \cap U'_{N-i}$ for
$0 \leq i \leq N$. 
\medskip

\noindent We mention a result for later use.
\begin{lemma} 
\label{lem:OppDim}
Let 
$\lbrace U_i\rbrace_{i=0}^N$
and $\lbrace U'_i\rbrace_{i=0}^N$ denote
opposite flags on $V$. Pick integers
$r,s$ $(0 \leq r,s\leq N)$ such that
$r+s\geq N$. Then $U_r\cap U'_s$ has dimension $r+s-N+1$.
\end{lemma}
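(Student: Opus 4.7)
The plan is to reduce the intersection $U_r\cap U'_s$ to a sum of terms taken from a single common decomposition, at which point its dimension can be read off by counting summands.

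First I would invoke the characterization of opposite flags (condition (ii)) stated immediately before Lemma \ref{lem:OppDim}: since $\lbrace U_i\rbrace_{i=0}^N$ and $\lbrace U'_i\rbrace_{i=0}^N$ are opposite, there exists a decomposition $\lbrace V_i\rbrace_{i=0}^N$ of $V$ such that
\[
U_i \;=\; V_0+V_1+\cdots +V_i, \qquad U'_i \;=\; V_{N-i}+V_{N-i+1}+\cdots +V_N
\]
for $0\leq i\leq N$, the first equality because the decomposition induces $\lbrace U_i\rbrace_{i=0}^N$ and the second because its inversion induces $\lbrace U'_i\rbrace_{i=0}^N$.

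Next I would use the elementary fact about direct sums that, for any two subsets $A,B\subseteq \lbrace 0,1,\ldots,N\rbrace$,
$\bigl(\sum_{i\in A} V_i\bigr)\cap \bigl(\sum_{i\in B} V_i\bigr)=\sum_{i\in A\cap B} V_i$. Applying this with $A=\lbrace 0,1,\ldots,r\rbrace$ and $B=\lbrace N-s,N-s+1,\ldots,N\rbrace$, and invoking the hypothesis $r+s\geq N$ to conclude $N-s\leq r$, I obtain
\[
U_r\cap U'_s \;=\; V_{N-s}+V_{N-s+1}+\cdots+V_r.
\]
Since each $V_i$ is one-dimensional and the sum remains direct, the dimension of $U_r\cap U'_s$ equals the number of summands, namely $r-(N-s)+1 = r+s-N+1$, which is what is claimed.

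The argument is bookkeeping once the common decomposition is in hand, so there is no real obstacle; the role of the hypothesis $r+s\geq N$ is solely to guarantee $A\cap B\neq \emptyset$, which is precisely what makes the stated dimension a positive integer.
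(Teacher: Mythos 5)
Your proof is correct and follows essentially the same route as the paper: both take the decomposition $\lbrace V_i\rbrace_{i=0}^N$ furnished by the definition of opposite flags and observe that $U_r\cap U'_s=\sum_{i=N-s}^{r}V_i$, a direct sum of $r+s-N+1$ one-dimensional summands. You merely spell out the intersection-of-direct-summands step that the paper leaves as an observation.
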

\begin{proof} There exists a decomposition $\lbrace V_i\rbrace_{i=0}^N$
of $V$ that induces 
$\lbrace U_i\rbrace_{i=0}^N$
and whose inversion induces
$\lbrace U'_i\rbrace_{i=0}^N$.
Observe that $U_r \cap U'_s = \sum_{i=N-s}^r V_i$.
The sum is direct, so it has dimension $r+s-N+1$.
\end{proof}

\section{Billiard Arrays}

Throughout this section the following notation is
in effect. 
Fix $N \in \mathbb N$.
Let $V$ denote a vector space over $\mathbb F$ with
dimension $N+1$.

\begin{definition}
\label{def:ba}
\rm
By a {\it Billiard Array on $V$} we mean a function
$B:\Delta_N \to \mathcal P_1(V)$, $\lambda \mapsto B_\lambda$
that satisfies the following conditions:
\begin{enumerate}
\item[\rm (i)] for each line $L$ in $\Delta_N$ 
the sum $\sum_{\lambda \in L} B_\lambda$ 
 is direct;
\item[\rm (ii)] for each black 3-clique $C$ in $\Delta_N$ the sum 
 $\sum_{\lambda \in C} B_\lambda$ 
is not direct.
\end{enumerate}
We say that $B$ is {\it over} $\mathbb F$.
We call $V$ the {\it underlying vector space}.
We call $N$ the {\it diameter} of $B$.
\end{definition}


\begin{note}
\label{note:interp}
\rm 
\rm We will show that the Billiard Array $B$
in Definition
\ref{def:ba} is injective.
\end{note}

\noindent 
Let $B$ denote a Billiard Array on $V$,
as in 
Definition
\ref{def:ba}.
We view $B$ as an arrangement
of one-dimensional subspaces of $V$ into a triangular
array, with  $B_{\lambda}$ at location $\lambda$ 
for all $\lambda \in \Delta_N$.
 For
 $U \in \mathcal P_1(V)$ and
 $\lambda \in \Delta_N$,
 we say that {\it  $U$ is included in $B$ at location $\lambda$}
 whenever
 $U=B_{\lambda}$.


\begin{lemma} 
\label{lem:threedim2}
Let $B$ denote a Billiard Array on $V$.
Let $\lambda,\mu, \nu$ denote locations in $\Delta_N$ that form
a black 3-clique. Then the subspace $B_\lambda+B_\mu+B_\nu$ 
is equal to each of
\begin{equation}
\label{eq:directsum}
B_\lambda+B_\mu, 
\qquad
B_\mu+B_\nu, 
\qquad
B_\nu+B_\lambda.
\end{equation}
Moreover, in line 
{\rm (\ref{eq:directsum})} each sum is direct.
\end{lemma}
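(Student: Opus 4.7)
The plan is to use the explicit description of black 3-cliques together with the two defining properties of a Billiard Array, to pin down the dimension of the 3-fold sum at exactly $2$.

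First I would note that by Lemma \ref{lem:white}, the black 3-clique $\{\lambda, \mu, \nu\}$ consists of locations of the form $(r+1,s,t)$, $(r,s+1,t)$, $(r,s,t+1)$ for some $(r,s,t) \in \Delta_{N-1}$. An inspection shows that any two of these three locations agree in exactly one coordinate, hence are collinear: $(r+1,s,t)$ and $(r,s+1,t)$ share their $3$-coordinate, $(r,s+1,t)$ and $(r,s,t+1)$ share their $1$-coordinate, and $(r,s,t+1)$ and $(r+1,s,t)$ share their $2$-coordinate. Therefore each of the pairs $\{\lambda,\mu\}$, $\{\mu,\nu\}$, $\{\nu,\lambda\}$ is contained in some line of $\Delta_N$.

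Next I would apply Definition \ref{def:ba}(i): since each pair lies on a line, and the sum of the subspaces indexed by locations on any line is direct, each of $B_\lambda+B_\mu$, $B_\mu+B_\nu$, $B_\nu+B_\lambda$ is a direct sum. Because every $B_\lambda \in \mathcal{P}_1(V)$ has dimension $1$, each of these pairwise sums has dimension exactly $2$. Consequently $B_\lambda+B_\mu+B_\nu$ has dimension at least $2$.

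Finally I would invoke Definition \ref{def:ba}(ii): the sum $B_\lambda+B_\mu+B_\nu$ is not direct, so its dimension is strictly less than $1+1+1=3$. Combined with the previous lower bound, the three-fold sum has dimension exactly $2$, and therefore coincides with each of the two-dimensional subspaces $B_\lambda+B_\mu$, $B_\mu+B_\nu$, $B_\nu+B_\lambda$ that it contains. This simultaneously establishes the claimed equalities and the directness assertion in line (\ref{eq:directsum}). The argument is essentially a dimension count; the only conceptual step is the collinearity observation, which is immediate from the coordinate description of black 3-cliques, so I do not anticipate a genuine obstacle.
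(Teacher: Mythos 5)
Your proof is correct and follows essentially the same route as the paper: observe that any two vertices of a black 3-clique are collinear, apply Definition \ref{def:ba}(i) to get that each pairwise sum is direct (hence of dimension $2$), apply Definition \ref{def:ba}(ii) to bound the dimension of the triple sum below $3$, and conclude by a dimension count. The paper merely leaves the coordinate verification of collinearity and the final dimension argument implicit, which you have spelled out.
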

\begin{proof}
Any two of $\lambda, \mu,\nu$ are collinear, so
in line
{\rm (\ref{eq:directsum})} each sum is direct by
Definition 
\ref{def:ba}(i).
The sum
 $B_\lambda+B_\mu+B_\nu$  is not direct by Definition
\ref{def:ba}(ii).
The result follows.
\end{proof}

\begin{corollary}
\label{cor:usefuldep}
Let $B$ denote a Billiard Array on $V$.
Let $\lambda,\mu, \nu$ denote locations in $\Delta_N$ that form
a black 3-clique. Then each of
$B_\lambda, B_\mu, B_\nu$ is contained in the sum
of the other two.
\end{corollary}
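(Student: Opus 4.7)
The plan is to read off the conclusion directly from Lemma \ref{lem:threedim2}. That lemma asserts the chain of equalities
\[
B_\lambda+B_\mu+B_\nu \;=\; B_\lambda+B_\mu \;=\; B_\mu+B_\nu \;=\; B_\nu+B_\lambda,
\]
so the ambient three-term sum collapses to each of the three pairwise sums.

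From here the corollary is immediate: since $B_\nu$ is a subspace of $B_\lambda+B_\mu+B_\nu$, the first equality above forces $B_\nu \subseteq B_\lambda+B_\mu$. By symmetry (applying the same reasoning to the equalities $B_\lambda+B_\mu+B_\nu = B_\mu+B_\nu$ and $B_\lambda+B_\mu+B_\nu = B_\nu+B_\lambda$), we obtain $B_\lambda \subseteq B_\mu+B_\nu$ and $B_\mu \subseteq B_\nu+B_\lambda$.

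There is essentially no obstacle here; the substantive content (the non-directness of Definition \ref{def:ba}(ii) combined with the pairwise directness of Definition \ref{def:ba}(i)) has already been packaged into Lemma \ref{lem:threedim2}, and this corollary is simply a reformulation of the equality $B_\lambda+B_\mu+B_\nu = B_\lambda+B_\mu$ (and its two cyclic variants) in terms of containment of individual summands.
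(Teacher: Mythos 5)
Your proposal is correct and follows exactly the route the paper intends: the corollary is stated without proof precisely because it is the immediate consequence of Lemma \ref{lem:threedim2} that you spell out, namely $B_\nu \subseteq B_\lambda+B_\mu+B_\nu = B_\lambda+B_\mu$ and its two cyclic variants. Nothing is missing.
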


\begin{lemma} 
\label{lem:sum}
Let $B$ denote a Billiard Array on $V$.
Let $\lambda,\mu, \nu$ denote locations in $\Delta_N$ that form
a white 3-clique. Then the sum
$B_\lambda+B_\mu+B_\nu$ is direct.
\end{lemma}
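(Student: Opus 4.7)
The plan is to argue by contradiction using the pairwise directness together with one carefully chosen $1$-line. For $N\leq 1$ the set $\Delta_N$ contains no white $3$-clique, so the statement is vacuous; assume $N\geq 2$. By Lemma \ref{lem:black} the clique $\{\lambda,\mu,\nu\}$ corresponds to a unique $(r,s,t)\in\Delta_{N-2}$, and we may label
\begin{equation*}
\lambda=(r,s+1,t+1),\qquad \mu=(r+1,s,t+1),\qquad \nu=(r+1,s+1,t).
\end{equation*}
Each of the three pairwise differences $\lambda-\mu,\,\lambda-\nu,\,\mu-\nu$ has a zero coordinate, so the pairs are collinear and Definition \ref{def:ba}(i) forces each of the sums $B_\lambda+B_\mu$, $B_\lambda+B_\nu$, $B_\mu+B_\nu$ to be direct and hence $2$-dimensional.

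Suppose, for contradiction, that $B_\lambda+B_\mu+B_\nu$ is not direct. Since each $2$-dimensional pairwise sum is contained in the triple sum and the triple sum has dimension at most $3$, the failure of directness forces the triple sum to have dimension exactly $2$. Call this subspace $W$; then
\begin{equation*}
W \;=\; B_\lambda+B_\mu \;=\; B_\mu+B_\nu \;=\; B_\lambda+B_\nu.
\end{equation*}

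Now introduce the auxiliary locations $\sigma_2:=(r,s+2,t)$ and $\sigma_3:=(r,s,t+2)$, both of which lie in $\Delta_N$. Applying Lemma \ref{lem:white} to $(r,s+1,t)\in\Delta_{N-1}$ shows that $\{\lambda,\nu,\sigma_2\}$ is a black $3$-clique, and applying it to $(r,s,t+1)\in\Delta_{N-1}$ shows that $\{\lambda,\mu,\sigma_3\}$ is a black $3$-clique. Lemma \ref{lem:threedim2} (or equivalently Corollary \ref{cor:usefuldep}) then yields
\begin{equation*}
B_{\sigma_2}\subseteq B_\lambda+B_\nu=W,\qquad B_{\sigma_3}\subseteq B_\lambda+B_\mu=W.
\end{equation*}

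Finally, the three locations $\lambda,\sigma_2,\sigma_3$ all have first coordinate equal to $r$, and their second coordinates $s+1,\,s+2,\,s$ are distinct, so they are three distinct points lying on a common $1$-line of $\Delta_N$. By Definition \ref{def:ba}(i) the sum $B_\lambda+B_{\sigma_2}+B_{\sigma_3}$ is direct and therefore $3$-dimensional, yet it sits inside the $2$-dimensional space $W$, a contradiction. I do not anticipate any serious obstacle; the only real step is to spot that two of the three ``outer'' black-clique neighbours of the white clique, together with one vertex of the white clique itself, are forced onto a common line.
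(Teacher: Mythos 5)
Your proof is correct and is essentially the paper's argument in a contrapositive dress: the paper also produces three collinear locations (it uses the $2$-line through $\mu$ in the direction $\gamma$, i.e.\ $\mu-\gamma,\mu,\mu+\gamma$, where $\mu+\gamma$ is your $\sigma_3$) whose ball-sum is direct by Definition \ref{def:ba}(i) and is contained in $B_\lambda+B_\mu+B_\nu$ via two black $3$-cliques and Corollary \ref{cor:usefuldep}. Your choice of the $1$-line through $\lambda$ and the framing by contradiction are cosmetic variations on the same idea.
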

\begin{proof} Permuting $\lambda, \mu, \nu$ if
necessary, we may assume that
\begin{equation*}
\mu-\lambda =\alpha,
\qquad \quad
\nu-\mu =\beta,
\qquad \quad
\lambda-\nu =\gamma,
\end{equation*}
where $\alpha,\beta,\gamma$ are from
(\ref{eq:alphaBeta}).
Consider the locations $\mu+\gamma,\mu,\mu-\gamma$ in 
$\Delta_N$. These locations are collinear, so
by Definition
\ref{def:ba}(i)
the sum
$B_{\mu+\gamma}+ B_\mu + B_{\mu-\gamma}$ is direct.
The locations $\lambda,\mu, \mu+\gamma$
form a black 3-clique, so $B_{\mu+\gamma}$ is contained in
$B_\lambda + B_\mu$ by Corollary
\ref{cor:usefuldep}.
The locations $\mu, \nu,\mu-\gamma$
form a black 3-clique, so $B_{\mu-\gamma}$ is contained in
$B_\mu + B_\nu$ by
 Corollary
\ref{cor:usefuldep}.
By these comments
$B_{\mu+\gamma}+ B_\mu + B_{\mu-\gamma}$ is contained in
$B_\lambda+B_\mu+B_\nu$.
The result follows.
\end{proof}

\begin{lemma} 
\label{lem:BspanV}
Let $B$ denote a Billiard Array on $V$.
Pick $\eta \in \lbrace 1,2,3\rbrace $ and let $L$  denote
the $\eta$-boundary of $\Delta_N$. Then the sum
$V=\sum_{\lambda \in L} B_\lambda$ is direct.
\end{lemma}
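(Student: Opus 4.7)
The plan is a short direct-sum count, based on identifying the $\eta$-boundary with a particular $\eta$-line.

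First I would note that any two locations $\lambda,\lambda' \in L$ have $\eta$-coordinate $0$, so the $\eta$-coordinate of $\lambda-\lambda'$ is $0$, meaning $\lambda,\lambda'$ are $\eta$-collinear. Hence $L$ is contained in a single $\eta$-line. Conversely, the $\eta$-line of $\Delta_N$ that contains any boundary location $(r,s,t)$ with (say) the $\eta$-coordinate equal to $0$ consists of all locations in $\Delta_N$ with $\eta$-coordinate $0$, which is exactly $L$. Therefore $L$ is itself an $\eta$-line of $\Delta_N$, namely the unique one of cardinality $N+1$ (by Lemma \ref{lem:linedist} with $n=0$, or the example in Section 3).

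Next I would apply Definition \ref{def:ba}(i) to the line $L$: the sum $\sum_{\lambda \in L} B_\lambda$ is direct. Finally I would count dimensions. Since $|L| = N+1$ and each $B_\lambda \in \mathcal P_1(V)$ is one-dimensional, directness gives
\[
\dim \sum_{\lambda \in L} B_\lambda \;=\; \sum_{\lambda \in L} \dim B_\lambda \;=\; N+1 \;=\; \dim V.
\]
A subspace of $V$ of dimension $\dim V$ equals $V$, so $V = \sum_{\lambda \in L} B_\lambda$ with the sum direct, as required.

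There is no real obstacle here: once the $\eta$-boundary is recognized as the top $\eta$-line (a purely combinatorial observation about $\Delta_N$), Definition \ref{def:ba}(i) gives directness for free, and the equality with $V$ is forced by matching dimensions. The only thing to be slightly careful about is invoking the correct line (the maximal $\eta$-line rather than some other line), which the identification in the first paragraph settles.
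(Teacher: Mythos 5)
Your proof is correct and follows essentially the same route as the paper: recognize the $\eta$-boundary as the unique $\eta$-line of cardinality $N+1$, apply Definition \ref{def:ba}(i) for directness, and compare dimensions to conclude the sum is all of $V$. The extra detail you give on why the boundary is a line is harmless and matches the combinatorial facts already recorded in Section 3.
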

\begin{proof} The 
sum
$\sum_{\lambda \in L} B_\lambda$ is direct by
Definition
\ref{def:ba}(i) and since
$L$ is a line. The sum is equal to $V$, since
$L$ has cardinality $N+1$ and
$V$ has dimension $N+1$.
\end{proof}

\begin{corollary} 
\label{lem:BspanV2}
Let $B$ denote a Billiard Array on $V$.
 Then $V$ is
spanned by $\lbrace B_\lambda\rbrace_{\lambda \in \Delta_N}$.
\end{corollary}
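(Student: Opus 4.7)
The plan is to deduce this immediately from the preceding Lemma~\ref{lem:BspanV}. That lemma asserts that for any choice of $\eta \in \lbrace 1,2,3\rbrace$, if $L$ denotes the $\eta$-boundary of $\Delta_N$, then $V = \sum_{\lambda \in L} B_\lambda$ (direct). So I would fix any such $\eta$ (say $\eta=1$), apply Lemma~\ref{lem:BspanV} to obtain $V = \sum_{\lambda \in L} B_\lambda$, and observe that $L \subseteq \Delta_N$. Therefore
\begin{equation*}
V \;=\; \sum_{\lambda \in L} B_\lambda \;\subseteq\; \sum_{\lambda \in \Delta_N} B_\lambda \;\subseteq\; V,
\end{equation*}
forcing equality. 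Hence $V$ is spanned by $\lbrace B_\lambda \rbrace_{\lambda \in \Delta_N}$, as desired.

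There is no real obstacle here; the corollary is a one-line consequence of Lemma~\ref{lem:BspanV}, whose proof already established the stronger assertion that just the $N+1$ subspaces lying on any single boundary line already span (in fact decompose) $V$. The role of this corollary is simply to record the weaker statement, phrased over the full index set $\Delta_N$, for convenient reference in later sections.
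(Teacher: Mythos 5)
Your proposal is correct and matches the paper's own proof, which simply cites Lemma~\ref{lem:BspanV}; you have merely spelled out the one-line containment argument that the paper leaves implicit.
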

\begin{proof}
By Lemma
\ref{lem:BspanV}.
\end{proof}

\noindent We now describe the Billiard Arrays of diameter 0 and 1.

\begin{example}
Let $B:\Delta_N\to \mathcal P_1(V)$ denote any function.
\begin{enumerate}
\item[\rm (i)] Assume $N=0$,
so that $\Delta_N$ and $\mathcal P_1(V)$ each have
cardinality 1.
Then $B$ is a Billiard Array 
on $V$.
\item[\rm (ii)] Assume $N=1$.
Then $B$ is a Billiard Array on 
$V$ if and only if
$B$ is injective.
\end{enumerate}
\end{example}

\begin{definition}
\label{def:isoBA}
\rm
Let $V'$ denote a vector space over $\mathbb F$ with
dimension $N+1$.
Let $B$ (resp. $B'$) denote a Billiard Array
on $V$ (resp. $V'$).
By an {\it isomorphism of Billiard Arrays from 
$B$ to
$B'$} we mean an $\mathbb F$-linear bijection
$\sigma: V \to V'$ that sends
$
B_\lambda
\mapsto 
B'_{\lambda} $
for all $\lambda \in \Delta_N$.
The Billiard Arrays $B$ and
$B'$ are called {\it isomorphic} whenever
there exists an isomorphism of Billiard Arrays from
$B$ to
$B'$.
\end{definition}

\begin{example} For $N=0$ or $N=1$, any two Billiard Arrays
over $\mathbb F$ of diameter $N$ are isomorphic.
\end{example}

\section{Concrete Billiard Arrays}

Throughout this section the following notation is
in effect. Fix $N \in \mathbb N$. Let $V$ denote
a vector space over $\mathbb F$ with dimension $N+1$.
\medskip

\begin{definition}
\label{def:cba}
\rm By a {\it Concrete Billiard Array on $V$}
we mean a function $\mathcal B: \Delta_N \to V$,
$\lambda \mapsto \mathcal B_\lambda$ that satisfies
the following conditions:
\begin{enumerate}
\item[\rm (i)] for each line $L$ in $\Delta_N$
the vectors $\lbrace \mathcal B_\lambda \rbrace_{\lambda \in L}$
are linearly independent;
\item[\rm (ii)] for each black 3-clique $C$
 in $\Delta_N$
the vectors $\lbrace \mathcal B_\lambda \rbrace_{\lambda \in C}$
are linearly dependent.
\end{enumerate}
We say that $\mathcal B$ is {\it over} $\mathbb F$.
We call $V$ the {\it underlying vector space}.
We call $N$ the {\it diameter} of $\mathcal B$.
\end{definition}
\begin{lemma} 
\label{lem:betanon0}
Let $\mathcal B$ denote a Concrete Billiard Array
on $V$. Then $\mathcal B_\lambda \not=0$ for all $\lambda\in \Delta_N$.
\end{lemma}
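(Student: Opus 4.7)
The claim is that no vector $\mathcal B_\lambda$ in a Concrete Billiard Array can be zero, and my plan is to extract this directly from the linear independence axiom in Definition~\ref{def:cba}(i). The key observation is simply that every location $\lambda \in \Delta_N$ sits on at least one line of $\Delta_N$; for instance, fixing any $\eta \in \{1,2,3\}$, the $\eta$-collinearity class of $\lambda$ is an $\eta$-line $L$ in the sense of Section~3, and $\lambda \in L$.

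The proof would then go as follows. Pick $\eta = 1$ (the choice is immaterial) and let $L$ be the unique $1$-line containing $\lambda$. By Definition~\ref{def:cba}(i), the family $\{\mathcal B_\mu\}_{\mu \in L}$ is linearly independent in $V$. A linearly independent family of vectors cannot contain the zero vector, since the singleton $\{0\}$ already supports the nontrivial relation $1 \cdot 0 = 0$. Because $\lambda \in L$, the vector $\mathcal B_\lambda$ belongs to this family, and hence $\mathcal B_\lambda \neq 0$.

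There is essentially no obstacle here; the lemma is a one-line consequence of the axioms once one recalls from Section~3 that every location lies on some line (indeed, on one line in each of the three directions). The only conceivable subtlety is the interpretation of the axiom for a line of cardinality one, namely an $\eta$-corner at $i=0$ of Example~\ref{ex:line}; but a singleton family $\{\mathcal B_{Ne_\eta}\}$ is linearly independent in the usual sense precisely when $\mathcal B_{Ne_\eta} \neq 0$, which is exactly what the axiom demands at that corner, so even the degenerate case is handled uniformly.
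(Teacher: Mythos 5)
Your proof is correct and matches the paper's own argument exactly: both take a line through $\lambda$ and invoke Definition~\ref{def:cba}(i), noting that a linearly independent family cannot contain the zero vector. Your remark on the singleton-line case at a corner is a reasonable extra check but does not change the argument.
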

\begin{proof} There exists a line $L$ in $\Delta_N$ that contains
$\lambda$. The result follows in view of 
Definition
\ref{def:cba}(i).
\end{proof}

\noindent Billiard Arrays and Concrete Billiard
Arrays are related as follows. 
Let $B$
denote a Billiard Array on $V$. For $\lambda \in \Delta_N$
let $\mathcal B_\lambda$ denote a nonzero vector in $B_\lambda$.
Then the function $\mathcal  
B:\Delta_N \to V$,
$\lambda \mapsto \mathcal B_\lambda$ is
 a Concrete Billiard Array on $V$.
Conversely,
let $\mathcal B$ denote a Concrete Billiard Array on $V$.
For $\lambda \in \Delta_N$ let
 $B_\lambda$ denote the
subspace of $V$ spanned by 
$\mathcal B_\lambda$.
 Note that $B_\lambda \in \mathcal P_1(V)$
by Lemma
\ref{lem:betanon0}.
The function $B:\Delta_N \to \mathcal P_1(V)$,
$\lambda \mapsto B_\lambda$ is
 a Billiard Array on $V$.

\begin{definition}
\label{def:corr}
\rm Let 
 $B$ denote a Billiard Array on $V$,
 and let $\mathcal B$ denote a Concrete Billiard Array on $V$.
 We say that $B, \mathcal B$ {\it correspond} whenever
 $\mathcal B_\lambda$ spans $B_\lambda$ for all 
 $\lambda \in \Delta_N$.
 \end{definition}

\noindent Let $\mathcal B$ denote a Concrete Billiard Array
on $V$. For $v \in V$ and $\lambda\in \Delta_N$, we say that
{\it $v$ is included in $\mathcal B$ at location $\lambda$}
whenever $v= {\mathcal B}_\lambda$.

\begin{lemma} 
\label{lem:threedim2CBA}
Let $\mathcal B$ denote a Concrete Billiard Array on $V$.
Let $\lambda,\mu, \nu$ denote locations in $\Delta_N$ that form
a black 3-clique. Then $\mathcal B_\lambda, \mathcal B_\mu, \mathcal B_\nu$ 
span a 2-dimensional subspace of $V$. Any two of
$\mathcal B_\lambda, \mathcal B_\mu, \mathcal B_\nu$  form a basis
for this subspace.
\end{lemma}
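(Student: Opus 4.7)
The plan is to prove both assertions by combining the two defining conditions of a Concrete Billiard Array directly, using the observation that any two adjacent locations are collinear.

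First I would record the following geometric fact: if $\lambda,\mu \in \Delta_N$ are adjacent, then $\lambda-\mu = \pm(e_i - e_j)$ for some distinct $i,j \in \{1,2,3\}$, so the third coordinate $k$ of $\lambda - \mu$ vanishes. Hence $\lambda$ and $\mu$ are $k$-collinear, and in particular they both lie on some line $L$ in $\Delta_N$. Applying Definition \ref{def:cba}(i) to $L$ shows that $\mathcal{B}_\lambda$ and $\mathcal{B}_\mu$ are linearly independent. Since any two locations in a $3$-clique are adjacent, this reasoning applies to each of the three pairs from $\{\lambda,\mu,\nu\}$.

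Next I would apply Definition \ref{def:cba}(ii) to the black $3$-clique $C = \{\lambda,\mu,\nu\}$: the vectors $\mathcal{B}_\lambda, \mathcal{B}_\mu, \mathcal{B}_\nu$ are linearly dependent. Combined with the pairwise independence just established, this forces the span of $\mathcal{B}_\lambda, \mathcal{B}_\mu, \mathcal{B}_\nu$ to have dimension exactly $2$: it is at least $2$ since two independent vectors already lie in it, and at most $2$ since three vectors in general position span a $3$-dimensional space, contradicting their linear dependence. For the second assertion, fix any two of the three vectors; they are linearly independent by the previous paragraph and contained in the $2$-dimensional span, so they form a basis for it.

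I do not expect any serious obstacle here; the only subtle point is noting that adjacency in $\Delta_N$ implies collinearity, which is immediate from the description of $\Phi$ in (\ref{eq:alphaBeta}). Everything else is linear algebra applied directly to the two axioms of Definition \ref{def:cba}, so the proof will be short. If preferred, one could instead invoke Lemma \ref{lem:threedim2} through the correspondence of Definition \ref{def:corr}, deducing that $B_\lambda + B_\mu + B_\nu$ is the $2$-dimensional direct sum $B_\lambda \oplus B_\mu$ and then noting that $\mathcal{B}_\lambda, \mathcal{B}_\mu, \mathcal{B}_\nu$ are nonzero spanning vectors for the one-dimensional summands, but this route is no shorter.
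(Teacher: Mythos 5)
Your proof is correct. The paper's own proof is a one-line reduction to Lemma \ref{lem:threedim2} via the correspondence between Concrete Billiard Arrays and Billiard Arrays --- exactly the alternative you mention at the end --- while your primary argument re-derives the same facts directly from Definition \ref{def:cba}: adjacency forces collinearity, so condition (i) gives pairwise independence, and condition (ii) caps the span at dimension $2$. Since this is precisely the argument underlying Lemma \ref{lem:threedim2} itself, just phrased at the level of vectors rather than one-dimensional subspaces, the two routes are essentially the same; yours is self-contained at the cost of repeating that collinearity observation, and the paper's is shorter because the work was already done once.
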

\begin{proof}
Use Lemma \ref{lem:threedim2}.
\end{proof}

\begin{lemma} 
\label{lem:sumCBA}
Let $\mathcal B$ denote a Concrete Billiard Array on $V$.
Let $\lambda,\mu, \nu$ denote locations in $\Delta_N$ that form
a white 3-clique. Then the vectors
$\mathcal B_\lambda, \mathcal B_\mu, \mathcal B_\nu$ are linearly
independent.
\end{lemma}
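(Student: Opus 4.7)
The plan is to reduce this to the Billiard Array version already proved in Lemma \ref{lem:sum}. The discussion preceding Definition \ref{def:corr} shows that every Concrete Billiard Array $\mathcal B$ on $V$ corresponds to a Billiard Array $B$ on $V$, obtained by letting $B_\kappa$ be the subspace spanned by $\mathcal B_\kappa$ for each $\kappa \in \Delta_N$. Since $\lambda,\mu,\nu$ form a white 3-clique in $\Delta_N$, Lemma \ref{lem:sum} applied to $B$ gives that the sum $B_\lambda + B_\mu + B_\nu$ is direct.

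To finish, I would invoke the standard linear algebra fact that if $W_1,W_2,W_3$ are one-dimensional subspaces of $V$ with $0 \neq v_i \in W_i$, then $v_1,v_2,v_3$ are linearly independent if and only if the sum $W_1+W_2+W_3$ is direct. In our situation each $\mathcal B_\kappa$ is nonzero by Lemma \ref{lem:betanon0} and spans $B_\kappa \in \mathcal P_1(V)$ by the correspondence. The directness of $B_\lambda + B_\mu + B_\nu$ therefore forces $\mathcal B_\lambda, \mathcal B_\mu, \mathcal B_\nu$ to be linearly independent, as desired. There is no real obstacle here; the only content beyond Lemma \ref{lem:sum} is the routine translation between the subspace language and the vector language, which is a one-line remark.
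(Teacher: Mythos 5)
Your proof is correct and matches the paper's approach: the paper's own proof is simply ``Use Lemma \ref{lem:sum},'' i.e., pass to the corresponding Billiard Array and apply the white 3-clique result there. Your additional remarks about nonvanishing of the $\mathcal B_\kappa$ and the equivalence between directness of a sum of lines and linear independence of spanning vectors just make explicit the routine translation the paper leaves implicit.
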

\begin{proof} 
Use Lemma \ref{lem:sum}.
\end{proof}

\begin{lemma} 
\label{lem:BspanVCBA}
Let $\mathcal B$ denote a Concrete Billiard Array on $V$.
Pick $\eta \in \lbrace 1,2,3\rbrace $ and let $L$  denote
the $\eta$-boundary of $\Delta_N$. Then the vectors
$\lbrace \mathcal B_\lambda\rbrace_{\lambda \in L}$ form
a basis for $V$.
\end{lemma}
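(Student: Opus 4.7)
The proof will be very short because the statement is essentially a direct-sum/dimension count packaged through the correspondence between Billiard Arrays and Concrete Billiard Arrays.

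First I would recall from the material preceding Definition~\ref{def:cba} that the $\eta$-boundary of $\Delta_N$ is itself an $\eta$-line of $\Delta_N$ (it is the unique $\eta$-line of cardinality $N+1$, by Lemma~\ref{lem:linedist} or by Example~3.9). Consequently $L$ is a line in $\Delta_N$ of cardinality $N+1$.

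Next, I would apply Definition~\ref{def:cba}(i) directly to this line $L$: the vectors $\{\mathcal{B}_\lambda\}_{\lambda\in L}$ are linearly independent in $V$. Since $|L|=N+1$ and $\dim V = N+1$, a linearly independent set of $N+1$ vectors in $V$ is automatically a basis, which finishes the argument.

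Alternatively (and essentially equivalently), one can pass through the Billiard Array $B$ corresponding to $\mathcal{B}$ as in Definition~\ref{def:corr}. Lemma~\ref{lem:BspanV} gives $V = \sum_{\lambda\in L} B_\lambda$ as a direct sum, and since each $\mathcal{B}_\lambda$ spans the one-dimensional subspace $B_\lambda$, the vectors $\{\mathcal{B}_\lambda\}_{\lambda\in L}$ must form a basis of $V$. There is no real obstacle here; the only thing to be careful about is citing the correct property of the $\eta$-boundary (that it is a line of the maximum possible cardinality $N+1$), after which the result is immediate from the definition.
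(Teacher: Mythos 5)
Your proof is correct and matches the paper's: the paper's own proof is simply ``Use Lemma~\ref{lem:BspanV}'', which is your second (alternative) route, and your first route just unfolds that same argument directly at the level of Definition~\ref{def:cba}(i) plus a dimension count. Either phrasing is fine.
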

\begin{proof}
Use 
Lemma \ref{lem:BspanV}.
\end{proof}

\begin{corollary} 
\label{lem:BspanV2CBA}
Let $\mathcal B$ denote a Concrete Billiard Array on $V$.
 Then $V$ is
spanned by the vectors
$\lbrace \mathcal B_\lambda\rbrace_{\lambda \in \Delta_N}$.
\end{corollary}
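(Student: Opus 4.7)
The statement is essentially an immediate weakening of Lemma \ref{lem:BspanVCBA}. The plan is to pick any $\eta \in \{1,2,3\}$ (for concreteness, say $\eta=1$), form the $\eta$-boundary $L$ of $\Delta_N$, and apply Lemma \ref{lem:BspanVCBA} to conclude that $\{\mathcal B_\lambda\}_{\lambda \in L}$ is a basis of $V$, hence spans $V$.

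Since $L \subseteq \Delta_N$, the larger family $\{\mathcal B_\lambda\}_{\lambda \in \Delta_N}$ certainly contains a spanning set, so it spans $V$ as well. There is no real obstacle here; the only content is that the $\eta$-boundary has cardinality $N+1 = \dim V$, which was already used in Lemma \ref{lem:BspanVCBA} (via the corresponding Billiard Array fact, Lemma \ref{lem:BspanV}). Thus this corollary is simply a restatement of that lemma for the full index set $\Delta_N$ in place of a single boundary line.
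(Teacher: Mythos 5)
Your proposal is correct and matches the paper's own argument, which proves this corollary by citing Lemma \ref{lem:BspanVCBA} (or equivalently Corollary \ref{lem:BspanV2}): the $\eta$-boundary already yields a basis, so the full family spans. Nothing further is needed.
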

\begin{proof}
By Corollary 
\ref{lem:BspanV2} or
Lemma \ref{lem:BspanVCBA}.
\end{proof}

\noindent We now describe the Concrete Billiard Arrays of diameter 0 and 1.

\begin{example}
\label{ex:CBAd01}
Let $\mathcal B:\Delta_N\to V $ denote any function.
\begin{enumerate}
\item[\rm (i)] Assume $N=0$, so that $\Delta_N$ contains a unique element
$\lambda$. 
Then $\mathcal B$ is a Concrete Billiard Array 
on $V$ if and only if $\mathcal B_\lambda \not=0$. 
\item[\rm (ii)] Assume $N=1$.
Then $\mathcal B$ is a Concrete Billiard Array on 
$V$ if and only if
any two of $\lbrace \mathcal B_\lambda\rbrace_{\lambda \in \Delta_N}$ 
are linearly independent.
\end{enumerate}
\end{example}


\begin{lemma}
\label{lem:kappaAdj}
Let
 $\mathcal B$ denote a Concrete Billiard Array on $V$. 
For all $\lambda \in \Delta_N$ let $\kappa_\lambda$ denote a nonzero
scalar in $\mathbb F$. 
 Then the function
 $\mathcal B':\Delta_N \to V$,
 $\lambda \mapsto \kappa_\lambda \mathcal B_\lambda$
 is a Concrete Billiard Array on $V$.
\end{lemma}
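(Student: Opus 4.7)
The plan is to verify directly that $\mathcal B'$ satisfies the two defining conditions of Definition \ref{def:cba}, appealing in each case to the corresponding property of $\mathcal B$ together with an elementary linear algebra observation: scaling each vector in a finite family by a nonzero scalar preserves both linear independence and linear dependence.

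First, I would handle condition (i). Fix a line $L$ in $\Delta_N$. By Definition \ref{def:cba}(i) applied to $\mathcal B$, the vectors $\{\mathcal B_\lambda\}_{\lambda \in L}$ are linearly independent. Suppose $\sum_{\lambda \in L} c_\lambda \mathcal B'_\lambda = 0$ for some scalars $c_\lambda \in \mathbb F$. Substituting $\mathcal B'_\lambda = \kappa_\lambda \mathcal B_\lambda$ gives $\sum_{\lambda \in L}(c_\lambda \kappa_\lambda) \mathcal B_\lambda = 0$, so by the independence of $\{\mathcal B_\lambda\}_{\lambda \in L}$ each $c_\lambda \kappa_\lambda = 0$; since $\kappa_\lambda \neq 0$, each $c_\lambda = 0$. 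Hence $\{\mathcal B'_\lambda\}_{\lambda \in L}$ is linearly independent.

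Next I would handle condition (ii). Let $C = \{\lambda,\mu,\nu\}$ be a black 3-clique in $\Delta_N$. By Definition \ref{def:cba}(ii) applied to $\mathcal B$, the vectors $\mathcal B_\lambda, \mathcal B_\mu, \mathcal B_\nu$ are linearly dependent, so there exist scalars $c_\lambda, c_\mu, c_\nu \in \mathbb F$, not all zero, with $c_\lambda \mathcal B_\lambda + c_\mu \mathcal B_\mu + c_\nu \mathcal B_\nu = 0$. Defining $c'_\tau = c_\tau / \kappa_\tau$ for $\tau \in C$, we obtain $\sum_{\tau \in C} c'_\tau \mathcal B'_\tau = \sum_{\tau \in C} c_\tau \mathcal B_\tau = 0$, and not all of the $c'_\tau$ are zero (since the $\kappa_\tau$ are nonzero). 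Therefore $\{\mathcal B'_\tau\}_{\tau \in C}$ is linearly dependent, establishing condition (ii).

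There is no real obstacle here — the lemma is a formal reassurance that rescaling ``representatives'' at each location does not disturb the Concrete Billiard Array structure. Its utility lies in the fact that it makes explicit the degree of freedom relating a Billiard Array to a corresponding Concrete Billiard Array, as set up in Definition \ref{def:corr}.
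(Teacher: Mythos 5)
Your proof is correct and is exactly the routine verification the paper has in mind (the paper states this lemma without proof, treating it as immediate). Both conditions of Definition \ref{def:cba} are checked cleanly, and the key observation—that rescaling by nonzero scalars preserves both linear independence along lines and linear dependence on black 3-cliques—is all that is needed.
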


\begin{definition}
\label{def:assoc}
\rm
 Let $\mathcal B,\mathcal B'$ denote Concrete
 Billiard Arrays on $V$. We say that  
 $\mathcal B, \mathcal B'$ are {\it associates}
whenever 
 there exist nonzero scalars 
 $\lbrace \kappa_\lambda\rbrace_{\lambda \in \Delta_N}$
 in $\mathbb F$ such that
 $\mathcal B'_\lambda=\kappa_\lambda \mathcal B_\lambda$
for all $\lambda \in \Delta_N$. 
The relation of being associates is an equivalence relation.
\end{definition}

\begin{lemma} Let
 $\mathcal B, \mathcal B'$ denote Concrete
 Billiard Arrays on $V$. 
Then 
 $\mathcal B, \mathcal B'$ are associates
if and only if they correspond to the same
Billiard
Array.
\end{lemma}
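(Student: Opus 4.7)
The plan is to unpack the two definitions and observe that the equivalence is essentially immediate. The key observation is that a Concrete Billiard Array $\mathcal B$ on $V$ assigns to each location $\lambda\in\Delta_N$ a nonzero vector (by Lemma \ref{lem:betanon0}), so the span $\langle \mathcal B_\lambda\rangle$ is a well-defined element of $\mathcal P_1(V)$, and $\mathcal B$ corresponds to exactly one Billiard Array $B$, namely the one with $B_\lambda = \langle \mathcal B_\lambda\rangle$ for each $\lambda$. So the whole argument reduces to: two nonzero vectors span the same one-dimensional subspace if and only if one is a nonzero scalar multiple of the other.

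For the forward direction, I would assume $\mathcal B$ and $\mathcal B'$ are associates, fix nonzero scalars $\lbrace\kappa_\lambda\rbrace_{\lambda\in\Delta_N}$ with $\mathcal B'_\lambda=\kappa_\lambda \mathcal B_\lambda$, and note that for each $\lambda\in\Delta_N$ the vectors $\mathcal B_\lambda, \mathcal B'_\lambda$ span the same one-dimensional subspace $B_\lambda$ of $V$. Hence $\mathcal B$ and $\mathcal B'$ both correspond (in the sense of Definition \ref{def:corr}) to the same Billiard Array $B:\lambda \mapsto B_\lambda$.

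For the reverse direction, I would assume $\mathcal B$ and $\mathcal B'$ correspond to a common Billiard Array $B$ on $V$. Then for each $\lambda\in\Delta_N$, both $\mathcal B_\lambda$ and $\mathcal B'_\lambda$ are (nonzero) vectors spanning the one-dimensional subspace $B_\lambda$, so there exists a unique nonzero scalar $\kappa_\lambda\in\mathbb F$ with $\mathcal B'_\lambda=\kappa_\lambda \mathcal B_\lambda$. The family $\lbrace\kappa_\lambda\rbrace_{\lambda\in\Delta_N}$ witnesses that $\mathcal B$ and $\mathcal B'$ are associates in the sense of Definition \ref{def:assoc}.

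There is no real obstacle here; the statement is essentially a reformulation of the bijection between nonzero vectors up to scalar and one-dimensional subspaces, applied pointwise on $\Delta_N$. The only thing one must be careful about is invoking Lemma \ref{lem:betanon0} (or Definition \ref{def:cba}(i)) to justify that the entries of a Concrete Billiard Array are nonzero, so that the spans in Definition \ref{def:corr} really are one-dimensional and the scalars $\kappa_\lambda$ really are nonzero.
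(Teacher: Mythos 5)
Your proof is correct and is essentially the paper's argument: the paper's proof is simply ``By Definitions \ref{def:corr}, \ref{def:assoc}'', and your write-up just spells out the pointwise equivalence between spanning the same line and differing by a nonzero scalar, with the nonvanishing of the entries supplied by Lemma \ref{lem:betanon0}. Nothing further is needed.
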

\begin{proof} By
Definitions
\ref{def:corr},
\ref{def:assoc}.
\end{proof}

\begin{example}\rm Referring to Definition
\ref{def:assoc},
if $N=0$ then
  $\mathcal B, \mathcal B'$ are associates. 
  \end{example}

\begin{definition}
\label{def:rel}
\rm
 Let $\mathcal B, \mathcal B'$ denote Concrete Billiard Arrays on $V$. 
Then 
  $\mathcal B, \mathcal B'$ are called {\it relatives}
whenever there exists $0 \not=\kappa \in \mathbb F$ such that
$\mathcal B'_\lambda = \kappa \mathcal B_\lambda$ for all
$\lambda \in \Delta_N$.
The relation of being relatives is an equivalence relation.
\end{definition}

\begin{example}\rm Referring to Definition
\ref{def:rel}, if $N=0$ then
  $\mathcal B, \mathcal B'$ are relatives.
  \end{example}

\begin{definition}
\label{def:cbaIso}
\rm
Let $V'$ denote a vector space over $\mathbb F$ with
dimension $N+1$.
Let $\mathcal B$ (resp. $\mathcal B'$) denote a Concrete Billiard Array
on $V$ (resp. $V'$).
By an {\it isomorphism of Concrete Billiard Arrays from 
$\mathcal B$ to
$\mathcal B'$} we mean an $\mathbb F$-linear bijection
$\sigma: V \to V'$ that sends
$
\mathcal B_\lambda
\mapsto 
\mathcal B'_{\lambda} $
for all $\lambda \in \Delta_N$.
We say that $\mathcal B,
\mathcal B'$ are {\it isomorphic} whenever
there exists an isomorphism of Concrete Billiard Arrays from
$\mathcal B$ to
$\mathcal B'$.
In this case the isomorphism is unique.
Isomorphism is an equivalence relation.
\end{definition}

\begin{example}
\rm Referring to Definition
\ref{def:cbaIso}, if $N=0$ then
$\mathcal B,
\mathcal B'$ are isomorphic.
\end{example}

\begin{definition}
\label{def:relations}
\rm
Let $V'$ denote a vector space over $\mathbb F$ with
dimension $N+1$.
Let $\mathcal B$ (resp. $\mathcal B'$) denote a Concrete Billiard Array
on $V$ (resp. $V'$).
Then $\mathcal B, \mathcal B'$ 
are called {\it similar} whenever their corresponding Billiard
Arrays are isomorphic in the sense of
Definition
\ref{def:isoBA}.
Similarity is an equivalence relation.
\end{definition}

\begin{example} \rm
Referring to Definition \ref{def:relations},
assume $N=0$ or $N=1$. Then
$\mathcal B, \mathcal B'$ are similar.
\end{example}

\noindent In each of the last four definitions we gave 
an equivalence relation for
Concrete Billiard Arrays.

\begin{lemma}
\label{lem:fourrel}
The above equivalence relations 
obey the
 following logical implications:

\begin{equation*}
\begin{CD}
\mbox{\rm relative}  @>>>
         \mbox{\rm isomorphic} 
	  \\ 
          @VVV                     @VVV \\
      \mbox{\rm associates}  @>>>
                       \mbox{\rm similar}
                   \end{CD}
\end{equation*}

\end{lemma}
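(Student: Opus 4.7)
The plan is to verify each of the four implications in the diagram directly from the definitions. None of them should present a real obstacle; the entire lemma is a definition-chase, and the only care needed is to make sure we supply the correct witness (scalar, map, or identification) for each implication.

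For \emph{relative} $\Rightarrow$ \emph{associates}: if $\mathcal{B}'_\lambda=\kappa\mathcal{B}_\lambda$ for a single nonzero $\kappa \in \mathbb{F}$, simply take $\kappa_\lambda=\kappa$ for every $\lambda \in \Delta_N$, which satisfies Definition~\ref{def:assoc}. For \emph{relative} $\Rightarrow$ \emph{isomorphic}: since \emph{relative} requires $\mathcal{B},\mathcal{B}'$ to live on the same $V$, the map $\sigma = \kappa \cdot I : V\to V$ is an $\mathbb{F}$-linear bijection sending $\mathcal{B}_\lambda \mapsto \kappa\mathcal{B}_\lambda = \mathcal{B}'_\lambda$ for all $\lambda$, which is exactly the isomorphism required by Definition~\ref{def:cbaIso}.

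For \emph{isomorphic} $\Rightarrow$ \emph{similar}: let $\sigma:V\to V'$ be the isomorphism of Concrete Billiard Arrays, and let $B,B'$ be the Billiard Arrays on $V,V'$ to which $\mathcal{B},\mathcal{B}'$ correspond. For each $\lambda$ the subspace $B_\lambda$ is spanned by $\mathcal{B}_\lambda$ and $B'_\lambda$ is spanned by $\mathcal{B}'_\lambda$, so since $\sigma(\mathcal{B}_\lambda)=\mathcal{B}'_\lambda$, we also have $\sigma(B_\lambda)=B'_\lambda$. Thus $\sigma$ is an isomorphism $B\to B'$ in the sense of Definition~\ref{def:isoBA}, giving similarity in the sense of Definition~\ref{def:relations}.

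For \emph{associates} $\Rightarrow$ \emph{similar}: if $\mathcal{B}'_\lambda=\kappa_\lambda\mathcal{B}_\lambda$ with each $\kappa_\lambda\ne 0$, then $\mathcal{B}_\lambda$ and $\mathcal{B}'_\lambda$ span the same one-dimensional subspace of $V$, so the corresponding Billiard Arrays $B$ and $B'$ (both on $V$) satisfy $B_\lambda=B'_\lambda$ for every $\lambda$. The identity map $I_V$ then furnishes an isomorphism $B\to B'$, establishing similarity. The step I expect to be least mechanical is \emph{isomorphic} $\Rightarrow$ \emph{similar}, because one must remember that a linear bijection carries the span of $\mathcal{B}_\lambda$ onto the span of its image; apart from that observation, every implication is immediate from the definitions.
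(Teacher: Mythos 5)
Your proposal is correct and matches the paper's approach: the paper dismisses this lemma with ``This is routinely verified,'' and your four definition-chases (constant scalars give associates, $\kappa I$ gives the isomorphism, a linear bijection carries spans to spans, and equal spans give equal Billiard Arrays) are exactly the routine verification intended. Nothing is missing.
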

\begin{proof} This is routinely verified.
\end{proof}

\noindent We have a few comments.

\begin{proposition}
\label{prop:isoAssoc}
Let $V'$ denote a vector space over $\mathbb F$ with
dimension $N+1$. 
Let $\mathcal B$ (resp.  $\mathcal B')$
denote a Concrete Billiard Array on $V$ (resp. $V'$).
Then the following are equivalent:
\begin{enumerate}
\item[\rm (i)]  $\mathcal B$ and $\mathcal B'$ are similar;
\item[\rm (ii)]  $\mathcal B$ is isomorphic to a
Concrete Billiard Array associated with
$\mathcal B'$;
\item[\rm (iii)]  $\mathcal B$ is associated with a
Concrete Billiard Array isomorphic to 
$\mathcal B'$.
\end{enumerate}
\end{proposition}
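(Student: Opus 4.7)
The plan is to prove the cycle by handling (ii) $\Rightarrow$ (i) and (iii) $\Rightarrow$ (i) together as easy consequences of Lemma \ref{lem:fourrel}, and then doing the real work in (i) $\Rightarrow$ (ii), with (i) $\Rightarrow$ (iii) following by symmetry.

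For the easy direction, assume (ii): there is a Concrete Billiard Array $\mathcal{B}''$ that is isomorphic to $\mathcal{B}$ and associated with $\mathcal{B}'$. By Lemma \ref{lem:fourrel}, isomorphism implies similarity and association implies similarity, and similarity is an equivalence relation by Definition \ref{def:relations}. Transitivity then gives (i). The argument for (iii) $\Rightarrow$ (i) is identical, passing through the intermediate array associated with $\mathcal{B}$ and isomorphic to $\mathcal{B}'$.

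For (i) $\Rightarrow$ (ii), let $B, B'$ denote the Billiard Arrays corresponding to $\mathcal{B}, \mathcal{B}'$. Similarity together with Definition \ref{def:isoBA} supplies an $\mathbb{F}$-linear bijection $\sigma : V \to V'$ with $\sigma(B_\lambda) = B'_\lambda$ for all $\lambda \in \Delta_N$. Define $\mathcal{B}'' : \Delta_N \to V'$ by $\mathcal{B}''_\lambda := \sigma(\mathcal{B}_\lambda)$. Since $\sigma$ is a linear bijection it preserves linear independence and linear dependence of any finite family of vectors, so conditions (i), (ii) of Definition \ref{def:cba} transfer from $\mathcal{B}$ to $\mathcal{B}''$; thus $\mathcal{B}''$ is a Concrete Billiard Array on $V'$, and by construction $\sigma$ realizes an isomorphism $\mathcal{B} \to \mathcal{B}''$. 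Moreover $\mathcal{B}''_\lambda$ is a nonzero vector in $\sigma(B_\lambda) = B'_\lambda$, which is one-dimensional and also spanned by $\mathcal{B}'_\lambda$, so there exist nonzero $\kappa_\lambda \in \mathbb{F}$ with $\mathcal{B}''_\lambda = \kappa_\lambda \mathcal{B}'_\lambda$, making $\mathcal{B}''$ and $\mathcal{B}'$ associates. This gives (ii). The implication (i) $\Rightarrow$ (iii) is the symmetric statement: run the same construction with $\sigma^{-1} : V' \to V$ in place of $\sigma$ to produce a Concrete Billiard Array on $V$ associated with $\mathcal{B}$ and isomorphic via $\sigma$ to $\mathcal{B}'$.

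The main obstacle is purely bookkeeping: verifying that the pushforward $\sigma \circ \mathcal{B}$ is again a Concrete Billiard Array (immediate from $\sigma$ being a linear bijection) and extracting the associating scalars (immediate from one-dimensionality of $B'_\lambda$). No deeper structural properties of Billiard Arrays are required beyond the equivalence-relation machinery already set up in Section 8.
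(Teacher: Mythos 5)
Your proof is correct and follows essentially the same route as the paper: the substantive direction (i)$\Rightarrow$(ii) is the identical pushforward construction $\mathcal B''_\lambda=\sigma(\mathcal B_\lambda)$, with the associate scalars extracted from the one-dimensionality of $B'_\lambda$. Your converse direction is marginally slicker than the paper's (which unwinds the Billiard Array correspondence again), since you just invoke the implications of Lemma \ref{lem:fourrel} together with transitivity of similarity; both arguments are valid.
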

\begin{proof}
Let $B$ (resp. $B'$) denote the Billiard Array on $V$ (resp. $V'$)
that
corresponds to $\mathcal B$ (resp. $\mathcal B'$).
\\
\noindent 
${\rm (i)}\Rightarrow {\rm (ii)}$ 
By Definition
\ref{def:relations}
$B,B'$  are isomorphic. Let 
$\sigma:V\to V'$ denote an
isomorphism of Billiard Arrays from $B$ to $B'$.
Define a function
$\mathcal B'':\Delta_N \to V', 
\lambda \mapsto \sigma(\mathcal B_\lambda)$.
 By construction,
$\mathcal B''$ is a Concrete Billiard Array that is
isomorphic to $\mathcal B$ and corresponds to $B'$. 
Note that $\mathcal B',\mathcal B''$ 
are associates since they both correspond to $B'$.
\\
\noindent ${\rm (ii)}\Rightarrow {\rm (i)}$ 
Let $\mathcal B''$ denote  a Concrete Billiard Array
that is isomorphic to $\mathcal B$ and associated with $\mathcal B'$.
Since $\mathcal B',\mathcal B''$ are  associates,
$\mathcal B''$ must correspond to $B'$.
Let $\sigma$ denote an isomorphism of Concrete Billiard Arrays
from $\mathcal B$ to $\mathcal B''$. Then $\sigma$ is an
isomorphism of Billiard Arrays from
$B$ to $B'$. Therefore $B,B'$ are isomorphic so
$\mathcal B,
\mathcal B'$ are similar.
\\
${\rm (i)}\Leftrightarrow {\rm (iii)}$ 
Interchange the roles of $\mathcal B, \mathcal B'$
in the proof of
${\rm (i)}\Leftrightarrow {\rm (ii)}$.
\end{proof} 

\begin{lemma} 
\label{lem:assocIso}
Consider the map which takes a Concrete Billiard Array
to the corresponding Billiard Array. This map induces a bijection
between the following two sets:
\begin{enumerate}
\item[\rm (i)] the similarity classes of Concrete Billiard Arrays
over $\mathbb F$ that have diameter $N$;
\item[\rm (ii)] the isomorphism classes of Billiard Arrays over
$\mathbb F$ that have diameter $N$.
\end{enumerate}
\end{lemma}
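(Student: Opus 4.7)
The plan is to show that the assignment $\mathcal B \mapsto B$ (where $B_\lambda = \mathbb F\,\mathcal B_\lambda$) descends to the claimed bijection, and then argue that the descended map is both injective and surjective at the level of equivalence classes. All three verifications will be short, because the key definitions have been set up precisely so that similarity of Concrete Billiard Arrays translates to isomorphism of Billiard Arrays.

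First I would check that the assignment is well defined on equivalence classes. Let $\mathcal B$ and $\mathcal B'$ be similar Concrete Billiard Arrays (possibly on different ambient vector spaces $V, V'$), and let $B, B'$ be the corresponding Billiard Arrays. By Definition \ref{def:relations} similarity of $\mathcal B, \mathcal B'$ means precisely that $B, B'$ are isomorphic, so the induced map $[\mathcal B] \mapsto [B]$ from (i) to (ii) is well defined.

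Next I would verify injectivity. Suppose $[\mathcal B]$ and $[\mathcal B']$ are sent to the same class, i.e.\ $B \cong B'$ as Billiard Arrays. Then by Definition \ref{def:relations} the Concrete Billiard Arrays $\mathcal B$ and $\mathcal B'$ are similar, so $[\mathcal B] = [\mathcal B']$. This step is essentially tautological given the way similarity was defined.

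Finally I would verify surjectivity. Let $B$ be an arbitrary Billiard Array on $V$ with diameter $N$. For each $\lambda \in \Delta_N$ the subspace $B_\lambda$ is one-dimensional, so it contains a nonzero vector; pick one and call it $\mathcal B_\lambda$. By the paragraph immediately preceding Definition \ref{def:corr}, the resulting function $\mathcal B:\Delta_N \to V$ is a Concrete Billiard Array on $V$, and by construction $\mathcal B$ corresponds to $B$. Hence $[\mathcal B]$ is sent to $[B]$. Since every isomorphism class in (ii) has a representative $B$ on some $V$, the map is surjective.

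There is no real obstacle here; the only thing to be careful about is that similarity (Definition \ref{def:relations}) is defined via isomorphism of the corresponding Billiard Arrays, so injectivity and well-definedness both reduce to invoking this definition. Surjectivity uses only the one-dimensionality of the $B_\lambda$ together with the observation, already recorded in Section 8, that any choice of nonzero spanning vectors yields a Concrete Billiard Array.
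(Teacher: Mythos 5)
Your proof is correct and follows the same route as the paper: well-definedness and injectivity are immediate from Definition \ref{def:relations} (similarity of Concrete Billiard Arrays is defined as isomorphism of the corresponding Billiard Arrays), and surjectivity comes from the observation preceding Definition \ref{def:corr} that choosing a nonzero vector in each $B_\lambda$ yields a corresponding Concrete Billiard Array. The paper's proof is just a terser statement of exactly these three points.
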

\begin{proof} By Definition
\ref{def:relations} the given map induces an injective
function from set (i) to set (ii). The function is
surjective by the comments above
Definition
\ref{def:corr}. Therefore the function is a bijection.
\end{proof}

\section{Billiard Arrays and the poset $\Delta_{\leq N}$}

Throughout this section the following notation is
in effect. Fix $N \in \mathbb N$. Let $V$ denote
a vector space over $\mathbb F$ with dimension $N+1$.
Let $B$ denote a Billiard Array on $V$. 
\medskip

\noindent 
By definition,
$B$ is a function $\Delta_N \to \mathcal P_1(V)$. 
We now extend $B$ to a function 
$B: \Delta_{\leq N} \to \mathcal P(V)$.

\begin{definition}
\label{def:extB}
\rm 
For  $\mu \in \Delta_{\leq N}$ define
\begin{equation}
B_{\mu} = \sum_{
\genfrac{}{}{0pt}{}{\lambda \in \Delta_N}{\mu \leq \lambda}
}
B_{\lambda}.
\label{eq:Bmudef}
\end{equation}
\noindent Thus $B_{\mu} \in \mathcal P(V)$.
\end{definition}

\noindent Pick $\mu \in \Delta_{\leq N}$.
Let $n=N-{\rm rank}(\mu)$, so that
$\mu \in \Delta_{N-n}$.
Evaluating
(\ref{eq:Bmudef}) using
Lemma \ref{lem:upsetdesc}  we obtain
\begin{equation}
B_\mu = \sum_{\nu \in \Delta_n} B_{\mu+\nu}.
\label{eq:Bmudec}
\end{equation}
Our next goal is to show that the function
 $\Delta_n \to \mathcal P_1(B_\mu)$, $\nu \mapsto 
B_{\mu+\nu}$ is a Billiard Array on $B_\mu$. To this
end, we show that $B_{\mu}$ has dimension $n+1$.

\begin{lemma}
\label{lem:decompBmu2}
With the above notation,
pick $\xi\in \lbrace 1,2,3\rbrace$ and
let $K$ denote the $\xi$-boundary of $\Delta_n$.
Then the sum
\begin{equation}
B_\mu = \sum_{\nu \in K} B_{\mu+\nu}
\label{eq:Bmudec2}
\end{equation}
is direct. Moreover $B_{\mu}$ has dimension $n+1$.
\end{lemma}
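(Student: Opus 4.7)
The plan is to establish the three claims in succession: first the equality $B_\mu = \sum_{\nu \in K} B_{\mu+\nu}$, then the directness of this sum, and finally the dimension count. Since equation (\ref{eq:Bmudec}) already expresses $B_\mu$ as a sum over all of $\Delta_n$, the equality amounts to showing that for every $\nu \in \Delta_n$, the subspace $B_{\mu+\nu}$ is contained in $\sum_{\nu' \in K} B_{\mu+\nu'}$.

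I will prove this containment by induction on the $\xi$-coordinate of $\nu$. The base case $\nu_\xi = 0$ is trivial since then $\nu \in K$. For the inductive step with $\nu_\xi \geq 1$, set $\kappa = \mu + \nu - e_\xi$ and consider the black 3-clique $\{\kappa + e_1, \kappa + e_2, \kappa + e_3\}$ in $\Delta_N$ (one checks routinely that $\kappa \in \Delta_{N-1}$, using $\nu_\xi \geq 1$ for nonnegativity of the $\xi$-coordinate). Writing the other two vertices of this clique as $\mu+\nu'$ and $\mu+\nu''$, one verifies that $\nu', \nu'' \in \Delta_n$ and that each has $\xi$-coordinate equal to $\nu_\xi - 1$. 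Corollary \ref{cor:usefuldep} then gives $B_{\mu+\nu} \subseteq B_{\mu+\nu'} + B_{\mu+\nu''}$, and the inductive hypothesis completes the containment.

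For directness of the sum in (\ref{eq:Bmudec2}), I observe that all locations in $\mu + K$ share the common $\xi$-coordinate $\mu_\xi$, so they lie on a single $\xi$-line of $\Delta_N$. By Definition \ref{def:ba}(i), the sum of $B_\lambda$ over that entire $\xi$-line is direct, and any subsum of a direct sum is direct; hence $\sum_{\nu \in K} B_{\mu+\nu}$ is direct. The dimension then follows immediately: each summand $B_{\mu+\nu}$ is one-dimensional (since $\mu + \nu \in \Delta_N$ and $B$ takes values in $\mathcal{P}_1(V)$), and $K$ has cardinality $n+1$, so $\dim B_\mu = n+1$. I do not anticipate a serious obstacle; the only bookkeeping requiring care is the verification that $\kappa$, $\nu'$, $\nu''$ actually lie in $\Delta_{N-1}$ and $\Delta_n$ respectively, which is a short check with coordinates.
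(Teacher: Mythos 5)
Your proof is correct and follows essentially the same route as the paper: directness via collinearity of $\{\mu+\nu\}_{\nu\in K}$ and Definition \ref{def:ba}(i), equality via Corollary \ref{cor:usefuldep} with induction on the $\xi$-coordinate, and the dimension count from $|K|=n+1$. You have merely written out the coordinate bookkeeping that the paper leaves as routine.
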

\begin{proof}
Define $B'_\mu
 = \sum_{\nu \in K} B_{\mu+\nu}$.
The preceding sum 
 is direct,
by Definition
\ref{def:ba}(i) and since
$\lbrace \mu + \nu\rbrace_{\nu \in K}$
are collinear locations in $\Delta_N$.
We show $B_{\mu} = B'_\mu$.
We have
$B'_\mu \subseteq B_\mu$ by
(\ref{eq:Bmudec}).
To obtain
$B_\mu \subseteq B'_\mu$, for
 $\nu \in \Delta_n$ 
use Corollary
\ref{cor:usefuldep} and induction on the $\xi$-coordinate
of $\nu$ to obtain
$B_{\mu + \nu}\subseteq  
 B'_{\mu}$.
This  and
(\ref{eq:Bmudec})
yield
$B_\mu \subseteq B'_\mu$, and
therefore
$B_\mu = B'_\mu$. We have shown that the sum
(\ref{eq:Bmudec2})
is direct. The set
$K$ has cardinality $n+1$, so
$B_\mu$ has dimension $n+1$.
\end{proof}

\noindent  We emphasize one aspect of
Lemma \ref{lem:decompBmu2}.

\begin{lemma} 
\label{lem:Bmudim}
For $\mu \in \Delta_{\leq N}$ we have
$B_{\mu} \in \mathcal P_{n+1}(V)$, where $n=
N-{\rm rank}(\mu)$.
\end{lemma}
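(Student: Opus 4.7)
The plan is essentially immediate: this lemma is just a restatement of the dimension count already established in Lemma \ref{lem:decompBmu2}, packaged in the notation $\mathcal P_{n+1}(V)$. So I would not expect any real obstacle here; this is a bookkeeping corollary.

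In more detail, first I would observe that by Definition \ref{def:extB}, $B_\mu$ is a sum of one-dimensional subspaces of $V$, hence is a subspace of $V$, so it lies in $\mathcal P(V)$ (assuming it is nonzero, which will follow from the dimension computation). Second, I would apply Lemma \ref{lem:decompBmu2}, which has just been proved and which says exactly that $B_\mu$ has dimension $n+1$, where $n = N - \mathrm{rank}(\mu)$. Concretely, one picks any $\xi \in \{1,2,3\}$ and lets $K$ denote the $\xi$-boundary of $\Delta_n$; Lemma \ref{lem:decompBmu2} then gives that $B_\mu = \sum_{\nu \in K} B_{\mu+\nu}$ is a direct sum of $|K| = n+1$ one-dimensional subspaces, so $\dim B_\mu = n+1$.

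Combining these observations, $B_\mu$ is an $(n+1)$-dimensional subspace of $V$, which by the definition of $\mathcal P_i(V)$ given in Section 6 means precisely $B_\mu \in \mathcal P_{n+1}(V)$. The only subtlety worth flagging is to be sure that $n+1 \leq N+1 = \dim V$; but this is automatic since $0 \leq \mathrm{rank}(\mu) \leq N$ gives $0 \leq n \leq N$, so $\mathcal P_{n+1}(V)$ is a nonempty set of subspaces of $V$ and the containment makes sense. That completes the proof.
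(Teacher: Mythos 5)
Your proof is correct and matches the paper's approach: the paper presents this lemma with the remark that it merely emphasizes one aspect of Lemma \ref{lem:decompBmu2}, which is exactly the dimension count you invoke. Nothing further is needed.
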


\begin{proposition}
\label{prop:newBA}
For $0 \leq n \leq N$ and $\mu \in \Delta_{N-n}$,
the function $\Delta_n \to \mathcal P_1(B_\mu)$, $\nu \mapsto 
B_{\mu+\nu}$ is a Billiard Array on $B_\mu$.
\end{proposition}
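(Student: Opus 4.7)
The plan is to verify the two conditions of Definition \ref{def:ba} for the map $\nu \mapsto B_{\mu+\nu}$ on $\Delta_n$, with the ambient vector space taken to be $B_\mu$. There are really three things to check: that the map is a well-defined function $\Delta_n \to \mathcal{P}_1(B_\mu)$, that line-sums are direct, and that black 3-clique sums are not direct.

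First I would verify that the map lands in $\mathcal{P}_1(B_\mu)$ and that the dimensions match. For $\nu \in \Delta_n$, the coordinate sum of $\mu+\nu$ is $(N-n)+n=N$, so $\mu+\nu \in \Delta_N$ and hence $B_{\mu+\nu} \in \mathcal{P}_1(V)$ by Definition \ref{def:ba}. The containment $B_{\mu+\nu} \subseteq B_\mu$ is immediate from (\ref{eq:Bmudec}), so $B_{\mu+\nu} \in \mathcal{P}_1(B_\mu)$. By Lemma \ref{lem:Bmudim}, $B_\mu$ has dimension $n+1$, which is exactly the diameter required for a Billiard Array on $\Delta_n$.

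To check condition (i), I would take an $\eta$-line $L$ in $\Delta_n$. Since translation by $\mu$ adds the same vector to every location, it preserves $\eta$-collinearity, and therefore $\mu+L := \{\mu+\nu : \nu \in L\}$ is contained in a single $\eta$-line $L'$ of $\Delta_N$. Applying Definition \ref{def:ba}(i) to $B$ on $L'$ gives that $\sum_{\lambda\in L'}B_\lambda$ is direct; restricting to the subset $\mu+L \subseteq L'$ keeps the sum direct, so $\sum_{\nu\in L}B_{\mu+\nu}$ is direct. For condition (ii), I would take a black 3-clique $C$ in $\Delta_n$. By Lemma \ref{lem:white} such a clique has the form $\{(r+1,s,t),(r,s+1,t),(r,s,t+1)\}$ for some $(r,s,t)\in\Delta_{n-1}$. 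Writing $\mu=(a,b,c)$ with $a+b+c=N-n$, the translated set $\mu+C$ has the same form with $(r,s,t)$ replaced by $(r+a,s+b,t+c)\in\Delta_{N-1}$, hence is again a black 3-clique, this time in $\Delta_N$. Definition \ref{def:ba}(ii) applied to $B$ then yields that $\sum_{\nu\in C}B_{\mu+\nu}$ is not direct.

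The only point requiring any care is that translation preserves the \emph{black} orientation of 3-cliques (as opposed to turning a black clique into a white one), which is visible from the explicit parametrization in Lemma \ref{lem:white}. No serious obstacle arises; the proposition says essentially that Definition \ref{def:ba}(i),(ii) are closed under restricting to an affine sub-triangle $\mu+\Delta_n\subseteq \Delta_N$, and this closure is built into the translation-equivariance of lines and of the black/white dichotomy.
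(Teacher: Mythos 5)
Your proof is correct and follows essentially the same route as the paper's: translation by $\mu$ carries lines of $\Delta_n$ into collinear sets in $\Delta_N$ and black 3-cliques to black 3-cliques, so both conditions of Definition \ref{def:ba} are inherited from $B$. The extra care you take with well-definedness and the dimension count via Lemma \ref{lem:Bmudim} is handled in the paper by the discussion immediately preceding the proposition, so nothing is missing.
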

\begin{proof} We verify that
the given function satisfies the conditions
of 
Definition \ref{def:ba}.
Let $L$ denote a line in $\Delta_n$.
Then the elements $\lbrace \mu+\nu\rbrace_{\nu \in L}$
are collinear locations
in $\Delta_N$.
Consequently the given function satisfies
Definition \ref{def:ba}(i).
Let $C$ denote 
a black 3-clique in $\Delta_n$. Then 
the locations $\lbrace \mu+\nu\rbrace_{\nu \in C}$ 
 form a black 3-clique in $\Delta_N$.
Therefore the given function satisfies
Definition \ref{def:ba}(ii).
The result follows.
\end{proof}

\noindent As we work with the elements $B_\mu$ from
Definition
\ref{def:extB},
we often encounter the case in which
$\mu$ is a corner of $\Delta_{\leq N}$. We now
consider this case.

\begin{lemma}
\label{lem:3Fclar}
Pick $\eta \in \lbrace 1,2,3\rbrace$ and
$0 \leq n \leq N$.
Let $\mu$ denote the $\eta$-corner of $\Delta_{N-n}$.
Then $B_\mu$ is described as follows.
\begin{enumerate}
\item[\rm (i)]
We have $B_\mu=\sum_{\lambda} B_\lambda$,
where 
the sum is over the locations $\lambda $ in $\Delta_N$
 at distance $\leq n$
from the $\eta$-corner of $\Delta_N$.
\item[\rm (ii)]
The sum $B_\mu=\sum_{\lambda} B_\lambda$
is direct, where 
the sum is over the locations $\lambda $ in $\Delta_N$
 at distance $n$
from the $\eta$-corner of $\Delta_N$.
\item[\rm (iii)] Pick 
$\xi \in \lbrace 1,2,3\rbrace$ other than $\eta$.
Then the sum $B_\mu=\sum_{\lambda} B_\lambda$
is direct, where the sum is over
the locations $\lambda$ 
on the $\xi$-boundary of $\Delta_N$ at distance  $\leq n$
from the $\eta$-corner of $\Delta_N$.
\end{enumerate}
\end{lemma}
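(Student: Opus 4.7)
The plan is to deduce all three parts from the extended definition of $B_\mu$ (Definition 8.1) together with the direct-sum decomposition of $B_\mu$ along any boundary of $\Delta_n$ provided by Lemma 8.6, using the substitution $\lambda = \mu + \nu$ with $\nu \in \Delta_n$.

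For part (i), I would simply combine Definition 8.1, which expresses $B_\mu = \sum_{\lambda \in \Delta_N,\ \mu \leq \lambda} B_\lambda$, with the equivalence ${\rm (i)} \Leftrightarrow {\rm (iv)}$ in Lemma 5.8 (moredetail), which states that $\mu \leq \lambda$ is equivalent to $\lambda$ being at distance $\leq n$ from the $\eta$-corner of $\Delta_N$. This immediately rewrites the indexing set in the desired form.

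For parts (ii) and (iii), the idea is to translate each stated condition on $\lambda$ into an equivalent condition on $\nu$ via $\lambda = \mu+\nu$, and then appeal to Lemma 8.6. Write $\mu = (N-n)e_\eta$, so that $\mu_\eta = N-n$ and the other two coordinates of $\mu$ are $0$. For (ii), by Lemma 4.9 the condition that $\lambda$ is at distance $n$ from the $\eta$-corner of $\Delta_N$ is equivalent to $\lambda_\eta = N - n$, i.e.\ $\nu_\eta = 0$, i.e.\ $\nu$ lies on the $\eta$-boundary of $\Delta_n$. Applying Lemma 8.6 with $\xi = \eta$ then yields the required direct-sum decomposition. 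For (iii), a location on the $\xi$-boundary of $\Delta_N$ satisfies $\lambda_\xi = 0$; combined with the distance $\leq n$ condition from the $\eta$-corner (equivalent to $\mu \leq \lambda$ by Lemma 5.8), and using $\mu_\xi = 0$ since $\xi \neq \eta$, this becomes $\nu_\xi = 0$, i.e.\ $\nu$ is on the $\xi$-boundary of $\Delta_n$. Applying Lemma 8.6 with this $\xi$ then gives the claim.

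I do not foresee any substantive obstacle: the lemma is essentially a bookkeeping consequence of earlier results. The only thing requiring care is the coordinate translation between conditions on $\lambda \in \Delta_N$ (stated in terms of distances and boundaries relative to $\Delta_N$) and the corresponding conditions on $\nu \in \Delta_n$ (to which Lemma 8.6 applies), and this follows transparently from the fact that $\mu$ is the $\eta$-corner of $\Delta_{N-n}$.
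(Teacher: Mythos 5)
Your proposal is correct and follows the paper's own route exactly: part (i) from Definition \ref{def:extB} together with Lemma \ref{lem:moredetail}(i),(iv), and parts (ii), (iii) by specializing Lemma \ref{lem:decompBmu2} to the case where $\mu$ is the $\eta$-corner of $\Delta_{N-n}$ (with $\xi=\eta$ for (ii) and the given $\xi$ for (iii)). The coordinate translation you spell out is just the bookkeeping the paper leaves implicit.
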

\begin{proof}(i) By
Lemma
\ref{lem:moredetail}(i),(iv) and
Definition \ref{def:extB}.
\\
\noindent (ii), (iii) In
Lemma \ref{lem:decompBmu2},
take $\mu$ to be the $\eta$-corner of
$\Delta_{N-n}$.
\end{proof}

\noindent At the end of Section 5 we discussed
the flags in the poset $\Delta_{\leq N}$.
In Section 6 we discussed the flags in the poset
${\mathcal P}(V)$. We now consider how the flags in these
two posets are related. 

\begin{lemma}
\label{lem:reverse}
Let $\mu,\nu$ denote elements in $\Delta_{\leq N}$ such that
$\mu\leq \nu$. Then $B_{\nu}\subseteq B_{\mu}$.
\end{lemma}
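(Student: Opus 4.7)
The plan is to prove this directly from the definition of $B_\mu$ in Definition \ref{def:extB}, using only transitivity of the partial order $\leq$ on $\Delta_{\leq N}$. No deeper machinery about Billiard Arrays is needed here; the statement is essentially a monotonicity property of the assignment $\mu \mapsto B_\mu$.

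First I would recall that, by Definition \ref{def:extB},
\begin{equation*}
B_\mu = \sum_{\genfrac{}{}{0pt}{}{\lambda \in \Delta_N}{\mu \leq \lambda}} B_\lambda, \qquad
B_\nu = \sum_{\genfrac{}{}{0pt}{}{\lambda \in \Delta_N}{\nu \leq \lambda}} B_\lambda.
\end{equation*}
Then I would argue that the index set of the sum defining $B_\nu$ is contained in the index set of the sum defining $B_\mu$: indeed, if $\lambda \in \Delta_N$ satisfies $\nu \leq \lambda$, then since $\mu \leq \nu$ by hypothesis, transitivity of $\leq$ gives $\mu \leq \lambda$. Consequently every summand $B_\lambda$ appearing in $B_\nu$ also appears in $B_\mu$, which yields $B_\nu \subseteq B_\mu$.

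There is no real obstacle here; the only thing to verify is that the partial order $\leq$ on $\mathbb{R}^3$ introduced in Section~2 is indeed transitive, which is immediate from the definition $\mu \leq \lambda \Leftrightarrow \lambda - \mu$ is nonnegative, together with the fact that the sum of two nonnegative elements of $\mathbb{R}^3$ is nonnegative. This completes the proof.
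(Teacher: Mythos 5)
Your proof is correct and is exactly the argument the paper intends; the paper's proof consists of the single line ``Use Definition \ref{def:extB},'' and your write-up simply makes explicit the containment of index sets via transitivity of $\leq$. Nothing further is needed.
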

\begin{proof}
Use
Definition \ref{def:extB}.
\end{proof}

\begin{lemma}
\label{lem:ind}
Let $\lbrace \lambda_n\rbrace_{n=0}^N$
denote a flag in $\Delta_{\leq N}$. Define
$U_i = B_{\lambda_{N-i}}$ for $0 \leq i \leq N$.
Then the sequence
$\lbrace U_i\rbrace_{i=0}^N$ is a flag on 
$V$.
\end{lemma}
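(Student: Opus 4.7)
The plan is to unwind the definitions and apply two results already established: Lemma \ref{lem:Bmudim} for the dimension condition and Lemma \ref{lem:reverse} for the containment condition. There is no real obstacle here; this lemma is essentially a direct translation between flags in the two posets via the map $\mu \mapsto B_\mu$.

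First I would verify the dimension condition. Fix $0 \leq i \leq N$. Since $\lbrace \lambda_n \rbrace_{n=0}^N$ is a flag in $\Delta_{\leq N}$, we have $\lambda_{N-i} \in \Delta_{N-i}$, so $\mathrm{rank}(\lambda_{N-i}) = N-i$. Applying Lemma \ref{lem:Bmudim} with $\mu = \lambda_{N-i}$ and $n = N - \mathrm{rank}(\mu) = i$, we conclude $B_{\lambda_{N-i}} \in \mathcal{P}_{i+1}(V)$, i.e., $U_i \in \mathcal{P}_{i+1}(V)$, as required.

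Next I would verify the containment condition. Fix $1 \leq i \leq N$. By the flag condition in $\Delta_{\leq N}$ we have $\lambda_{N-i} \leq \lambda_{N-i+1}$. By Lemma \ref{lem:reverse}, it follows that $B_{\lambda_{N-i+1}} \subseteq B_{\lambda_{N-i}}$; that is, $U_{i-1} \subseteq U_i$. This completes the verification of both conditions in the definition of a flag on $V$, so $\lbrace U_i \rbrace_{i=0}^N$ is a flag on $V$.
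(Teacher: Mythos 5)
Your proof is correct and takes essentially the same route as the paper: the paper's own argument cites Lemma \ref{lem:reverse} for the containments $U_{i-1}\subseteq U_i$ and Lemma \ref{lem:Bmudim} for $\dim U_i = i+1$, exactly as you do. Your version merely spells out the rank bookkeeping in a bit more detail.
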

\begin{proof}
By Lemma
\ref{lem:reverse}
$U_{i-1} \subseteq U_i$ for $1 \leq i \leq N$.
By Lemma
\ref{lem:Bmudim} the subspace $U_i$ has dimension $i+1$
for $0 \leq i \leq N$. The result follows.
\end{proof}

\begin{definition}
\label{def:inducedflag}
\rm
Referring to Lemma
\ref{lem:ind}, the flag
$\lbrace U_i\rbrace_{i=0}^N$ 
is called the  {\it $B$-flag induced by
 $\lbrace \lambda_n\rbrace_{n=0}^N$}.
 \end{definition}

\begin{definition}
\label{lem:threeflags}
\rm
Pick $\eta \in \lbrace 1,2,3\rbrace$.
Recall from
Definition
\ref{eq:flagex} the
 flag 
$\lbrack \eta \rbrack$ in 
$\Delta_{\leq N}$.
 The $B$-flag induced by
$\lbrack \eta \rbrack$ will be called the
{\it $B$-flag 
$\lbrack \eta \rbrack$}. This is a flag on $V$.
\end{definition}

\noindent The next result is meant to clarify Definition
\ref{lem:threeflags}.

\begin{lemma}
\label{lem:clar}
Pick $\eta \in \lbrace 1,2,3\rbrace$. Let
$\lbrace U_n \rbrace_{n=0}^N$ denote the
$B$-flag $\lbrack \eta \rbrack $.
Then for $0 \leq n \leq N$, $U_n$ is equal to
the space $B_{\mu}$ from
Lemma
\ref{lem:3Fclar}.
\end{lemma}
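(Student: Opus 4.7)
The plan is to unwind the chain of definitions that constitute the $B$-flag $[\eta]$ and to match the indexing with that in Lemma \ref{lem:3Fclar}. The only substantive thing to track is the index inversion coming from Lemma \ref{lem:reverse}: under $\mu \mapsto B_\mu$ the order on $\Delta_{\leq N}$ is reversed, so the $n$-th component of the $B$-flag (indexed by increasing dimension) corresponds to the rank-$(N-n)$ element of the $\Delta_{\leq N}$-flag.

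First I would recall from Definition \ref{eq:flagex} that the flag $[\eta]$ in $\Delta_{\leq N}$ is $\{ne_\eta\}_{n=0}^N$, so its component in $\Delta_{N-n}$ is $(N-n)e_\eta$. By Definition \ref{def:corner}, $(N-n)e_\eta$ is precisely the $\eta$-corner of $\Delta_{N-n}$, which is the element $\mu$ of Lemma \ref{lem:3Fclar}. Next, by Definition \ref{lem:threeflags} together with Definition \ref{def:inducedflag} and the formula in Lemma \ref{lem:ind}, the $n$-th component of the $B$-flag $[\eta]$ is $U_n = B_{\lambda_{N-n}}$, where $\lambda_k$ is the rank-$k$ element of the flag $[\eta]$ in $\Delta_{\leq N}$. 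Taking $k = N-n$ gives $\lambda_{N-n} = (N-n)e_\eta = \mu$, so $U_n = B_\mu$, which is the desired identification.

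Finally, to confirm that this $B_\mu$ is the same object appearing in Lemma \ref{lem:3Fclar}, I would note that $B_\mu$ is defined by the single formula $(\ref{eq:Bmudef})$ in Definition \ref{def:extB}, and Lemma \ref{lem:3Fclar}(i) records exactly this formula specialized to the case that $\mu$ is the $\eta$-corner of $\Delta_{N-n}$ (using Lemma \ref{lem:moredetail}(i),(iv) to rewrite $\mu \le \lambda$ as ``$\lambda$ lies at distance $\le n$ from the $\eta$-corner of $\Delta_N$''). Hence no ambiguity arises.

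The main ``obstacle'' is purely notational: one has to verify carefully that the inversion in Lemma \ref{lem:ind} (sending rank $N-n$ to flag-index $n$) is consistent with the labeling convention in Lemma \ref{lem:3Fclar} (where the parameter $n$ measures the distance from the $\eta$-corner of $\Delta_N$, equivalently $N-\mathrm{rank}(\mu)$). Once that bookkeeping is unwound, the proof is a direct substitution of definitions, with no hidden computation required.
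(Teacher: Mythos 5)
Your proof is correct and is essentially the paper's own argument: the paper proves this lemma simply by citing Lemma \ref{lem:ind} and Definitions \ref{def:inducedflag}, \ref{lem:threeflags}, and your write-up just spells out that unwinding, correctly identifying $\lambda_{N-n}=(N-n)e_\eta$ as the $\eta$-corner $\mu$ of $\Delta_{N-n}$ so that $U_n=B_{\lambda_{N-n}}=B_\mu$.
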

\begin{proof}
By Lemma
\ref{lem:ind} and Definitions
\ref{def:inducedflag},
\ref{lem:threeflags}.
\end{proof}

\begin{lemma} 
\label{lem:flagext}
Pick $\eta \in \lbrace 1,2,3\rbrace$. Let
$\lbrace U_n \rbrace_{n=0}^N$ denote the
$B$-flag $\lbrack \eta \rbrack$.
Then for $1 \leq n \leq N$ the sum
$U_n = U_{n-1} + B_\lambda$ is direct, where $\lambda$ is
any location in $\Delta_N$ at distance $n$ from the $\eta$-corner.
\end{lemma}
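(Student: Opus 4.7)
The plan is to combine the direct-sum decompositions of $U_n$ and $U_{n-1}$ given by Lemmas \ref{lem:clar} and \ref{lem:3Fclar}(ii) with the dependence relation from Corollary \ref{cor:usefuldep} applied to a sequence of black 3-cliques that straddle these two decompositions. Since the argument is symmetric in $\eta \in \lbrace 1,2,3\rbrace$, I carry it out for $\eta=1$. By Lemma \ref{lem:linedist}, the locations at distance $n$ from the $\eta$-corner form the unique $\eta$-line of cardinality $n+1$; enumerate them as $\lambda_j = (N-n,\, n-j,\, j)$ for $0\leq j \leq n$. Similarly enumerate the locations at distance $n-1$ as $\mu_i = (N-n+1,\, n-1-i,\, i)$ for $0\leq i \leq n-1$. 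Lemma \ref{lem:clar} together with Lemma \ref{lem:3Fclar}(ii) yields
\begin{equation*}
U_n \,=\, \bigoplus_{j=0}^n B_{\lambda_j},
\qquad
U_{n-1} \,=\, \bigoplus_{i=0}^{n-1} B_{\mu_i}.
\end{equation*}
The given $\lambda$ is one of the $\lambda_j$, say $\lambda = \lambda_t$, and in particular $B_\lambda \subseteq U_n$.

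The key combinatorial fact is that for each $0 \leq i \leq n-1$ the triple $\lbrace \mu_i, \lambda_i, \lambda_{i+1}\rbrace$ is a black 3-clique in $\Delta_N$. Indeed, taking $(a,b,c) = (N-n,\, n-i-1,\, i) \in \Delta_{N-1}$ and applying Lemma \ref{lem:white}, the corresponding black 3-clique $\lbrace (a+1,b,c),\, (a,b+1,c),\, (a,b,c+1)\rbrace$ is precisely $\lbrace \mu_i, \lambda_i, \lambda_{i+1}\rbrace$. By Corollary \ref{cor:usefuldep}, each of $B_{\mu_i}, B_{\lambda_i}, B_{\lambda_{i+1}}$ lies in the sum of the other two.

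I now show that $B_{\lambda_j} \subseteq B_\lambda + U_{n-1}$ for all $0 \leq j \leq n$, by induction on $|j-t|$. The base case $j=t$ is immediate. For the step: if $j > 0$ then $B_{\lambda_{j-1}} \subseteq B_{\mu_{j-1}} + B_{\lambda_j}$ (the black clique at index $j-1$); if $j < n$ then $B_{\lambda_{j+1}} \subseteq B_{\mu_j} + B_{\lambda_j}$ (the black clique at index $j$). In either case the right-hand side is contained in $U_{n-1} + (B_\lambda + U_{n-1}) = B_\lambda + U_{n-1}$ by the induction hypothesis. Summing over $j$ gives $U_n = \sum_{j=0}^n B_{\lambda_j} \subseteq B_\lambda + U_{n-1}$, and the reverse containment is immediate, so $B_\lambda + U_{n-1} = U_n$. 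Comparing dimensions, $n+1 = \dim U_{n-1} + \dim B_\lambda$ forces the sum to be direct. The only step requiring care is the verification that the triples $\lbrace \mu_i, \lambda_i, \lambda_{i+1}\rbrace$ are black (rather than white) 3-cliques, which is a short check against Lemma \ref{lem:white}; once that is in hand, the induction runs cleanly.
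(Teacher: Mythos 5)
Your argument is correct, and every step checks out: the enumeration of the two lines, the identification of $\lbrace \mu_i,\lambda_i,\lambda_{i+1}\rbrace$ as the black 3-clique attached to $(N-n,\,n-i-1,\,i)\in\Delta_{N-1}$ via Lemma \ref{lem:white}, the two-sided induction from $j=t$ outward, and the final dimension count (using $\dim U_n=n+1$ from Lemma \ref{lem:ind} and $\dim B_\lambda=1$). There is no circularity: all the ingredients (Lemmas \ref{lem:linedist}, \ref{lem:3Fclar}, \ref{lem:clar}, \ref{lem:white}, Corollary \ref{cor:usefuldep}) precede this lemma.

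The route differs from the paper's in how directness is extracted. The paper also uses the black-3-clique mechanism, but only to show that $U_{n-1}+B_\lambda$ is \emph{independent of the choice of} $\lambda$ on the distance-$n$ line (via $B_\lambda+B_\nu=B_\mu+B_\nu$ from Lemma \ref{lem:threedim2}); it then specializes $\lambda$ to a boundary location and reads off directness from the nested $\xi$-boundary decompositions of $U_{n-1}$ and $U_n$ supplied by Lemma \ref{lem:3Fclar}(iii), with no dimension count. You instead work with the full distance-$n$ and distance-$(n-1)$ line decompositions from Lemma \ref{lem:3Fclar}(ii), prove the equality $U_n=U_{n-1}+B_\lambda$ for the given $\lambda$ directly by propagating containments along the chain of black cliques, and then force directness by comparing dimensions. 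Your version proves slightly more along the way (the explicit spanning statement for an arbitrary $\lambda$ without first reducing to the boundary), at the cost of invoking a dimension argument where the paper gets directness structurally; both are clean and of comparable length.
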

\begin{proof}
We first show that $U_{n-1}+B_\lambda$ is independent
of the choice of $\lambda$. Let $L$ denote
the set of locations in $\Delta_N$ that are at distance $n$
from the $\eta$-corner. By Lemma
\ref{lem:linedist},
$L$ is the $\eta $-line
with cardinality $n+1$. 
By construction $\lambda \in L$.
Let $\mu \in L$ be adjacent
to $\lambda$.
By Lemma
\ref{lem:edgegivesclique} there exists a unique
$\nu \in \Delta_N$ such that $\lambda, \mu,\nu$ form
a black 3-clique.
The location $\nu$ is at distance $n-1$
from the $\eta$-corner of $\Delta_N$, so $B_{\nu} \subseteq U_{n-1}$
in view of Lemmas
\ref{lem:3Fclar}(i),
\ref{lem:clar}.
By
Lemma
\ref{lem:threedim2} we have $B_\lambda + B_\nu= B_\mu+B_\nu$.
By these comments
 $U_{n-1}+B_\lambda = U_{n-1}+B_\mu$. 
It follows that 
 $U_{n-1}+ B_\lambda $  is independent of the choice of $\lambda$.
So we may assume that $\lambda$ is on the
boundary of $\Delta_N$. Now there exists  
$\xi \in \lbrace 1,2,3\rbrace$ such that $\xi \not=\eta$
and $\lambda$ is on the $\xi$-boundary of
$\Delta_N$. By
Lemmas
\ref{lem:3Fclar}(iii),
\ref{lem:clar}
we have a direct sum
$U_{n-1} = \sum_{\zeta} B_\zeta$, where the sum
is over the locations $\zeta$ on the 
$\xi$-boundary of $\Delta_N$
 at distance
$\leq n-1$ from the $\eta$-corner. 
Similarly we have a direct sum
$U_{n} = \sum_{\zeta} B_\zeta$, where the sum
is over the locations $\zeta$ on the $\xi$-boundary of $\Delta_N$
 at distance
$\leq n$ from the $\eta $-corner. 
By assumption $\lambda$ is on the
$\xi$-boundary of $\Delta_N$ at distance $n$
from the $\eta$-corner.
By these comments the sum
$U_n = U_{n-1} +B_\lambda$ is direct.
\end{proof}

\begin{proposition}
\label{lem:dist}
Pick $\eta \in \lbrace 1,2,3\rbrace$.
Let $S$ denote a  subset of
$\Delta_N$ that is 
$\eta$-geodesic
in the sense of Definition
\ref{lem:distcoord2}.
Then the sum $\sum_{\lambda \in S} B_\lambda$
is direct.
\end{proposition}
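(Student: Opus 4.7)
The plan is to reduce the statement to the $B$-flag $\lbrack \eta \rbrack$ on $V$ together with Lemma \ref{lem:flagext}, and then proceed by induction on $|S|$. Because $S$ is $\eta$-geodesic, distinct elements of $S$ have distinct $\eta$-coordinates; by Lemma \ref{lem:linedist} this is equivalent to their having pairwise distinct distances from the $\eta$-corner of $\Delta_N$. I will therefore enumerate the elements of $S$ as $\lambda_1, \ldots, \lambda_k$ in order of strictly increasing distance $d_i$ from the $\eta$-corner, so that $d_1 < d_2 < \cdots < d_k$.

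Let $\lbrace U_n \rbrace_{n=0}^N$ denote the $B$-flag $\lbrack \eta \rbrack$. Combining Lemmas \ref{lem:3Fclar}(i) and \ref{lem:clar}, each $U_n$ is the sum of those $B_\lambda$ with $\lambda$ at distance $\leq n$ from the $\eta$-corner; in particular $B_{\lambda_i} \subseteq U_{d_i}$ for $1 \leq i \leq k$. I will argue by induction on $k$ that $\sum_{i=1}^k B_{\lambda_i}$ is direct. The case $k=1$ is immediate since $B_{\lambda_1}$ is one-dimensional. For the inductive step, the hypothesis that $\sum_{i<k} B_{\lambda_i}$ is direct, combined with the containments above, places this sum inside $U_{d_{k-1}}$, and since $d_{k-1} \leq d_k - 1$ we further have $U_{d_{k-1}} \subseteq U_{d_k - 1}$. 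Lemma \ref{lem:flagext} supplies the direct sum $U_{d_k} = U_{d_k-1} + B_{\lambda_k}$, which, since $B_{\lambda_k}$ is one-dimensional, forces $U_{d_k-1} \cap B_{\lambda_k} = 0$. Hence $\bigl(\sum_{i<k} B_{\lambda_i}\bigr) \cap B_{\lambda_k} = 0$, completing the induction.

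I do not anticipate any substantive obstacle: the crucial geometric input has been isolated in Lemma \ref{lem:flagext}, which packages exactly the ``add one new $B_\lambda$ to the flag as a direct summand'' step needed at each stage. The only delicate point is the choice of ordering on $S$, namely by distance from the $\eta$-corner, which guarantees at each inductive step that the new summand $B_{\lambda_k}$ lies outside the strictly smaller flag member $U_{d_k - 1}$ containing the running partial sum.
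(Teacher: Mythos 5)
Your proof is correct and uses essentially the same mechanism as the paper's: both isolate the element of $S$ at maximal distance from the $\eta$-corner, place the remaining $B_\lambda$'s inside the flag member $U_{n-1}$ via Lemmas \ref{lem:3Fclar}(i) and \ref{lem:clar}, and invoke Lemma \ref{lem:flagext} to split off the last summand. The paper phrases this as a minimal-counterexample contradiction rather than a forward induction on $|S|$, but the two formulations are interchangeable.
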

\begin{proof}
We assume that the assertion is false,
and get a contradiction. By assumption
there exists
a counterexample $S$. Without loss,
we may assume that among all the counterexamples,
the cardinality of $S$  is minimal. This
cardinality is at least 2.
Let $\mu$ denote the location in $S$
that has minimal $\eta$-coordinate, 
and abbreviate $R=S\backslash \lbrace \mu \rbrace $.
By construction the sum
$ \sum_{\lambda \in S} B_\lambda$ is not direct, 
and the sum
$ \sum_{\lambda \in R} B_\lambda$ is direct.
By Definition 
\ref{def:ba}
$B_{\mu}$ has dimension 1.
By these comments
$B_\mu \subseteq \sum_{\lambda \in R} B_\lambda$.
Denote the $\eta$-coordinate of $\mu$ by $N-n$, and observe $n\geq 1$.
By construction $\mu$ is at distance $n$ from the $\eta$-corner of
$\Delta_N$, and  each element of $R$ is
at distance $\leq n-1$ from the 
$\eta$-corner
of $\Delta_N$.
Let $\lbrace U_i\rbrace_{i=0}^N$ denote the
$B$-flag $\lbrack \eta \rbrack$. 
By Lemma
\ref{lem:flagext} 
 the sum
$U_n = U_{n-1} + B_{\mu}$ is direct.
Using Lemmas
\ref{lem:3Fclar}(i),
\ref{lem:clar} we obtain
$B_\lambda \subseteq U_{n-1}$ for all
$\lambda \in R$.
Therefore $B_\mu \subseteq U_{n-1}$,
 for a contradiction. The result follows.
\end{proof}



\begin{corollary}
The function $B$ is injective.
\end{corollary}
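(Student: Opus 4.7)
The plan is to derive injectivity directly from Proposition \ref{lem:dist}. Suppose $\lambda, \mu \in \Delta_N$ are distinct; we must show $B_\lambda \neq B_\mu$. Since $\lambda$ and $\mu$ have the same coordinate sum $N$ but are distinct, they cannot agree in all three coordinates, so there exists $\eta \in \{1,2,3\}$ with respect to which the $\eta$-coordinates of $\lambda$ and $\mu$ differ. By Definition \ref{lem:distcoord2}, this means the two-element set $S = \{\lambda, \mu\}$ is $\eta$-geodesic.

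Now I invoke Proposition \ref{lem:dist} with this $S$ and this $\eta$: the sum $B_\lambda + B_\mu$ is direct. In particular, $B_\lambda \cap B_\mu = 0$. Since $B_\lambda$ and $B_\mu$ are one-dimensional (by Definition \ref{def:ba}) and hence nonzero, they cannot coincide, giving $B_\lambda \neq B_\mu$. This establishes injectivity of $B$.

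There is no real obstacle here: the entire work has already been done in Proposition \ref{lem:dist}. The only thing to check is the trivial observation that any two distinct locations in $\Delta_N$ differ in at least one coordinate, which combined with the $\eta$-geodesic property and the one-dimensionality of each $B_\lambda$ yields the conclusion in one step.
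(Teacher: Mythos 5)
Your proof is correct and follows essentially the same route as the paper: the paper also deduces injectivity from Proposition \ref{lem:dist}, arguing (in the contrapositive direction to yours) that $B_\lambda=B_\mu$ forces $\lambda,\mu$ to share every $\eta$-coordinate and hence coincide. The only difference is presentational, so nothing further is needed.
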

\begin{proof}
Let $\lambda, \mu$ denote locations in $\Delta_N$
such that $B_\lambda = B_\mu$.
By Proposition
\ref{lem:dist} the locations $\lambda, \mu$ 
have the same $\eta$-coordinate for all 
$\eta \in \lbrace 1,2,3\rbrace$. Therefore
$\lambda = \mu$.
\end{proof}

\noindent The following definition is for notational
convenience.
\begin{definition} 
\label{def:outofbounds}
Let $B$ denote a Billiard Array on $V$.
Define $B_\lambda = 0$ for all $\lambda \in \mathbb R^3$ such that
$\lambda \notin \Delta_{\leq N}$.
\end{definition}

\section{Bases, decompositions, and flags} 

\noindent 
Throughout this section the following notation is
in effect.
Fix $N \in \mathbb N$. Let $V$ denote a vector space over
$\mathbb F$ with dimension $N+1$. 
Let $\mathcal B$ denote a Concrete Billiard
Array on $V$, and let $B$ denote the corresponding Billiard Array
on $V$.

\begin{lemma} 
\label{lem:geodesicpath}
Let $\lbrace \lambda_i\rbrace_{i=0}^n$
denote a geodesic path in $\Delta_N$, and define
$\mu=\lambda_0 \wedge\lambda_n$.  Then the following
{\rm (i)--(iii)} hold:
\begin{enumerate}
\item[\rm (i)] $\mu \leq \lambda_i$ for $0 \leq i \leq n$;
\item[\rm (ii)]
 the sequence
$\lbrace B_{\lambda_i}\rbrace_{i=0}^n$ is a decomposition
of $B_\mu$;
\item[\rm (iii)] 
the sequence $\lbrace \mathcal B_{\lambda_i}\rbrace_{i=0}^n$
is a basis for $B_\mu$.
\end{enumerate}
\end{lemma}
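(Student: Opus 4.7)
The plan is to exploit Lemma~\ref{lem:etaGeo}, which supplies some $\eta \in \lbrace 1,2,3\rbrace$ making the path $\eta$-geodesic, together with Lemma~\ref{lem:findrank} and Proposition~\ref{lem:dist}. Parts (i), (ii), (iii) will be handled in order, with (i) feeding into (ii) and (iii) falling out of (ii).

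For part (i), after permuting the coordinates of $\mathbb R^3$ and possibly inverting the path, I may assume the path is $1$-geodesic with $\varepsilon = +1$ in the sense of Lemma~\ref{def:etaGeo}. Each step $\lambda_j - \lambda_{j-1}$ then lies in $\Phi$ and has $1$-coordinate equal to $+1$, which forces it to be either $\alpha = (1,-1,0)$ or $-\gamma = (1,0,-1)$. In particular every step has nonpositive $2$- and $3$-coordinates, so the $2$-coordinate and $3$-coordinate of $\lambda_i$ are each weakly decreasing in $i$; while the $1$-coordinate is strictly increasing. Consequently each coordinate of $\lambda_i$ lies between the corresponding coordinates of $\lambda_0$ and $\lambda_n$, so every coordinate of $\lambda_i$ is at least the corresponding coordinate of $\mu = \lambda_0 \wedge \lambda_n$, giving $\mu \leq \lambda_i$.

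For part (ii), by Lemma~\ref{lem:findrank} we have ${\rm rank}(\mu) = N - \partial(\lambda_0,\lambda_n) = N-n$, so Lemma~\ref{lem:Bmudim} gives $\dim B_\mu = n+1$. Part (i) combined with Definition~\ref{def:extB} shows $B_{\lambda_i} \subseteq B_\mu$ for each $i$. The subset $\lbrace \lambda_0, \ldots, \lambda_n\rbrace$ of $\Delta_N$ is $\eta$-geodesic by Step~1, so Proposition~\ref{lem:dist} yields that the sum $\sum_{i=0}^n B_{\lambda_i}$ is direct. Since this is a direct sum of $n+1$ one-dimensional subspaces inside the $(n+1)$-dimensional space $B_\mu$, a dimension count forces equality, which exhibits $\lbrace B_{\lambda_i}\rbrace_{i=0}^n$ as a decomposition of $B_\mu$.

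For part (iii), Lemma~\ref{lem:betanon0} guarantees that each $\mathcal B_{\lambda_i}$ is a nonzero vector spanning the line $B_{\lambda_i}$, so part (ii) identifies $\lbrace \mathcal B_{\lambda_i}\rbrace_{i=0}^n$ as a basis for $B_\mu$ induced by that decomposition. The only mildly delicate point is the step analysis in part (i); everything else is a routine assembly of results from the preceding sections, and I do not anticipate a genuine obstacle.
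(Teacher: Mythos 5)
Your proposal is correct, and parts (ii) and (iii) coincide with the paper's argument: rank of $\mu$ via Lemma \ref{lem:findrank}, dimension of $B_\mu$ via Lemma \ref{lem:Bmudim}, directness of $\sum_i B_{\lambda_i}$ via Lemma \ref{lem:etaGeo} and Proposition \ref{lem:dist}, and a dimension count. The only divergence is in part (i). The paper disposes of it in one line: the up-set $\lbrace \lambda \in \Delta_N \mid \mu \leq \lambda\rbrace$ is geodesically closed (Lemma \ref{lem:gcsubset}) and contains $\lambda_0$ and $\lambda_n$, hence contains every $\lambda_i$. You instead normalize to a $1$-geodesic path with $\varepsilon = +1$, observe that each step must be $\alpha=(1,-1,0)$ or $-\gamma=(1,0,-1)$, and deduce that each coordinate of $\lambda_i$ is monotone along the path and therefore bounded below by the corresponding coordinate of $\mu=\lambda_0\wedge\lambda_n$. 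That argument is valid (the WLOG is harmless since $\wedge$ and $\leq$ are symmetric under permuting coordinates and under inverting the path), and it has the virtue of being self-contained; the cost is that it re-derives, in this special case, exactly the geodesic-closedness that the paper has already packaged into Lemma \ref{lem:gcsubset} via Example \ref{ex:GC} and Lemma \ref{lem:GCint}. Either route is fine.
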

\begin{proof}
(i) 
The set
$\lbrace \lambda \in \Delta_N\;|\;\mu\leq \lambda\rbrace$
is geodesically closed by 
Lemma \ref{lem:gcsubset}.
By construction $\mu \leq \lambda_0$ and
$\mu\leq \lambda_n$.
Now by 
 Definition
\ref{def:GC},
$\mu \leq \lambda_i $ for $0 \leq i \leq n$.
\\
\noindent (ii)
By Definition
\ref{def:extB} and
(i) above,
 $B_{\lambda_i} \subseteq B_u$
for $0 \leq i \leq n$.
Observe that $n=\partial(\lambda_0, \lambda_n)$
by Definition
\ref{def:Geo},
so $\mu \in \Delta_{N-n}$ by Lemma
\ref{lem:findrank}. Now $B_\mu$
has dimension $n+1$ by
Lemma
\ref{lem:Bmudim}.
It remains to show that the sum
$\sum_{i=0}^n B_{\lambda_i}$ is direct.
This sum is direct by 
 Lemma
\ref{lem:etaGeo} and
Proposition
\ref{lem:dist}.
\\
\noindent (iii) By (ii) and since
$\mathcal B_\lambda$ spans $B_\lambda$ for all
$\lambda \in \Delta_N$.
\end{proof}

\begin{definition}
\label{def:basisInduced}
\rm
Referring to Lemma
\ref{lem:geodesicpath},
the decomposition
 $\lbrace  B_{\lambda_i}\rbrace_{i=0}^n$
(resp.
basis
$\lbrace \mathcal B_{\lambda_i}\rbrace_{i=0}^n$)
is said to be {\it $B$-induced}
(resp. {\it $\mathcal B$-induced})
by the
path
 $\lbrace \lambda_i\rbrace_{i=0}^n$.
\end{definition}

\begin{definition}
\label{def:Bdec}
\rm 
For distinct $\eta, \xi \in\lbrace 1,2,3\rbrace$
we define
 a decomposition of $V$ called the
{\it $B$-decomposition $\lbrack \eta, \xi\rbrack$}.
This decomposition is $B$-induced by the
path
$\lbrack \eta, \xi\rbrack$ in $\Delta_N$  from
Definition
\ref{def:sixpaths}.
Similarly, we define
a basis of $V$ called the
{\it $\mathcal B$-basis  $\lbrack \eta, \xi\rbrack$}.
This basis is $\mathcal B$-induced by the path
$\lbrack \eta, \xi\rbrack$ in $\Delta_N$. 
By construction, the $\mathcal B$-basis 
$\lbrack \eta, \xi\rbrack$ of $V$ induces the
$B$-decomposition
$\lbrack \eta, \xi\rbrack$ of $V$, in the sense of the first paragraph
of Section 6.
\end{definition}

\begin{lemma} 
\label{lem:BdecClar}
For distinct $\eta, \xi \in \lbrace 1,2,3\rbrace$
consider the 
$B$-decomposition
(resp. $\mathcal B$-basis) $\lbrack \eta, \xi\rbrack$ of
$V$. For $0 \leq i \leq N$ the
$i$-component of
this decomposition (resp. basis) is included in $B$
(resp. $\mathcal B$)
at the location $\lambda_i$ given in Lemma
\ref{lem:sixpathsinfo}.
\end{lemma}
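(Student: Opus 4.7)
The plan is to simply unwind the chain of definitions; no nontrivial computation is required. By Definition~\ref{def:Bdec}, the $B$-decomposition $\lbrack \eta,\xi\rbrack$ of $V$ is the decomposition $B$-induced by the path $\lbrack \eta,\xi\rbrack$ in $\Delta_N$. Writing this path as $\lbrace \lambda_i\rbrace_{i=0}^N$, Definition~\ref{def:basisInduced} says precisely that being $B$-induced by this path means the resulting decomposition is $\lbrace B_{\lambda_i}\rbrace_{i=0}^N$. Hence its $i$-component equals $B_{\lambda_i}$, which (by the terminology introduced immediately after Definition~\ref{def:ba}) is exactly the assertion that the $i$-component is included in $B$ at location $\lambda_i$.

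The only remaining step is to identify the locations $\lambda_i$ explicitly. By Definition~\ref{def:sixpaths}, $\lbrack \eta,\xi\rbrack$ denotes the unique geodesic path in $\Delta_N$ from the $\eta$-corner to the $\xi$-corner, and Lemma~\ref{lem:sixpathsinfo} tabulates $\lambda_i$ for each ordered pair $(\eta,\xi)$. Combining this with the previous paragraph gives the claimed description of the $i$-component of the $B$-decomposition $\lbrack \eta,\xi\rbrack$.

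The argument for the $\mathcal B$-basis $\lbrack \eta,\xi\rbrack$ is word-for-word parallel: Definition~\ref{def:Bdec} declares it to be the basis $\mathcal B$-induced by the path $\lbrack \eta,\xi\rbrack$, so by Definition~\ref{def:basisInduced} its $i$-component is $\mathcal B_{\lambda_i}$, i.e.\ is included in $\mathcal B$ at $\lambda_i$, with $\lambda_i$ again supplied by Lemma~\ref{lem:sixpathsinfo}.

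There is no real obstacle here; the lemma is a bookkeeping statement whose only purpose is to name, once and for all, which location in $\Delta_N$ supplies the $i$-th component of each of the six standard decompositions and bases. In the written proof I would simply cite Definitions~\ref{def:Bdec} and~\ref{def:basisInduced} together with Lemma~\ref{lem:sixpathsinfo} and leave it at that.
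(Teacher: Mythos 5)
Your proposal is correct and matches the paper's own argument, which proves the lemma by citing exactly Lemma~\ref{lem:sixpathsinfo} together with Definitions~\ref{def:basisInduced} and~\ref{def:Bdec}. Your write-up just spells out the same unwinding of definitions in more detail.
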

\begin{proof}
By Lemma \ref{lem:sixpathsinfo} and
Definitions
\ref{def:basisInduced},
\ref{def:Bdec}.
\end{proof}

\begin{lemma}
\label{lem:BinvB}
For distinct
$\eta, \xi \in \lbrace 1,2,3\rbrace$ the
$B$-decomposition (resp. $\mathcal B$-basis) $\lbrack \eta, \xi\rbrack$ is
the inversion of the $B$-decomposition
(resp. $\mathcal B$-basis)
$\lbrack \xi, \eta \rbrack$.
\end{lemma}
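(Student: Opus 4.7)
The plan is to reduce this immediately to the corresponding statement about paths in $\Delta_N$, which has already been established as Lemma \ref{lem:boundarypathInv}.

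First, I would unfold the definitions. By Definition \ref{def:Bdec}, the $B$-decomposition $\lbrack \eta, \xi \rbrack$ is the decomposition $B$-induced by the path $\lbrack \eta, \xi \rbrack$ in $\Delta_N$, which by Definition \ref{def:basisInduced} means: if we write $\lbrack \eta, \xi \rbrack = \lbrace \lambda_i \rbrace_{i=0}^N$, then the $B$-decomposition is $\lbrace B_{\lambda_i} \rbrace_{i=0}^N$. Similarly for the $\mathcal B$-basis $\lbrack \eta, \xi \rbrack$, it is the sequence $\lbrace \mathcal B_{\lambda_i} \rbrace_{i=0}^N$.

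Next, I would invoke Lemma \ref{lem:boundarypathInv}, which tells us that the path $\lbrack \xi, \eta \rbrack$ is the inversion of the path $\lbrack \eta, \xi \rbrack$. Thus if $\lbrack \eta, \xi \rbrack = \lbrace \lambda_i \rbrace_{i=0}^N$, then $\lbrack \xi, \eta \rbrack = \lbrace \lambda_{N-i} \rbrace_{i=0}^N$. Applying the definitions from Section 11 to this reversed path, the $B$-decomposition $\lbrack \xi, \eta \rbrack$ is $\lbrace B_{\lambda_{N-i}} \rbrace_{i=0}^N$, and the $\mathcal B$-basis $\lbrack \xi, \eta \rbrack$ is $\lbrace \mathcal B_{\lambda_{N-i}} \rbrace_{i=0}^N$. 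By the definition of inversion of a sequence given in Section 2, these are precisely the inversions of $\lbrace B_{\lambda_i} \rbrace_{i=0}^N$ and $\lbrace \mathcal B_{\lambda_i} \rbrace_{i=0}^N$ respectively, yielding the claim.

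There is no real obstacle here; the lemma is a bookkeeping consequence of chaining three definitions (Definition \ref{def:sixpaths}, Definition \ref{def:basisInduced}, Definition \ref{def:Bdec}) with Lemma \ref{lem:boundarypathInv}. The only thing to be careful about is matching indices correctly when transferring ``inversion'' from paths in $\Delta_N$ to sequences of subspaces/vectors in $V$, but this is immediate from the fact that the induced decomposition and basis are defined componentwise along the path.
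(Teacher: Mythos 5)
Your proof is correct and follows the same route as the paper, which simply cites Lemma \ref{lem:boundarypathInv} (or alternatively Lemma \ref{lem:BdecClar}); you have just spelled out the bookkeeping of how the path inversion transfers componentwise through Definitions \ref{def:basisInduced} and \ref{def:Bdec}. No gaps.
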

\begin{proof} By Lemma
\ref{lem:boundarypathInv} or Lemma
\ref{lem:BdecClar}.
\end{proof}

\begin{lemma}
\label{lem:decIndFlag}
 For distinct
$\eta, \xi \in\lbrace 1,2,3\rbrace$ 
the $B$-decomposition $\lbrack \eta, \xi\rbrack$ of $V$
induces the $B$-flag $\lbrack \eta \rbrack$ on $V$.
\end{lemma}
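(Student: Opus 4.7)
The plan is to show directly that for $0 \le i \le N$ we have $U_i = V_0+V_1+\cdots+V_i$, where $\{V_j\}_{j=0}^N$ denotes the $B$-decomposition $\lbrack \eta,\xi\rbrack$ and $\{U_i\}_{i=0}^N$ denotes the $B$-flag $\lbrack \eta\rbrack$. This is exactly the meaning of ``decomposition induces flag'' as spelled out in the first paragraph of Section 6.

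First I would identify both sides in terms of the extended map of Definition \ref{def:extB}. Applying Lemma \ref{lem:BdecClar} together with Lemma \ref{lem:sixpathsinfo} yields $V_j = B_{\lambda_j}$ for $0 \le j \le N$, where $\{\lambda_j\}_{j=0}^N$ is the boundary path $\lbrack \eta,\xi \rbrack$. On the other hand, unwinding Definition \ref{lem:threeflags} via Lemma \ref{lem:clar} gives $U_i = B_{(N-i)e_\eta}$; that is, $U_i$ is the space attached to the $\eta$-corner of $\Delta_{N-i}$.

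The key step is to apply Lemma \ref{lem:3Fclar}(iii) with a carefully chosen third index. Let $\zeta$ denote the unique element of $\{1,2,3\}$ distinct from $\eta$ and $\xi$. Inspection of the table in Lemma \ref{lem:sixpathsinfo} shows that every location on the path $\lbrack \eta,\xi \rbrack$ has $\zeta$-coordinate $0$, so the path is contained in the $\zeta$-boundary of $\Delta_N$; since both sets have cardinality $N+1$, they coincide as subsets of $\Delta_N$. The same table records that $\lambda_j$ sits at distance $j$ from the $\eta$-corner, for $0 \le j \le N$.

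Applying Lemma \ref{lem:3Fclar}(iii) with this $\zeta$ (playing the role of ``$\xi$'' in that lemma) and $n=i$ then expresses $U_i$ as the direct sum of $B_\lambda$ over all $\lambda$ lying on the $\zeta$-boundary at distance at most $i$ from the $\eta$-corner; by the preceding paragraph this set of locations is precisely $\{\lambda_0,\ldots,\lambda_i\}$. Hence $U_i = \sum_{j=0}^i B_{\lambda_j} = \sum_{j=0}^i V_j$, which is exactly the assertion to be proved. There is no real obstacle here: the argument is bookkeeping built on top of the two preceding lemmas, and the only point that requires attention is selecting $\zeta$ (the third index, not $\xi$) as the boundary direction in Lemma \ref{lem:3Fclar}(iii), so that the resulting direct sum runs along the path $\lbrack \eta,\xi\rbrack$ itself.
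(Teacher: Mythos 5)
Your argument is correct and is essentially the paper's proof: the paper also establishes this lemma by combining Lemma \ref{lem:clar} (identifying component $i$ of the $B$-flag $\lbrack \eta \rbrack$ with $B_\mu$ for $\mu$ the $\eta$-corner of $\Delta_{N-i}$) with Lemma \ref{lem:3Fclar}(iii), and you have simply written out the bookkeeping, including the one delicate point of choosing the third index $\zeta$ so that the boundary in Lemma \ref{lem:3Fclar}(iii) coincides with the path $\lbrack \eta,\xi\rbrack$. No gaps.
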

\begin{proof}
By Lemmas
\ref{lem:3Fclar}(iii), \ref{lem:clar}.
\end{proof}

\begin{lemma}
\label{lem:mutOp}
The $B$-flags 
$\lbrack 1\rbrack$,
$\lbrack 2\rbrack$,
$\lbrack 3\rbrack$ on $V$
are mutually opposite.
\end{lemma}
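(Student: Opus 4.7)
The plan is to assemble the result directly from the decomposition/flag machinery already set up, specifically Lemmas \ref{lem:BinvB} and \ref{lem:decIndFlag} together with the characterization of opposite flags given in Section 6.

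First I would recall from Section 6 that two flags $\lbrace U_i\rbrace_{i=0}^N$ and $\lbrace U'_i\rbrace_{i=0}^N$ on $V$ are opposite precisely when there is a decomposition of $V$ that induces the first flag and whose inversion induces the second. So to check that the $B$-flags $\lbrack \eta\rbrack$ and $\lbrack \xi\rbrack$ are opposite for distinct $\eta,\xi \in \lbrace 1,2,3\rbrace$, it suffices to exhibit a single decomposition of $V$ with these two induction properties.

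The natural candidate is the $B$-decomposition $\lbrack \eta,\xi\rbrack$ of $V$ from Definition \ref{def:Bdec}. By Lemma \ref{lem:decIndFlag}, this decomposition induces the $B$-flag $\lbrack \eta\rbrack$. By Lemma \ref{lem:BinvB}, its inversion is the $B$-decomposition $\lbrack \xi,\eta\rbrack$, which by Lemma \ref{lem:decIndFlag} induces the $B$-flag $\lbrack \xi\rbrack$. This is exactly the condition for the $B$-flags $\lbrack \eta\rbrack$ and $\lbrack \xi\rbrack$ to be opposite.

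Applying this to each of the three unordered pairs $\lbrace 1,2\rbrace$, $\lbrace 1,3\rbrace$, $\lbrace 2,3\rbrace$ shows that the $B$-flags $\lbrack 1\rbrack$, $\lbrack 2\rbrack$, $\lbrack 3\rbrack$ are pairwise opposite, i.e., mutually opposite. There is no real obstacle here; the work was done in assembling Lemmas \ref{lem:BinvB} and \ref{lem:decIndFlag}, and this proposition is essentially a one-line corollary of those two results combined with the definition of opposite flags.
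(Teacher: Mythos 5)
Your proof is correct and follows essentially the same route as the paper: both invoke Lemma \ref{lem:decIndFlag} to see that the $B$-decomposition $\lbrack \eta,\xi\rbrack$ induces the $B$-flag $\lbrack \eta\rbrack$, and Lemma \ref{lem:BinvB} to identify its inversion with $\lbrack \xi,\eta\rbrack$, which induces $\lbrack \xi\rbrack$, giving oppositeness via the Section 6 characterization. No gaps.
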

\begin{proof}
Pick distinct $\eta, \xi \in \lbrace 1,2,3\rbrace$.
By Lemma
\ref{lem:decIndFlag},
the
$B$-decomposition $\lbrack \eta, \xi \rbrack$
induces 
the $B$-flag $\lbrack \eta\rbrack$,
and the 
$B$-decomposition $\lbrack \xi, \eta \rbrack$
induces 
the $B$-flag 
$\lbrack \xi\rbrack$.
Now the $B$-flags 
$\lbrack \eta\rbrack$,
$\lbrack \xi\rbrack$ are opposite
in view of Lemma
\ref{lem:BinvB}.
\end{proof}

\begin{lemma}
For distinct $\eta,\xi \in \lbrace 1,2,3\rbrace$ and
$0 \leq i \leq N$, the $i$-component of
the $B$-decomposition $\lbrack \eta,\xi \rbrack$ is equal to 
the intersection 
of the following two subspaces of $V$:
\begin{enumerate}
\item[\rm (i)] component $i$ of the $B$-flag $\lbrack \eta \rbrack$;
\item[\rm (ii)] component $N-i$ of the $B$-flag $\lbrack \xi \rbrack$.
\end{enumerate}
\end{lemma}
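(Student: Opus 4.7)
The plan is to derive this as a direct consequence of the characterization of opposite flags given at the end of Section 6, combined with Lemmas \ref{lem:decIndFlag} and \ref{lem:BinvB}. Recall from Section 6 that if a decomposition $\{V_j\}_{j=0}^N$ of $V$ induces a flag $\{U_j\}_{j=0}^N$, and the inversion of the decomposition induces a second flag $\{U'_j\}_{j=0}^N$, then the two flags are opposite and in fact $V_j = U_j \cap U'_{N-j}$ for $0 \leq j \leq N$.

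First I would let $\{V_j\}_{j=0}^N$ denote the $B$-decomposition $\lbrack \eta, \xi \rbrack$. By Lemma \ref{lem:decIndFlag} (applied to the ordered pair $(\eta,\xi)$), this decomposition induces the $B$-flag $\lbrack \eta \rbrack$, which I would denote $\{U_j\}_{j=0}^N$. Next, by Lemma \ref{lem:BinvB} the inversion $\{V_{N-j}\}_{j=0}^N$ of $\{V_j\}_{j=0}^N$ coincides with the $B$-decomposition $\lbrack \xi, \eta \rbrack$. Applying Lemma \ref{lem:decIndFlag} again (now to the ordered pair $(\xi,\eta)$), this inverted decomposition induces the $B$-flag $\lbrack \xi \rbrack$, which I would denote $\{U'_j\}_{j=0}^N$.

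With these identifications in hand, I would invoke the Section 6 characterization: since $\{V_j\}$ induces $\{U_j\}$ and its inversion induces $\{U'_j\}$, the two flags are opposite and $V_j = U_j \cap U'_{N-j}$ for $0 \leq j \leq N$. Specializing to $j=i$ yields that the $i$-component of the $B$-decomposition $\lbrack \eta, \xi \rbrack$ is the intersection of component $i$ of the $B$-flag $\lbrack \eta \rbrack$ with component $N-i$ of the $B$-flag $\lbrack \xi \rbrack$, which is the desired identity.

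I do not expect any serious obstacle; the statement is essentially a dictionary translation once one observes that Lemmas \ref{lem:decIndFlag} and \ref{lem:BinvB} together put the two $B$-flags into the precise posture in which the formula $V_j = U_j \cap U'_{N-j}$ from Section 6 applies. (If one wanted to avoid citing that formula directly, one could also argue inclusion $V_i \subseteq U_i \cap U'_{N-i}$ from $V_i \subseteq U_i$ and $V_i \subseteq U'_{N-i}$, and then match dimensions via Lemma \ref{lem:OppDim}, but the route through Section 6 is cleaner.)
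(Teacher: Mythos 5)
Your proof is correct and is essentially the paper's own argument: the paper likewise observes that the $B$-decomposition $\lbrack \eta,\xi\rbrack$ induces the $B$-flag $\lbrack \eta\rbrack$ (Lemma \ref{lem:decIndFlag}) while its inversion, namely $\lbrack \xi,\eta\rbrack$ by Lemma \ref{lem:BinvB}, induces the $B$-flag $\lbrack \xi\rbrack$, and then appeals to the Section 6 fact that for flags so induced one has $V_i = U_i \cap U'_{N-i}$. No gap; your spelled-out version just makes explicit what the paper leaves implicit.
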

\begin{proof}
The
$B$-decomposition $\lbrack \eta, \xi \rbrack$
induces 
the $B$-flag $\lbrack \eta\rbrack$,
and the inversion of this decomposition
induces 
the $B$-flag $\lbrack \xi\rbrack$.
\end{proof}

\section{More on flags}

\noindent Throughout this section the following notation is in effect.
Fix $N \in \mathbb N$.
Let $V$ denote a vector space over $\mathbb F$
with dimension $N+1$. Let $B$ denote a Billiard Array on $V$.

\begin{proposition} 
\label{prop:3flagsgiveB}
Pick integers
$r,s,t$ $(0 \leq r,s,t\leq N)$
and consider the intersection of the following three sets:
\begin{enumerate}
\item[\rm (i)] component $N-r$ of the $B$-flag $\lbrack 1 \rbrack$;
\item[\rm (ii)] component $N-s$ of the $B$-flag $\lbrack 2 \rbrack$;
\item[\rm (iii)] component $N-t$ of the $B$-flag $\lbrack 3 \rbrack$.
\end{enumerate}
If $r+s+t>N$ then the intersection is zero. If $r+s+t\leq N$
then the intersection is $B_{\mu}$, where $\mu=(r,s,t)$.
\end{proposition}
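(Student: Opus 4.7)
The plan is induction on the diameter $N$, with the pairwise intersections handled up front via the opposite-flag structure.

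\textbf{Preliminaries.} First I would record the easy inclusion: when $r+s+t\le N$, Lemma~\ref{lem:reverse} gives $B_{(r,s,t)}\subseteq B_{re_1}\cap B_{se_2}\cap B_{te_3}$, since $re_1,se_2,te_3\le(r,s,t)$. Next I handle the pairwise case. By Lemma~\ref{lem:mutOp} the $B$-flags $[1],[2]$ are opposite, so by Lemma~\ref{lem:OppDim} and the definition of opposite flags, $\dim(B_{re_1}\cap B_{se_2})$ equals $N-r-s+1$ when $r+s\le N$ and is $0$ otherwise. Since $B_{(r,s,0)}\subseteq B_{re_1}\cap B_{se_2}$ (the easy inclusion at $t=0$) and $\dim B_{(r,s,0)}=N-r-s+1$ by Lemma~\ref{lem:Bmudim}, the pairwise intersections collapse to
\[
B_{re_1}\cap B_{se_2}=B_{(r,s,0)}\quad(r+s\le N),\qquad B_{re_1}\cap B_{se_2}=0\quad(r+s>N),
\]
and analogously for the other two pairs.

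\textbf{Induction on $N$.} The base case $N=0$ is immediate. For the inductive step, if $r=s=t=0$ then both sides equal $V$ (using Corollary~\ref{lem:BspanV2}); otherwise the symmetry in $\{1,2,3\}$ lets me assume $r\ge 1$. Proposition~\ref{prop:newBA} gives a Billiard Array $B^{(1)}$ on $U=B_{re_1}$ of diameter $N'=N-r<N$, and unwinding Definition~\ref{def:extB} on both sides yields $B^{(1)}_{\mu'}=B_{re_1+\mu'}$ for every $\mu'\in\Delta_{\le N'}$. In particular $B^{(1)}_{0\cdot e_1}=U$, $B^{(1)}_{se_2}=B_{(r,s,0)}$, $B^{(1)}_{te_3}=B_{(r,0,t)}$, and $B^{(1)}_{(0,s,t)}=B_{(r,s,t)}$ whenever the relevant coordinate sums fit.

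If $s>N-r$ or $t>N-r$, the triple intersection is trapped inside the vanishing pairwise intersection $B_{re_1}\cap B_{se_2}=0$ or $B_{re_1}\cap B_{te_3}=0$ from the pairwise step, and simultaneously $r+s+t>N$, so both sides of the claim are $0$. Otherwise $s,t\le N'$ and I apply the inductive hypothesis to $B^{(1)}$ at the triple $(0,s,t)$: if $s+t\le N'$ it gives $B_{(r,s,t)}=U\cap B_{(r,s,0)}\cap B_{(r,0,t)}=B_{(r,s,0)}\cap B_{(r,0,t)}$, and if $s+t>N'$ it gives $B_{(r,s,0)}\cap B_{(r,0,t)}=0$. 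Substituting the pairwise identities $B_{(r,s,0)}=B_{re_1}\cap B_{se_2}$ and $B_{(r,0,t)}=B_{re_1}\cap B_{te_3}$ then collapses the right-hand side to $B_{re_1}\cap B_{se_2}\cap B_{te_3}$, closing the induction.

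\textbf{Main obstacle.} The bookkeeping: one must carefully track when the inductive hypothesis on $B^{(1)}$ is in range (namely $s,t\le N'$) and dispatch the out-of-range cases $s>N-r$ or $t>N-r$ via pairwise vanishing. Once this is done, the key identity $B_{(r,s,0)}\cap B_{(r,0,t)}=B_{re_1}\cap B_{se_2}\cap B_{te_3}$, arising from the pairwise collapse, is what converts the inductive hypothesis inside $U$ into the triple intersection statement in $V$.
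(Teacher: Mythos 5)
Your argument is correct, but it is organized quite differently from the paper's. The paper proves this directly: it takes the decomposition of $S_2$ (component $N-s$ of the $B$-flag $\lbrack 2\rbrack$) along the $2$-line $\lbrace \lambda_i=(N-s-i,s,i)\rbrace_{i=0}^{N-s}$, identifies $S_1\cap S_2$ and $S_2\cap S_3$ as the explicit subsums $\sum_{i=0}^{N-r-s}B_{\lambda_i}$ and $\sum_{i=t}^{N-s}B_{\lambda_i}$ via the opposite-flag dimension count of Lemma \ref{lem:OppDim}, intersects the two subsums inside the direct sum, and recognizes the result as $B_\mu$ by Lemma \ref{lem:decompBmu2}. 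You instead establish only the pairwise collapse $S_1\cap S_2=B_{(r,s,0)}$ (by the same dimension count plus Lemma \ref{lem:Bmudim}) and then induct on the diameter, passing to the restricted Billiard Array on $B_{re_1}$ from Proposition \ref{prop:newBA}. Your route exploits the self-similarity of Billiard Arrays and reduces a triple intersection to a pairwise one plus a smaller instance, which is conceptually clean; the cost is the extra (true, and routine via Lemma \ref{lem:upsetdesc}) compatibility check $B^{(1)}_{\mu'}=B_{re_1+\mu'}$ identifying the extension of the restricted array with the restriction of the extended one, together with the bookkeeping of the out-of-range cases $s>N-r$ or $t>N-r$, which you do handle correctly. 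The paper's computation is shorter and exhibits the intersection explicitly as a span of specific one-dimensional pieces, which it then reuses; your version proves exactly the stated proposition but yields less explicit intermediate data.
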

\begin{proof}
Denote the sets in (i), (ii), (iii) by $S_1,S_2,S_3$ respectively.
Let $L$ denote the set of locations in $\Delta_N$ 
 at distance $N-s$ from the $2$-corner of $\Delta_N$.
Thus $L$ is the $2$-line with cardinality $N-s+1$.
The set $L$ consists of the locations
$\lbrace \lambda_i \rbrace_{i=0}^{N-s}$ where
$\lambda_i = (N-s-i, s, i)$ for $0 \leq i \leq N-s$.
By Lemmas
\ref{lem:3Fclar}(ii),
\ref{lem:clar}
the sequence
$\lbrace B_{\lambda_i}\rbrace_{i=0}^{N-s}$
is a decomposition of $S_2$.
We now describe $S_1\cap S_2$. 
By Lemma
\ref{lem:mutOp}
 the $B$-flags
$\lbrack 1 \rbrack, 
\lbrack 2 \rbrack$ are opposite.
For the moment assume $r+s>N$. Then 
$S_1\cap S_2=0$, so 
$S_1\cap S_2\cap S_3=0$ and we are done.
Next assume 
 $r+s\leq N$. Then $S_1\cap S_2$ has dimension $ N-r-s+1$ by
 Lemma
\ref{lem:OppDim}.
For $0 \leq i \leq N-s$ the location
$\lambda_i$ is at distance $s+i$ from
the $1$-corner of $\Delta_N$. 
For $0 \leq i \leq N-r-s$ this
 distance is at most $ N-r$,
 so 
 $B_{\lambda_i} \subseteq S_1$ by 
 Lemmas
\ref{lem:3Fclar}(i),
\ref{lem:clar}, so
 $B_{\lambda_i} \subseteq S_1\cap S_2$.
Therefore $\sum_{i=0}^{N-r-s} 
 B_{\lambda_i} \subseteq S_1\cap S_2$.
In this inclusion each side has dimension $N-r-s+1$,
so equality holds. In other words
\begin{equation}
\label{eq:int2}
S_1\cap S_2 = 
\sum_{i=0}^{N-r-s} 
 B_{\lambda_i}.
\end{equation}
We now describe $S_2\cap S_3$. 
By Lemma
\ref{lem:mutOp} the
$B$-flags
$\lbrack 2 \rbrack, 
\lbrack 3 \rbrack$ are opposite.
For the moment assume $s+t>N$. Then 
$S_2\cap S_3=0$, so 
$S_1\cap S_2\cap S_3=0$ and we are done.
Next assume 
 $s+t\leq N$. Then $S_2\cap S_3$ has dimension $ N-s-t+1$
by Lemma
\ref{lem:OppDim}.
For $0 \leq i \leq N-s$ the location
$\lambda_i$ is at distance $N-i$ from
the $3$-corner of $\Delta_N$. 
For $t \leq i \leq N-s$ this
 distance is at most $ N-t$, so
 $B_{\lambda_i} \subseteq S_3$ by 
 Lemmas
\ref{lem:3Fclar}(i), 
\ref{lem:clar}, so
 $B_{\lambda_i} \subseteq S_2\cap S_3$.
Therefore
$\sum_{i=t}^{N-s} 
 B_{\lambda_i} \subseteq S_2\cap S_3$.
In this inclusion each side has dimension $N-s-t+1$,
so equality holds. In other words
\begin{equation}
\label{eq:int3}
S_2\cap S_3 = 
\sum_{i=t}^{N-s} 
 B_{\lambda_i}.
\end{equation}
Combining 
(\ref{eq:int2}),
(\ref{eq:int3}) we obtain
\begin{equation}
S_1\cap S_2\cap S_3 = 
\sum_{i=t}^{N-r-s} 
 B_{\lambda_i}.
\label{eq:finalint}
\end{equation}
Consider the sum on the right in
(\ref{eq:finalint}).
First assume $r+s+t>N$. Then the sum
 is empty, so
$S_1\cap S_2\cap S_3 =0$. Next assume
$r+s+t\leq N$, and define $n=N-r-s-t$. Then
$0 \leq n \leq N$ and 
$\mu=(r,s,t)$  is contained in
$\Delta_{N-n}$.
Let $K$ denote the $2$-boundary of $\Delta_n$,
and consider the set $K+\mu$. This set
consists of the locations
$\lbrace \lambda_i \rbrace_{i=t}^{N-r-s}$.
Now by Lemma
\ref{lem:decompBmu2} we obtain
$B_\mu=\sum_{i=t}^{N-r-s} B_{\lambda_i}$. By this and 
(\ref{eq:finalint}) we 
obtain $S_1\cap S_2\cap S_3 = B_\mu$.
\end{proof}

\begin{corollary}
\label{cor:3flagsgiveB}
Pick a location $\lambda =(r,s,t)$ in $\Delta_N$.
Then $B_\lambda$ is equal to the intersection of the following three sets:
\begin{enumerate}
\item[\rm (i)] component $N-r$ of the $B$-flag $\lbrack 1 \rbrack$;
\item[\rm (ii)] component $N-s$ of the $B$-flag $\lbrack 2 \rbrack$;
\item[\rm (iii)] component $N-t$ of the $B$-flag $\lbrack 3 \rbrack$.
\end{enumerate}
\end{corollary}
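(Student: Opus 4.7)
The plan is to derive this corollary directly from Proposition \ref{prop:3flagsgiveB} by specializing to the case $\lambda \in \Delta_N$. Since $\lambda = (r,s,t)$ lies in $\Delta_N$, by Definition \ref{def:ba} we have $r+s+t = N$, so in particular $r+s+t \leq N$. Hence the second alternative of Proposition \ref{prop:3flagsgiveB} applies, and the intersection of the three subspaces listed in (i)--(iii) equals $B_\mu$ with $\mu = (r,s,t)$.

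The only remaining point to verify is that this $B_\mu$ (defined via the extension in Definition \ref{def:extB}) coincides with the original $B_\lambda \in \mathcal{P}_1(V)$ from Definition \ref{def:ba}. I would note that $\mu = \lambda \in \Delta_N$ has rank $N$, so by Lemma \ref{lem:upsetdesc} the set $\{\nu \in \Delta_N \mid \mu \leq \nu\}$ equals $\Delta_0 + \mu = \{\mu\}$. Hence the sum in (\ref{eq:Bmudef}) collapses to the single term $B_\mu = B_\lambda$, as required.

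There is no real obstacle here: the corollary is a clean specialization of the proposition to the boundary case $r+s+t = N$. The entire content of the proof is the bookkeeping observation that the extended definition of $B_\mu$ at rank $N$ agrees with the original definition of $B_\lambda$ at $\lambda \in \Delta_N$, which is immediate from Lemma \ref{lem:upsetdesc} (or directly from inspecting (\ref{eq:Bmudef})).
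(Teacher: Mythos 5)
Your proof is correct and is essentially the paper's own argument: the paper proves this corollary simply by setting $r+s+t=N$ in Proposition \ref{prop:3flagsgiveB}. Your added check via Lemma \ref{lem:upsetdesc} that the extended $B_\mu$ of Definition \ref{def:extB} reduces to the original $B_\lambda$ at rank $N$ is a harmless (and correct) piece of bookkeeping that the paper leaves implicit, though note that $r+s+t=N$ comes from the definition of $\Delta_N$ rather than from Definition \ref{def:ba}.
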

\begin{proof} 
In Proposition
\ref{prop:3flagsgiveB}, assume $r+s+t=N$.
\end{proof}

\noindent By Corollary 
\ref{cor:3flagsgiveB} the Billiard Array $B$ is uniquely
determined by the $B$-flags
$\lbrack 1\rbrack,
\lbrack 2\rbrack,
\lbrack 3\rbrack$. This idea will be pursued further in Section 12.
In the meantime we have a few comments.

\begin{proposition}
\label{prop:meet}
Let $\mu, \nu$ denote elements in
$\Delta_{\leq N}$.
\begin{enumerate}
\item[\rm (i)]
Assume that $\mu\vee \nu$ exists in $\Delta_{\leq N}$.
Then $B_\mu \cap B_\nu = B_{\mu\vee \nu}$.
\item[\rm (ii)]
Assume that $\mu \vee\nu$ does not exist in $\Delta_{\leq N}$.
Then $B_\mu \cap B_\nu =0$.
\end{enumerate}
\end{proposition}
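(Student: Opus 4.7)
The strategy is to reduce both parts to a single application of Proposition \ref{prop:3flagsgiveB}, by first expressing each of $B_\mu$, $B_\nu$ as a triple intersection of components of the $B$-flags $[1]$, $[2]$, $[3]$, and then combining these six subspaces using the chain property of flags.

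Write $\mu = (r_1, r_2, r_3)$ and $\nu = (s_1, s_2, s_3)$; both lie in $\Delta_{\leq N}$, so $r_1 + r_2 + r_3 \leq N$ and $s_1 + s_2 + s_3 \leq N$. Applying Proposition \ref{prop:3flagsgiveB} (in its non-vanishing case) to each of $\mu$ and $\nu$ gives
\begin{equation*}
B_\mu = F_1(N-r_1) \cap F_2(N-r_2) \cap F_3(N-r_3), \qquad B_\nu = F_1(N-s_1) \cap F_2(N-s_2) \cap F_3(N-s_3),
\end{equation*}
where $F_\eta(i)$ denotes component $i$ of the $B$-flag $[\eta]$. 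Since each $B$-flag is a chain under inclusion, $F_\eta(i) \cap F_\eta(j) = F_\eta(\min(i,j))$, and using $\min(N-r,\,N-s) = N - \max(r,s)$ the combined intersection becomes
\begin{equation*}
B_\mu \cap B_\nu = F_1(N - a) \cap F_2(N - b) \cap F_3(N - c),
\end{equation*}
where $(a, b, c) = (\max(r_1, s_1), \max(r_2, s_2), \max(r_3, s_3))$. By the definition recalled before Lemma \ref{lem:findrank}, this triple is exactly $\mu \vee \nu$ viewed in $\mathbb{R}^3$, and it lies in $\Delta_{\leq N}$ if and only if $a + b + c \leq N$.

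To finish, I would apply Proposition \ref{prop:3flagsgiveB} once more, with $(r, s, t) = (a, b, c)$. In case (ii), $\mu \vee \nu$ does not exist in $\Delta_{\leq N}$, so $a + b + c > N$, and the proposition yields $B_\mu \cap B_\nu = 0$. In case (i), $a + b + c \leq N$, and the proposition yields $B_\mu \cap B_\nu = B_{(a,b,c)} = B_{\mu \vee \nu}$.

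There is no substantial obstacle here: the argument is essentially bookkeeping once one observes that Proposition \ref{prop:3flagsgiveB} characterizes $B_\mu$ for every $\mu \in \Delta_{\leq N}$ (not merely for locations in $\Delta_N$), so that the coordinatewise-maximum identity defining $\mu \vee \nu$ is converted, via the chain property of each $B$-flag, into the coordinatewise-minimum identity for intersecting flag components.
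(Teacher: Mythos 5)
Your proof is correct and is precisely the argument the paper intends: its proof of this proposition is simply the remark that it is a routine application of Proposition \ref{prop:3flagsgiveB}, and your write-up (expressing $B_\mu$, $B_\nu$ as triple intersections of flag components, merging them via the chain property into a triple intersection indexed by the coordinatewise maximum, and applying Proposition \ref{prop:3flagsgiveB} once more) is exactly that routine application spelled out.
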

\begin{proof} This is a routine application
of Proposition
\ref{prop:3flagsgiveB}.
\end{proof}

\begin{proposition} Let $\mu, \nu$ denote elements in
$\Delta_{\leq N}$. Then $\mu\leq \nu$ if and only if
$B_{\nu}\subseteq B_{\mu}$.
\end{proposition}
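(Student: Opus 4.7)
The forward direction is already Lemma~\ref{lem:reverse}, so all the work lies in the converse: assuming $B_\nu \subseteq B_\mu$, I want to deduce $\mu \leq \nu$. The plan is to extract this from Proposition~\ref{prop:meet} together with the dimension formula of Lemma~\ref{lem:Bmudim}.

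First I would observe that the containment $B_\nu \subseteq B_\mu$ gives $B_\mu \cap B_\nu = B_\nu$, and that this subspace is nonzero (it is at least one-dimensional by Lemma~\ref{lem:Bmudim}, or even directly from (\ref{eq:Bmudec})). By Proposition~\ref{prop:meet}(ii) the element $\mu \vee \nu$ must therefore exist in $\Delta_{\leq N}$, and then Proposition~\ref{prop:meet}(i) gives $B_{\mu \vee \nu} = B_\mu \cap B_\nu = B_\nu$.

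Next I would compare dimensions. By Lemma~\ref{lem:Bmudim},
\[
\dim B_\nu = N - \operatorname{rank}(\nu) + 1, \qquad
\dim B_{\mu \vee \nu} = N - \operatorname{rank}(\mu \vee \nu) + 1,
\]
so the equality $B_\nu = B_{\mu \vee \nu}$ forces $\operatorname{rank}(\nu) = \operatorname{rank}(\mu \vee \nu)$, i.e.\ the coordinate sums of $\nu$ and $\mu \vee \nu$ agree. But $\nu \leq \mu \vee \nu$ by the definition of the join (each coordinate of $\mu \vee \nu$ is at least the corresponding coordinate of $\nu$), and coordinatewise inequality together with equal coordinate sums forces equality coordinate by coordinate. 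Hence $\nu = \mu \vee \nu$, and since also $\mu \leq \mu \vee \nu$, we conclude $\mu \leq \nu$, as desired.

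There is essentially no obstacle here: Proposition~\ref{prop:meet} does the heavy lifting by translating set-theoretic containment of the subspaces into join information in the poset, and Lemma~\ref{lem:Bmudim} upgrades that join information to actual equality of locations via a rank count. The only point to watch is that $B_\nu \neq 0$, which is needed to rule out case (ii) of Proposition~\ref{prop:meet}.
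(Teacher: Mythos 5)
Your argument is correct and follows essentially the same route as the paper: forward direction from Lemma~\ref{lem:reverse}, and for the converse, nonzero intersection forces $\mu\vee\nu$ to exist via Proposition~\ref{prop:meet}, after which $B_\nu = B_{\mu\vee\nu}$ and the rank count of Lemma~\ref{lem:Bmudim} give $\nu = \mu\vee\nu$, hence $\mu\leq\nu$. Nothing is missing.
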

\begin{proof}
First assume that $\mu \leq \nu$. Then
$B_{\nu}\subseteq B_{\mu}$ by Lemma
\ref{lem:reverse}.  Next assume that 
$B_{\nu}\subseteq B_{\mu}$.
Then $B_{\nu} = B_{\mu} \cap B_{\nu}$, so
$B_{\mu} \cap B_{\nu}\not=0$. 
Now by
Proposition
\ref{prop:meet} we see that $\mu\vee \nu$ exists in $\Delta_{\leq N}$,
and 
 $ B_{\mu}\cap B_{\nu} =
 B_{\mu\vee \nu}$. Therefore
 $B_{\nu}=B_{\mu\vee \nu}$.
By this and
Lemma
\ref{lem:Bmudim},
$\nu$ and $\mu\vee \nu$ have the same rank.
By construction $\nu \leq \mu\vee \nu$.
By these comments
 $\nu=\mu\vee \nu$, so
$\mu\leq \nu$.
\end{proof}

\section{A characterization of Billiard Arrays using totally opposite flags}

\noindent Throughout this section the following notation is in effect.
Fix $N \in \mathbb N$. Let $V$ denote a vector space over
$\mathbb F$ with dimension $N+1$.
\medskip

\noindent In Lemma
\ref{lem:mutOp}
we encountered three flags on $V$ that are mutually opposite.
 We now
introduce a stronger condition involving three flags.

\begin{definition}
\label{def:Topp}
\rm
Suppose we are given three flags on $V$, 
denoted 
$\lbrace U_i\rbrace_{i=0}^N$,
$\lbrace U'_i\rbrace_{i=0}^N$,
$\lbrace U''_i\rbrace_{i=0}^N$.
These flags are said to be {\it totally opposite}
whenever $U_{N-r} \cap U'_{N-s} \cap U''_{N-t} =0 $ 
for all $r,s,t$ $(0 \leq r,s,t\leq N)$ such that
$r+s+t>N$.
\end{definition}

\begin{lemma} 
\label{lem:ToppisMutopp}
Referring to
Definition \ref{def:Topp}, assume that 
$\lbrace U_i\rbrace_{i=0}^N$,
$\lbrace U'_i\rbrace_{i=0}^N$,
$\lbrace U''_i\rbrace_{i=0}^N$ are totally opposite.
Then they are mutually opposite.
\end{lemma}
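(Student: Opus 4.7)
The plan is to observe that mutual oppositeness is a pairwise condition, and the total oppositeness hypothesis specializes immediately to each pair by taking the third index at the extreme. Recall that two flags $\{U_i\}_{i=0}^N$ and $\{U'_i\}_{i=0}^N$ on $V$ are opposite precisely when $U_i \cap U'_j = 0$ whenever $i+j<N$.

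First I would fix one of the three pairs, say $\{U_i\}_{i=0}^N$ and $\{U'_i\}_{i=0}^N$, and apply Definition \ref{def:Topp} with $t=0$. Since $U''_{N} \in \mathcal{P}_{N+1}(V)$ we have $U''_N = V$, so the intersection $U_{N-r} \cap U'_{N-s} \cap U''_{N-0}$ simplifies to $U_{N-r} \cap U'_{N-s}$. The hypothesis $r+s+t>N$ with $t=0$ becomes $r+s>N$. Setting $i=N-r$ and $j=N-s$, this is equivalent to $i+j<N$, giving $U_i \cap U'_j = 0$ whenever $i+j<N$. Hence $\{U_i\}_{i=0}^N$ and $\{U'_i\}_{i=0}^N$ are opposite.

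Second, I would repeat this argument verbatim for the other two pairs, setting $s=0$ to get that $\{U_i\}_{i=0}^N$ and $\{U''_i\}_{i=0}^N$ are opposite, and $r=0$ to get that $\{U'_i\}_{i=0}^N$ and $\{U''_i\}_{i=0}^N$ are opposite. In each case the extremal flag component equals $V$ by the dimension condition built into the definition of a flag, so it drops out of the intersection harmlessly.

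There is essentially no obstacle here; the lemma is a formal consequence of the definitions, and the only thing to check is the mild bookkeeping that $U_N = U'_N = U''_N = V$, which follows since each is forced to have dimension $N+1$. The result then reads off the defining inequality of Definition \ref{def:Topp} after setting one coordinate to zero.
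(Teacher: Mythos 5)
Your proof is correct and takes essentially the same route as the paper: set one of $r,s,t$ to zero in Definition \ref{def:Topp} so that the corresponding top flag component $V$ drops out of the triple intersection, yielding the pairwise oppositeness condition for each of the three pairs. The only difference is that you spell out the bookkeeping ($U''_N=V$, the change of indices $i=N-r$, $j=N-s$) which the paper leaves implicit.
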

\begin{proof}  Setting $r=0$ (resp. $s=0$) (resp. $t=0$) in
 Definition 
\ref{def:Topp}, we find that the flags
$\lbrace U'_i\rbrace_{i=0}^N$ and
$\lbrace U''_i\rbrace_{i=0}^N$
(resp. $\lbrace U''_i\rbrace_{i=0}^N$ and
$\lbrace U_i\rbrace_{i=0}^N$)
(resp. $\lbrace U_i\rbrace_{i=0}^N$ and
$\lbrace U'_i\rbrace_{i=0}^N$) are opposite.
\end{proof}

\begin{lemma}
\label{lem:ToppMeaning}
Referring to Definition 
\ref{def:Topp}, the following are equivalent:
\begin{enumerate}
\item[\rm (i)] the flags
$\lbrace U_i\rbrace_{i=0}^N$,
$\lbrace U'_i\rbrace_{i=0}^N$,
$\lbrace U''_i\rbrace_{i=0}^N$ are totally opposite;
\item[\rm (ii)] for $0 \leq n \leq N$ the sequences
\begin{equation}
\label{eq:3flags}
\lbrace U_{i}\rbrace_{i=0}^{N-n}, \qquad  
\lbrace U_{N-n} \cap U'_{n+i}\rbrace_{i=0}^{N-n},
\qquad 
\lbrace U_{N-n} \cap U''_{n+i}\rbrace_{i=0}^{N-n}
\end{equation}
are mutually opposite flags on $U_{N-n}$;
\item[\rm (iii)] for $0 \leq n \leq N$ the sequences
\begin{equation*}
\lbrace U'_{i}\rbrace_{i=0}^{N-n},\qquad
\lbrace U'_{N-n} \cap U''_{n+i}\rbrace_{i=0}^{N-n},\qquad
\lbrace U'_{N-n} \cap U_{n+i}\rbrace_{i=0}^{N-n}
\end{equation*}
are mutually opposite flags on $U'_{N-n}$;
\item[\rm (iv)] for $0 \leq n \leq N$ the sequences
\begin{equation*}
\lbrace U''_{i}\rbrace_{i=0}^{N-n},\qquad
\lbrace U''_{N-n} \cap U_{n+i}\rbrace_{i=0}^{N-n},\qquad
\lbrace U''_{N-n} \cap U'_{n+i}\rbrace_{i=0}^{N-n} 
\end{equation*}
are mutually opposite flags on $U''_{N-n}$.
\end{enumerate}
\end{lemma}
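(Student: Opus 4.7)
The plan is to exploit the symmetry of Definition \ref{def:Topp} in the three flags. Condition (i) is manifestly invariant under permutations of the three flags, and (ii), (iii), (iv) are obtained from one another by such permutations, so it suffices to prove (i)$\Leftrightarrow$(ii). I will repeatedly invoke Lemma \ref{lem:ToppisMutopp} (total oppositeness implies pairwise oppositeness) and Lemma \ref{lem:OppDim} (dimension of an intersection of opposite flag components).

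For (i)$\Rightarrow$(ii), fix $n$ with $0 \leq n \leq N$. First I check that the three sequences in (ii) are flags on $U_{N-n}$: the first is obvious, while for the other two, Lemma \ref{lem:OppDim} applied to the opposite pairs $\lbrace U_i\rbrace, \lbrace U'_i\rbrace$ and $\lbrace U_i\rbrace, \lbrace U''_i\rbrace$ gives $\dim(U_{N-n}\cap U'_{n+i})=i+1$ and $\dim(U_{N-n}\cap U''_{n+i})=i+1$, and the chain property is immediate. Next I verify pairwise oppositeness. For the first and second flags at indices $i,j$ with $i+j<N-n$, the intersection $U_i\cap U_{N-n}\cap U'_{n+j}$ reduces, since $U_i\subseteq U_{N-n}$, to $U_i\cap U'_{n+j}$, which is zero because $i+(n+j)<N$; the first--third pair is symmetric. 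The interesting pair is second versus third: their components intersect in $U_{N-n}\cap U'_{n+i}\cap U''_{n+j}$. Setting $(r,s,t)=(n,N-n-i,N-n-j)$ in Definition \ref{def:Topp}, we have $r+s+t=2N-n-i-j$, which exceeds $N$ precisely when $i+j<N-n$, so total oppositeness supplies the required vanishing.

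For (ii)$\Rightarrow$(i), suppose $r+s+t>N$ and we must show $U_{N-r}\cap U'_{N-s}\cap U''_{N-t}=0$. Taking $n=0$ in (ii) makes the three sequences equal to the original flags, which are therefore mutually opposite; hence if any of $r+s$, $r+t$, $s+t$ exceeds $N$ the corresponding pairwise intersection already vanishes. Otherwise set $n=r$, $i=N-s-r$, $j=N-t-r$, all of which lie in $[0,N-n]$. Then the $i$-component of the second flag and the $j$-component of the third flag of (ii) are $U_{N-r}\cap U'_{N-s}$ and $U_{N-r}\cap U''_{N-t}$, while $i+j=2N-2r-s-t<N-r$ since $r+s+t>N$. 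Oppositeness of these two flags on $U_{N-n}$ then forces $U_{N-r}\cap U'_{N-s}\cap U''_{N-t}=0$.

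The main obstacle is purely the bookkeeping needed to match the three-parameter oppositeness condition of (i) against a one-parameter family of pairwise oppositeness conditions on restricted spaces $U_{N-n}$. The key correspondence is $n=r$, $i=N-s-n$, $j=N-t-n$, under which the constraint $r+s+t>N$ translates to $i+j<N-n$; once this is noted, both directions follow by inspection.
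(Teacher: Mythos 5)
Your proposal is correct and follows essentially the same route as the paper: prove (i)$\Leftrightarrow$(ii) by combining Lemma \ref{lem:ToppisMutopp} and Lemma \ref{lem:OppDim} to establish the flag and pairwise-oppositeness properties, with the key index substitution $n=r$, $i=N-r-s$, $j=N-r-t$ translating $r+s+t>N$ into $i+j<N-n$, and then dispose of (iii), (iv) by the symmetry of Definition \ref{def:Topp} (the paper's ``similarly obtained''). No gaps.
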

\begin{proof}  ${\rm (i)}\Rightarrow {\rm (ii)}$ 
We first show that each sequence in
(\ref{eq:3flags}) is a flag on
$U_{N-n}$.
For the first sequence this is clear, so consider
the second sequence.
By construction, the subspaces
$\lbrace U_{N-n} \cap U'_{n+i}\rbrace_{i=0}^{N-n}$ 
are nested and contained in $U_{N-n}$.
For $0 \leq i \leq N-n$ the subspace
$ U_{N-n} \cap U'_{n+i}$ 
has dimension
$i+1$ by Lemmas
\ref{lem:OppDim},
\ref{lem:ToppisMutopp}.
 Therefore the sequence 
$\lbrace U_{N-n} \cap U'_{n+i}\rbrace_{i=0}^{N-n}$ 
is a flag on $U_{N-n}$.
Similarly the sequence
$\lbrace U_{N-n} \cap U''_{n+i}\rbrace_{i=0}^{N-n}$ 
is a flag on $U_{N-n}$.
We have shown that each sequence in
(\ref{eq:3flags}) is a flag on
$U_{N-n}$. We check that these
three flags are mutually opposite.
We show that the flags
$\lbrace U_i \rbrace_{i=0}^{N-n}$ and
$\lbrace U_{N-n} \cap U'_{n+i}\rbrace_{i=0}^{N-n}$ 
are opposite.
For $0 \leq i,j\leq N-n$ such that $i+j<N-n$,
the intersection of
$ U_{i}$
and 
$U_{N-n} \cap U'_{n+j}$ 
is equal to $U_i \cap U'_{n+j}$,
which is zero by Lemma
\ref{lem:ToppisMutopp}.
Therefore the flags
$\lbrace U_i \rbrace_{i=0}^{N-n}$ and
$\lbrace U_{N-n} \cap U'_{n+i}\rbrace_{i=0}^{N-n}$ 
are opposite.
Similarly the flags
$\lbrace U_i \rbrace_{i=0}^{N-n}$ and
$\lbrace U_{N-n} \cap U''_{n+i}\rbrace_{i=0}^{N-n}$ 
are opposite.
We now show that the flags
$\lbrace U_{N-n} \cap U'_{n+i}\rbrace_{i=0}^{N-n}$ 
and
$\lbrace U_{N-n} \cap U''_{n+i}\rbrace_{i=0}^{N-n}$ 
are opposite.
For $0 \leq i,j\leq N-n$ such that $i+j<N-n$,
the intersection of
$ U_{N-n} \cap U'_{n+i}$ 
and 
$U_{N-n} \cap U''_{n+j}$ 
is equal to $U_{N-n} \cap U'_{n+i} \cap U''_{n+j}$,
which is zero by Definition
\ref{def:Topp}.
Therefore the flags
$\lbrace U_{N-n} \cap U'_{n+i}\rbrace_{i=0}^{N-n}$ 
and
$\lbrace U_{N-n} \cap U''_{n+i}\rbrace_{i=0}^{N-n}$ 
are opposite. By these comments the three flags in
(\ref{eq:3flags}) are mutually opposite.
\\
${\rm (ii)}\Rightarrow {\rm (i)}$ 
Setting $n=0$ in
(\ref{eq:3flags}) we see that 
the flags
$\lbrace U_i\rbrace_{i=0}^N$,
$\lbrace U'_i\rbrace_{i=0}^N$,
$\lbrace U''_i\rbrace_{i=0}^N$ are mutually opposite.
Pick integers $r,s,t$ $(0 \leq r,s,t\leq N)$  such that $r+s+t>N$.
We show that $U_{N-r}\cap U'_{N-s}\cap U''_{N-t}=0$.
We may assume that 
$r+s\leq N$
(resp. $s+t\leq N$) (resp. $t+r\leq N$);
otherwise
$U_{N-r}\cap U'_{N-s}=0$
(resp. $U'_{N-s}\cap U''_{N-t}=0$)
(resp. $U''_{N-t}\cap U_{N-r}=0$) and we are done.
Define 
 $i=N-r-s$ and  $j=N-r-t$. By construction
 $0 \leq i,j\leq N-r$ and $i+j<N-r$.
Observe that $U'_{N-s} = U'_{r+i}$
and 
 $U''_{N-t} = U''_{r+j}$.
Therefore $U_{N-r}\cap U'_{N-s}\cap U''_{N-t}$
is the intersection of
 $U_{N-r}\cap U'_{r+i} $ and $U_{N-r}\cap U''_{r+j}$,
which is zero by assumption.
\\
The implications ${\rm (i)}\Leftrightarrow {\rm (iii)}$  and
${\rm (i)}\Leftrightarrow {\rm (iv)}$ are similarly obtained.
\end{proof}

\begin{theorem} 
\label{thm:forward}
Let $B$ denote a Billiard Array on $V$, and recall
 the $B$-flags
$\lbrack 1 \rbrack$,
$\lbrack 2 \rbrack$,
$\lbrack 3 \rbrack$ on $V$
from Definition
\ref{lem:threeflags}. These flags are totally opposite.
\end{theorem}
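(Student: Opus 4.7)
The plan is to reduce this essentially immediately to Proposition \ref{prop:3flagsgiveB}, which does all the real work.

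First, I would name things. Let $\{U_i\}_{i=0}^N$, $\{U'_i\}_{i=0}^N$, $\{U''_i\}_{i=0}^N$ denote the $B$-flags $[1]$, $[2]$, $[3]$ respectively. To verify the totally opposite condition of Definition \ref{def:Topp}, I must show that for any integers $r,s,t$ with $0 \le r,s,t \le N$ and $r+s+t > N$, we have
\begin{equation*}
U_{N-r}\cap U'_{N-s}\cap U''_{N-t} \;=\; 0.
\end{equation*}

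Next, I would match the notation. By the description of the $B$-flag $[\eta]$ in Definition \ref{lem:threeflags} and Lemma \ref{lem:clar}, the space $U_{N-r}$ is precisely \emph{component $N-r$ of the $B$-flag $[1]$} as used in Proposition \ref{prop:3flagsgiveB}, and likewise for $U'_{N-s}$ and $U''_{N-t}$. Applying Proposition \ref{prop:3flagsgiveB} to these three integers $r,s,t$ with $r+s+t>N$ immediately gives that the triple intersection is $0$, which is exactly the totally opposite condition. This finishes the proof.

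There is no real obstacle here, since the hard content (that the three-way intersection of corresponding flag components is either $0$ or equals $B_{(r,s,t)}$ according as $r+s+t>N$ or $r+s+t \le N$) has already been established in Proposition \ref{prop:3flagsgiveB}. The only small bookkeeping step is confirming the identification of ``component $N-r$ of the $B$-flag $[1]$'' with the $(N-r)$-th entry $U_{N-r}$ of the flag, and this is built into Definitions \ref{def:inducedflag} and \ref{lem:threeflags} together with Lemma \ref{lem:clar}.
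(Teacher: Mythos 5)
Your proposal is correct and follows exactly the paper's own route: the paper proves this theorem simply by citing Proposition \ref{prop:3flagsgiveB} together with Definition \ref{def:Topp}, which is precisely the reduction you carry out (with the notational identification via Lemma \ref{lem:clar} made explicit).
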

\begin{proof} 
By Proposition
\ref{prop:3flagsgiveB}
and 
Definition
\ref{def:Topp}.
\end{proof}

\begin{lemma}
\label{lem:begin}
\rm
Suppose we are given three totally
opposite flags on $V$, denoted
$\lbrace U_i\rbrace_{i=0}^N$,
$\lbrace U'_i\rbrace_{i=0}^N$,
$\lbrace U''_i\rbrace_{i=0}^N$.
Then $U_{N-r} \cap U'_{N-s} \cap U''_{N-t} $ 
has dimension $N-r-s-t+1$ for all $(r,s,t) \in \Delta_{\leq N}$.
\end{lemma}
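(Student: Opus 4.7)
The plan is to reduce the triple intersection to a double intersection inside the subspace $U_{N-r}$, and then invoke Lemma \ref{lem:OppDim}. The only case that requires work is $(r,s,t) \in \Delta_{\leq N}$ with $r+s+t \leq N$, since the totally opposite hypothesis kills the intersection outright when $r+s+t > N$, while the claimed dimension $N-r-s-t+1$ applies precisely when $r+s+t \leq N$.

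Concretely, I would apply Lemma \ref{lem:ToppMeaning}(ii) with $n = r$, which asserts that
\begin{equation*}
\{U_{N-r} \cap U'_{r+i}\}_{i=0}^{N-r}, \qquad \{U_{N-r} \cap U''_{r+i}\}_{i=0}^{N-r}
\end{equation*}
are opposite flags on the $(N-r+1)$-dimensional space $U_{N-r}$. Since $r+s+t \leq N$, the indices $i = N-r-s$ and $j = N-r-t$ both lie in $\{0, 1, \ldots, N-r\}$. Writing $U'_{N-s} = U'_{r+i}$ and $U''_{N-t} = U''_{r+j}$, the triple intersection becomes
\begin{equation*}
U_{N-r} \cap U'_{N-s} \cap U''_{N-t} = \bigl(U_{N-r} \cap U'_{r+i}\bigr) \cap \bigl(U_{N-r} \cap U''_{r+j}\bigr),
\end{equation*}
the intersection of the $i$-th component of one of the above opposite flags with the $j$-th component of the other, taken in $U_{N-r}$.

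Finally, $i + j = 2(N-r) - s - t$, so $i + j \geq N-r$ is equivalent to $N-r-s-t \geq 0$, which holds by hypothesis. Therefore Lemma \ref{lem:OppDim}, applied inside the ambient space $U_{N-r}$ (of dimension $N-r+1$), yields dimension $i + j - (N-r) + 1 = N - r - s - t + 1$, as desired. The main obstacle is really just the bookkeeping: verifying that $i$ and $j$ are in range and that the hypothesis $i+j \geq N-r$ of Lemma \ref{lem:OppDim} is met. Both follow immediately from $r+s+t \leq N$ together with $r,s,t \geq 0$, so once the reduction via Lemma \ref{lem:ToppMeaning}(ii) is in place the argument is short.
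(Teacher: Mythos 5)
Your proposal is correct and follows essentially the same route as the paper: invoke Lemma \ref{lem:ToppMeaning}(ii) with $n=r$ to get the two opposite flags on $U_{N-r}$, set $i=N-r-s$ and $j=N-r-t$, identify the triple intersection with the intersection of the corresponding flag components, and apply Lemma \ref{lem:OppDim} inside $U_{N-r}$. The only cosmetic difference is your preliminary remark about the case $r+s+t>N$, which does not actually arise since elements of $\Delta_{\leq N}$ have coordinate sum at most $N$.
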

\begin{proof} The given flags
satisfy 
Lemma
\ref{lem:ToppMeaning}(i), so they satisfy
Lemma
\ref{lem:ToppMeaning}(ii).
Therefore
the sequences
$\lbrace U_{N-r} \cap U'_{r+i}\rbrace_{i=0}^{N-r}$
and
$\lbrace U_{N-r} \cap U''_{r+i}\rbrace_{i=0}^{N-r}$
are opposite flags on $U_{N-r}$.
Define $i=N-r-s$ and
$j=N-r-t$.
By construction $0 \leq i,j\leq N-r$ 
and $i+j\geq N-r$. 
Observe that
$U'_{N-s} =
U'_{r+i} $
and
$U''_{N-t}=U''_{r+j}$.
Therefore
$U_{N-r} \cap U'_{N-s} \cap U''_{N-t}$
is the intersection of
$U_{N-r} \cap U'_{r+i}$
and 
$U_{N-r} \cap U''_{r+j}$.
By 
Lemma \ref{lem:OppDim}
this intersection 
has dimension $i+j+r-N+1$, which is equal to
$N-r-s-t+1$.
\end{proof}

\begin{corollary}
\label{lem:begin2}
\rm
Suppose we are given three totally
opposite flags on $V$, denoted
$\lbrace U_i\rbrace_{i=0}^N$,
$\lbrace U'_i\rbrace_{i=0}^N$,
$\lbrace U''_i\rbrace_{i=0}^N$.
Then $U_{N-r} \cap U'_{N-s} \cap U''_{N-t} $ 
has dimension one for all $(r,s,t) \in \Delta_{N}$.
\end{corollary}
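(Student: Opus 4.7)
The plan is to obtain this corollary as an immediate specialization of Lemma \ref{lem:begin}. That lemma already establishes, for arbitrary $(r,s,t) \in \Delta_{\leq N}$, that the intersection $U_{N-r} \cap U'_{N-s} \cap U''_{N-t}$ has dimension $N-r-s-t+1$, so no further work on the flag structure is needed.

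Concretely, I would begin the proof by observing that the hypothesis $(r,s,t) \in \Delta_N$ means, by the definition of $\Delta_N$ recalled in Section 3, that $r,s,t$ are nonnegative integers with $r+s+t=N$. In particular, $(r,s,t)$ lies in $\Delta_{\leq N}$, so Lemma \ref{lem:begin} applies and yields that $U_{N-r} \cap U'_{N-s} \cap U''_{N-t}$ has dimension $N-r-s-t+1$. Substituting $r+s+t=N$ into this expression gives dimension $N-N+1 = 1$, which is the desired conclusion.

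There is essentially no obstacle here; the work of exhibiting the correct opposite-flag pair inside $U_{N-r}$ and invoking Lemma \ref{lem:OppDim} has already been done in the proof of Lemma \ref{lem:begin}. The only thing to be careful about is that the constraints $0\leq r,s,t\leq N$ built into the definitions of $\Delta_{\leq N}$ and $\Delta_N$ match, so that the intersection appearing in the statement of the corollary is literally the intersection appearing in the statement of Lemma \ref{lem:begin}, with no reindexing needed.
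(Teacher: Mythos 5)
Your proposal is correct and is exactly the paper's argument: the corollary is the special case $r+s+t=N$ of Lemma \ref{lem:begin}, giving dimension $N-r-s-t+1=1$. Nothing further is needed.
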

\begin{proof} 
In Lemma
\ref{lem:begin}, assume $r+s+t=N$.
\end{proof}

\begin{theorem}
\label{thm:backward}
Suppose we are given three totally opposite flags on
$V$, 
denoted 
$\lbrace U_i\rbrace_{i=0}^N$,
$\lbrace U'_i\rbrace_{i=0}^N$,
$\lbrace U''_i\rbrace_{i=0}^N$.
 For each location
$\lambda=(r,s,t)$ in $\Delta_N$ define
\begin{equation}
B_\lambda = 
U_{N-r} \cap U'_{N-s} \cap U''_{N-t}.
\label{eq:defBLam}
\end{equation}
Then the map $B:\Delta_N \to \mathcal P_1(V)$,
$\lambda \mapsto B_\lambda$ is a Billiard
Array on $V$.
\end{theorem}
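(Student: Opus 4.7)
The map $B$ lands in $\mathcal P_1(V)$ by Corollary~\ref{lem:begin2}, so what remains is to verify the two axioms of Definition~\ref{def:ba}.

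\emph{Line axiom.} I would handle $1$-lines first and then invoke symmetry. Fix the $1$-line $L = \lbrace (r,N-r-i,i)\;|\; 0\leq i\leq N-r\rbrace$. Every $B_\lambda$ with $\lambda\in L$ lies inside $U_{N-r}$, so the analysis is internal to $U_{N-r}$. Lemma~\ref{lem:ToppMeaning}(ii) (with $n=r$) says the sequences $\lbrace U_{N-r}\cap U'_{r+j}\rbrace_{j=0}^{N-r}$ and $\lbrace U_{N-r}\cap U''_{r+j}\rbrace_{j=0}^{N-r}$ are opposite flags on $U_{N-r}$. Applying the formula $V_i=U_i\cap U'_{N-i}$ from the discussion of opposite flags in Section~6 (with $N$ replaced by $N-r$) produces a decomposition of $U_{N-r}$ whose $i$-th component is $U_{N-r}\cap U'_{r+i}\cap U''_{r+(N-r-i)}=U_{N-r}\cap U'_{r+i}\cap U''_{N-i}$, which is exactly $B_{(r,N-r-i,i)}$. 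Thus $\lbrace B_\lambda\rbrace_{\lambda\in L}$ is a decomposition of $U_{N-r}$, so in particular the sum is direct. The $2$-line and $3$-line cases follow by the analogous symmetric statements in Lemma~\ref{lem:ToppMeaning}(iii),(iv).

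\emph{Black $3$-clique axiom.} By Lemma~\ref{lem:white}, a typical black $3$-clique has the form $\lbrace (r+1,s,t),(r,s+1,t),(r,s,t+1)\rbrace$ with $(r,s,t)\in\Delta_{N-1}$. Set $W:=U_{N-r}\cap U'_{N-s}\cap U''_{N-t}$. Because $U_{N-r-1}\subseteq U_{N-r}$ and similarly for $U',U''$, all three of $B_{(r+1,s,t)}, B_{(r,s+1,t)}, B_{(r,s,t+1)}$ lie in $W$. Lemma~\ref{lem:begin} gives $\dim W=N-r-s-t+1=2$. The locations $(r+1,s,t)$ and $(r,s+1,t)$ are $3$-collinear, so by the line axiom just established, the sum $B_{(r+1,s,t)}+B_{(r,s+1,t)}$ is direct of dimension $2$ and therefore equals $W$. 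Hence $B_{(r,s,t+1)}\subseteq W$ is already contained in $B_{(r+1,s,t)}+B_{(r,s+1,t)}$, forcing the triple sum to be non-direct, as required.

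The only real subtlety, and the piece most likely to trip one up, is the index bookkeeping in the line step: one has to correctly match the opposite-flag decomposition formula $V_i=U_i\cap U'_{N-i}$ (applied inside $U_{N-r}$ of dimension $N-r+1$) with the chosen parametrization of the line so that the $V_i$ really coincide with the $B_\lambda$'s for $\lambda\in L$. Once this identification is locked in, both axioms reduce to short dimension counts.
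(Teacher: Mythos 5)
Your proof is correct and takes essentially the same route as the paper: the line axiom is obtained from the decomposition of $U_{N-r}$ induced by the two opposite flags supplied by Lemma~\ref{lem:ToppMeaning}(ii), and the black-clique axiom from the dimension-two count of Lemma~\ref{lem:begin}. The only (harmless) difference is in the clique step, where the paper concludes non-directness immediately from three one-dimensional subspaces sitting inside a two-dimensional space, so your extra detour through the $3$-collinearity of two of the locations is not needed.
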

\begin{proof}
The map $B$ is well defined by Corollary \ref{lem:begin2}.
We check that $B$ satisfies the conditions
of Definition
\ref{def:ba}. 
We show that $B$ satisfies 
Definition
\ref{def:ba}(i).
Let $L$ denote a line in $\Delta_N$.
We show that the sum
$\sum_{\lambda \in L}B_\lambda$ 
is direct.
For notational convenience,  assume that
$L$ is a $1$-line. Denote the cardinality
of $L$ by $N-n+1$. 
The sum $\sum_{\lambda \in L}B_\lambda$ 
is direct, since the summands make up the
decomposition of $U_{N-n}$
associated with the 
last two flags in
(\ref{eq:3flags}).
We have shown that $B$ satisfies
 Definition
\ref{def:ba}(i). 
Next we show that $B$ satisfies
 Definition
\ref{def:ba}(ii). Assume $N\geq 1$;
otherwise we are done.
Pick $(r,s,t) \in
\Delta_{N-1}$ and let $C$ denote the corresponding
black 3-clique in $\Delta_N$ from Lemma
\ref{lem:white}.
By construction, for all $\lambda \in C$
the subspace $B_\lambda$ is contained in
$U_{N-r} \cap U'_{N-s} \cap U''_{N-t}$. This 
space has dimension 2 by 
Lemma \ref{lem:begin}. Therefore
the 
sum $\sum_{\lambda \in C} B_\lambda$
is not direct. 
 We have shown that $B$ satisfies Definition
\ref{def:ba}(ii).
By the above comments
$B$ is
a Billiard Array on $V$. 
\end{proof}

\begin{proposition}
\label{lem:extra} 
Referring to Theorem
\ref{thm:backward}, the 
 $B$-flags 
$\lbrack 1 \rbrack$, $\lbrack 2 \rbrack$,
$\lbrack 3 \rbrack$ 
coincide with 
$\lbrace U_i\rbrace_{i=0}^N$,
$\lbrace U'_i\rbrace_{i=0}^N$,
$\lbrace U''_i\rbrace_{i=0}^N$ respectively.
\end{proposition}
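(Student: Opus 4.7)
The plan is to unpack what the $B$-flag $\lbrack \eta \rbrack$ actually is, compare it termwise to the input flag, and finish by a dimension count. I would handle $\eta=1$ explicitly; the other two cases are identical after relabelling.

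First I would observe, via Lemma \ref{lem:ind} and Definitions \ref{def:inducedflag}, \ref{lem:threeflags}, that the $i$-th component of the $B$-flag $\lbrack 1 \rbrack$ is the subspace $B_{\mu_i}$, where $\mu_i = (N-i)e_1$ is the $1$-corner of $\Delta_{N-i}$. By Definition \ref{def:extB} and Lemma \ref{lem:posetmeaning},
\begin{equation*}
B_{\mu_i} = \sum_{\lambda} B_{\lambda},
\end{equation*}
where the sum ranges over locations $\lambda = (r,s,t) \in \Delta_N$ with $r \geq N-i$.

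Next I would use the explicit formula (\ref{eq:defBLam}) from Theorem \ref{thm:backward}. For any such $\lambda = (r,s,t)$ we have $B_{\lambda} = U_{N-r} \cap U'_{N-s} \cap U''_{N-t}$. Since $r \geq N-i$ gives $N-r \leq i$, nestedness of the flag $\lbrace U_j \rbrace_{j=0}^N$ yields $U_{N-r} \subseteq U_i$, hence $B_{\lambda} \subseteq U_i$ for each summand. Consequently $B_{\mu_i} \subseteq U_i$.

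Finally I would close with a dimension count: $B_{\mu_i}$ has dimension $i+1$ by Lemma \ref{lem:Bmudim} (applied to the Billiard Array $B$ produced by Theorem \ref{thm:backward}), and $U_i$ has dimension $i+1$ because $\lbrace U_j \rbrace_{j=0}^N$ is a flag. Thus $B_{\mu_i} = U_i$ for $0 \leq i \leq N$, which is exactly the claim that the $B$-flag $\lbrack 1 \rbrack$ coincides with $\lbrace U_i \rbrace_{i=0}^N$. Repeating the same reasoning with $\eta = 2$ (respectively $\eta = 3$), and the analogous inclusions $U'_{N-s} \subseteq U'_i$ (respectively $U''_{N-t} \subseteq U''_i$), handles the remaining two cases. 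There is no real obstacle here: the entire content of the proposition is that Definition \ref{def:extB} applied to the $B$ of (\ref{eq:defBLam}) inverts the formula, and the inclusion $B_{\mu_i} \subseteq U_i$ plus a matching dimension count is all that is needed.
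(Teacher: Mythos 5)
Your proposal is correct and follows essentially the same route as the paper: identify the $i$-th component of the $B$-flag $\lbrack 1 \rbrack$ as the span of the $B_\lambda$ with $1$-coordinate at least $N-i$, deduce the inclusion in $U_i$ directly from the defining formula (\ref{eq:defBLam}), and conclude by matching dimensions. The only cosmetic difference is that you phrase the component via Lemma \ref{lem:posetmeaning} and Lemma \ref{lem:Bmudim} where the paper invokes Lemmas \ref{lem:3Fclar}(i), \ref{lem:clar} and the definition of a flag.
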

\begin{proof}
We show that the 
$B$-flag $\lbrack 1 \rbrack$ coincides with
$\lbrace U_i\rbrace_{i=0}^N$.
Denote the $B$-flag $\lbrack 1 \rbrack$ by
$\lbrace {\mathcal U}_i\rbrace_{i=0}^N$.
We show that ${\mathcal U}_i = U_i$ for $0 \leq i \leq N$.
Let $i$ be given. By 
Lemmas
\ref{lem:3Fclar}(i),
\ref{lem:clar}
the subspace 
 ${\mathcal U}_i$ is spanned by the subspaces 
 $B_\lambda$ such that
$\lambda$ is a location in $\Delta_N$ at distance $\leq i$ from
the $1$-corner of $\Delta_N$.  Each such $B_\lambda$ is
contained in $U_i$ by
(\ref{eq:defBLam}),
 so $\mathcal U_i \subseteq U_i$.
Each of $\mathcal U_i, U_i$ has dimension $i+1$ by
the definition of a flag.
Therefore
$\mathcal U_i = U_i$.
We have shown that the $B$-flag $\lbrack 1 \rbrack$ coincides with
$\lbrace U_i\rbrace_{i=0}^N$. The remaining assertions are
similarly shown.
\end{proof}

\noindent Consider the following two sets:
\begin{enumerate}
\item[\rm (i)] the Billiard Arrays on $V$;
\item[\rm (ii)] the 3-tuples of totally opposite
flags on $V$.
\end{enumerate}
 Theorem
\ref{thm:forward} describes a function from (i) to (ii).
Theorem 
\ref{thm:backward} describes a function from (ii) to  (i).
\begin{theorem} The above functions are inverses. Moreover they
are bijections.
\end{theorem}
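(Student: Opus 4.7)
The plan is to show that the two functions are mutually inverse, which immediately implies that each is a bijection. Let $F$ denote the map of Theorem \ref{thm:forward}, sending a Billiard Array $B$ on $V$ to the triple $(\lbrack 1\rbrack,\lbrack 2\rbrack,\lbrack 3\rbrack)$ of its $B$-flags, and let $G$ denote the map of Theorem \ref{thm:backward}, sending a 3-tuple of totally opposite flags on $V$ to the Billiard Array prescribed by equation (\ref{eq:defBLam}).

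For the composition $G\circ F$: given a Billiard Array $B$ on $V$, denote its $B$-flags by $\lbrace U_i\rbrace_{i=0}^N$, $\lbrace U'_i\rbrace_{i=0}^N$, $\lbrace U''_i\rbrace_{i=0}^N$. Applying $G$ to this triple produces the Billiard Array $B'$ with $B'_\lambda = U_{N-r}\cap U'_{N-s}\cap U''_{N-t}$ for each $\lambda=(r,s,t)\in\Delta_N$. By Corollary \ref{cor:3flagsgiveB} this intersection is precisely $B_\lambda$, so $B'=B$ and $G\circ F$ is the identity on set (i).

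For the composition $F\circ G$: given a 3-tuple of totally opposite flags on $V$, let $B$ be the Billiard Array produced by $G$. Proposition \ref{lem:extra} states exactly that the triple of $B$-flags $(\lbrack 1\rbrack,\lbrack 2\rbrack,\lbrack 3\rbrack)$ coincides with the original triple. Hence $F\circ G$ is the identity on set (ii).

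Having established that $F$ and $G$ are mutually inverse, each is automatically a bijection, and the theorem follows. All the substantive content has already been packaged into Corollary \ref{cor:3flagsgiveB} and Proposition \ref{lem:extra}, so there is no genuine obstacle here; the proof amounts to reading those two results as the two inverse-composition identities and recording that observation.
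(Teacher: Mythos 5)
Your proposal is correct and follows essentially the same route as the paper, which proves the theorem by citing exactly Corollary \ref{cor:3flagsgiveB} (for $G\circ F$) and Proposition \ref{lem:extra} (for $F\circ G$). You have simply written out the two inverse-composition identities explicitly, which is a faithful unpacking of the paper's argument.
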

\begin{proof} The functions are inverses by
Corollary
\ref{cor:3flagsgiveB}
and
Proposition \ref{lem:extra}.
It follows that they are bijections.
\end{proof}

\section{The braces for a Billiard Array }

Our next general goal is to classify the Billiard Arrays up
to isomorphism. This goal will be completed in Secton 19.
Throughout the present section the following notation is in effect.
Fix $N \in \mathbb N$. Let $V$ denote a vector space over $\mathbb F$
with dimension $N+1$. Let $B$ denote a Billiard Array on $V$.

\begin{definition}
\label{def:brace}
\rm
Let $\lambda,\mu, \nu$ denote locations in $\Delta_N$ that form
a black 3-clique. By an {\it affine brace} (or {\it abrace}) for this clique,
we mean a set of vectors
\begin{equation}
u \in B_\lambda,
\qquad \quad
v \in B_\mu,
\qquad \quad
w \in B_\nu
\label{eq:location}
\end{equation}
that are not all zero, and
$u+v+w=0$.
\end{definition}

\begin{example}
\label{ex:white}
\rm 
Let $\lambda,\mu, \nu$ denote locations in $\Delta_N$ that form
a black 3-clique. Pick any nonzero vectors
\begin{equation*}
u \in B_\lambda,
\qquad  \quad 
v \in B_\mu,
\qquad  \quad 
w \in B_\nu.
\end{equation*}
The vectors $u,v,w$ are linearly dependent by Definition
\ref{def:ba}(ii).
So there
exist scalars $a,b,c$ in $\mathbb F$ that are not all zero
and $au+bv+cw=0$. The vectors $au, bv, cw$ form an abrace for the clique.
\end{example}

\noindent We have a few comments about abraces.

\begin{lemma}
\label{lem:brace}
Referring to Definition
\ref{def:brace}, assume that $u,v,w$ form an abrace. Then
each of $u,v,w$ is nonzero. Moreover 
any two of $u,v,w$ are linearly independent. 
\end{lemma}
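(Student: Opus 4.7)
The plan is to deduce both assertions from the directness of $B_\lambda + B_\mu$, $B_\mu + B_\nu$, $B_\nu + B_\lambda$, which is already established in Lemma~\ref{lem:threedim2} (since any two of the three locations forming a black 3-clique are collinear).

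For the first assertion, I would argue by contradiction. Suppose one of $u,v,w$ is zero; say $u=0$. Then the abrace condition $u+v+w=0$ forces $v=-w$, so the single vector $v$ lies in both $B_\mu$ and $B_\nu$. But $B_\mu+B_\nu$ is direct by Lemma~\ref{lem:threedim2}, so $B_\mu\cap B_\nu=0$, forcing $v=0$ and therefore also $w=0$. This contradicts the requirement in Definition~\ref{def:brace} that $u,v,w$ are not all zero. The cases $v=0$ and $w=0$ are handled symmetrically.

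For the second assertion, suppose to the contrary that some two of $u,v,w$ are linearly dependent; say $u$ and $v$ are linearly dependent. Since neither is zero by the first assertion, there exists $\alpha\in\mathbb F\setminus\{0\}$ with $u=\alpha v$. Then $u$ lies in $B_\mu$ as well as in $B_\lambda$, so $u\in B_\lambda\cap B_\mu$. Again by Lemma~\ref{lem:threedim2} this intersection is zero, forcing $u=0$, a contradiction. The other two pairs are handled in the same way.

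No step here presents a real obstacle; the only thing to be careful about is invoking Lemma~\ref{lem:threedim2} correctly to ensure the pairwise directness of the subspaces involved, which in turn rests on the collinearity of any two vertices of a black $3$-clique.
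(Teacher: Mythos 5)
Your proof is correct and follows essentially the same route as the paper: both arguments reduce the two assertions to the pairwise directness of $B_\lambda+B_\mu$, $B_\mu+B_\nu$, $B_\nu+B_\lambda$ from Lemma~\ref{lem:threedim2}, first ruling out a zero vector (which would force two opposite nonzero vectors in a direct sum) and then ruling out dependence of any pair via the zero intersection. No gaps; this matches the paper's proof.
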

\begin{proof} If at least one of $u,v,w$ is zero then
the other two are opposite and nonzero, contradicting the last assertion of
Lemma
\ref{lem:threedim2}. Therefore each of $u,v,w$ is nonzero.
By this and the last assertion of Lemma
\ref{lem:threedim2}, we find that
any two of $u,v,w$ are linearly independent.
\end{proof}

\begin{lemma}
\label{lem:abc}
Referring to Definition
\ref{def:brace}, assume that $u,v,w$ form an abrace. 
Let $a,b,c$ denote scalars in $\F$ such that
$au+bv+cw=0$. Then $a=b=c$.
\end{lemma}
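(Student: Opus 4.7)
The plan is straightforward: I would use the given relation $u+v+w=0$ to eliminate one variable, then invoke the linear independence guaranteed by the previous lemma.

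First I would solve $u+v+w=0$ for $w$, obtaining $w=-u-v$. Substituting into $au+bv+cw=0$ gives
\begin{equation*}
(a-c)u + (b-c)v = 0.
\end{equation*}
By Lemma \ref{lem:brace}, any two vectors among $u,v,w$ are linearly independent; in particular $u$ and $v$ are linearly independent. Hence both coefficients vanish, yielding $a=c$ and $b=c$, so $a=b=c$ as claimed.

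There is no real obstacle here — the work is already done by Lemma \ref{lem:brace}, which rules out the degenerate case by ensuring $u,v$ are genuinely linearly independent (and in particular nonzero). One could equivalently eliminate $u$ or $v$ instead of $w$; the argument is symmetric in the three vectors, which is why the common value $a=b=c$ appears naturally.
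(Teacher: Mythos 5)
Your proof is correct and follows essentially the same route as the paper: the paper substitutes $u=-v-w$ to get $(b-a)v+(c-a)w=0$ and uses linear independence of $v,w$ from Lemma \ref{lem:brace}, while you eliminate $w$ instead — a purely cosmetic difference, as you note yourself.
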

\begin{proof} 
Using $u+v+w=0$ we obtain $(b-a)v+(c-a)w=0$. The vectors
$v,w$ are linearly independent so $b-a$ and $c-a$ are zero.
\end{proof}

\begin{lemma}
\label{lem:braceunique}
Referring to Definition
\ref{def:brace}, assume that $u,v,w$ form an abrace. 
 Then for
 $0 \not=\delta \in \mathbb F$ the vectors
$\delta u, \delta v, \delta w$ form an abrace. 
\end{lemma}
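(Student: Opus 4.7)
The plan is to verify directly the three conditions in Definition \ref{def:brace} for the triple $\delta u, \delta v, \delta w$. First, since each $B_\lambda, B_\mu, B_\nu$ is a subspace of $V$ (in fact one-dimensional by Definition \ref{def:ba}), multiplication by the scalar $\delta \in \mathbb F$ preserves membership, so $\delta u \in B_\lambda$, $\delta v \in B_\mu$, $\delta w \in B_\nu$. Second, scaling by a nonzero $\delta$ preserves the property of ``not all being zero,'' since $\delta x = 0$ forces $x = 0$; so because $u, v, w$ are not all zero, neither are $\delta u, \delta v, \delta w$. Third, from $u + v + w = 0$ we obtain $\delta u + \delta v + \delta w = \delta(u + v + w) = 0$.

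Combining these three observations shows that $\delta u, \delta v, \delta w$ meet every requirement of Definition \ref{def:brace}, hence form an abrace. There is no real obstacle here; the statement is a one-line consequence of the $\mathbb F$-linearity built into the definition of an abrace, and the proof requires no appeal to the deeper structural lemmas such as Lemma \ref{lem:brace} or Lemma \ref{lem:abc}.
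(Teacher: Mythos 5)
Your proof is correct and matches the paper's approach: the paper simply cites Definition \ref{def:brace}, and your three checks (membership in $B_\lambda, B_\mu, B_\nu$ under scalar multiplication, preservation of ``not all zero'' by a nonzero scalar, and $\delta u+\delta v+\delta w=\delta(u+v+w)=0$) are exactly the content of that citation.
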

\begin{proof} By Definition
\ref{def:brace}.
\end{proof}

\begin{lemma}
\label{lem:braceunique2}
Referring to Definition
\ref{def:brace}, assume that $u,v,w$ form an abrace. 
Then for any abrace
\begin{equation*}
u' \in B_\lambda,
\qquad \quad
v' \in B_\mu,
\qquad \quad
w' \in B_\nu
\end{equation*}
 there exists  $0 \not=\delta \in \mathbb F$ such that
\begin{eqnarray*}
u' = \delta u, \qquad \quad
v' = \delta v, \qquad \quad
w' = \delta w.
\end{eqnarray*}
\end{lemma}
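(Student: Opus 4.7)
The plan is straightforward and relies on the two technical lemmas just proved, namely Lemma \ref{lem:brace} (every component of an abrace is nonzero) and Lemma \ref{lem:abc} (any linear dependence among $u,v,w$ must have equal coefficients).

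First I would observe that since $B_\lambda$, $B_\mu$, $B_\nu$ are one-dimensional, and by Lemma \ref{lem:brace} the vectors $u,v,w$ are nonzero, each of $u,v,w$ spans the corresponding subspace. Hence there exist unique scalars $a,b,c \in \mathbb F$ with $u' = au$, $v' = bv$, $w' = cw$. Applying Lemma \ref{lem:brace} to the second abrace shows $u',v',w'$ are all nonzero, which forces $a,b,c$ to all be nonzero.

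Next I would use the defining relation $u'+v'+w'=0$ to write $au + bv + cw = 0$. Since $u,v,w$ form an abrace, Lemma \ref{lem:abc} applies and gives $a = b = c$. Setting $\delta$ to be this common value, we have $\delta \neq 0$ by the previous paragraph, and $u' = \delta u$, $v' = \delta v$, $w' = \delta w$ as required.

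There is no real obstacle here; the argument is essentially an immediate corollary of Lemmas \ref{lem:brace} and \ref{lem:abc}. The only subtlety to double-check is that Lemma \ref{lem:abc} genuinely applies to $au+bv+cw=0$ with the original $u,v,w$ (not the primed versions), which it does since the hypothesis of Lemma \ref{lem:abc} only requires $u,v,w$ to be an abrace and imposes no condition on the scalars beyond producing a linear dependence.
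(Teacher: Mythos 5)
Your proof is correct and follows essentially the same route as the paper: express $u',v',w'$ as scalar multiples of $u,v,w$ (using that these are nonzero spanning vectors of the one-dimensional spaces), apply $u'+v'+w'=0$ together with Lemma \ref{lem:abc} to get equal coefficients, and take $\delta$ to be the common value. Your additional remark that $\delta\neq 0$ follows from Lemma \ref{lem:brace} applied to the primed abrace just makes explicit a point the paper leaves to the reader.
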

\begin{proof}
By 
(\ref{eq:location}) and since
$u,v,w$ are nonzero, we see that
$u,v,w$ are bases for
$B_\lambda$,
$B_\mu$,
$B_\nu$ respectively. Therefore there exist
scalars $a,b,c$ in $\mathbb F$ such that
\begin{eqnarray*}
u'=au, \qquad \quad 
v'=bv, \qquad \quad 
w'=cw. 
\end{eqnarray*}
Now $au+bv+cw=u'+v'+w'=0$ so $a=b=c$ by Lemma 
\ref{lem:abc}. Let $\delta$ denote this common value
and observe that $\delta$ satisfies the requirements of the
lemma.
\end{proof}

\begin{lemma}
\label{lem:brace2}
Let $\lambda,\mu, \nu$ denote locations in $\Delta_N$ that form
a black 3-clique. 
Then each nonzero $u \in B_\lambda$ is contained in a unique abrace
for this clique.
\end{lemma}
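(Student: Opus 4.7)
The plan is to establish existence and uniqueness separately, using the preceding lemmas as black boxes. For existence, I would start by invoking Example \ref{ex:white} to produce some abrace $(u', v', w')$ for the clique $\{\lambda, \mu, \nu\}$. By Lemma \ref{lem:brace}, $u'$ is nonzero, hence a basis for the one-dimensional subspace $B_\lambda$. So there is a nonzero scalar $\delta \in \mathbb{F}$ with $u = \delta u'$. Then Lemma \ref{lem:braceunique} tells me that $(\delta u', \delta v', \delta w') = (u, \delta v', \delta w')$ is again an abrace for the clique, and this abrace contains $u$ at position $\lambda$.

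For uniqueness, suppose $(u, v_1, w_1)$ and $(u, v_2, w_2)$ are two abraces for the clique, both with the prescribed vector $u$ at $B_\lambda$. Applying Lemma \ref{lem:braceunique2} to these two abraces, there is some nonzero $\delta \in \mathbb{F}$ such that $u = \delta u$, $v_2 = \delta v_1$, and $w_2 = \delta w_1$. Since $u \neq 0$, the first equation forces $\delta = 1$, and hence $v_1 = v_2$ and $w_1 = w_2$, proving uniqueness.

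I do not expect any genuine obstacle here; the lemma is essentially a direct corollary of Lemmas \ref{lem:brace}, \ref{lem:braceunique}, and \ref{lem:braceunique2} together with Example \ref{ex:white}. The only minor point to be careful about is ensuring that the abrace produced from Example \ref{ex:white} can indeed be rescaled to match the prescribed $u$, which is where the one-dimensionality of $B_\lambda$ (from Definition \ref{def:ba}) enters to guarantee that the chosen basis vector $u'$ and the target $u$ differ by a nonzero scalar.
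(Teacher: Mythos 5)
Your proof is correct and follows essentially the same route as the paper: existence by producing an abrace via Example \ref{ex:white} and rescaling it (using Lemma \ref{lem:brace} and Lemma \ref{lem:braceunique}, plus the one-dimensionality of $B_\lambda$) to match the prescribed $u$, and uniqueness by applying Lemma \ref{lem:braceunique2} and noting $\delta=1$ since $u\neq 0$. The only cosmetic difference is that the paper applies Example \ref{ex:white} directly to the given $u$ and divides by the resulting coefficient $a$, whereas you rescale a generic abrace; the content is the same.
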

\begin{proof}
Concerning existence, pick nonzero vectors
$v \in B_\mu$ and
$w \in B_\nu$. By Example
\ref{ex:white} there exist scalars $a,b,c$ in $\mathbb F$
 such that $au,bv,cw$ is an abrace. The scalars
$a,b,c$ are nonzero by Lemma
\ref{lem:brace}. Now by construction the vectors $u, ba^{-1}v, ca^{-1}w$
form an abrace that contains $u$. The abrace containing
$u$ is unique by Lemma
\ref{lem:braceunique2}.
\end{proof}

\begin{definition}\rm Let $\lambda, \mu$ denote
 locations in $\Delta_N$ that form an edge. By Lemma
\ref{lem:edgegivesclique}
 there exists
a unique location $\nu \in \Delta_N$ such that
$\lambda, \mu, \nu$ form a black 3-clique. We call
$\nu$ the {\it completion} of the edge.
\end{definition}

\begin{definition}
\label{def:shortbrace}
\rm
 Let $\lambda, \mu$ denote
locations in $\Delta_N$ that form an edge.
By a {\it brace} for this edge, we mean a set of nonzero vectors
\begin{equation*}
u \in B_\lambda, \qquad \quad v \in B_\mu
\end{equation*}
such that $u+v \in B_\nu$. Here $\nu$ denotes the
completion of the edge.
\end{definition}

\begin{lemma}
\label{lem:abrace2brace}
Referring to Definition
\ref{def:brace}, assume that $u,v,w$ form an abrace. Then
$u,v$ form a brace for the edge  $\lambda, \mu$.
\end{lemma}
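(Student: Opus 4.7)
The plan is to unpack both definitions and observe that the proof reduces to two immediate verifications. Recall that to say $u,v$ form a brace for the edge $\lambda,\mu$ (with completion $\nu$), Definition \ref{def:shortbrace} requires two things: that $u \in B_\lambda$ and $v \in B_\mu$ are both nonzero, and that $u+v \in B_\nu$.

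For the first requirement, I would appeal directly to Lemma \ref{lem:brace}, which guarantees that every vector in an abrace is nonzero. So from the hypothesis that $(u,v,w)$ is an abrace, we get at once that $u \ne 0$ and $v \ne 0$, with $u \in B_\lambda$ and $v \in B_\mu$ by Definition \ref{def:brace}.

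For the second requirement, I would use the defining relation $u+v+w=0$ from Definition \ref{def:brace}, which rearranges as $u+v = -w$. Since $B_\nu$ is a subspace of $V$ and $w \in B_\nu$, the vector $-w$ lies in $B_\nu$, so $u+v \in B_\nu$ as needed. (In fact $u+v \ne 0$ automatically, since $w \ne 0$ by Lemma \ref{lem:brace}, though the definition of brace does not require this.)

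There is no real obstacle here; this lemma is essentially a bookkeeping step that records the elementary fact that an abrace carries a brace inside it, and both conditions follow directly from the preceding definitions and Lemma \ref{lem:brace}. The only subtle point worth stating explicitly in the writeup is that the completion $\nu$ of the edge $\lambda,\mu$ in Definition \ref{def:shortbrace} coincides with the third location in the black 3-clique from Definition \ref{def:brace}, which is immediate from the uniqueness assertion in Lemma \ref{lem:edgegivesclique}.
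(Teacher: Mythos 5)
Your proposal is correct and follows essentially the same route as the paper: nonvanishing of $u,v$ via Lemma \ref{lem:brace}, and $u+v=-w\in B_\nu$ from the abrace relation, then invoking Definition \ref{def:shortbrace}. The remark identifying the completion $\nu$ via Lemma \ref{lem:edgegivesclique} is a fine, if implicit, clarification.
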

\begin{proof} The vectors $u,v$ are nonzero by
Lemma
\ref{lem:brace}. Moreover $u+v=-w \in B_{\nu}$.  
The result follows in view of Definition
\ref{def:shortbrace}.
\end{proof}

\begin{lemma}
\label{lem:brace2abrace}
Referring to Definition
\ref{def:shortbrace}, assume that $u,v$ form a brace,
and define $w=-u-v$. Then $u,v,w$ form an abrace for the
black 3-clique $\lambda, \mu, \nu$.
\end{lemma}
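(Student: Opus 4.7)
The plan is to verify directly that the triple $(u,v,w)$ satisfies the three conditions of Definition \ref{def:brace} for the black 3-clique $\lambda,\mu,\nu$. By hypothesis, $u,v$ form a brace for the edge $\lambda,\mu$, so $u \in B_\lambda$ and $v \in B_\mu$ are nonzero, and $u+v \in B_\nu$.

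First I would check the location condition $w \in B_\nu$. Since $B_\nu$ is a subspace of $V$ and $u+v \in B_\nu$, we have $w = -u-v = -(u+v) \in B_\nu$. Second, the additive relation $u+v+w = 0$ is immediate from the definition $w = -u-v$. Third, the non-triviality condition in Definition \ref{def:brace} requires that $u,v,w$ are not all zero, and this holds since $u$ (and $v$) is nonzero by Definition \ref{def:shortbrace}. Combining these three verifications yields the claim.

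There is no real obstacle here; the lemma is a formal converse to Lemma \ref{lem:abrace2brace}, and its proof is a direct unwinding of Definitions \ref{def:brace} and \ref{def:shortbrace}. The only point worth stating explicitly is that we are using closure of the subspace $B_\nu$ under scalar multiplication to pass from $u+v \in B_\nu$ to $-(u+v) \in B_\nu$.
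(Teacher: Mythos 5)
Your proof is correct and matches the paper's approach, which simply cites Definitions \ref{def:brace} and \ref{def:shortbrace}; you have merely made explicit the routine verification (closure of the subspace $B_\nu$, the identity $u+v+w=0$, and nonvanishing of $u$) that the paper leaves to the reader.
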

\begin{proof} By Definitions
\ref{def:brace} and
\ref{def:shortbrace}.
\end{proof}

\begin{lemma}
\label{lem:uniquebrace}
 Let $\lambda, \mu$ denote
locations in $\Delta_N$ that form an edge.
Each nonzero $u \in B_\lambda$ is contained in a unique
brace for this edge.
\end{lemma}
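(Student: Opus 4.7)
The plan is to reduce the statement to the analogous fact for abraces, namely Lemma \ref{lem:brace2}, by using the correspondence between braces and abraces given in Lemmas \ref{lem:abrace2brace} and \ref{lem:brace2abrace}.

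First I would fix the setup: let $\nu$ denote the completion of the edge $\lambda,\mu$, so that $\lambda,\mu,\nu$ form a black $3$-clique. For existence, I would start with the given nonzero $u\in B_\lambda$ and invoke Lemma \ref{lem:brace2} to obtain a (unique) abrace $u,v,w$ for the $3$-clique $\lambda,\mu,\nu$ containing $u$. Then Lemma \ref{lem:abrace2brace} immediately gives that $u,v$ is a brace for the edge $\lambda,\mu$, supplying existence.

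For uniqueness, suppose $u,v_1$ and $u,v_2$ are both braces for the edge, with $v_1,v_2\in B_\mu$. Applying Lemma \ref{lem:brace2abrace} to each, the triples $u,v_1,-u-v_1$ and $u,v_2,-u-v_2$ are abraces for the black $3$-clique $\lambda,\mu,\nu$, and each contains the vector $u\in B_\lambda$. By the uniqueness half of Lemma \ref{lem:brace2}, these two abraces coincide, which forces $v_1=v_2$. This finishes the proof.

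I do not expect a real obstacle here: the two previous lemmas are essentially designed to translate between the three-vector picture (abrace) and the two-vector picture (brace), and Lemma \ref{lem:brace2} already handled the delicate existence-and-uniqueness argument for the clique. The only thing to be careful about is to cite the completion $\nu$ from Lemma \ref{lem:edgegivesclique} so that ``the'' black $3$-clique attached to the edge is well defined, after which the argument is a direct two-line appeal to the earlier results.
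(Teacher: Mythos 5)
Your proof is correct and follows exactly the paper's route: the paper proves this lemma precisely by combining Lemmas \ref{lem:abrace2brace}, \ref{lem:brace2abrace} with Lemma \ref{lem:brace2}, which is the translation between braces and abraces that you spell out. Your write-up simply makes explicit the existence and uniqueness halves of that citation.
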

\begin{proof} 
By Lemmas
\ref{lem:abrace2brace},
\ref{lem:brace2abrace} together with
Lemma
\ref{lem:brace2}.
\end{proof}


\section{The maps $\tilde B_{\lambda,\mu}$ and the 
value function $\hat B$}

\noindent Throughout this section the following notation is in
effect. Fix $N \in \mathbb N$. Let $V$ denote a vector space over
$\mathbb F$ with dimension $N+1$. Let $B$ denote a Billiard
Array on $V$.

\begin{definition} 
\label{def:value}
\rm Let $\lambda, \mu$ denote
adjacent locations in $\Delta_N$.
We define an $\mathbb F$-linear map
$\tilde B_{\lambda,\mu}: B_\lambda \to V$
as follows. For each brace
\begin{equation*}
u \in B_\lambda, \qquad \quad 
v \in B_\mu
\end{equation*}
the map $\tilde B_{\lambda,\mu}$ sends $u\mapsto v$. The map
$\tilde B_{\lambda,\mu}$ is well defined by Lemma
\ref{lem:uniquebrace}.
\end{definition}

\begin{note}
\label{note:maps}
\rm
Let $\lambda, \mu$ denote
adjacent locations in $\Delta_N$.
By Definition
\ref{def:value},
 $B_\mu$
is the image of
$B_\lambda$ under $\tilde B_{\lambda,\mu}$.
Consequently we sometimes view $\tilde B_{\lambda,\mu}$
as a bijection 
$\tilde B_{\lambda,\mu}: B_\lambda \to B_\mu$.
\end{note}

\noindent The following definition is for notational convenience.
\begin{definition}\rm
Define $\tilde B_{\lambda,\mu}=0$ for all
$\lambda,\mu \in \mathbb R^3$ such that
$\lambda\notin \Delta_N$ or
$\mu\notin \Delta_N$.
\end{definition}

\begin{lemma} 
\label{lem:exvalue}
\rm Let $\lambda, \mu$ denote
adjacent locations in $\Delta_N$.
For vectors
$u \in B_\lambda $
and $v \in B_\mu$ the following are equivalent:
\begin{enumerate}
\item[\rm (i)] the vectors $u,v$ form a brace;
\item[\rm (ii)] 
the vector $u \not=0$ and the map $\tilde B_{\lambda,\mu}$ sends $u\mapsto v$;
\item[\rm (iii)] 
the vector $v \not=0$ and the map $\tilde B_{\mu,\lambda}$ sends $v\mapsto u$.
\end{enumerate}
\end{lemma}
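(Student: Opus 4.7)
The plan is to deduce all three equivalences by directly unfolding Definitions \ref{def:shortbrace} and \ref{def:value}, using only the uniqueness statement in Lemma \ref{lem:uniquebrace}. No new construction is needed; this lemma is essentially a reformulation.

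First I would handle ${\rm (i)}\Rightarrow {\rm (ii)}$: if $u,v$ form a brace, then by Definition \ref{def:shortbrace} the vectors $u,v$ are both nonzero, and Definition \ref{def:value} specifies that $\tilde B_{\lambda,\mu}$ sends $u \mapsto v$. For ${\rm (ii)}\Rightarrow {\rm (i)}$, suppose $u \not= 0$ and $\tilde B_{\lambda,\mu}(u)=v$. By Lemma \ref{lem:uniquebrace} there is a unique brace $u \in B_\lambda$, $v' \in B_\mu$ containing $u$; by Definition \ref{def:value} the map $\tilde B_{\lambda,\mu}$ acts on $u$ by sending it to this $v'$. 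Therefore $v=v'$, and so $u,v$ form a brace.

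The equivalence ${\rm (i)}\Leftrightarrow {\rm (iii)}$ is obtained by the same argument after swapping the roles of $(\lambda,u)$ and $(\mu,v)$. The key observation that legitimizes this swap is that Definition \ref{def:shortbrace} is symmetric in the two endpoints of the edge: the completion $\nu$ depends only on the unordered edge $\{\lambda,\mu\}$, and the condition $u+v\in B_\nu$ together with the nonvanishing of both vectors is symmetric in $u,v$. Consequently the notion of a brace for the edge is unchanged under this swap, and applying ${\rm (i)}\Leftrightarrow {\rm (ii)}$ to the pair $(\mu,\lambda)$ yields ${\rm (i)}\Leftrightarrow {\rm (iii)}$.

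There is no real obstacle here, since the content is purely definitional. The only point that warrants any care is observing that although Definition \ref{def:value} describes $\tilde B_{\lambda,\mu}$ as a map $B_\lambda \to V$ in one direction, the underlying datum of a brace is unordered, so its defining action determines $\tilde B_{\mu,\lambda}$ on $v$ in exactly the same way.
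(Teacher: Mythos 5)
Your proof is correct and takes essentially the same route as the paper, whose entire proof is the citation ``By Definition \ref{def:value}'': unfolding that definition together with Definition \ref{def:shortbrace} and the uniqueness in Lemma \ref{lem:uniquebrace}, and using the symmetry of the brace notion in the two endpoints of the edge, is exactly what that citation encapsulates.
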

\begin{proof} By Definition
\ref{def:value}.
\end{proof}

\begin{lemma}
\label{lem:Binverses}
Let $\lambda, \mu$ denote
adjacent locations in $\Delta_N$.
Then the maps
$\tilde B_{\lambda,\mu}:B_\lambda \to B_\mu$ and
$\tilde B_{\mu,\lambda}:B_\mu \to B_\lambda $
are inverses. 
\end{lemma}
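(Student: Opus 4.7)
The plan is to reduce the claim to the symmetric characterization of braces supplied by Lemma \ref{lem:exvalue}, which already packages the two directions we need into equivalent statements. The essential observation is that a brace $\{u,v\}$ with $u\in B_\lambda$ and $v\in B_\mu$ is an unordered pair, so it simultaneously witnesses both $\tilde B_{\lambda,\mu}(u)=v$ and $\tilde B_{\mu,\lambda}(v)=u$.

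In more detail, I would fix a nonzero vector $u \in B_\lambda$; since $B_\lambda$ is one-dimensional, it suffices by $\mathbb F$-linearity to check that $\tilde B_{\mu,\lambda}\bigl(\tilde B_{\lambda,\mu}(u)\bigr)=u$ on this single generator. Let $v=\tilde B_{\lambda,\mu}(u)$. By the implication (ii)$\Rightarrow$(i) of Lemma \ref{lem:exvalue}, the pair $u,v$ is a brace for the edge $\lambda,\mu$; then by (i)$\Rightarrow$(iii) of the same lemma, $v$ is nonzero and $\tilde B_{\mu,\lambda}(v)=u$. This gives $\tilde B_{\mu,\lambda}\circ \tilde B_{\lambda,\mu}=\mathrm{id}_{B_\lambda}$. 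Swapping the roles of $\lambda$ and $\mu$ (the hypotheses are symmetric in them) yields $\tilde B_{\lambda,\mu}\circ \tilde B_{\mu,\lambda}=\mathrm{id}_{B_\mu}$, and the two maps are mutual inverses.

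There is no real obstacle here; the work has all been done upstream. The only thing to be careful about is that in Definition \ref{def:value} the maps were defined as $\mathbb F$-linear maps into $V$, so invoking Note \ref{note:maps} (whose content is immediate from Definition \ref{def:value}) to regard them as maps $B_\lambda\to B_\mu$ and $B_\mu\to B_\lambda$ is what licenses the word ``inverses'' in the statement.
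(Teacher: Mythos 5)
Your proof is correct and follows the paper's own route: the paper proves this lemma precisely by comparing parts (ii) and (iii) of Lemma \ref{lem:exvalue}, which is exactly the chain (ii)$\Rightarrow$(i)$\Rightarrow$(iii) you spell out. The extra care about linearity on the one-dimensional spaces and the appeal to Note \ref{note:maps} is fine but adds nothing beyond what the paper leaves implicit.
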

\begin{proof} Compare parts (ii), (iii) of Lemma
\ref{lem:exvalue}.
\end{proof}

\begin{lemma} 
\label{lem:mapsforabrace}
Let $\lambda, \mu, \nu$ denote
locations in $\Delta_N$ that form a black 3-clique.
Pick an abrace 
\begin{equation*}
u \in B_\lambda,
\qquad \quad
v \in B_\mu,
\qquad \quad
w \in B_\nu.
\end{equation*}
Then
\bigskip

\centerline{
\begin{tabular}[t]{c|cc|cc|cc}
{\rm the map}  & 
  $\tilde B_{\lambda,\mu} $  
  & 
  $\tilde B_{\mu,\lambda} $  
  & 
  $\tilde B_{\mu,\nu} $  
  & 
  $\tilde B_{\nu,\mu} $  
  & 
  $\tilde B_{\nu,\lambda} $  
  & 
  $\tilde B_{\lambda,\nu} $  
  \\ \hline
{\rm sends}  & 
   $u\mapsto v$ &
   $v\mapsto u$ &
   $v\mapsto w$ &
   $w\mapsto v$ &
   $w\mapsto u$ &
   $u\mapsto w$ 
     \end{tabular}}
     \bigskip

\end{lemma}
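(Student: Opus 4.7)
The plan is to derive every entry of the table from Lemmas \ref{lem:brace}, \ref{lem:abrace2brace}, and \ref{lem:Binverses}, using the fact that an abrace for a black $3$-clique restricts, on each of its three edges, to a brace in the sense of Definition \ref{def:shortbrace}. First, by Lemma \ref{lem:brace} the vectors $u,v,w$ are all nonzero. The defining condition $u+v+w=0$ of an abrace is symmetric under permutations of the three arguments, so reading the abrace as $(u,v,w)$ on the clique $\{\lambda,\mu,\nu\}$, as $(v,w,u)$ on the clique relabelled $\{\mu,\nu,\lambda\}$, and as $(u,w,v)$ on the clique relabelled $\{\lambda,\nu,\mu\}$ each produces an abrace in the sense of Definition \ref{def:brace}. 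Three applications of Lemma \ref{lem:abrace2brace} then show that $(u,v)$ is a brace for the edge $\{\lambda,\mu\}$, that $(v,w)$ is a brace for $\{\mu,\nu\}$, and that $(u,w)$ is a brace for $\{\lambda,\nu\}$; in each case the completion of the edge is the third vertex of the clique, which is precisely the vertex whose ball contains the negative of the sum of the two brace vectors (respectively $-w\in B_\nu$, $-u\in B_\lambda$, $-v\in B_\mu$).

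By Definition \ref{def:value}, the existence of these three braces translates directly into $\tilde B_{\lambda,\mu}(u)=v$, $\tilde B_{\mu,\nu}(v)=w$, and $\tilde B_{\lambda,\nu}(u)=w$, which accounts for three of the six columns of the table. The remaining three entries follow at once from Lemma \ref{lem:Binverses}, which asserts that $\tilde B_{\mu,\lambda}$, $\tilde B_{\nu,\mu}$, and $\tilde B_{\nu,\lambda}$ are the inverses of $\tilde B_{\lambda,\mu}$, $\tilde B_{\mu,\nu}$, and $\tilde B_{\lambda,\nu}$ respectively, and hence must send $v\mapsto u$, $w\mapsto v$, and $w\mapsto u$. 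I do not anticipate any real obstacle here; the content is a symmetric bookkeeping step, and the only conceptual point to flag is the cyclic symmetry of Definition \ref{def:brace} that allows one invocation of Lemma \ref{lem:abrace2brace} to be reused for each of the three edges.
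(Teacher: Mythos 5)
Your proof is correct and follows essentially the same route as the paper, which cites Lemma \ref{lem:abrace2brace} together with Lemma \ref{lem:exvalue}; your use of Definition \ref{def:value} plus Lemma \ref{lem:Binverses} is just an unpacking of Lemma \ref{lem:exvalue}, and the permutation of the abrace before applying Lemma \ref{lem:abrace2brace} is the same bookkeeping the paper leaves implicit.
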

\begin{proof}
By Lemma
\ref{lem:abrace2brace}
and
Lemma
\ref{lem:exvalue}.
\end{proof}

\begin{lemma}
\label{lem:threetermv}
Let $\lambda, \mu, \nu$ denote
locations in $\Delta_N$ that form a black 3-clique.
Then on 
 $B_\lambda$,
\begin{equation}
I + \tilde B_{\lambda,\mu}+ \tilde B_{\lambda, \nu} = 0.
\label{eq:sumit}
\end{equation}
\end{lemma}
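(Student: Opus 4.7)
The plan is to verify the identity (\ref{eq:sumit}) by evaluating both sides on an arbitrary vector in $B_\lambda$ and using the abrace relation. Since $B_\lambda$ is one-dimensional, it suffices to check the identity on a single nonzero vector $u \in B_\lambda$; and in fact (\ref{eq:sumit}) holds automatically on the zero vector by linearity, so one nonzero $u$ is enough.

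First I would pick any nonzero $u \in B_\lambda$. By Lemma \ref{lem:brace2}, there is a (unique) abrace for the black $3$-clique $\lambda,\mu,\nu$ containing $u$: vectors $u \in B_\lambda$, $v \in B_\mu$, $w \in B_\nu$ with
\begin{equation*}
u + v + w = 0.
\end{equation*}
Next I would read off from Lemma \ref{lem:mapsforabrace} that $\tilde B_{\lambda,\mu}(u) = v$ and $\tilde B_{\lambda,\nu}(u) = w$. Substituting these, we get
\begin{equation*}
\bigl(I + \tilde B_{\lambda,\mu} + \tilde B_{\lambda,\nu}\bigr)(u) = u + v + w = 0,
\end{equation*}
exactly the abrace relation.

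Since $B_\lambda$ has dimension one and both sides of (\ref{eq:sumit}) are $\mathbb F$-linear maps on $B_\lambda$ that agree on a nonzero vector, they agree on all of $B_\lambda$, completing the proof. There is essentially no obstacle here; the work has been done in setting up the abrace machinery (Lemmas \ref{lem:brace2} and \ref{lem:mapsforabrace}), and the lemma is just the statement that an abrace, repackaged through the maps $\tilde B_{\lambda,\mu}$ and $\tilde B_{\lambda,\nu}$, becomes the linear identity (\ref{eq:sumit}).
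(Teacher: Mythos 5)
Your proof is correct and follows essentially the same route as the paper, which deduces (\ref{eq:sumit}) directly from Definition \ref{def:brace} and Lemma \ref{lem:mapsforabrace}; your explicit appeal to Lemma \ref{lem:brace2} for the existence of an abrace through $u$, together with the one-dimensionality of $B_\lambda$, just spells out the details the paper leaves implicit.
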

\begin{proof}
Use Definition
\ref{def:brace}
and
 Lemma
\ref{lem:mapsforabrace}.
\end{proof}

\begin{lemma} 
\label{lem:whitevalue}
Let $\lambda, \mu, \nu$ denote
locations in $\Delta_N$ that form a black 3-clique.
Then the composition
\begin{equation*}
\begin{CD} 
B_\lambda  @>>  \tilde B_{\lambda,\mu} >  
 B_\mu  @>> \tilde B_{\mu,\nu} > B_\nu 
               @>>\tilde B_{\nu,\lambda} > B_\lambda
                  \end{CD}
\end{equation*}
is equal to the identity map on $B_\lambda$.
\end{lemma}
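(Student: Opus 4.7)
The plan is to use the previous lemma (\ref{lem:mapsforabrace}) essentially verbatim. Since $B_\lambda$ is one-dimensional, it suffices to check that the composition fixes a single nonzero vector in $B_\lambda$. So I would begin by picking any nonzero $u \in B_\lambda$ and invoking Lemma \ref{lem:brace2} to extend it to an abrace $u \in B_\lambda$, $v \in B_\mu$, $w \in B_\nu$ with $u+v+w=0$.

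Then I would apply Lemma \ref{lem:mapsforabrace} three times: it states that $\tilde B_{\lambda,\mu}$ sends $u \mapsto v$, that $\tilde B_{\mu,\nu}$ sends $v \mapsto w$, and that $\tilde B_{\nu,\lambda}$ sends $w \mapsto u$. Composing these three maps in order yields $u \mapsto u$. Since $B_\lambda$ is one-dimensional and the composition is an $\mathbb F$-linear map from $B_\lambda$ to itself which fixes the nonzero vector $u$, it must be the identity map on $B_\lambda$.

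There really is no obstacle here: all the content has been stacked into Lemma \ref{lem:mapsforabrace} and Lemma \ref{lem:brace2}. The only thing to be careful about is that the three maps in Lemma \ref{lem:mapsforabrace} are read off consistently with the cyclic order $\lambda \to \mu \to \nu \to \lambda$ in the statement; inspecting the table confirms this. Thus the proof collapses to one paragraph.
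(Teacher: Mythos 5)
Your proof is correct and follows the same route as the paper, whose proof is simply ``Use Lemma \ref{lem:mapsforabrace}''; you have just spelled out the implicit steps (taking an abrace through $u$ via Lemma \ref{lem:brace2} and reading off the three maps from the table). Nothing is missing.
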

\begin{proof}
Use Lemma
\ref{lem:mapsforabrace}.
\end{proof}

\begin{definition} 
\label{def:orientedBval}
\rm
Let $\lambda, \mu, \nu$ denote locations in $\Delta_N$
that form a white 3-clique. Then the composition
\begin{equation*}
\begin{CD} 
B_\lambda  @>>  \tilde B_{\lambda,\mu} >  
 B_\mu  @>> \tilde B_{\mu,\nu} > B_\nu 
               @>>\tilde B_{\nu,\lambda} > B_\lambda
                  \end{CD}
\end{equation*}
is a nonzero scalar multiple of the identity map
on $B_\lambda$.
The scalar is called the {\it clockwise $B$-value} 
(resp. {\it counterclockwise $B$-value})
of the clique whenever
the sequence 
 $\lambda, \mu, \nu$ runs clockwise (resp. counterclockwise)
 around the clique.
\end{definition}

\begin{lemma} For each white 3-clique in $\Delta_N$, 
its clockwise $B$-value and counterclockwise $B$-value are reciprocal.
\end{lemma}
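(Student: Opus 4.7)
The plan is to reduce the reciprocity claim directly to Lemma \ref{lem:Binverses}. Fix a white 3-clique with locations $\lambda,\mu,\nu$ in $\Delta_N$, and suppose (without loss of generality) that $\lambda,\mu,\nu$ runs clockwise around the clique. By Definition \ref{def:orientedBval}, the clockwise $B$-value is the scalar $c$ with
\begin{equation*}
\tilde B_{\nu,\lambda}\circ \tilde B_{\mu,\nu}\circ \tilde B_{\lambda,\mu}=cI
\end{equation*}
on $B_\lambda$, while the counterclockwise $B$-value is determined by running the clique in the opposite sense, that is, starting from $\lambda$ the counterclockwise sequence is $\lambda,\nu,\mu$. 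Thus the counterclockwise $B$-value is the scalar $c'$ with
\begin{equation*}
\tilde B_{\mu,\lambda}\circ \tilde B_{\nu,\mu}\circ \tilde B_{\lambda,\nu}=c'I
\end{equation*}
on $B_\lambda$.

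Next I would invoke Lemma \ref{lem:Binverses} three times to identify each factor of the counterclockwise composition as the inverse of the corresponding factor in the clockwise composition: $\tilde B_{\mu,\lambda}=\tilde B_{\lambda,\mu}^{-1}$, $\tilde B_{\nu,\mu}=\tilde B_{\mu,\nu}^{-1}$, and $\tilde B_{\lambda,\nu}=\tilde B_{\nu,\lambda}^{-1}$. Substituting these into the counterclockwise composition and reversing the order of composition yields
\begin{equation*}
c'I=\tilde B_{\lambda,\mu}^{-1}\circ \tilde B_{\mu,\nu}^{-1}\circ \tilde B_{\nu,\lambda}^{-1}
=\bigl(\tilde B_{\nu,\lambda}\circ \tilde B_{\mu,\nu}\circ \tilde B_{\lambda,\mu}\bigr)^{-1}=(cI)^{-1}=c^{-1}I.
\end{equation*}
Since $c\neq 0$ by Definition \ref{def:orientedBval}, this gives $c'=c^{-1}$, as required.

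There is essentially no obstacle here: the argument is a one-line consequence of the invertibility provided by Lemma \ref{lem:Binverses} together with the elementary fact that reversing the order of composition inverts each factor. The only point requiring any care is the bookkeeping of orientation, namely verifying that the counterclockwise composition starting at $\lambda$ really is the inverse composition written above; but this is forced by Definition \ref{def:orientedBval} once one observes that reversing the orientation of the cyclic sequence $\lambda,\mu,\nu$ produces the cyclic sequence $\lambda,\nu,\mu$.
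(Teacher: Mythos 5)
Your proof is correct and follows the same route as the paper, which simply cites Lemma \ref{lem:Binverses} together with Definition \ref{def:orientedBval}; you have merely written out the inversion of the three factors and the reversal of the composition explicitly. No gaps.
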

\begin{proof}
By Lemma
\ref{lem:Binverses}
and
Definition
\ref{def:orientedBval}.
\end{proof}

\begin{definition}
\label{def:BvalStraight}
\rm For each white 3-clique in $\Delta_N$,
by its {\it $B$-value}  we mean its clockwise $B$-value.
\end{definition}

\begin{definition}
\label{def:vfunction}
\rm By a {\it value function} on 
$\Delta_N$ we mean a function $\psi: \Delta_N \to
{\mathbb F}\backslash \lbrace 0 \rbrace$.
\end{definition}

\begin{definition}
\label{def:Bvaluefunction}
\rm Assume $N\geq 2$. We define a function
$\hat B:\Delta_{N-2} \to \mathbb F$ as follows.
Pick $(r,s,t) \in \Delta_{N-2}$. To describe
the image of $(r,s,t)$ under $\hat B$,
consider the corresponding white 3-clique in $\Delta_N$
from Lemma
\ref{lem:black}. The  $B$-value of this 3-clique
is the image of $(r,s,t)$ under
$\hat B$.
Observe that 
$\hat B$ is a value function on $\Delta_{N-2}$,
in the sense of
Definition \ref{def:vfunction}.
We call $\hat B$ the {\it value function} for $B$.
\end{definition}

\noindent We are going to show that for $N\geq 2$
the map $B\mapsto \hat B$ induces a bijection from
the set of isomorphism classes of Billiard Arrays
over $\mathbb F$ that have diameter $N$, to 
the set of value functions on $\Delta_{N-2}$.
The proof of this result will be completed in Section 19.

\section{The scalars
$ \tilde {\mathcal B}_{\lambda,\mu}$} 

Throughout this section the following notation is in effect.
Fix $N \in \mathbb N$. Let $V$ denote a vector space over $\mathbb F$
with dimension $N+1$.
Let $\mathcal B$ denote a Concrete Billiard Array on $V$.
Let
$B$ denote the corresponding Billiard Array on $V$, from
Definition
\ref{def:corr}.

\begin{definition}
\label{def:calT}
\rm
Let $\lambda, \mu$ denote adjacent locations in $\Delta_N$.
Recall the bijection
$\tilde B_{\lambda,\mu}: B_\lambda \to B_\mu$.
Recall that $\mathcal B_\lambda $ is a basis for $B_\lambda$
and 
 $\mathcal B_\mu $ is a basis for $B_\mu$.
Define a scalar
$\tilde {\mathcal B}_{\lambda,\mu} \in \mathbb F$ such that
$\tilde B_{\lambda,\mu}$ sends
	  $
	  \mathcal B_\lambda \mapsto  
      \tilde {\mathcal B}_{\lambda,\mu} \mathcal B_\mu
$.
Note that 
$\tilde {\mathcal B}_{\lambda,\mu} \not=0$.
\end{definition}

\begin{lemma}
\label{lem:kassociates}
For all $\lambda \in \Delta_N$ let $\kappa_\lambda$
denote a nonzero scalar in $\mathbb F$. Consider the
Concrete Billiard Array
$\mathcal B':\Delta_N \to V$,
$\lambda \mapsto \kappa_\lambda \mathcal B_\lambda$.
Then for all adjacent
$\lambda, \mu$ in $\Delta_N$,
\begin{equation*}
\tilde {\mathcal B}_{\lambda,\mu} \kappa_\lambda = 
\tilde {\mathcal B}'_{\lambda,\mu} \kappa_\mu.
\end{equation*}
\end{lemma}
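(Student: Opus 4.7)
The plan is to exploit the fact that $\mathcal{B}$ and $\mathcal{B}'$, being associates in the sense of Definition~\ref{def:assoc}, correspond to the \emph{same} Billiard Array $B$ on $V$. Consequently, the $\mathbb{F}$-linear map $\tilde{B}_{\lambda,\mu} : B_\lambda \to B_\mu$ attached to the underlying Billiard Array is one and the same object, whether we read off its scalars using $\mathcal{B}$ or using $\mathcal{B}'$. This common map is the hinge of the argument.

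First I would record that $\mathcal{B}'$ really is a Concrete Billiard Array on $V$ (by Lemma~\ref{lem:kappaAdj}) and that it corresponds to the same $B$ as $\mathcal{B}$, so that $\tilde{B}_{\lambda,\mu}$ is well defined and unambiguous. Next, by Definition~\ref{def:calT} applied to $\mathcal{B}$, the map $\tilde{B}_{\lambda,\mu}$ sends $\mathcal{B}_\lambda \mapsto \tilde{\mathcal{B}}_{\lambda,\mu}\,\mathcal{B}_\mu$, while applied to $\mathcal{B}'$ it sends $\mathcal{B}'_\lambda \mapsto \tilde{\mathcal{B}}'_{\lambda,\mu}\,\mathcal{B}'_\mu$.

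I would then compute $\tilde{B}_{\lambda,\mu}(\mathcal{B}'_\lambda)$ in two ways. On one hand, $\mathcal{B}'_\lambda = \kappa_\lambda \mathcal{B}_\lambda$, so by $\mathbb{F}$-linearity
\begin{equation*}
\tilde{B}_{\lambda,\mu}(\mathcal{B}'_\lambda) \;=\; \kappa_\lambda\,\tilde{B}_{\lambda,\mu}(\mathcal{B}_\lambda) \;=\; \kappa_\lambda\,\tilde{\mathcal{B}}_{\lambda,\mu}\,\mathcal{B}_\mu.
\end{equation*}
On the other hand, using the defining property for $\mathcal{B}'$ and then $\mathcal{B}'_\mu = \kappa_\mu \mathcal{B}_\mu$,
\begin{equation*}
\tilde{B}_{\lambda,\mu}(\mathcal{B}'_\lambda) \;=\; \tilde{\mathcal{B}}'_{\lambda,\mu}\,\mathcal{B}'_\mu \;=\; \tilde{\mathcal{B}}'_{\lambda,\mu}\,\kappa_\mu\,\mathcal{B}_\mu.
\end{equation*}

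Finally I would equate the two expressions and cancel the nonzero vector $\mathcal{B}_\mu$ (nonzero by Lemma~\ref{lem:betanon0}) to obtain the asserted identity $\tilde{\mathcal{B}}_{\lambda,\mu}\,\kappa_\lambda = \tilde{\mathcal{B}}'_{\lambda,\mu}\,\kappa_\mu$. There is no real obstacle here; the only conceptual point to flag is the invariance of $\tilde{B}_{\lambda,\mu}$ under passage from $\mathcal{B}$ to its associate $\mathcal{B}'$, which rests on the fact that $\tilde{B}_{\lambda,\mu}$ is intrinsic to the Billiard Array $B$ (Definition~\ref{def:value}) rather than to any particular choice of spanning vectors.
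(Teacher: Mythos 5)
Your proof is correct and follows the paper's own argument: apply Definition \ref{def:calT} to both $\mathcal B$ and $\mathcal B'$ (which share the same underlying Billiard Array and hence the same map $\tilde B_{\lambda,\mu}$), compute $\tilde B_{\lambda,\mu}(\mathcal B'_\lambda)$ two ways using linearity, and cancel the nonzero vector $\mathcal B_\mu$. The paper's proof is just a terser version of exactly this comparison.
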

\begin{proof}
By Definition 
\ref{def:calT} 
the map 
$\tilde B_{\lambda,\mu}$ sends
	  $
	  \mathcal B_\lambda \mapsto  
      \tilde {\mathcal B}_{\lambda,\mu} \mathcal B_\mu
$ and
	  $
	  {\mathcal B}'_\lambda \mapsto  
    \tilde {\mathcal B}'_{\lambda,\mu} {\mathcal B}'_\mu
$. Compare these using
${\mathcal B}'_\lambda=\kappa_\lambda \mathcal B_\lambda$
and
${\mathcal B}'_\mu=\kappa_\mu \mathcal B_\mu$.
\end{proof}

\noindent We mention a special case of Lemma
\ref{lem:kassociates}.
\begin{lemma}
Pick $0 \not=\kappa \in \mathbb F$ and consider
the Concrete Billiard Array
$\mathcal B':\Delta_N\to V$, $ \lambda \mapsto\kappa \mathcal B_\lambda$. 
Then
$\tilde {\mathcal B}_{\lambda,\mu} =
\tilde {\mathcal B}'_{\lambda,\mu}$ 
for all adjacent $\lambda, \mu$ in $\Delta_N$.
\end{lemma}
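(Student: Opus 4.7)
The plan is to derive this as an immediate specialization of the preceding Lemma \ref{lem:kassociates}. That lemma handles the most general rescaling, where each location $\lambda$ is multiplied by its own nonzero scalar $\kappa_\lambda$, and yields the identity
\begin{equation*}
\tilde{\mathcal B}_{\lambda,\mu}\,\kappa_\lambda = \tilde{\mathcal B}'_{\lambda,\mu}\,\kappa_\mu
\end{equation*}
for all adjacent $\lambda,\mu\in\Delta_N$. The present statement is the uniform case in which every $\kappa_\lambda$ equals the same nonzero scalar $\kappa\in\mathbb F$.

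Concretely, I would first observe that the Concrete Billiard Array $\mathcal B':\lambda\mapsto\kappa\mathcal B_\lambda$ in the statement is precisely the $\mathcal B'$ produced in Lemma \ref{lem:kassociates} when one sets $\kappa_\lambda=\kappa$ for every $\lambda\in\Delta_N$. Second, I would apply Lemma \ref{lem:kassociates} to this choice, obtaining
\begin{equation*}
\tilde{\mathcal B}_{\lambda,\mu}\,\kappa = \tilde{\mathcal B}'_{\lambda,\mu}\,\kappa.
\end{equation*}
Third, since $\kappa\ne 0$ lies in the field $\mathbb F$, I cancel $\kappa$ from both sides to conclude $\tilde{\mathcal B}_{\lambda,\mu}=\tilde{\mathcal B}'_{\lambda,\mu}$, as desired.

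There is no real obstacle here; the only thing to check is that the hypothesis of Lemma \ref{lem:kassociates} (that each $\kappa_\lambda$ be a nonzero element of $\mathbb F$) is satisfied, which is immediate from the assumption $\kappa\ne 0$. Conceptually, the content of the statement is that the scalars $\tilde{\mathcal B}_{\lambda,\mu}$ record only the relative scaling between $\mathcal B_\lambda$ and $\mathcal B_\mu$ on adjacent locations, so a global rescaling by $\kappa$ cancels out, whereas a non-uniform rescaling (the full Lemma \ref{lem:kassociates}) introduces the ratio $\kappa_\lambda/\kappa_\mu$.
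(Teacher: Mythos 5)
Your proof is correct and matches the paper's intent: the paper states this lemma explicitly as a special case of Lemma \ref{lem:kassociates} (with no separate proof given), and your specialization $\kappa_\lambda=\kappa$ followed by cancelling the nonzero $\kappa$ is exactly that argument.
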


\begin{lemma}
\label{lem:CBAreciprocal}
Let $\lambda, \mu$ denote adjacent
locations in $\Delta_N$.
Then the scalars
$\tilde {\mathcal B}_{\lambda,\mu}$ and
$\tilde {\mathcal B}_{\mu,\lambda}$ are reciprocal.
\end{lemma}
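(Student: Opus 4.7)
The plan is to combine Definition \ref{def:calT} with the fact (Lemma \ref{lem:Binverses}) that $\tilde B_{\lambda,\mu}$ and $\tilde B_{\mu,\lambda}$ are mutually inverse bijections between $B_\lambda$ and $B_\mu$. This reduces the claim to a short computation on a single basis vector.

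Concretely, I would start by applying $\tilde B_{\mu,\lambda}$ to the defining relation
$\tilde B_{\lambda,\mu}(\mathcal{B}_\lambda) = \tilde{\mathcal{B}}_{\lambda,\mu}\mathcal{B}_\mu$. Using linearity of $\tilde B_{\mu,\lambda}$ and the defining relation $\tilde B_{\mu,\lambda}(\mathcal{B}_\mu) = \tilde{\mathcal{B}}_{\mu,\lambda}\mathcal{B}_\lambda$ from Definition \ref{def:calT} (with the roles of $\lambda,\mu$ swapped), the right-hand side becomes $\tilde{\mathcal{B}}_{\lambda,\mu}\tilde{\mathcal{B}}_{\mu,\lambda}\mathcal{B}_\lambda$. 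By Lemma \ref{lem:Binverses} the left-hand side equals $\mathcal{B}_\lambda$. Since $\mathcal{B}_\lambda \neq 0$ by Lemma \ref{lem:betanon0}, comparing coefficients in the one-dimensional space $B_\lambda$ yields $\tilde{\mathcal{B}}_{\lambda,\mu}\tilde{\mathcal{B}}_{\mu,\lambda} = 1$, which is the claim.

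There is really no obstacle here; the lemma is a direct bookkeeping consequence of the two preceding results. The only point to be careful about is to cite Lemma \ref{lem:Binverses} (for the inverse property of the $\tilde B$ maps) rather than try to re-derive it from braces, since the inversion statement at the level of Billiard Arrays has already been established. The proof is therefore expected to occupy just a few lines.
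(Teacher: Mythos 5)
Your proof is correct and is exactly the argument the paper intends: the paper's proof is the one-line citation of Lemma \ref{lem:Binverses} and Definition \ref{def:calT}, and your computation simply spells out how those two combine. No issues.
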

\begin{proof}
By Lemma
\ref{lem:Binverses}
and Definition 
\ref{def:calT}. 
\end{proof}

\begin{lemma} 
\label{lem:concvalue}
\rm Let $\lambda, \mu$ denote
adjacent locations in $\Delta_N$.
Then the following are equivalent:
\begin{enumerate}
\item[\rm (i)] the vectors
$\mathcal B_\lambda, \mathcal B_\mu$ form a brace for $B$;
\item[\rm (ii)] 
 $\tilde {\mathcal B}_{\lambda,\mu}=1$;
\item[\rm (iii)] 
$\tilde {\mathcal B}_{\mu,\lambda}=1$.
\end{enumerate}
\end{lemma}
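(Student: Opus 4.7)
The plan is to unpack the definition of $\tilde{\mathcal B}_{\lambda,\mu}$ and combine it with the brace characterization in Lemma \ref{lem:exvalue}.

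First I would handle the equivalence (i) $\Leftrightarrow$ (ii). By Definition \ref{def:calT}, the linear map $\tilde B_{\lambda,\mu} \colon B_\lambda \to B_\mu$ sends $\mathcal B_\lambda \mapsto \tilde{\mathcal B}_{\lambda,\mu}\,\mathcal B_\mu$. By Lemma \ref{lem:exvalue}(i),(ii), the pair $\mathcal B_\lambda, \mathcal B_\mu$ forms a brace for $B$ if and only if $\mathcal B_\lambda \neq 0$ and $\tilde B_{\lambda,\mu}$ sends $\mathcal B_\lambda \mapsto \mathcal B_\mu$. Since $\mathcal B_\lambda \neq 0$ automatically by Lemma \ref{lem:betanon0}, this latter condition reads $\tilde{\mathcal B}_{\lambda,\mu}\,\mathcal B_\mu = \mathcal B_\mu$; since $\mathcal B_\mu \neq 0$ (again by Lemma \ref{lem:betanon0}), this is equivalent to $\tilde{\mathcal B}_{\lambda,\mu} = 1$.

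Next, (ii) $\Leftrightarrow$ (iii) is immediate from Lemma \ref{lem:CBAreciprocal}: the scalars $\tilde{\mathcal B}_{\lambda,\mu}$ and $\tilde{\mathcal B}_{\mu,\lambda}$ are reciprocals of each other, so one equals $1$ precisely when the other does.

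There is no real obstacle here; the content is entirely bookkeeping between the two definitions and the cited lemmas. The only point worth being careful about is that one must invoke $\mathcal B_\mu \neq 0$ (and, implicitly, $\mathcal B_\lambda \neq 0$) in passing from $\tilde{\mathcal B}_{\lambda,\mu}\,\mathcal B_\mu = \mathcal B_\mu$ to $\tilde{\mathcal B}_{\lambda,\mu} = 1$; this is supplied by Lemma \ref{lem:betanon0}.
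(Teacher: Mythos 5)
Your argument is correct and follows essentially the same route as the paper, which proves the lemma directly from Lemma \ref{lem:exvalue} and Definition \ref{def:calT}; your use of Lemma \ref{lem:CBAreciprocal} for (ii)$\Leftrightarrow$(iii) is a minor variant of reading off the symmetric part (iii) of Lemma \ref{lem:exvalue}. The extra care about $\mathcal B_\lambda,\mathcal B_\mu\neq 0$ via Lemma \ref{lem:betanon0} is exactly the right bookkeeping.
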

\begin{proof}
By Lemma
\ref{lem:exvalue}
and Definition 
\ref{def:calT}. 
\end{proof}

\begin{lemma} 
\label{lem:dep}
Let $\lambda, \mu, \nu$ denote
locations in $\Delta_N$ that form a black 3-clique.
Then
\begin{equation}
\label{eq:ld}
\mathcal B_\lambda
+ 
\tilde {\mathcal B}_{\lambda, \mu}\mathcal B_\mu
+
\tilde {\mathcal B}_{\lambda, \nu}\mathcal B_\nu = 0.
\end{equation}
\end{lemma}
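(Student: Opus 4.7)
The plan is to apply the operator identity of Lemma \ref{lem:threetermv} to the specific vector $\mathcal B_\lambda \in B_\lambda$ and then translate each resulting term into the concrete setting using Definition \ref{def:calT}.

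More precisely, since $\lambda, \mu, \nu$ form a black 3-clique, Lemma \ref{lem:threetermv} gives the operator identity
\begin{equation*}
I + \tilde B_{\lambda,\mu} + \tilde B_{\lambda,\nu} = 0
\end{equation*}
on $B_\lambda$. I would evaluate this identity at $\mathcal B_\lambda$, which lies in $B_\lambda$ by construction of the Concrete Billiard Array. The identity term contributes $\mathcal B_\lambda$. By Definition \ref{def:calT}, the map $\tilde B_{\lambda,\mu}$ sends $\mathcal B_\lambda \mapsto \tilde{\mathcal B}_{\lambda,\mu}\mathcal B_\mu$, and similarly $\tilde B_{\lambda,\nu}$ sends $\mathcal B_\lambda \mapsto \tilde{\mathcal B}_{\lambda,\nu}\mathcal B_\nu$. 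Summing the three contributions yields exactly (\ref{eq:ld}).

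There is no real obstacle here: the lemma is essentially just a restatement of Lemma \ref{lem:threetermv} evaluated on a chosen basis vector, with the scalar factors $\tilde{\mathcal B}_{\lambda,\mu}$ and $\tilde{\mathcal B}_{\lambda,\nu}$ being precisely the coefficients recording how the maps $\tilde B_{\lambda,\mu}, \tilde B_{\lambda,\nu}$ act on $\mathcal B_\lambda$ relative to the bases $\mathcal B_\mu, \mathcal B_\nu$ of $B_\mu, B_\nu$.
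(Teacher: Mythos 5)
Your argument is correct and is exactly the paper's proof: apply the operator identity of Lemma \ref{lem:threetermv} to the vector $\mathcal B_\lambda$ and convert each term via Definition \ref{def:calT}. Nothing further is needed.
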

\begin{proof}
Use Lemma
\ref{lem:threetermv}
and 
 Definition 
\ref{def:calT}. 
\end{proof}

\noindent 
We now consider the linear dependency
(\ref{eq:ld})
from a slightly more general point of view.
Let $\lambda, \mu, \nu$ denote locations in $\Delta_N$
that form a black 3-clique.
By Definition \ref{def:cba}(ii) the vectors
$\mathcal B_\lambda $,
$\mathcal B_\mu $,
$\mathcal B_\nu $ are linearly dependent. Therefore
there exist
scalars $a,b,c$ in $\mathbb F$ that are not all zero, and
\begin{equation}
\label{eq:abc}
a\mathcal B_\lambda +
b\mathcal B_\mu +
c\mathcal B_\nu =0.
\end{equation}
By Definition
\ref{def:brace}
the vectors 
$a\mathcal B_\lambda,
b\mathcal B_\mu,
c\mathcal B_\nu$ form an abrace for the given black 3-clique.
Each of $a,b,c$ is nonzero
by 
Lemma
\ref{lem:brace}.

\begin{lemma} 
\label{lem:3frac}
With the above
notation,
\begin{eqnarray*}
&&
\tilde {\mathcal B}_{\lambda,\mu}  =  b/a,
\qquad
\tilde {\mathcal B}_{\mu,\nu}  =  c/b,
\qquad
\tilde {\mathcal B}_{\nu,\lambda}  =  a/c,
\\
&&
\tilde {\mathcal B}_{\mu,\lambda}  =  a/b,
\qquad
\tilde {\mathcal B}_{\nu,\mu}  =  b/c,
\qquad
\tilde {\mathcal B}_{\lambda,\nu}  =  c/a.
\end{eqnarray*}
\end{lemma}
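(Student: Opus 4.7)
The plan is to exploit Lemma \ref{lem:mapsforabrace} together with the $\mathbb F$-linearity of the maps $\tilde B_{\lambda,\mu}$ to read off each scalar $\tilde{\mathcal B}_{\lambda,\mu}$ directly.

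First I would reinterpret the hypothesis. From the linear dependency (\ref{eq:abc}) and Definition \ref{def:brace}, the vectors $a\mathcal B_\lambda$, $b\mathcal B_\mu$, $c\mathcal B_\nu$ form an abrace for the black 3-clique $\lambda,\mu,\nu$, and the discussion preceding the lemma already established $a,b,c\ne 0$. Then Lemma \ref{lem:mapsforabrace} applies and tells us that the bijection $\tilde B_{\lambda,\mu}\colon B_\lambda\to B_\mu$ sends $a\mathcal B_\lambda\mapsto b\mathcal B_\mu$.

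Next I would compare this with Definition \ref{def:calT}: by that definition and $\mathbb F$-linearity, $\tilde B_{\lambda,\mu}$ also sends $a\mathcal B_\lambda\mapsto a\tilde{\mathcal B}_{\lambda,\mu}\mathcal B_\mu$. Equating the two images of $a\mathcal B_\lambda$ and using $\mathcal B_\mu\ne 0$ (Lemma \ref{lem:betanon0}) gives $a\tilde{\mathcal B}_{\lambda,\mu}=b$, hence $\tilde{\mathcal B}_{\lambda,\mu}=b/a$. The identities $\tilde{\mathcal B}_{\mu,\nu}=c/b$ and $\tilde{\mathcal B}_{\nu,\lambda}=a/c$ follow from the same argument applied to the remaining two pairs of the abrace described in Lemma \ref{lem:mapsforabrace}. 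Finally, the bottom row of the table is immediate from Lemma \ref{lem:CBAreciprocal}, which states that $\tilde{\mathcal B}_{\lambda,\mu}$ and $\tilde{\mathcal B}_{\mu,\lambda}$ are reciprocals, so $\tilde{\mathcal B}_{\mu,\lambda}=a/b$, and similarly for the other two.

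There is essentially no obstacle here: the scalars $a,b,c$ are exactly the coefficients that turn the generic basis vectors $\mathcal B_\lambda,\mathcal B_\mu,\mathcal B_\nu$ into an abrace, and the $\tilde{\mathcal B}$'s are the ratios of these coefficients by construction. The only thing one has to be careful about is tracking which scalar is in the numerator versus the denominator for each ordered pair of adjacent locations, but this is pinned down unambiguously by comparing the ``sends $u\mapsto v$'' column of Lemma \ref{lem:mapsforabrace} with Definition \ref{def:calT}.
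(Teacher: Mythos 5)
Your argument is correct. It differs mildly from the paper's proof: the paper obtains $\tilde{\mathcal B}_{\lambda,\mu}=b/a$ and $\tilde{\mathcal B}_{\lambda,\nu}=c/a$ by comparing the two linear dependencies (\ref{eq:ld}) and (\ref{eq:abc}) and invoking Lemma \ref{lem:threedim2CBA} (any two of $\mathcal B_\lambda,\mathcal B_\mu,\mathcal B_\nu$ are linearly independent, so the coefficients in a dependency normalized at $\mathcal B_\lambda$ are unique), and then gets the remaining four equalities by cyclically permuting $\lambda,\mu,\nu$ together with $a,b,c$. You instead recognize $a\mathcal B_\lambda, b\mathcal B_\mu, c\mathcal B_\nu$ as an abrace, read off the images under $\tilde B_{\lambda,\mu}$, $\tilde B_{\mu,\nu}$, $\tilde B_{\nu,\lambda}$ from Lemma \ref{lem:mapsforabrace}, compare with Definition \ref{def:calT} via linearity and $\mathcal B_\mu\not=0$, and finish the bottom row with Lemma \ref{lem:CBAreciprocal}. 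The two routes are essentially equivalent in substance, since (\ref{eq:ld}) is itself derived from Lemma \ref{lem:mapsforabrace}; yours has the small advantage of bypassing Lemma \ref{lem:dep} and the coefficient-uniqueness step, at the cost of invoking the reciprocity lemma for the second row, while the paper's comparison-of-dependencies argument handles all six scalars uniformly by cyclic permutation.
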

\begin{proof} 
To obtain the first and last equation,
compare
(\ref{eq:ld}),
(\ref{eq:abc}) in light of Lemma
\ref{lem:threedim2CBA}.
The remaining equations are obtained from these
by
cyclically permuting the roles of $\lambda,\mu,\nu$.
\end{proof}


\begin{lemma} 
\label{lem:whiteprod}
Let $\lambda, \mu, \nu$ denote
locations in $\Delta_N$ that form a black 3-clique.
Then
\begin{equation*}
          \tilde {\mathcal B}_{\lambda,\mu}
          \tilde {\mathcal B}_{\mu,\nu}
          \tilde {\mathcal B}_{\nu,\lambda}
	  = 1.
\end{equation*}
\end{lemma}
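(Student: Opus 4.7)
The plan is to give a one-line derivation using either Lemma \ref{lem:3frac} directly or Lemma \ref{lem:whitevalue} together with Definition \ref{def:calT}. Both are essentially immediate, so the main point is to choose the cleanest route.

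My preferred route goes through Lemma \ref{lem:whitevalue}. That lemma says the composition
\begin{equation*}
\begin{CD}
B_\lambda @>>\tilde B_{\lambda,\mu}> B_\mu @>>\tilde B_{\mu,\nu}> B_\nu @>>\tilde B_{\nu,\lambda}> B_\lambda
\end{CD}
\end{equation*}
is the identity map on $B_\lambda$. Now apply this composition to the vector $\mathcal B_\lambda$. By Definition \ref{def:calT}, $\tilde B_{\lambda,\mu}$ sends $\mathcal B_\lambda \mapsto \tilde{\mathcal B}_{\lambda,\mu}\mathcal B_\mu$; then $\tilde B_{\mu,\nu}$ sends $\mathcal B_\mu \mapsto \tilde{\mathcal B}_{\mu,\nu}\mathcal B_\nu$, so $\tilde{\mathcal B}_{\lambda,\mu}\mathcal B_\mu \mapsto \tilde{\mathcal B}_{\lambda,\mu}\tilde{\mathcal B}_{\mu,\nu}\mathcal B_\nu$; and finally $\tilde B_{\nu,\lambda}$ sends $\mathcal B_\nu \mapsto \tilde{\mathcal B}_{\nu,\lambda}\mathcal B_\lambda$. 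Chaining these yields
\begin{equation*}
\tilde{\mathcal B}_{\lambda,\mu}\tilde{\mathcal B}_{\mu,\nu}\tilde{\mathcal B}_{\nu,\lambda}\,\mathcal B_\lambda \;=\; \mathcal B_\lambda.
\end{equation*}
Since $\mathcal B_\lambda \neq 0$ by Lemma \ref{lem:betanon0}, comparing coefficients gives the desired identity.

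Alternatively, one can just quote Lemma \ref{lem:3frac}: writing the three scalars as $b/a$, $c/b$, $a/c$ respectively, their product telescopes to $1$. There is no real obstacle here — the work was already done in setting up Lemma \ref{lem:whitevalue} (or equivalently Lemma \ref{lem:3frac}); this statement is simply the scalar incarnation of the fact that going around a black 3-clique via the maps $\tilde B$ returns the identity. I would present the proof in a single sentence citing Lemma \ref{lem:whitevalue} and Definition \ref{def:calT}.
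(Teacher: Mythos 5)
Your proof is correct and follows exactly the paper's route: the paper's proof is precisely ``use Lemma \ref{lem:whitevalue} and Definition \ref{def:calT}, or use Lemma \ref{lem:3frac},'' and you have simply written out the first of these two options (while also noting the second). Nothing is missing; your extra step of invoking Lemma \ref{lem:betanon0} to compare coefficients is a fine way to make the one-liner explicit.
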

\begin{proof} Use
Lemma \ref{lem:whitevalue}
and
 Definition 
\ref{def:calT}, or use
Lemma \ref{lem:3frac}.
\end{proof}


\begin{lemma}
\label{lem:bcval}
Let $\lambda, \mu, \nu$ denote locations in $\Delta_N$
that form a white 3-clique. Then the
clockwise (resp. counterclockwise)
$ B$-value of the clique is equal to
\begin{equation*}
	 \tilde{ \mathcal B}_{\lambda,\mu}
	 \tilde{ \mathcal B}_{\mu,\nu}
	 \tilde{ \mathcal B}_{\nu,\lambda}
\end{equation*}
whenever the sequence 
$\lambda, \mu, \nu$ runs clockwise
(resp. counterclockwise)
around the clique.
\end{lemma}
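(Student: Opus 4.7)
The plan is to simply trace the basis vector $\mathcal B_\lambda$ through the defining composition of Definition \ref{def:orientedBval}, using Definition \ref{def:calT} at each step.

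Concretely, first I would assume without loss of generality that $\lambda, \mu, \nu$ run clockwise around the clique (the counterclockwise case is handled identically, or by Lemma \ref{lem:CBAreciprocal}). By Definitions \ref{def:orientedBval} and \ref{def:BvalStraight}, the composition
\begin{equation*}
\tilde B_{\nu,\lambda} \circ \tilde B_{\mu,\nu} \circ \tilde B_{\lambda,\mu} : B_\lambda \to B_\lambda
\end{equation*}
is $c\, I$, where $c$ is the clockwise $B$-value of the clique. Since $\mathcal B_\lambda$ is a nonzero vector in $B_\lambda$, it suffices to compute the image of $\mathcal B_\lambda$ under this composition and read off the scalar.

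Next I would apply Definition \ref{def:calT} three times. The map $\tilde B_{\lambda,\mu}$ sends $\mathcal B_\lambda \mapsto \tilde{\mathcal B}_{\lambda,\mu}\mathcal B_\mu$. Then $\tilde B_{\mu,\nu}$, being $\mathbb F$-linear, sends this to $\tilde{\mathcal B}_{\lambda,\mu}\tilde{\mathcal B}_{\mu,\nu}\mathcal B_\nu$. Finally $\tilde B_{\nu,\lambda}$ sends this to $\tilde{\mathcal B}_{\lambda,\mu}\tilde{\mathcal B}_{\mu,\nu}\tilde{\mathcal B}_{\nu,\lambda}\mathcal B_\lambda$. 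Comparing with $c\,\mathcal B_\lambda$ yields $c = \tilde{\mathcal B}_{\lambda,\mu}\tilde{\mathcal B}_{\mu,\nu}\tilde{\mathcal B}_{\nu,\lambda}$, which is the claim.

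There is no real obstacle here; the lemma is essentially a bookkeeping translation between the scalars $\tilde{\mathcal B}_{\lambda,\mu}$, which express each bijection $\tilde B_{\lambda,\mu}$ in the bases $\mathcal B_\lambda, \mathcal B_\mu$, and the $B$-value, which expresses the composition around the white clique in any basis. The only small point worth noting is that Definition \ref{def:orientedBval} requires the composition to be a scalar multiple of the identity in the first place (so the scalar $c$ really exists before we identify it), but this is guaranteed by the definition itself and need not be reproved.
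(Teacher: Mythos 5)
Your proof is correct and is exactly the argument the paper intends: its proof of this lemma just says ``Use Definitions \ref{def:orientedBval}, \ref{def:calT}," and your tracing of $\mathcal B_\lambda$ through the composition, applying Definition \ref{def:calT} at each step and comparing with the scalar multiple of the identity, is the straightforward expansion of that citation.
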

\begin{proof} Use
Definitions 
\ref{def:orientedBval},
\ref{def:calT}.
\end{proof}

%
\section{Edge-labellings of $\Delta_N$}

\noindent 
Throughout this section fix
 $N\in \mathbb N$.

\begin{definition}
\label{def:EL}
\rm By an {\it edge-labelling of $\Delta_N$}
we mean a function $\beta$
that assigns to each ordered pair 
$\lambda, \mu$ of adjacent locations in $\Delta_N$
a scalar $\beta_{\lambda, \mu} \in  \mathbb F$  such that:
\begin{enumerate}
\item[\rm (i)] for adjacent locations $\lambda, \mu$ in $\Delta_N$,
\begin{equation*}
\beta_{\lambda, \mu}\beta_{\mu,\lambda} = 1;
\end{equation*}
\item[\rm (ii)] for locations $\lambda, \mu, \nu$  in $\Delta_N$
that form a black 
3-clique,
\begin{equation*}
\beta_{\lambda, \mu}
\beta_{\mu, \nu}
\beta_{\nu, \lambda}
= 1.
\end{equation*}
\end{enumerate}
\end{definition}

\begin{lemma}
\label{prop:CBAtoEL}
Let $\mathcal B$ denote a Concrete Billiard
Array over $\mathbb F$ that has diameter $N$.
Define a function
$\tilde {\mathcal B}$
that assigns to each ordered pair $\lambda, \mu$ of 
adjacent locations in $\Delta_N$ the scalar 
$\tilde {\mathcal B}_{\lambda,\mu}$ from Definition
\ref{def:calT}.
Then $\tilde {\mathcal B}$
is an edge-labelling of $\Delta_N$.
\end{lemma}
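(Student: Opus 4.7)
The plan is to verify the two conditions listed in Definition \ref{def:EL}, both of which have already been established in the preceding section as properties of the scalars $\tilde{\mathcal B}_{\lambda,\mu}$; the lemma is essentially a repackaging of those facts.

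First I would check condition (i) of Definition \ref{def:EL}: for any ordered pair of adjacent locations $\lambda, \mu$ in $\Delta_N$, we need $\tilde{\mathcal B}_{\lambda,\mu} \tilde{\mathcal B}_{\mu,\lambda} = 1$. This is exactly the content of Lemma \ref{lem:CBAreciprocal}, which states that these two scalars are reciprocal. So condition (i) is immediate.

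Next I would check condition (ii): for any three locations $\lambda, \mu, \nu$ in $\Delta_N$ forming a black 3-clique, we need $\tilde{\mathcal B}_{\lambda,\mu} \tilde{\mathcal B}_{\mu,\nu} \tilde{\mathcal B}_{\nu,\lambda} = 1$. This is exactly the statement of Lemma \ref{lem:whiteprod}. So condition (ii) is also immediate.

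Since both defining conditions of an edge-labelling are satisfied, $\tilde{\mathcal B}$ is an edge-labelling of $\Delta_N$. There is no real obstacle here; the only bookkeeping step is to note that $\tilde{\mathcal B}$ is well defined as a function on ordered pairs of adjacent locations, which follows from Definition \ref{def:calT} (the scalar exists because $\mathcal B_\mu$ is a basis of the one-dimensional space $B_\mu$, and it is nonzero because $\tilde B_{\lambda,\mu}$ is a bijection by Lemma \ref{lem:Binverses}). The proof is therefore just a two-line citation.
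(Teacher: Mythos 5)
Your proof is correct and is essentially identical to the paper's: both verify Definition \ref{def:EL}(i) via Lemma \ref{lem:CBAreciprocal} and Definition \ref{def:EL}(ii) via Lemma \ref{lem:whiteprod}. The extra remark on well-definedness is harmless and consistent with Definition \ref{def:calT}.
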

\begin{proof}
The function $\tilde {\mathcal B}$
satisfies the conditions (i), (ii)  of
Definition
\ref{def:EL} by
 Lemmas
\ref{lem:CBAreciprocal},
\ref{lem:whiteprod}, respectively.
\end{proof}

\noindent For the rest of this section the following notation
is in effect.
Let $\beta$ denote an edge-labelling of $\Delta_N$. Let 
$V$ denote a vector space over $\mathbb F$ that has dimension 
$N+1$.

\medskip
\noindent 
Our next goal is to describe the Concrete Billiard Arrays 
$\mathcal B$ on $V$ such that
$\beta = \tilde {\mathcal B}$.

%

\begin{lemma}
\label{lem:ELnonzero}
For adjacent locations $\lambda, \mu$ in $\Delta_N$ we have
$\beta_{\lambda,\mu}\not=0$.
\end{lemma}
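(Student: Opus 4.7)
The plan is essentially a one-line argument drawn directly from part (i) of Definition~\ref{def:EL}. Suppose toward contradiction that $\beta_{\lambda,\mu}=0$ for some ordered pair of adjacent locations $\lambda,\mu$ in $\Delta_N$. Then the product $\beta_{\lambda,\mu}\beta_{\mu,\lambda}$ equals $0$, regardless of the value of $\beta_{\mu,\lambda}$. But Definition~\ref{def:EL}(i) requires this product to equal $1$, which is a contradiction since $1\neq 0$ in the field $\mathbb F$. Hence $\beta_{\lambda,\mu}\neq 0$.

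There is no real obstacle here; the only subtlety is to note that property (i) alone suffices, so neither the black 3-clique condition (ii) nor any geometric information about $\Delta_N$ is needed. I would present the proof in two sentences, perhaps phrased as: from $\beta_{\lambda,\mu}\beta_{\mu,\lambda}=1$ we immediately obtain that $\beta_{\lambda,\mu}$ is a unit in $\mathbb F$, with inverse $\beta_{\mu,\lambda}$, and in particular $\beta_{\lambda,\mu}\neq 0$. This also incidentally shows $\beta_{\mu,\lambda}\neq 0$, giving a symmetric statement for free.
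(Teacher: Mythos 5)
Your proof is correct and is exactly the paper's argument: the paper also deduces the nonvanishing directly from Definition~\ref{def:EL}(i), since $\beta_{\lambda,\mu}\beta_{\mu,\lambda}=1$ forces $\beta_{\lambda,\mu}$ to be a unit in $\mathbb F$.
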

\begin{proof} By Definition \ref{def:EL}(i).
\end{proof}

\begin{definition}
\label{def:betaFeas}
\rm 
A function
$\mathcal B:\Delta_N \to V$, $\lambda \mapsto \mathcal B_\lambda$
is said to be {\it $\beta$-dependent} whenever 
\begin{equation}
\label{eq:betaFeas}
{\mathcal B}_\lambda + 
\beta_{\lambda, \mu}{\mathcal B}_\mu + 
\beta_{\lambda, \nu}{\mathcal B}_\nu 
= 0
\end{equation}
 for  all locations 
$\lambda, \mu, \nu$ in $\Delta_N$
that form a black 3-clique.
\end{definition}

\begin{lemma} 
\label{lem:smallpt2}
Let $\mathcal B$ denote a
Concrete Billiard Array on $V$. Then $\mathcal B$ is
$\beta$-dependent if and only if $\beta = \tilde {\mathcal B}$.
\end{lemma}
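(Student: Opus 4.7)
The plan is to reduce the claim to a direct comparison of two linear dependencies on each black 3-clique, using the linear independence guaranteed by Lemma \ref{lem:threedim2CBA} together with the fact that every edge is in a unique black 3-clique (Lemma \ref{lem:edgegivesclique}).

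For the easy direction, suppose $\beta = \tilde{\mathcal B}$. Then for every black 3-clique $\lambda, \mu, \nu$, Lemma \ref{lem:dep} gives
\[
\mathcal B_\lambda + \tilde{\mathcal B}_{\lambda,\mu}\mathcal B_\mu + \tilde{\mathcal B}_{\lambda,\nu}\mathcal B_\nu = 0,
\]
which upon substituting $\beta$ for $\tilde{\mathcal B}$ is exactly the defining relation (\ref{eq:betaFeas}). So $\mathcal B$ is $\beta$-dependent.

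For the converse, assume $\mathcal B$ is $\beta$-dependent. Fix an arbitrary black 3-clique with locations $\lambda, \mu, \nu$. By the $\beta$-dependent hypothesis and Lemma \ref{lem:dep}, both
\[
\mathcal B_\lambda + \beta_{\lambda,\mu}\mathcal B_\mu + \beta_{\lambda,\nu}\mathcal B_\nu = 0, \qquad
\mathcal B_\lambda + \tilde{\mathcal B}_{\lambda,\mu}\mathcal B_\mu + \tilde{\mathcal B}_{\lambda,\nu}\mathcal B_\nu = 0
\]
hold. Subtracting yields
\[
(\beta_{\lambda,\mu} - \tilde{\mathcal B}_{\lambda,\mu})\mathcal B_\mu + (\beta_{\lambda,\nu} - \tilde{\mathcal B}_{\lambda,\nu})\mathcal B_\nu = 0,
\]
and by Lemma \ref{lem:threedim2CBA} the vectors $\mathcal B_\mu, \mathcal B_\nu$ are linearly independent, forcing $\beta_{\lambda,\mu} = \tilde{\mathcal B}_{\lambda,\mu}$ and $\beta_{\lambda,\nu} = \tilde{\mathcal B}_{\lambda,\nu}$. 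Now for any ordered pair of adjacent locations $(\lambda, \mu)$ in $\Delta_N$, by Lemma \ref{lem:edgegivesclique} there is a unique $\nu$ so that $\{\lambda,\mu,\nu\}$ is a black 3-clique, and the above argument applied to that clique (with the chosen vertex $\lambda$ playing the distinguished role) yields $\beta_{\lambda,\mu} = \tilde{\mathcal B}_{\lambda,\mu}$. This exhausts all ordered pairs in the domain of both functions, so $\beta = \tilde{\mathcal B}$.

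There is no real obstacle here; the only thing to double-check is that one equation of the form (\ref{eq:betaFeas}) per clique is enough to pin down both relevant values of $\beta$, and that ranging over all black cliques (which cover every edge) determines $\beta$ everywhere. Both facts are immediate from Lemma \ref{lem:edgegivesclique}.
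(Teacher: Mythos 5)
Your proof is correct and follows essentially the same route as the paper, whose proof is simply to compare the dependency (\ref{eq:ld}) from Lemma \ref{lem:dep} with (\ref{eq:betaFeas}) in light of the linear independence in Lemma \ref{lem:threedim2CBA}; you have just spelled out the subtraction argument and the observation (via Lemma \ref{lem:edgegivesclique}) that ranging over black 3-cliques covers every ordered pair of adjacent locations.
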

\begin{proof}
Compare
(\ref{eq:ld}),
(\ref{eq:betaFeas}) in light of
Lemma
\ref{lem:threedim2CBA}.
\end{proof}

\noindent 
Let 
$\mathcal B:\Delta_N \to V$, $\lambda \mapsto \mathcal B_\lambda$
denote a $\beta$-dependent function.
Let 
$\lambda, \mu, \nu$ denote locations in $\Delta_N$
that form a black 3-clique. Then the vectors
$\mathcal B_\lambda,
\mathcal B_\mu,
\mathcal B_\nu$ satisfy 
(\ref{eq:betaFeas}). Cyclically permuting the roles of
$\lambda, \mu, \nu$ we obtain
\begin{eqnarray}
&&
{\mathcal B}_\lambda + 
\beta_{\lambda, \mu}{\mathcal B}_\mu + 
\beta_{\lambda, \nu}{\mathcal B}_\nu 
= 0,
\label{eq:betaFeas1}
\\
&&
\beta_{\mu, \lambda}{\mathcal B}_\lambda 
+
{\mathcal B}_\mu + 
\beta_{\mu, \nu}{\mathcal B}_\nu  
= 0,
\label{eq:betaFeas2}
\\
&&
\beta_{\nu, \lambda}{\mathcal B}_\lambda + 
\beta_{\nu, \mu}{\mathcal B}_\mu +
{\mathcal B}_\nu  
= 0.
\label{eq:betaFeas3}
\end{eqnarray}
\noindent  The linear dependencies 
(\ref{eq:betaFeas1})--(\ref{eq:betaFeas3}) are essentially
the same. To see this, consider the coefficient matrix
\begin{equation}
\label{eq:cm}
\left(
\begin{array}{ccc}
1 & \beta_{\lambda,\mu} & \beta_{\lambda, \nu} 
\\
\beta_{\mu,\lambda} & 1 & \beta_{\mu,\nu} 
\\
\beta_{\nu,\lambda} & \beta_{\nu,\mu} & 1 
\end{array}
\right).
\end{equation}
Using Definition
\ref{def:EL} one checks that this matrix has rank 1.

\begin{lemma}
\label{lem:betafeas2}
Assume $N\geq 1$. For a function
$\mathcal B:\Delta_N \to V$, $\lambda \mapsto \mathcal B_\lambda$
the following {\rm (i), (ii)} are equivalent:
\begin{enumerate}
\item[\rm (i)] $\mathcal B$ is $\beta$-dependent;
\item[\rm (ii)] for all $(r,s,t) \in \Delta_{N-1}$ we have
\begin{equation}
{\mathcal B}_\lambda + 
\beta_{\lambda, \mu}{\mathcal B}_\mu + 
\beta_{\lambda, \nu}{\mathcal B}_\nu 
= 0,
\label{eq:BF}
\end{equation}
where
\begin{equation*}
\lambda = (r+1,s,t), \qquad
\mu = (r,s+1,t), \qquad
\nu = (r,s,t+1).
\end{equation*}
\end{enumerate}
\end{lemma}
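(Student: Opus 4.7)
The plan is to establish both directions, with the bulk of the work going into (ii) $\Rightarrow$ (i). The implication (i) $\Rightarrow$ (ii) will be immediate from Lemma \ref{lem:white}, while (ii) $\Rightarrow$ (i) will follow by exploiting the rank-1 structure of the coefficient matrix (\ref{eq:cm}), which the paper has already highlighted in the discussion preceding the lemma statement.

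For the easy direction, given any $(r,s,t) \in \Delta_{N-1}$, Lemma \ref{lem:white} says that $(r+1,s,t)$, $(r,s+1,t)$, $(r,s,t+1)$ form a black 3-clique in $\Delta_N$. Hence equation (\ref{eq:BF}) is simply the instance of (\ref{eq:betaFeas}) for this clique with the labelling of $\lambda,\mu,\nu$ prescribed in (ii), so $\beta$-dependence of $\mathcal{B}$ immediately implies the condition in (ii).

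For the converse, I would fix an arbitrary black 3-clique $C$ in $\Delta_N$ and an arbitrary labelling $\lambda_0, \mu_0, \nu_0$ of its vertices. By the bijection in Lemma \ref{lem:white}, $C = \{(r+1,s,t),(r,s+1,t),(r,s,t+1)\}$ for some $(r,s,t) \in \Delta_{N-1}$, and after a cyclic rotation (possibly combined with a $\mu_0 \leftrightarrow \nu_0$ swap) we may take $\lambda_0 = (r+1,s,t)$, $\mu_0 = (r,s+1,t)$, $\nu_0 = (r,s,t+1)$, in which case (ii) delivers (\ref{eq:betaFeas1}). I would next obtain (\ref{eq:betaFeas2}) by multiplying (\ref{eq:betaFeas1}) through by $\beta_{\mu_0,\lambda_0}$ and simplifying the three resulting coefficients: the first becomes $\beta_{\mu_0,\lambda_0}$, the second becomes $\beta_{\mu_0,\lambda_0}\beta_{\lambda_0,\mu_0} = 1$ by Definition \ref{def:EL}(i), and the third becomes $\beta_{\mu_0,\lambda_0}\beta_{\lambda_0,\nu_0} = \beta_{\mu_0,\nu_0}$ by applying Definition \ref{def:EL}(ii) to the black 3-clique $C$ together with Definition \ref{def:EL}(i). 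This is just the observation that Row 2 of the matrix (\ref{eq:cm}) is $\beta_{\mu,\lambda}$ times Row 1. Equation (\ref{eq:betaFeas3}) would follow analogously by multiplying (\ref{eq:betaFeas1}) through by $\beta_{\nu_0,\lambda_0}$. Since equation (\ref{eq:betaFeas}) is manifestly invariant under swapping $\mu$ and $\nu$, the three cyclic instances just established cover all six possible orderings of the three vertices of $C$, so $\mathcal{B}$ is $\beta$-dependent.

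There is no substantive obstacle: the authors have already flagged the rank-1 structure of (\ref{eq:cm}) in the paragraph immediately preceding the lemma, and that is exactly what drives the converse. The only small point that needs to be mentioned explicitly is the $\mu \leftrightarrow \nu$ symmetry of (\ref{eq:betaFeas}), which reduces the six orderings of a labelled triple to just three cyclic representatives, so that the trio (\ref{eq:betaFeas1})--(\ref{eq:betaFeas3}) indeed covers the full condition of Definition \ref{def:betaFeas}.
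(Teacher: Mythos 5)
Your proposal is correct and follows essentially the same route as the paper: the paper's proof cites Lemma \ref{lem:white}, Definition \ref{def:betaFeas}, and the discussion around (\ref{eq:cm}), which is precisely the rank-1/row-scaling observation you use to pass from the single dependency (\ref{eq:BF}) to the dependencies for all labellings of each black 3-clique. You have merely written out explicitly (via Definition \ref{def:EL}(i),(ii) and the $\mu\leftrightarrow\nu$ symmetry) what the paper leaves as that short remark.
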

\begin{proof}
By Lemma \ref{lem:white} and
Definition
\ref{def:betaFeas}, along with the
discussion around
(\ref{eq:cm}).
\end{proof}

\noindent Given a $\beta$-dependent function
$\mathcal B:\Delta_N \to V, \lambda \to \mathcal B_\lambda$,
using the dependencies
(\ref{eq:BF}) we can solve for $\lbrace {\mathcal B}_\lambda
\rbrace_{\lambda \in \Delta_N}$ in terms of 
$\lbrace {\mathcal B}_{\lambda}\rbrace_{\lambda \in L}$,
where $L$ denotes the 
$1$-boundary of $\Delta_N$.
We give the details after a few definitions.

\begin{definition}
\label{def:tildecalB}
\rm
For each walk $\omega=\lbrace \lambda_i \rbrace_{i=0}^n$
in $\Delta_N$ we define the scalar
$\beta_\omega = \prod_{i=1}^n
\beta_{\lambda_{i-1},\lambda_i}$ called 
the {\it $\beta$-value of $\omega$}. 
Note that $\beta_\omega \not=0$ by
Lemma
\ref{lem:ELnonzero}.
We interpret $\beta_\omega=1$ if $n=0$.
\end{definition}

\begin{lemma}
\label{lem:mathcalrec}
For any walk $\omega$  in $\Delta_N$
the following scalars are reciprocal:
\begin{enumerate}
\item[\rm (i)] the $\beta$-value for $\omega$;
\item[\rm (ii)] the $\beta$-value for the inversion of $\omega$.
\end{enumerate}
\end{lemma}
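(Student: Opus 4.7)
The plan is to unwind both quantities directly from Definition \ref{def:tildecalB} and then collapse the product using the reciprocity built into an edge-labelling (Definition \ref{def:EL}(i)). This is essentially a bookkeeping argument; there is no real obstacle to overcome, since Definition \ref{def:EL}(i) is exactly the pairwise reciprocity needed for each factor in the product.

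First I would write $\omega = \{\lambda_i\}_{i=0}^n$, so that by Definition \ref{def:tildecalB}
\[
\beta_\omega = \prod_{i=1}^n \beta_{\lambda_{i-1},\lambda_i},
\]
with the convention that an empty product equals $1$. The inversion of $\omega$ is the walk $\omega' = \{\lambda_{n-i}\}_{i=0}^n$, so again by Definition \ref{def:tildecalB}
\[
\beta_{\omega'} = \prod_{i=1}^n \beta_{\lambda_{n-i+1},\lambda_{n-i}}.
\]
Reindexing by $j = n-i+1$, which as $i$ runs from $1$ to $n$ makes $j$ run from $n$ down to $1$, this becomes
\[
\beta_{\omega'} = \prod_{j=1}^n \beta_{\lambda_{j},\lambda_{j-1}}.
\]

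Next I would apply Definition \ref{def:EL}(i), which gives $\beta_{\lambda_j,\lambda_{j-1}} = \beta_{\lambda_{j-1},\lambda_j}^{-1}$ for each $j$ (legitimately, since the two locations $\lambda_{j-1}, \lambda_j$ are adjacent by the definition of a walk, and the factor is nonzero by Lemma \ref{lem:ELnonzero}). Substituting,
\[
\beta_{\omega'} = \prod_{j=1}^n \beta_{\lambda_{j-1},\lambda_j}^{-1} = \left(\prod_{j=1}^n \beta_{\lambda_{j-1},\lambda_j}\right)^{-1} = \beta_\omega^{-1},
\]
which is the desired reciprocity. The $n=0$ case is immediate since both values equal $1$.
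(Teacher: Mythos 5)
Your proof is correct and follows the same route as the paper, which simply cites Definition \ref{def:tildecalB} and Definition \ref{def:EL}(i); you have merely written out the reindexing and the factor-by-factor reciprocity explicitly.
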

\begin{proof}
By Definition
\ref{def:EL}(i) and
Definition
\ref{def:tildecalB}.
\end{proof}

\begin{definition}
\label{def:betaxy}
\rm
For $\lambda, \mu \in \Delta_N$ we define the scalar
\begin{equation}
\beta_{\lambda, \mu}
= \sum_{\omega} \beta_\omega,
\label{eq:betaSum}
\end{equation}
where the
sum is over all geodesic paths $\omega$ in $\Delta_N$ from
$\lambda$ to $\mu$. 
\end{definition}

\begin{note}\rm
\label{lem:smallpt}
Let $\lambda, \mu$ denote collinear locations
in $\Delta_N$. By
 Lemma
\ref{lem:collinGeo} there exists a unique
geodesic path $\omega$ in $\Delta_N$
from $\lambda$ to $\mu$.
 Now
$\beta_{\lambda, \mu} = \beta_\omega$ in view of
  Definition \ref{def:betaxy}.
In this case
$\beta_{\lambda, \mu} \not=0$.
\end{note}

\noindent 
Recall the path 
 $\lbrack 2,3\rbrack$ in $\Delta_N$, from Definition
\ref{def:sixpaths} and Lemma
\ref{lem:sixpathsinfo}. This path runs along the $1$-boundary of
$\Delta_N$, from the $2$-corner to the $3$-corner.

\begin{proposition} 
\label{prop:vigives}
Let $\lbrace \lambda_i \rbrace_{i=0}^N$ denote the
path $\lbrack 2,3\rbrack$ in $\Delta_N$.
Let $\lbrace v_i \rbrace_{i=0}^N$ denote
arbitrary vectors in $V$.
Then there exists a unique $\beta$-dependent function
 $\mathcal B:\Delta_N\to V, \lambda \mapsto \mathcal B_\lambda$
 that sends $\lambda_i \mapsto v_i$ for $0 \leq i \leq N$. 
For $\lambda = (r,s,t)$ in $\Delta_N$ we have
\begin{equation}
\mathcal B_\lambda = (-1)^r\sum_{i=t}^{r+t} \beta_{\lambda,\lambda_i}v_i.
\label{eq:Bformula}
\end{equation}
\end{proposition}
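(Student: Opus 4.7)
The plan is to establish existence and uniqueness by a recursive construction along the $1$-coordinate, and then verify that the closed-form expression in (\ref{eq:Bformula}) agrees with this recursion. By Lemma \ref{lem:sixpathsinfo} the path $\lbrack 2,3\rbrack$ consists of the locations $\lambda_i=(0,N-i,i)$ for $0\le i\le N$, so $\{\lambda_i\}_{i=0}^{N}$ is precisely the $1$-boundary of $\Delta_N$.

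For existence and uniqueness, I would invoke Lemma \ref{lem:betafeas2}: a function $\mathcal B$ is $\beta$-dependent if and only if for every $(r,s,t)\in\Delta_{N-1}$ the relation (\ref{eq:BF}) holds at $\lambda=(r+1,s,t)$, $\mu=(r,s+1,t)$, $\nu=(r,s,t+1)$. Rewriting this as
\[
\mathcal B_\lambda=-\beta_{\lambda,\mu}\mathcal B_\mu-\beta_{\lambda,\nu}\mathcal B_\nu,
\]
and noting that $\mu$ and $\nu$ have $1$-coordinate $r$, we see that $\mathcal B_\lambda$ is determined once all values at locations with $1$-coordinate $\le r$ are known. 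Starting from the prescribed boundary values $\mathcal B_{\lambda_i}=v_i$ at $r=0$, induction on $r$ produces a unique $\beta$-dependent function with the required boundary data.

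To verify (\ref{eq:Bformula}), I induct on $r$. For $r=0$ the location $(0,s,t)$ equals $\lambda_t$, and the sum has a single term with $\beta_{\lambda_t,\lambda_t}=1$ (the empty walk), correctly returning $v_t$. For the inductive step, one substitutes the formula for $\mathcal B_\mu$ and $\mathcal B_\nu$ (valid by induction, since both have $1$-coordinate $r$) into the displayed recursion; after simplification this reduces to the combinatorial identity
\[
\beta_{\lambda,\lambda_i}=\epsilon_1\,\beta_{\lambda,\mu}\beta_{\mu,\lambda_i}+\epsilon_2\,\beta_{\lambda,\nu}\beta_{\nu,\lambda_i}
\]
for each $i$ with $t\le i\le r+t+1$, where $\epsilon_1=1$ if $i\le r+t$ and $0$ otherwise, and $\epsilon_2=1$ if $i\ge t+1$ and $0$ otherwise. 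This identity is established by classifying the geodesic paths from $\lambda=(r+1,s,t)$ to $\lambda_i=(0,N-i,i)$ according to their first step: since $\partial(\lambda,\lambda_i)=r+1$ equals the absolute difference in $1$-coordinates, Lemma \ref{def:etaGeo} forces every such geodesic to be $1$-geodesic, so its first step lands on a neighbor with $1$-coordinate $r$, which must be $\mu$ or $\nu$. The tail is then a geodesic of length $r$, so a first step to $\mu$ (resp.\ to $\nu$) is admissible precisely when $\partial(\mu,\lambda_i)=r$ (resp.\ $\partial(\nu,\lambda_i)=r$); Lemma \ref{lem:distCalc} shows that these conditions reduce to $i\le r+t$ and $i\ge t+1$ respectively.

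The main obstacle is the correct handling of the two endpoint cases $i=t$ and $i=r+t+1$, at which only one of the two first-steps yields a genuine geodesic (the other neighbor sits at distance $r+1$, not $r$, from $\lambda_i$); these cases force one of $\epsilon_1,\epsilon_2$ to vanish, but the summation range in (\ref{eq:Bformula}) for $\mathcal B_\mu$ (ending at $r+t$) and for $\mathcal B_\nu$ (starting at $t+1$) is exactly what is needed to accommodate this asymmetry. Confirming that the sign factor $(-1)^{r+1}$ arising from the recursion matches the sign in (\ref{eq:Bformula}) then closes the induction.
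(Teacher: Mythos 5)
Your proposal is correct and follows the paper's own proof: existence and uniqueness come from the same recursion on the $1$-coordinate via Lemma \ref{lem:betafeas2}, and the formula (\ref{eq:Bformula}) is obtained by induction on $r$, which the paper dismisses as routine while you make it explicit through the first-step decomposition of geodesic paths from $\lambda$ to $\lambda_i$ (including the correct treatment of the endpoint cases $i=t$ and $i=r+t+1$). There is no gap; this is essentially the paper's argument with the routine induction step spelled out.
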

\begin{proof}
We first show  that $\mathcal B$ exists.
We define
the vectors $\lbrace \mathcal B_\lambda\rbrace_{\lambda \in \Delta_N}$ 
in the following
recursive way.
For $\xi=0,1,\ldots, N$ we define 
$\mathcal B_\lambda$ for those locations $\lambda \in \Delta_N$ 
that have $1$-coordinate $\xi$.
Let $\xi$ and $\lambda$ be given.
First assume  $\xi=0$, so that  $\lambda$ is on the
$1$-boundary of $\Delta_N$. Define
${\mathcal B}_{\lambda} = v_i$ where 
$\lambda = \lambda_i$. Next assume $1 \leq \xi \leq N$.
For notational convenience define $r=\xi-1$
and write $\lambda = (r+1,s,t)$.
Consider the locations
$\mu= (r,s+1,t)$ and $\nu=(r,s,t+1)$ in $\Delta_N$.
The locations $\lambda, \mu,\nu$ form a black 3-clique.
Each of $\mu, \nu$  
has $1$-coordinate $r$, so 
the vectors 
${\mathcal B}_{\mu}$ and ${\mathcal B}_\nu$  were defined
earlier in the recursion. Define  ${\mathcal B}_\lambda$
so that
(\ref{eq:BF}) holds.
The vectors $\lbrace {\mathcal B}_\lambda \rbrace_{\lambda \in \Delta_N}$
are now defined.
Consider the function
$\mathcal B:\Delta_N\to V$, $\lambda \mapsto 
\mathcal B_\lambda$. 
By construction 
$\mathcal B$ satisfies condition (ii) of 
 Lemma
\ref{lem:betafeas2}, so
by that lemma
$\mathcal B$  is $\beta$-dependent.
By construction $\mathcal B$ sends $\lambda_i \mapsto v_i$
for $0 \leq i \leq N$.
We have shown that $\mathcal B$ exists.
Now let $\mathcal B:\Delta_N \to V, \lambda\mapsto \mathcal B_\lambda$
denote any $\beta$-dependent function that sends $\lambda_i \mapsto v_i$
for $0 \leq i \leq N$. Then $\mathcal B$ satisfies
the equivalent conditions  (i), (ii) of
Lemma
\ref{lem:betafeas2}.
In condition (ii) there is a parameter
$r$; using condition (ii) and induction on $r$ one routinely
obtains
(\ref{eq:Bformula}). It follows from
(\ref{eq:Bformula}) that $\mathcal B$ is unique.
\end{proof}

\begin{proposition}
\label{prop:addmore}
With reference to Proposition
\ref{prop:vigives}, the following are equivalent:
\begin{enumerate}
\item[\rm (i)] $\mathcal B$ is a Concrete Billiard Array;
\item[\rm (ii)] the vectors 
$\lbrace v_i\rbrace_{i=0}^N$ are linearly independent.
\end{enumerate}
\end{proposition}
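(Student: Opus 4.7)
The plan splits along the two implications of the proposition. For (i) $\Rightarrow$ (ii), Lemma \ref{lem:sixpathsinfo} shows that the path $\lbrack 2,3\rbrack$ consists of the locations $\lambda_i = (0, N-i, i)$ for $0 \leq i \leq N$, which is precisely the $1$-boundary of $\Delta_N$. This is a line, so Definition \ref{def:cba}(i) applied to it forces $\lbrace v_i \rbrace_{i=0}^N = \lbrace \mathcal{B}_{\lambda_i} \rbrace_{i=0}^N$ to be linearly independent.

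For (ii) $\Rightarrow$ (i), assume the $v_i$ are linearly independent; since $\dim V = N+1$, they form a basis for $V$. I verify the two conditions of Definition \ref{def:cba}. Condition (ii), on black $3$-cliques, is immediate from the $\beta$-dependence relation (\ref{eq:betaFeas}), which provides a nontrivial linear relation among $\mathcal{B}_\lambda, \mathcal{B}_\mu, \mathcal{B}_\nu$ (the coefficient of $\mathcal{B}_\lambda$ is $1$). For Condition (i), I plan to show that on every line $L$ in $\Delta_N$ the vectors $\lbrace \mathcal{B}_\lambda \rbrace_{\lambda \in L}$ are linearly independent by expanding each $\mathcal{B}_\lambda$ in the basis $\lbrace v_i \rbrace$ via formula (\ref{eq:Bformula}) and exhibiting a triangular structure.

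Illustrating with a $1$-line at level $r$, consisting of locations $(r, N-r-t, t)$ for $t = 0, \ldots, N-r$, formula (\ref{eq:Bformula}) reads
$$\mathcal{B}_{(r, N-r-t, t)} = (-1)^r \sum_{i=t}^{r+t} \beta_{(r, N-r-t, t), \lambda_i} v_i.$$
Form the $(N-r+1) \times (N+1)$ coefficient matrix $M$ with rows indexed by $t$ and columns by $i$; then $M_{t,i} = 0$ whenever $i < t$ (outside the support), and the entry $M_{t,t}$ is nonzero, because $(r, N-r-t, t)$ and $\lambda_t = (0, N-t, t)$ share their $3$-coordinate (hence are $3$-collinear) so Note \ref{lem:smallpt} gives $\beta_{(r, N-r-t, t), \lambda_t} \neq 0$. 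The $(N-r+1) \times (N-r+1)$ submatrix on columns $0, 1, \ldots, N-r$ is therefore triangular with nonzero diagonal, hence invertible, and the line vectors are linearly independent. The $2$-line and $3$-line cases are treated identically, using $3$-collinearity (resp.\ $2$-collinearity) of the generic location with an extremal $\lambda_i$ to obtain the analogous triangular structure.

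The main obstacle is conceptual rather than computational: since the intermediate coefficients $\beta_{\lambda, \lambda_i}$ in (\ref{eq:Bformula}) are sums over multiple geodesic paths (Definition \ref{def:betaxy}), they may in general vanish, and one cannot hope for all coefficients to be nonzero. What saves the argument is that the extremal indices $i=t$ and $i=r+t$ correspond precisely to locations $\lambda_i$ collinear with $\lambda$, where Note \ref{lem:smallpt} guarantees a single nonzero $\beta$-product; the triangular structure organized around these extremal indices then yields invertibility uniformly across all three line types.
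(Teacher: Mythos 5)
Your proposal is correct and follows essentially the same route as the paper: the paper also gets (i)$\Rightarrow$(ii) from Definition \ref{def:cba}(i) applied to the line $\lbrack 2,3\rbrack$, gets Definition \ref{def:cba}(ii) from the dependency (\ref{eq:betaFeas}), and gets Definition \ref{def:cba}(i) by expanding via (\ref{eq:Bformula}) and invoking Note \ref{lem:smallpt} to see that the coefficients of $v_i$ at the extreme indices $i=t$ and $i=r+t$ are nonzero. Your explicit triangular-matrix bookkeeping for the three line types is just a careful writing-out of the step the paper treats as immediate.
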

\begin{proof}
\noindent ${\rm (i)}\Rightarrow {\rm (ii)}$ By 
 Definition
\ref{def:cba}(i) and since $\lbrace \lambda_i\rbrace_{i=0}^N$
form a line in $\Delta_N$.
\\
\noindent ${\rm (ii)}\Rightarrow {\rm (i)}$ 
We show that $\mathcal B$ satisfies the conditions 
of Definition \ref{def:cba}.
Concerning 
Definition \ref{def:cba}(i),
let $L$ denote a line in $\Delta_N$.
Pick a location $\lambda = (r,s,t)$ in $L$
and consider the sum in line
(\ref{eq:Bformula}). In that sum,
by 
Note
\ref{lem:smallpt}
the coefficient
of $v_i$ is nonzero for $i=t$ and $i=r+t$.
Therefore the vectors $\lbrace {\mathcal B}_\lambda \rbrace_{\lambda \in L}$ 
are linearly independent. 
We have shown that $\mathcal B$ satisfies
 Definition \ref{def:cba}(i).
The function $\mathcal B$ satisfies
Definition \ref{def:cba}(ii), since
for each black 3-clique $C$ in $\Delta_N$
the vectors 
$\lbrace {\mathcal B}_\lambda \rbrace_{\lambda \in C}$ 
are linearly dependent by
(\ref{eq:betaFeas}).
We have shown that $\mathcal B$ satisfies
the conditions 
of Definition \ref{def:cba}, so
$\mathcal B$ is a 
 Concrete Billiard Array.
\end{proof}

\noindent We give two variations on
Propositions
\ref{prop:vigives},
\ref{prop:addmore}.

\begin{proposition}
\label{lem:prelim}
Let $\lbrace \lambda_i \rbrace_{i=0}^N$ denote the
path $\lbrack 2,3\rbrack$ in $\Delta_N$.
Then for any function
$\mathcal B:\Delta_N \to V$,
$\lambda \mapsto \mathcal B_\lambda$
the following {\rm (i), (ii)} are
equivalent:
\begin{enumerate}
\item[\rm (i)] $\mathcal B$ is a Concrete Billiard Array
 and $\beta = \tilde {\mathcal B}$;
\item[\rm (ii)] $\mathcal B$ is $\beta$-dependent
and $\lbrace \mathcal B_{\lambda_i}\rbrace_{i=0}^N$
are linearly independent.
\end{enumerate}
\end{proposition}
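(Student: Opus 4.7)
The plan is to derive both directions by combining Lemma \ref{lem:smallpt2} with the existence/uniqueness result in Proposition \ref{prop:vigives} and the characterization in Proposition \ref{prop:addmore}.

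For (i) $\Rightarrow$ (ii): Assume $\mathcal B$ is a Concrete Billiard Array with $\beta=\tilde{\mathcal B}$. The $\beta$-dependence of $\mathcal B$ is immediate from Lemma \ref{lem:smallpt2}. For the linear independence of $\{\mathcal B_{\lambda_i}\}_{i=0}^N$, observe that by Lemma \ref{lem:sixpathsinfo} the path $\lbrack 2,3\rbrack$ traces out the $1$-boundary of $\Delta_N$, which is a line in $\Delta_N$; hence the independence follows from Definition \ref{def:cba}(i).

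For (ii) $\Rightarrow$ (i): Assume $\mathcal B$ is $\beta$-dependent and $\{\mathcal B_{\lambda_i}\}_{i=0}^N$ are linearly independent. Set $v_i = \mathcal B_{\lambda_i}$ for $0 \le i \le N$. By Proposition \ref{prop:vigives} there is a unique $\beta$-dependent function on $\Delta_N$ sending $\lambda_i \mapsto v_i$; since $\mathcal B$ itself has both properties, it must coincide with this unique function. Now the linear independence of the $v_i$ together with Proposition \ref{prop:addmore} implies that $\mathcal B$ is a Concrete Billiard Array. Finally, since $\mathcal B$ is $\beta$-dependent, Lemma \ref{lem:smallpt2} yields $\beta = \tilde{\mathcal B}$.

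No step looks particularly difficult here; the proof is essentially bookkeeping assembled from the prior lemmas. The only subtle point is recognizing that $\lbrack 2,3\rbrack$ is a line in $\Delta_N$ (so one can invoke Definition \ref{def:cba}(i)) and ensuring that the passage through Proposition \ref{prop:vigives} in direction (ii) $\Rightarrow$ (i) is legitimate: one needs the $v_i$ to be arbitrary vectors in $V$, which is exactly the hypothesis of that proposition, so no compatibility check is required beyond $\beta$-dependence itself.
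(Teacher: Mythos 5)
Your proof is correct and follows essentially the same route as the paper: both directions are handled by Lemma \ref{lem:smallpt2} together with Propositions \ref{prop:vigives} and \ref{prop:addmore}, and the observation that the path $\lbrack 2,3\rbrack$ lies on a line (the $1$-boundary) is exactly what the paper uses implicitly when invoking Definition \ref{def:cba}(i).
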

\begin{proof}
\noindent ${\rm (i)}\Rightarrow {\rm (ii)}$
The function $\mathcal B$ is $\beta$-dependent
by 
Lemma \ref{lem:smallpt2}.
 The vectors
$\lbrace {\mathcal B}_{\lambda_i}\rbrace_{i=0}^N$
are linearly independent by Definition
\ref{def:cba}(i).
\\
\noindent ${\rm (ii)}\Rightarrow {\rm (i)}$ 
Define $v_i  
 = {\mathcal B}_{\lambda_i}$
for $0 \leq i \leq N$.
By assumption,
the function $\mathcal B$ is $\beta$-dependent
and sends $\lambda_i \mapsto v_i$ for
$0 \leq i \leq N$. Therefore $\mathcal B$
meets the requirements of Proposition
\ref{prop:vigives}.
By assumption, the vectors 
$\lbrace v_i\rbrace_{i=0}^N$
are linearly independent.
Now  
$\mathcal B$
is a Concrete Billiard Array in view of
Proposition
\ref{prop:addmore}.
We have
 $\beta=\tilde {\mathcal B}$ by Lemma \ref{lem:smallpt2}.
\end{proof}

\begin{proposition}
\label{lem:ELtoCBA}
Let $\lbrace v_i \rbrace_{i=0}^N$ denote
a basis for $V$.
Then there exists a unique Concrete Billiard
Array $\mathcal B$ on $V$ such that $\beta=\tilde {\mathcal B}$
and 
 $\lbrace v_i \rbrace_{i=0}^N$ is the 
 $\mathcal B$-basis $\lbrack 2,3\rbrack$.
\end{proposition}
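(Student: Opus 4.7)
The plan is to invoke Proposition \ref{prop:vigives} and Proposition \ref{prop:addmore} (or equivalently Proposition \ref{lem:prelim}) directly, after unpacking what it means for $\{v_i\}_{i=0}^N$ to be the $\mathcal{B}$-basis $\lbrack 2,3\rbrack$.

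First, let $\{\lambda_i\}_{i=0}^N$ denote the path $\lbrack 2,3\rbrack$ in $\Delta_N$. By Definition \ref{def:Bdec}, the $\mathcal{B}$-basis $\lbrack 2,3\rbrack$ is the sequence $\{\mathcal{B}_{\lambda_i}\}_{i=0}^N$, so the condition on $\mathcal{B}$ is precisely that $\mathcal{B}_{\lambda_i} = v_i$ for $0 \leq i \leq N$ (together with being a Concrete Billiard Array satisfying $\beta = \tilde{\mathcal{B}}$).

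For existence, apply Proposition \ref{prop:vigives} to the vectors $\{v_i\}_{i=0}^N$: this produces a unique $\beta$-dependent function $\mathcal{B}:\Delta_N \to V$ with $\mathcal{B}_{\lambda_i}=v_i$. Since $\{v_i\}_{i=0}^N$ is a basis, it is linearly independent, so Proposition \ref{prop:addmore} promotes $\mathcal{B}$ to a Concrete Billiard Array on $V$. Because $\mathcal{B}$ is $\beta$-dependent, Lemma \ref{lem:smallpt2} yields $\beta = \tilde{\mathcal{B}}$. By construction $\mathcal{B}_{\lambda_i}=v_i$, so $\{v_i\}_{i=0}^N$ is the $\mathcal{B}$-basis $\lbrack 2,3\rbrack$.

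For uniqueness, suppose $\mathcal{B}'$ is any Concrete Billiard Array on $V$ with $\beta = \tilde{\mathcal{B}'}$ such that $\{v_i\}_{i=0}^N$ is the $\mathcal{B}'$-basis $\lbrack 2,3\rbrack$. Then $\mathcal{B}'_{\lambda_i}=v_i$ for $0 \leq i \leq N$, and Lemma \ref{lem:smallpt2} shows $\mathcal{B}'$ is $\beta$-dependent. The uniqueness clause of Proposition \ref{prop:vigives} then forces $\mathcal{B}'=\mathcal{B}$.

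No step here is an obstacle; this proposition is essentially a packaging of the earlier results, and the only care needed is translating the ``$\mathcal{B}$-basis $\lbrack 2,3\rbrack$'' hypothesis into the pointwise condition $\mathcal{B}_{\lambda_i}=v_i$ so that Proposition \ref{prop:vigives} applies verbatim.
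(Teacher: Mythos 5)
Your proof is correct and follows essentially the same route as the paper: existence and uniqueness both come from Proposition \ref{prop:vigives}, after translating the $\mathcal B$-basis $\lbrack 2,3\rbrack$ condition into $\mathcal B_{\lambda_i}=v_i$. The only cosmetic difference is that the paper channels the equivalence ``Concrete Billiard Array with $\beta=\tilde{\mathcal B}$ $\Leftrightarrow$ $\beta$-dependent with independent boundary vectors'' through Proposition \ref{lem:prelim}, whereas you cite its ingredients (Proposition \ref{prop:addmore} and Lemma \ref{lem:smallpt2}) directly.
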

\begin{proof} 
Let $\lbrace \lambda_i\rbrace_{i=0}^N$ denote the
path $\lbrack 2,3\rbrack$ in $\Delta_N$.
We first show that $\mathcal B$ exists.
By Proposition
\ref{prop:vigives} there exists
a $\beta$-dependent function $\mathcal B:\Delta_N \to V$,
 $\lambda \mapsto \mathcal B_\lambda$ that sends
$\lambda_i \mapsto v_i$ for $0 \leq i \leq N$.
By construction
$\mathcal B$ satisfies condition (ii) of
Proposition
\ref{lem:prelim}.
By that proposition
$\mathcal B$ is a Concrete Billiard Array and $\beta = \tilde {\mathcal B}$.
By construction 
 $\lbrace v_i \rbrace_{i=0}^N$ is the 
 $\mathcal B$-basis $\lbrack 2,3\rbrack$.
We have shown that $\mathcal B$ exists.
Concerning 
 uniqueness, let
$\mathcal B$ denote any Concrete Billiard Array on $V$
such that
$\beta=\tilde {\mathcal B}$
and 
 $\lbrace v_i \rbrace_{i=0}^N$ is the 
 $\mathcal B$-basis $\lbrack 2,3\rbrack$.
Then $\mathcal B$ is $\beta$-dependent by
Proposition
\ref{lem:prelim}. By construction
the function $\mathcal B$ sends $\lambda_i\mapsto v_i$
for $0 \leq i \leq N$.
Therefore $\mathcal B$ meets the requirements  of
 Proposition
\ref{prop:vigives}. By that proposition
$\mathcal B$  is unique.
\end{proof}

\begin{corollary} Let $\mathcal B:\Delta_N \to V, \lambda \mapsto
\mathcal B_\lambda$ denote a $\beta$-dependent function.
Then $\mathcal B$ is a Concrete Billiard Array if and only
if the vectors 
$\lbrace \mathcal B_\lambda\rbrace_{\lambda \in \Delta_N}$
span $V$.
\end{corollary}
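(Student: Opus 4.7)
The plan is to prove each direction separately, leveraging Propositions \ref{prop:vigives} and \ref{prop:addmore} together with Corollary \ref{lem:BspanV2CBA}.

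For the forward direction, assume $\mathcal B$ is a Concrete Billiard Array. Then Corollary \ref{lem:BspanV2CBA} immediately yields that $\lbrace \mathcal B_\lambda\rbrace_{\lambda \in \Delta_N}$ spans $V$. This direction requires no further work and does not even use the $\beta$-dependent hypothesis.

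For the backward direction, assume $\mathcal B$ is $\beta$-dependent and $\lbrace \mathcal B_\lambda\rbrace_{\lambda \in \Delta_N}$ spans $V$. Let $\lbrace \lambda_i \rbrace_{i=0}^N$ denote the path $\lbrack 2,3\rbrack$ in $\Delta_N$, and set $v_i = \mathcal B_{\lambda_i}$ for $0 \leq i \leq N$. Since $\mathcal B$ is $\beta$-dependent and sends $\lambda_i \mapsto v_i$, Proposition \ref{prop:vigives} applies and gives the explicit formula (\ref{eq:Bformula}) expressing every $\mathcal B_\lambda$ as an $\mathbb F$-linear combination of $\lbrace v_i\rbrace_{i=0}^N$. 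It follows that the span of $\lbrace \mathcal B_\lambda\rbrace_{\lambda \in \Delta_N}$ is contained in the span of $\lbrace v_i\rbrace_{i=0}^N$. By hypothesis this span is all of $V$, which has dimension $N+1$, so the $N+1$ vectors $\lbrace v_i\rbrace_{i=0}^N$ must in fact be a basis for $V$; in particular they are linearly independent. Applying Proposition \ref{prop:addmore} then shows that $\mathcal B$ is a Concrete Billiard Array, completing the proof.

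The argument is short and the main (only nontrivial) point is the dimension-counting step in the backward direction: one must recognize that the $\beta$-dependent hypothesis, via Proposition \ref{prop:vigives}, compresses the spanning set $\lbrace \mathcal B_\lambda\rbrace_{\lambda \in \Delta_N}$ into the span of the $N+1$ boundary vectors $\lbrace v_i\rbrace_{i=0}^N$, so that a spanning hypothesis automatically upgrades to linear independence of those $N+1$ vectors. I do not anticipate any real obstacle here, since the hard work has already been done in Propositions \ref{prop:vigives} and \ref{prop:addmore}.
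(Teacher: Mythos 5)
Your proof is correct and follows essentially the same route as the paper: the forward direction via Corollary \ref{lem:BspanV2CBA}, and the backward direction by restricting to the path $\lbrack 2,3\rbrack$, using Proposition \ref{prop:vigives} to place all $\mathcal B_\lambda$ in the span of the boundary vectors, dimension-counting to get their linear independence, and concluding with Proposition \ref{prop:addmore}. No issues.
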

\begin{proof} First assume that
$\mathcal B$ is a Concrete Billiard Array.
Then the vectors
$\lbrace \mathcal B_\lambda\rbrace_{\lambda \in \Delta_N}$
span $V$ by
Corollary \ref{lem:BspanV2CBA}.
Conversely, assume that
the vectors
$\lbrace \mathcal B_\lambda\rbrace_{\lambda \in \Delta_N}$
span $V$. Let $\lbrace \lambda_i \rbrace_{i=0}^N$ denote
the path $\lbrack 2,3\rbrack$ in $\Delta_N$
and define $v_i = \mathcal B_{\lambda_i}$ for
$0 \leq i \leq N$. By Proposition
\ref{prop:vigives}
the vectors 
$\lbrace \mathcal B_\lambda\rbrace_{\lambda \in \Delta_N}$
are contained in the span of
 $\lbrace v_i\rbrace_{i=0}^N$.
Therefore the vectors
 $\lbrace v_i\rbrace_{i=0}^N$ span $V$.
The dimenension of $V$ is $N+1$ so
 $\lbrace v_i\rbrace_{i=0}^N$ are linearly independent.
Now $\mathcal B$ is a Concrete Billiard Array by Proposition
\ref{prop:addmore}.
\end{proof}

\section{Concrete Billiard Arrays and edge-labellings}

\noindent Throughout this section fix $N \in \mathbb N$.

\medskip
\noindent Let $\mathcal B$ denote a Concrete Billiard Array
over $\mathbb F$ that has diameter $N$. Recall from
Lemma
\ref{prop:CBAtoEL} the edge-labelling
$\tilde {\mathcal B}$
 of $\Delta_N$.

\begin{proposition}
\label{prop:CBAisoEL}
With the above notation,
the map $\mathcal B\mapsto \tilde {\mathcal B}$
induces a bijection between the following two sets:
\begin{enumerate}
\item[\rm (i)] the isomorphism classes of Concrete Billiard Arrays
over $\mathbb F$ that have diameter $N$;
\item[\rm (ii)] the edge-labellings of $\Delta_N$.
\end{enumerate}
\end{proposition}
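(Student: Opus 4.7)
The plan is to verify in turn that the map $\mathcal{B}\mapsto \tilde{\mathcal{B}}$ is well defined on isomorphism classes, surjective, and injective, after which the bijective correspondence is immediate.

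First I will check well-definedness: if $\sigma:V\to V'$ is an isomorphism of Concrete Billiard Arrays from $\mathcal{B}$ to $\mathcal{B}'$, then for any adjacent $\lambda,\mu\in\Delta_N$, applying $\sigma$ to $\tilde{B}_{\lambda,\mu}\mathcal{B}_\mu = \tilde{B}_{\lambda,\mu}(\mathcal{B}_\lambda)$ (the defining relation in Definition~\ref{def:calT}) and using $\sigma(\mathcal{B}_\lambda)=\mathcal{B}'_\lambda$, $\sigma(\mathcal{B}_\mu)=\mathcal{B}'_\mu$ gives $\tilde{\mathcal{B}}_{\lambda,\mu}=\tilde{\mathcal{B}}'_{\lambda,\mu}$. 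Hence the map descends to isomorphism classes.

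Surjectivity is essentially Proposition~\ref{lem:ELtoCBA}: given any edge-labelling $\beta$ of $\Delta_N$, choose any vector space $V$ over $\mathbb{F}$ of dimension $N+1$ and any basis $\lbrace v_i\rbrace_{i=0}^N$; then Proposition~\ref{lem:ELtoCBA} produces a Concrete Billiard Array $\mathcal{B}$ on $V$ with $\tilde{\mathcal{B}}=\beta$.

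For injectivity, suppose $\mathcal{B}$ on $V$ and $\mathcal{B}'$ on $V'$ satisfy $\tilde{\mathcal{B}}=\tilde{\mathcal{B}}'$; call this common edge-labelling $\beta$. Let $\lbrace \lambda_i\rbrace_{i=0}^N$ denote the path $\lbrack 2,3\rbrack$ in $\Delta_N$. By Definition~\ref{def:Bdec} the vectors $\lbrace \mathcal{B}_{\lambda_i}\rbrace_{i=0}^N$ form the $\mathcal{B}$-basis $\lbrack 2,3\rbrack$ of $V$, and similarly $\lbrace \mathcal{B}'_{\lambda_i}\rbrace_{i=0}^N$ is the $\mathcal{B}'$-basis $\lbrack 2,3\rbrack$ of $V'$. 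Define the $\mathbb{F}$-linear bijection $\sigma:V\to V'$ by $\sigma(\mathcal{B}_{\lambda_i})=\mathcal{B}'_{\lambda_i}$ for $0\leq i\leq N$. The main point is then that both $\mathcal{B}'$ and the function $\lambda\mapsto \sigma(\mathcal{B}_\lambda)$ are $\beta$-dependent functions $\Delta_N\to V'$ that agree on $\lbrace \lambda_i\rbrace_{i=0}^N$: for $\mathcal{B}'$ this is Lemma~\ref{lem:smallpt2}, while for $\sigma\circ \mathcal{B}$ it follows because $\beta$-dependence is a linear condition (the vanishing of the left-hand side of~(\ref{eq:betaFeas})) and $\sigma$ is linear. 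Hence by the uniqueness clause of Proposition~\ref{prop:vigives}, $\sigma(\mathcal{B}_\lambda)=\mathcal{B}'_\lambda$ for every $\lambda\in\Delta_N$, so $\sigma$ is an isomorphism of Concrete Billiard Arrays from $\mathcal{B}$ to $\mathcal{B}'$.

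The main obstacle among these steps is the injectivity argument, and specifically the clean invocation of Proposition~\ref{prop:vigives}: one must observe that $\beta$-dependence transfers across the linear map $\sigma$ so that the uniqueness of a $\beta$-dependent extension from the $1$-boundary forces the two arrays to correspond. The other two steps (well-definedness and surjectivity) are direct consequences of earlier results.
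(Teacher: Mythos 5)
Your proposal is correct and follows essentially the same route as the paper: surjectivity via Proposition~\ref{lem:ELtoCBA}, and injectivity by transporting $\mathcal B$ along the $\mathcal B$-basis $\lbrack 2,3\rbrack$ and using Proposition~\ref{prop:vigives} (the paper applies $\sigma$ to the explicit formula~(\ref{eq:Bformula}), while you invoke the uniqueness clause of the same proposition via $\beta$-dependence — an equivalent mechanism). Your explicit well-definedness check is a fine addition that the paper leaves implicit.
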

\begin{proof} The map $\mathcal B \mapsto \tilde {\mathcal B}$
induces a function from set (i) to set (ii), and this function
is surjective by Proposition
\ref{lem:ELtoCBA}. We now show that the function is injective.
Let $\mathcal B, \mathcal B'$ denote Concrete Billiard
Arrays over $\mathbb F$ that have diameter $N$ and
$\tilde {\mathcal B} = \tilde {\mathcal B}'$. We 
show that 
$\mathcal B, \mathcal B'$ are isomorphic.
Let $V$ (resp. $V'$) denote the vector space underlying
$\mathcal B$ (resp. $\mathcal B'$).
Let $\lbrace v_i\rbrace_{i=0}^N$ denote the $\mathcal B$-basis
$\lbrack 2,3\rbrack$ for $V$, and 
let $\lbrace v'_i\rbrace_{i=0}^N$ denote the $\mathcal B'$-basis
$\lbrack 2,3\rbrack$ for $V'$.
Consider the $\mathbb F$-linear
bijection $\sigma :V\to V'$ that sends $v_i \mapsto v'_i$
for $0 \leq i \leq N$.
For $\lambda =(r,s,t)$ in $\Delta_N$ the vector $\mathcal B_\lambda$
satisfies 
(\ref{eq:Bformula}), where
$\beta = 
\tilde {\mathcal B} = \tilde {\mathcal B}'$.  Applying
$\sigma $ to each side of
(\ref{eq:Bformula}) we see
 that $\sigma$ sends $\mathcal B_\lambda \mapsto \mathcal B'_\lambda$.
So by Definition
\ref{def:cbaIso},
$\sigma$ is an isomorphism of Concrete Billiard Arrays
from 
$\mathcal B$ to $\mathcal B'$. Consequently
$\mathcal B, \mathcal B'$ are isomorphic, as desired.
The result follows.
\end{proof}

\begin{lemma} Let $\beta$ denote an edge-labelling of
$\Delta_N$. Let $\lbrace \kappa_\lambda\rbrace_{\lambda\in \Delta_N}$
denote nonzero scalars in $\mathbb F$. Then there exists
an edge-labelling $\beta'$ of $\Delta_N$ such that
$\beta_{\lambda, \mu}\kappa_\lambda 
=
\beta'_{\lambda, \mu} \kappa_\mu 
$
for all adjacent $\lambda, \mu $ in $\Delta_N$.
\end{lemma}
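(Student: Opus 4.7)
The plan is to construct $\beta'$ explicitly by the formula dictated by the required identity, and then check that it satisfies the two axioms of an edge-labelling from Definition \ref{def:EL}. Specifically, for each ordered pair of adjacent locations $\lambda, \mu$ in $\Delta_N$, I would define
\begin{equation*}
\beta'_{\lambda, \mu} = \beta_{\lambda, \mu} \kappa_\lambda \kappa_\mu^{-1}.
\end{equation*}
This is well-defined and nonzero, since $\beta_{\lambda,\mu}\neq 0$ by Lemma \ref{lem:ELnonzero} and the $\kappa$'s are nonzero by assumption. The required identity $\beta_{\lambda,\mu}\kappa_\lambda = \beta'_{\lambda,\mu}\kappa_\mu$ is then immediate upon multiplying through by $\kappa_\mu$.

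Next I would verify the two conditions of Definition \ref{def:EL}. For condition (i), given adjacent $\lambda, \mu$, I compute
\begin{equation*}
\beta'_{\lambda,\mu}\beta'_{\mu,\lambda}
= \bigl(\beta_{\lambda,\mu}\kappa_\lambda \kappa_\mu^{-1}\bigr)\bigl(\beta_{\mu,\lambda}\kappa_\mu\kappa_\lambda^{-1}\bigr)
= \beta_{\lambda,\mu}\beta_{\mu,\lambda} = 1,
\end{equation*}
where the last equality holds because $\beta$ satisfies Definition \ref{def:EL}(i). For condition (ii), given a black 3-clique $\lambda, \mu, \nu$ in $\Delta_N$, the $\kappa$ factors telescope cyclically, giving
\begin{equation*}
\beta'_{\lambda,\mu}\beta'_{\mu,\nu}\beta'_{\nu,\lambda}
= \beta_{\lambda,\mu}\beta_{\mu,\nu}\beta_{\nu,\lambda}\cdot\frac{\kappa_\lambda}{\kappa_\mu}\cdot\frac{\kappa_\mu}{\kappa_\nu}\cdot\frac{\kappa_\nu}{\kappa_\lambda} = 1,
\end{equation*}
again using Definition \ref{def:EL}(ii) for $\beta$.

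There is essentially no obstacle here: the formula for $\beta'$ is forced by the required identity, and both axioms survive the multiplicative correction $\kappa_\lambda\kappa_\mu^{-1}$ because the relevant products of these correction factors are either a reciprocal pair (for an edge) or a cyclic telescope (for a black 3-clique), each evaluating to $1$. The only point of mild care is to note that the construction depends on $\beta$ satisfying Definition \ref{def:EL}; the axioms for $\beta'$ do not require any additional hypothesis on the scalars $\kappa_\lambda$ beyond being nonzero.
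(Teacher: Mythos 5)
Your proposal is correct and follows the same route as the paper, which defines $\beta'_{\lambda,\mu} = \beta_{\lambda,\mu}\kappa_\lambda/\kappa_\mu$ and leaves the verification of Definition \ref{def:EL} to the reader; you have simply written out that routine check explicitly.
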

\begin{proof}  For all adjacent $\lambda, \mu \in
\Delta_N$ define
$\beta'_{\lambda,\mu} = \beta_{\lambda,\mu}\kappa_\lambda/\kappa_\mu$.
One checks that $\beta'$ is an edge-labelling of $\Delta_N$
with the required features.
\end{proof}

\begin{definition}
\label{def:ELsim}
\rm Edge-labellings $\beta, \beta'$ of 
$\Delta_N$ are called {\it similar} whenever
there exist nonzero scalars
$\lbrace \kappa_\lambda\rbrace_{\lambda\in \Delta_N}$ in $\mathbb F$
such that
$\beta_{\lambda,\mu} \kappa_\lambda = 
\beta'_{\lambda,\mu} \kappa_\mu $ for all
adjacent $\lambda, \mu$ in $\Delta_N$.
This similarity relation is an equivalence relation.
\end{definition}

\begin{example}
\label{ex:ELsmallN}
\rm
Assume $N=1$. Then any two edge-labellings of $\Delta_N$
are similar.
\end{example}

\begin{lemma}
\label{lem:simsim}
Let $\mathcal B$ and $\mathcal B'$ denote
Concrete Billiard Arrays over $\mathbb F$ that have diameter
$N$. Then the following are equivalent:
\begin{enumerate}
\item[\rm (i)] $\mathcal B$ and $\mathcal B'$ are similar in
the sense of Definition
\ref{def:relations};
\item[\rm (ii)] the edge-labellings
$\tilde {\mathcal B}$ and $\tilde {\mathcal B}'$ are similar in
the sense of Definition
\ref{def:ELsim}.
\end{enumerate}
\end{lemma}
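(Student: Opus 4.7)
The plan is to reduce the statement to the previously established structural correspondences, chiefly Proposition \ref{prop:isoAssoc}, Proposition \ref{prop:CBAisoEL}, and Lemma \ref{lem:kassociates}. Each of these governs exactly one of the two ways in which two Concrete Billiard Arrays can fail to be equal while still being similar: isomorphism leaves the edge-labelling invariant, while passing to an associate rescales it in a controlled way.

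For (i)$\Rightarrow$(ii), assume $\mathcal B$ and $\mathcal B'$ are similar. By Proposition \ref{prop:isoAssoc} (applied in the form (i)$\Rightarrow$(iii), or by swapping the roles of $\mathcal B$ and $\mathcal B'$ in (ii)) there exists a Concrete Billiard Array $\mathcal B''$ that is associated with $\mathcal B$ and isomorphic to $\mathcal B'$. The association supplies nonzero scalars $\{\kappa_\lambda\}_{\lambda\in\Delta_N}$ with $\mathcal B''_\lambda = \kappa_\lambda \mathcal B_\lambda$, and Lemma \ref{lem:kassociates} yields
\[
\tilde{\mathcal B}_{\lambda,\mu}\,\kappa_\lambda \;=\; \tilde{\mathcal B}''_{\lambda,\mu}\,\kappa_\mu
\]
for every adjacent pair $\lambda,\mu$. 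On the other hand, because $\mathcal B''$ is isomorphic to $\mathcal B'$, the injectivity statement in Proposition \ref{prop:CBAisoEL} gives $\tilde{\mathcal B}'' = \tilde{\mathcal B}'$. Substituting, $\tilde{\mathcal B}_{\lambda,\mu}\,\kappa_\lambda = \tilde{\mathcal B}'_{\lambda,\mu}\,\kappa_\mu$, which is precisely Definition \ref{def:ELsim}.

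For (ii)$\Rightarrow$(i), start from nonzero scalars $\{\kappa_\lambda\}$ realizing the similarity of edge-labellings, so $\tilde{\mathcal B}_{\lambda,\mu}\,\kappa_\lambda = \tilde{\mathcal B}'_{\lambda,\mu}\,\kappa_\mu$. Define $\mathcal B'':\Delta_N\to V$ by $\mathcal B''_\lambda = \kappa_\lambda \mathcal B_\lambda$; by Lemma \ref{lem:kappaAdj} this is a Concrete Billiard Array on $V$, and by construction it is associated with $\mathcal B$. Applying Lemma \ref{lem:kassociates} to the pair $\mathcal B,\mathcal B''$ gives $\tilde{\mathcal B}_{\lambda,\mu}\,\kappa_\lambda = \tilde{\mathcal B}''_{\lambda,\mu}\,\kappa_\mu$, and comparison with the hypothesis forces $\tilde{\mathcal B}'' = \tilde{\mathcal B}'$. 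Proposition \ref{prop:CBAisoEL} now implies that $\mathcal B''$ and $\mathcal B'$ are isomorphic. Thus $\mathcal B$ is associated with $\mathcal B''$, which is isomorphic to $\mathcal B'$, and Proposition \ref{prop:isoAssoc} (iii)$\Rightarrow$(i) yields that $\mathcal B$ and $\mathcal B'$ are similar.

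The main obstacle is essentially bookkeeping: one must correctly track how the scalars $\tilde{\mathcal B}_{\lambda,\mu}$ behave under the two basic moves, namely isomorphism (which preserves them exactly, by Proposition \ref{prop:CBAisoEL}) and association (which rescales them by the $\kappa$-factors of Lemma \ref{lem:kassociates}). Once Proposition \ref{prop:isoAssoc} is invoked to factor ``similar'' into ``associate of an isomorph,'' each direction becomes a two-step chain of these standard identifications.
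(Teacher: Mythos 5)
Your proof is correct and follows essentially the same route as the paper: both directions use Proposition \ref{prop:isoAssoc} to factor similarity as ``associate of an isomorph,'' Lemma \ref{lem:kassociates} to track how the scalars $\tilde{\mathcal B}_{\lambda,\mu}$ rescale under association, and the injectivity in Proposition \ref{prop:CBAisoEL} to convert equality of edge-labellings into isomorphism (and back). The only cosmetic difference is your explicit citation of Lemma \ref{lem:kappaAdj} when constructing $\mathcal B''$ in (ii)$\Rightarrow$(i), which the paper leaves implicit.
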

\begin{proof} ${\rm (i)}\Rightarrow {\rm (ii)}$ 
By Proposition
\ref{prop:isoAssoc} there exists a Concrete Billiard
Array $\mathcal B''$ over $\mathbb F$ that is associated with 
$\mathcal B$ and isomorphic to $\mathcal B'$.
By Definition
\ref{def:assoc} there exist nonzero scalars
$\lbrace \kappa_\lambda\rbrace_{\lambda \in \Delta_N}$
in $\mathbb F$ such that 
$\mathcal B''_\lambda = \kappa_\lambda \mathcal B_\lambda $
for all $\lambda \in \Delta_N$.
For all adjacent $\lambda,\mu\in 
\Delta_N$ we have
$\tilde {\mathcal B}_{\lambda, \mu}\kappa_\lambda
=
\tilde {\mathcal B}''_{\lambda, \mu}\kappa_\mu$
by Lemma
\ref{lem:kassociates}, 
and
$\tilde {\mathcal B}''_{\lambda, \mu}
=
\tilde {\mathcal B}'_{\lambda, \mu}$
 by 
 Proposition
\ref{prop:CBAisoEL},
so
$\tilde {\mathcal B}_{\lambda, \mu}\kappa_\lambda
=
\tilde {\mathcal B}'_{\lambda, \mu}\kappa_\mu$.
Now 
$\tilde {\mathcal B}$ and
$\tilde {\mathcal B}'$ are similar in the sense of
Definition \ref{def:ELsim}.
\\
\noindent ${\rm (ii) \Rightarrow (i)}$
By Definition
\ref{def:ELsim} there exist nonzero 
scalars
$\lbrace \kappa_\lambda \rbrace_{\lambda \in \Delta_N}$
in $\mathbb F$ such that 
$\tilde {\mathcal B}_{\lambda,\mu} \kappa_\lambda = 
\tilde {\mathcal B}'_{\lambda,\mu} \kappa_\mu $ for all
adjacent $\lambda, \mu$ in $\Delta_N$.
Let $\mathcal B''$ denote the 
Concrete Billiard Array over $\mathbb F$ that sends
$\lambda \mapsto \kappa_\lambda \mathcal B_\lambda$
for all $\lambda \in \Delta_N$.
Then $\mathcal B''$ is associated with $\mathcal B$.
For all adjacent $\lambda, \mu \in \Delta_N$
we have
$\tilde {\mathcal B}_{\lambda, \mu} 
\kappa_\lambda =
\tilde {\mathcal B}''_{\lambda, \mu} 
\kappa_\mu$
by Lemma
\ref{lem:kassociates},
so
$\tilde {\mathcal B}'_{\lambda, \mu} =
\tilde {\mathcal B}''_{\lambda, \mu}$.
Therefore
$\mathcal B', \mathcal B''$ are isomorphic
by Proposition
\ref{prop:CBAisoEL}.
The Concrete Billiard Array $\mathcal B''$ is
associated with $\mathcal B$ and isomorphic to
$\mathcal B'$. Now by 
Proposition
\ref{prop:isoAssoc}, 
 $\mathcal B$ and $\mathcal B'$
are similar in the sense of
Definition
\ref{def:relations}.
\end{proof}

\noindent The following result is a variation on Proposition
\ref{prop:CBAisoEL}.

\begin{proposition}
\label{prop:simSim}
The map $\mathcal B \mapsto \tilde {\mathcal B}$ induces a bijection
between the following two sets:
\begin{enumerate}
\item[\rm (i)] the similarity classes of Concrete Billiard Arrays
over $\mathbb F$ that have diameter $N$;
\item[\rm (ii)] the similarity classes of edge-labellings for
$\Delta_N$.
\end{enumerate}
\end{proposition}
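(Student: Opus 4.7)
The plan is to deduce this result directly from Proposition \ref{prop:CBAisoEL} together with Lemma \ref{lem:simsim}, which is the key technical tool that translates the similarity relation for Concrete Billiard Arrays into the similarity relation for edge-labellings. The structure of the proof mirrors a standard ``quotient of a bijection is a bijection'' argument.

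First I would verify that the map $\mathcal{B} \mapsto \tilde{\mathcal{B}}$ descends to a well-defined function between the similarity classes in (i) and (ii). This is precisely the ${\rm (i)} \Rightarrow {\rm (ii)}$ direction of Lemma \ref{lem:simsim}: if $\mathcal{B}$ and $\mathcal{B}'$ are similar Concrete Billiard Arrays, then $\tilde{\mathcal{B}}$ and $\tilde{\mathcal{B}}'$ are similar edge-labellings. So the assignment on representatives descends to a well-defined function $\Psi$ between similarity classes.

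Next I would establish injectivity of $\Psi$ by invoking the ${\rm (ii)} \Rightarrow {\rm (i)}$ direction of Lemma \ref{lem:simsim}: if $\Psi$ sends two similarity classes to the same class, then for representatives $\mathcal{B}, \mathcal{B}'$ we have $\tilde{\mathcal{B}}$ and $\tilde{\mathcal{B}}'$ similar as edge-labellings, hence $\mathcal{B}$ and $\mathcal{B}'$ are similar as Concrete Billiard Arrays, so their similarity classes coincide. For surjectivity, given an edge-labelling $\beta$ of $\Delta_N$, I would fix a vector space $V$ over $\mathbb{F}$ of dimension $N+1$ together with any basis $\lbrace v_i \rbrace_{i=0}^N$ and apply Proposition \ref{lem:ELtoCBA} to produce a Concrete Billiard Array $\mathcal{B}$ on $V$ with $\tilde{\mathcal{B}} = \beta$; then the similarity class of $\mathcal{B}$ is a preimage of the similarity class of $\beta$.

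There is no genuine obstacle here, since the content has already been absorbed into Lemma \ref{lem:simsim} and Proposition \ref{lem:ELtoCBA}; the mild bookkeeping point is merely to state clearly that $\Psi$ is well-defined before proving injectivity, and that the two implications of Lemma \ref{lem:simsim} serve as the well-definedness and injectivity statements respectively, while Proposition \ref{lem:ELtoCBA} supplies surjectivity.
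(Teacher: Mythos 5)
Your proposal is correct and follows essentially the same route as the paper: the paper cites Proposition \ref{prop:CBAisoEL} together with Lemma \ref{lem:simsim}, and your argument uses exactly these ingredients, merely unpacking the surjectivity step via Proposition \ref{lem:ELtoCBA} (which is the surjectivity input to Proposition \ref{prop:CBAisoEL}) rather than citing that proposition wholesale. The bookkeeping about well-definedness and injectivity via the two implications of Lemma \ref{lem:simsim} matches the intended argument.
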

\begin{proof} By Proposition
\ref{prop:CBAisoEL}
and
Lemma \ref{lem:simsim}.
\end{proof}

\section{Edge-labellings and value functions}

\noindent Throughout this section fix $N \in \mathbb N$.

\medskip
\noindent For this section our goal is to
describe the similarity classes of 
edge-labellings for $\Delta_N$. Note that if $N=0$ then
$\Delta_N$ has no edges.
If $N=1$ then by Example
\ref{ex:ELsmallN},
any two edge-labellings of $\Delta_N$ are similar.
For $N\geq 2$ we will display a bijection
between the following two sets:
(i) 
the similarity classes of edge-labellings for $\Delta_N$;
(ii) 
the value functions on $\Delta_{N-2}$.

\begin{definition} 
\label{def:orientedbetaval}
\rm
Let $\beta$ denote an edge-labelling of $\Delta_N$.
Let $\lambda, \mu, \nu$ denote locations in $\Delta_N$
that form a white 3-clique. Then the scalar
\begin{equation*}
\beta_{\lambda,\mu}
\beta_{\mu,\nu}
\beta_{\nu,\lambda}
\end{equation*}
is called the {\it clockwise $\beta$-value} 
(resp. {\it counterclockwise $\beta$-value})
of the clique whenever
the sequence 
 $\lambda, \mu, \nu$ runs clockwise (resp. counterclockwise)
 around the clique.
\end{definition}

\begin{lemma} 
Let $\beta$ denote an edge-labelling of $\Delta_N$.
For each white 3-clique in $\Delta_N$, 
its clockwise $\beta$-value and counterclockwise $\beta$-value are reciprocal.
\end{lemma}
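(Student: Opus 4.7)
The plan is to verify this by a direct computation that multiplies the clockwise $\beta$-value and the counterclockwise $\beta$-value together and reduces the product to $1$ using Definition \ref{def:EL}(i). Fix a white $3$-clique with locations $\lambda, \mu, \nu$, and without loss of generality assume that $\lambda, \mu, \nu$ runs clockwise around the clique. Then reading the same three vertices in reverse order $\lambda, \nu, \mu$ runs counterclockwise around the clique, so by Definition \ref{def:orientedbetaval} the clockwise and counterclockwise $\beta$-values are
\begin{equation*}
c = \beta_{\lambda,\mu}\beta_{\mu,\nu}\beta_{\nu,\lambda},
\qquad
c' = \beta_{\lambda,\nu}\beta_{\nu,\mu}\beta_{\mu,\lambda},
\end{equation*}
respectively.

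I would then compute the product $cc'$, rearranging the six factors into the three pairs
\begin{equation*}
cc' = \bigl(\beta_{\lambda,\mu}\beta_{\mu,\lambda}\bigr)
\bigl(\beta_{\mu,\nu}\beta_{\nu,\mu}\bigr)
\bigl(\beta_{\nu,\lambda}\beta_{\lambda,\nu}\bigr).
\end{equation*}
By Definition \ref{def:EL}(i) each of the three parenthesized factors is equal to $1$, so $cc'=1$. Since neither $c$ nor $c'$ is zero (each is a product of scalars that are nonzero by Lemma \ref{lem:ELnonzero}), it follows that $c$ and $c'$ are reciprocal, which is the desired conclusion.

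There is essentially no obstacle here: the argument uses only the first axiom in Definition \ref{def:EL} together with the commutativity of multiplication in $\mathbb F$. The only ``care'' needed is to confirm that reversing the cyclic order of the three vertices switches a clockwise traversal to a counterclockwise one, which is immediate from the geometric picture of the clique and from Definition \ref{def:orientedbetaval}.
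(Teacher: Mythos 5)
Your proof is correct and is exactly the argument the paper intends: the paper's one-line proof cites Definition \ref{def:EL}(i) and Definition \ref{def:orientedbetaval}, and your computation of $cc'$ pairing $\beta_{\lambda,\mu}\beta_{\mu,\lambda}$, $\beta_{\mu,\nu}\beta_{\nu,\mu}$, $\beta_{\nu,\lambda}\beta_{\lambda,\nu}$ is simply that citation written out in full.
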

\begin{proof}
By Definition \ref{def:EL}(i) and
Definition
\ref{def:orientedbetaval}.
\end{proof}

\begin{definition}
\label{def:betavalStraight}
\rm
Let $\beta$ denote an edge-labelling of $\Delta_N$.
 For each white 3-clique in $\Delta_N$,
by its {\it $\beta$-value}  we mean its clockwise $\beta$-value.
\end{definition}


\begin{definition}
\label{def:el}
\rm Assume $N\geq 2$, and let
$\beta$ denote an edge-labelling of $\Delta_N$.
We define a function
$\hat \beta:\Delta_{N-2} \to \mathbb F$ as follows.
Pick $(r,s,t) \in \Delta_{N-2}$. To describe
the image of $(r,s,t)$ under $\hat \beta$,
consider the corresponding white 3-clique in $\Delta_N$
from Lemma
\ref{lem:black}. The  $\beta$-value of this 3-clique
is the image of $(r,s,t)$ under
$\hat \beta$.
Note that $\hat \beta$ is a value function on
$\Delta_{N-2}$, in the sense of Definition
\ref{def:vfunction}.
We call $\hat \beta$ the {\it value function} for $\beta$.
\end{definition}

\begin{lemma}
\label{lem:diagcom}
Let $\mathcal B$ denote a Concrete Billiard
Array over $\mathbb F$ that has diameter $N$. Let $B$
denote the corresponding Billiard Array.
 Then for each white 3-clique in 
$\Delta_N$ the $B$-value is equal to the $\tilde {\mathcal B}$-value.
In other words, $B$ and $\tilde {\mathcal B}$ have the same
value function.
\end{lemma}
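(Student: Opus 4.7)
The plan is to recognize that this lemma is essentially an immediate comparison of definitions, once Lemma \ref{lem:bcval} is invoked. First I would fix a white 3-clique with locations $\lambda,\mu,\nu$ in $\Delta_N$, chosen so that the sequence $\lambda,\mu,\nu$ runs clockwise around the clique. The goal is then to show that the clockwise $B$-value of this clique equals its clockwise $\tilde{\mathcal B}$-value, since by Definition \ref{def:BvalStraight} and Definition \ref{def:betavalStraight} these clockwise values are precisely what gets called the $B$-value and the $\tilde{\mathcal B}$-value.

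Next I would apply Lemma \ref{lem:bcval}, which asserts that the clockwise $B$-value of the clique equals the product
\begin{equation*}
\tilde{\mathcal B}_{\lambda,\mu}\,\tilde{\mathcal B}_{\mu,\nu}\,\tilde{\mathcal B}_{\nu,\lambda}.
\end{equation*}
On the other hand, applying Definition \ref{def:orientedbetaval} to the edge-labelling $\beta=\tilde{\mathcal B}$ (which is an edge-labelling of $\Delta_N$ by Lemma \ref{prop:CBAtoEL}), the clockwise $\tilde{\mathcal B}$-value of the very same clique is, by definition, the same product $\tilde{\mathcal B}_{\lambda,\mu}\,\tilde{\mathcal B}_{\mu,\nu}\,\tilde{\mathcal B}_{\nu,\lambda}$. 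Therefore the clockwise $B$-value and the clockwise $\tilde{\mathcal B}$-value of the clique coincide, and by Definitions \ref{def:BvalStraight} and \ref{def:betavalStraight} the $B$-value and the $\tilde{\mathcal B}$-value agree.

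Finally, to conclude that $B$ and $\tilde{\mathcal B}$ have the same value function, I would use Definition \ref{def:Bvaluefunction} together with Definition \ref{def:el}. Both $\hat B$ and the value function associated with $\tilde{\mathcal B}$ are defined on $\Delta_{N-2}$ by sending each $(r,s,t)$ to the value of the white 3-clique produced by Lemma \ref{lem:black}; since these values agree clique-by-clique, the two value functions are identical as functions on $\Delta_{N-2}$. There is no real obstacle here — the content is a bookkeeping check that the two \emph{a priori} different definitions of ``value at a white 3-clique'' (one via the composition of the linear bijections $\tilde B_{\lambda,\mu}$, one via the scalars $\tilde{\mathcal B}_{\lambda,\mu}$ satisfying the edge-labelling axioms) collapse to the same product of scalars, and this collapse is already recorded in Lemma \ref{lem:bcval}.
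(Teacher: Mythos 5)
Your proposal is correct and follows the same route as the paper, which proves the lemma by comparing Lemma \ref{lem:bcval} with Definition \ref{def:orientedbetaval}; you have merely spelled out the bookkeeping (clockwise orientation, Definitions \ref{def:BvalStraight}, \ref{def:betavalStraight}, \ref{def:Bvaluefunction}, \ref{def:el}) in more detail. No gaps.
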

\begin{proof}
Compare
 Lemma
\ref{lem:bcval}
and 
Definition
\ref{def:orientedbetaval}.
\end{proof}

\begin{proposition}
\label{thm:cyclevalue}
Let $\beta$ denote an edge-labelling of $\Delta_N$.
Given a cycle
in $\Delta_N$
that runs clockwise (resp. counterclockwise), consider
the white 3-cliques that are surrounded by the cycle.
Then the following scalars coincide:
\begin{enumerate}
\item[\rm (i)] 
the $\beta$-value of the cycle, in the sense of Definition
\ref{def:tildecalB};
\item[\rm (ii)]
the product of the
clockwise (resp. counterclockwise)
$\beta $-values for these
white 3-cliques.
\end{enumerate}
\end{proposition}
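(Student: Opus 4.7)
The plan is to prove the proposition by induction on the number $k \geq 1$ of 3-cliques of $\Delta_N$ enclosed by the cycle $C$. The central ingredient is a multiplicativity principle for $\beta$-values of cycles, which follows from Definition~\ref{def:tildecalB} together with the reciprocity $\beta_{\lambda,\mu}\beta_{\mu,\lambda}=1$ of Definition~\ref{def:EL}(i): if two cycles share a common edge traversed in opposite directions, then the cycle obtained by cancelling the shared edge in their concatenation has $\beta$-value equal to the product of their individual $\beta$-values.

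For the base case $k=1$, the cycle has length three and is itself a 3-clique $T$. If $T$ is black, Definition~\ref{def:EL}(ii) gives that the $\beta$-value of $C$ is $1$, matching the empty product over enclosed white cliques. If $T$ is white, Definitions~\ref{def:orientedbetaval} and \ref{def:betavalStraight} directly identify the $\beta$-value of $C$ with the $\beta$-value of $T$, and the clockwise/counterclockwise distinction is inherited from the orientation of $C$.

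For the inductive step, assume $k \geq 2$. Using the planar triangulation structure of $\Delta_N$, I would select a 3-clique $T$ enclosed by $C$ that shares at least one edge with $C$. Replace the edges $C \cap T$ by the complementary edges of $T$ (or, if the modification would introduce a repeated vertex, split $C$ at that vertex into two simpler cycles to which induction can be applied separately). This produces a cycle $C'$ enclosing exactly $k-1$ 3-cliques, and $C$ becomes the multiplicativity-principle concatenation of $C'$ with the triangle $T$ traversed in the orientation inherited from $C$. Hence $\beta_C = \beta_{C'}\cdot \beta_{T^{\mathrm{or}}}$. If $T$ is black, Definition~\ref{def:EL}(ii) gives $\beta_{T^{\mathrm{or}}}=1$ and the enclosed white cliques of $C$ and $C'$ agree, so induction concludes. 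If $T$ is white, $\beta_{T^{\mathrm{or}}}$ is the clockwise or counterclockwise $\beta$-value of $T$ matching the orientation of $C$, and the enclosed white cliques of $C$ are those enclosed by $C'$ together with $T$, so induction again closes the case.

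The main obstacle is geometric bookkeeping rather than algebra: one must verify rigorously that the interior of a simple cycle in $\Delta_N$ is tiled by 3-cliques, that a boundary-adjacent 3-clique always exists when $k \geq 2$, that the modification $C \mapsto C'$ yields either a simple cycle with exactly $k-1$ enclosed 3-cliques or a union of simple cycles whose enclosed 3-cliques partition those of $C$ minus $T$, and that the orientation of $C$ transfers consistently to $T^{\mathrm{or}}$ so that clockwise cycles pick up clockwise $\beta$-values of enclosed white cliques. These facts are intuitively clear from the planar triangulation of $\Delta_N$, but a fully formal treatment amounts to a discrete Jordan-curve-type argument within the $\Delta_N$ graph.
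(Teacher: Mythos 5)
Your proposal is correct and is essentially the argument the paper's one-line proof (which just cites Definitions \ref{def:EL}, \ref{def:tildecalB}, \ref{def:orientedbetaval}) has in mind: interior edges cancel by $\beta_{\lambda,\mu}\beta_{\mu,\lambda}=1$, black 3-cliques contribute $1$ by Definition \ref{def:EL}(ii), and white 3-cliques contribute their oriented values, whether one organizes this as your triangle-peeling induction or as a single global product over all enclosed 3-cliques. The planar bookkeeping you defer (the interior being tiled by 3-cliques, existence of a boundary-adjacent enclosed triangle, and the behavior of the peeling step) is exactly what the paper also leaves implicit, so nothing essential is missing.
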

\begin{proof}
Use Definitions
\ref{def:EL},
\ref{def:tildecalB},
\ref{def:orientedbetaval}.
\end{proof}

\noindent
Let $\beta$ denote an edge-labelling of $\Delta_N$.
Let $T$ denote a spanning tree of $\Delta_N$, as in
Definition
\ref{def:spanningT}.
Consider the scalars
\begin{equation}
\label{eq:data}
 \beta_{\lambda, \mu}  \qquad
 \lambda, \mu \in \Delta_N, \qquad
 {\mbox{\rm $\lambda, \mu$  form an edge
 in $T$}}.
 \end{equation}
 For adjacent  $\lambda, \mu \in
 \Delta_N$ we now compute
   $\beta_{\lambda,\mu}$
   in terms of 
   (\ref{eq:data}) and the
   value function
   $\hat \beta$. 
   First assume that the edge formed by
   $\lambda, \mu$ is in $T$. Then
     $\beta_{\lambda,\mu}$ is included in
     (\ref{eq:data}), so we are done.
     Next assume that the edge formed by $\lambda, \mu$ 
     is not in $T$.
     There exists a unique path in $\Delta_N$
     from $\mu$ to  $\lambda$ that involves only edges
     in $T$.
     Denote this path by
     $\lbrace \lambda_i\rbrace_{i=1}^{n}$. By construction
     $\lambda_1 = \mu$ and $\lambda_{n} = \lambda$.
     Define $\lambda_{0} = \lambda$ and note that
     $\lbrace \lambda_i\rbrace_{i=0}^n$ is a cycle in $\Delta_N$.
Apply 
Proposition
\ref{thm:cyclevalue}
 to this cycle.
 For this cycle, the scalar in
 Proposition
 \ref{thm:cyclevalue}(i) 
is equal to 
 $\prod_{i=1}^n
 \beta_{\lambda_{i-1},\lambda_i}$,
and the scalar in  
 Proposition
 \ref{thm:cyclevalue}(ii) is determined by
 $\hat \beta$.
Therefore
 $\prod_{i=1}^n
 \beta_{\lambda_{i-1},\lambda_i}$
 is
 determined by $\hat \beta$.
 In the product
 $\prod_{i=1}^n
 \beta_{\lambda_{i-1},\lambda_i}$
 consider the $i$-factor  for $1 \leq i \leq n$.
 This $i$-factor is
  $\beta_{\lambda,\mu}$ for $i=1$,
  and is included in
  (\ref{eq:data}) for $2\leq i \leq n$.
  Using these comments we routinely solve for
   $\beta_{\lambda,\mu}$ in terms of
   (\ref{eq:data}) and
   $\hat \beta$.
   \medskip

\noindent The scalars
(\ref{eq:data}) are ``free'' in the following sense.

\begin{proposition}
\label{prop:free}
Assume $N\geq 2$, and
let $\psi$ denote a value function on
$\Delta_{N-2}$. Let $T$ denote a spanning tree of
$\Delta_N$.
Consider a collection of scalars 
\begin{equation}
\label{eq:betadata2}
 b_{\lambda, \mu} \in \mathbb F, 
 \qquad 
\lambda, \mu \in \Delta_N, \qquad
{\mbox{\rm $\lambda, \mu$  form an edge 
in $T$}}
\end{equation}
such that
 $b_{\lambda, \mu} 
 b_{\mu,\lambda} = 1$ for all
 $\lambda, \mu \in \Delta_N$ that form
 an edge in $T$. Then
there exists a unique edge-labelling $\beta$ of $\Delta_N$
that has value function $\psi$ and
$\beta_{\lambda, \mu} = b_{\lambda, \mu}$
for all 
 $\lambda, \mu \in \Delta_N$ that form
 an edge in $T$.
\end{proposition}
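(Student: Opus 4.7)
The plan is to prove uniqueness directly from the discussion preceding the proposition, then define $\beta$ by the formula that uniqueness forces, and verify the edge-labelling axioms and the value-function condition using a cycle-space argument.

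\textbf{Uniqueness.} Suppose $\beta$ is an edge-labelling of $\Delta_N$ with the stated properties. For a tree edge we must have $\beta_{\lambda,\mu}=b_{\lambda,\mu}$. For a non-tree edge $\{\lambda,\mu\}$, let $\mu=\lambda_1,\lambda_2,\ldots,\lambda_n=\lambda$ be the unique path in $T$ from $\mu$ to $\lambda$, and set $\lambda_0=\lambda$; then $\omega=(\lambda_0,\lambda_1,\ldots,\lambda_n)$ is a cycle. By Proposition~\ref{thm:cyclevalue} the scalar
\[
\beta_{\lambda,\mu}\prod_{i=2}^{n} b_{\lambda_{i-1},\lambda_i}\;=\;\beta_\omega
\]
is a fixed product of $\psi$-values over the white 3-cliques surrounded by $\omega$ (inverted on those for which $\omega$ runs counterclockwise). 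Hence $\beta_{\lambda,\mu}$ is determined by $b$ and $\psi$, proving uniqueness.

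\textbf{Existence.} Define $\beta$ on tree edges by $\beta_{\lambda,\mu}=b_{\lambda,\mu}$, on non-tree edges by the formula forced by the uniqueness argument, and by reciprocation on reversed edges. Condition~(i) of Definition~\ref{def:EL} is then immediate. For condition~(ii) and for $\hat\beta=\psi$, the key claim is that
\[
\beta_\gamma\;=\;\prod_{W}\psi(W)^{n_\gamma(W)}
\]
for every cycle $\gamma$ in $\Delta_N$, where $W$ ranges over the white 3-cliques and $n_\gamma(W)\in\mathbb Z$ is the signed winding number of $\gamma$ around an interior point of $W$ (positive for clockwise). Granting the claim, a black 3-clique surrounds no white clique and so has $\beta$-value $1$, giving Definition~\ref{def:EL}(ii); and a clockwise-oriented white 3-clique $W_0$ satisfies $n_{W_0}(W)=\delta_{W,W_0}$, giving $\beta_{W_0}=\psi(W_0)$, i.e.~$\hat\beta=\psi$.

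\textbf{The key claim, and the main obstacle.} View $\Delta_N$ as a triangulation of a closed disk whose bounded $2$-cells are precisely the $\binom{N+1}{2}$ black and $\binom{N}{2}$ white 3-cliques, for a total of $N^2$ faces; the simplicial cycle space $Z_1$ has rank $|E|-|V|+1=N^2$ as well. Since the disk is contractible, the boundary map $C_2\to Z_1$ is an isomorphism, so the clockwise-oriented face cycles form a $\mathbb Z$-basis of $Z_1$, as do the fundamental cycles of $T$. By our definition of $\beta$ on non-tree edges the key claim holds on each fundamental cycle; both sides of the claim are multiplicative under cycle concatenation and invert under reversal, so they descend to group homomorphisms $Z_1\to\mathbb F\backslash\{0\}$ that agree on a basis, and therefore agree on all of $Z_1$. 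This homological identification of the two bases of $Z_1$ is the only non-routine step; an alternative, more elementary route is to add the non-tree edges to $T$ one at a time in an order in which each addition completes a single face, reducing the verification to a face-by-face check.
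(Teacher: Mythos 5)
Your proposal is correct and takes essentially the same route as the paper: you define $\beta$ on tree edges by $b$ and on each non-tree edge via its fundamental cycle in $T$ (the sign $\varepsilon$ for orientation and the product of $\psi$-values of the surrounded white 3-cliques), and your uniqueness argument is exactly the paper's fundamental-cycle argument via Proposition~\ref{thm:cyclevalue}. The only difference is that the paper dismisses the verification of Definition~\ref{def:EL} and of $\hat\beta=\psi$ with ``one checks,'' whereas you carry it out explicitly by matching the face-boundary basis and the fundamental-cycle basis of the cycle space of the triangulated disk — a valid way to supply that routine step (as is your alternative of adjoining non-tree edges one face at a time).
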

\begin{proof}
We first show that $\beta$ exists.
To do this we mimic the argument from below
(\ref{eq:data}).
For adjacent  $\lambda, \mu \in \Delta_N$
we define a scalar $\beta_{\lambda,\mu}$
as follows. First assume that 
  the edge formed by  
   $\lambda, \mu$ is in $T$. Define
    $\beta_{\lambda,\mu}= b_{\lambda, \mu}$.
     Next assume that the edge formed by $\lambda, \mu$ 
     is not in $T$.
     There exists a unique path in $\Delta_N$
     from $\mu$ to  $\lambda$ that involves only edges
     in $T$.
     Denote this path by
     $\lbrace \lambda_i\rbrace_{i=1}^{n}$. By construction
     $\lambda_1 = \mu$ and $\lambda_{n} = \lambda$.
     Define $\lambda_{0} = \lambda$ and note that
     $\lbrace \lambda_i\rbrace_{i=0}^n$ is a cycle in $\Delta_N$.
  Define $\varepsilon =1$ (resp. $\varepsilon = -1$) if this cycle
  is clockwise (resp. counterclockwise).
Define $\beta_{\lambda,\mu}=c^{\varepsilon}/d$ where
 $d=\prod_{i=2}^n
 b_{\lambda_{i-1},\lambda_i}$ and $c$
 is the product of the $\psi$-values of the 
white 3-cliques surrounded by the cycle.
We have defined the scalar $\beta_{\lambda,\mu}$ for
all adjacent $\lambda, \mu \in \Delta_N$. 
One checks that these scalars satisfy
the conditions of Definition
\ref{def:EL}. This gives an edge-labelling $\beta$ 
of $\Delta_N$. By construction $\beta$ has value
function $\psi$ and
$\beta_{\lambda, \mu} = b_{\lambda, \mu}$
for all 
 $\lambda, \mu \in \Delta_N$ that form
 an edge in $T$.
 We have shown that  $\beta$
exists. The edge-labelling $\beta$ is unique by the discussion
below 
(\ref{eq:data}).
\end{proof}

\begin{corollary} 
\label{cor:beta2}
Assume $N\geq 2$, and let $\psi$ denote a value function on
$\Delta_{N-2}$. Let $T$ denote a spanning tree of $\Delta_N$.
Then there exists a unique edge-labelling $\beta$ of $\Delta_N$
that has value function $\psi$ and
$\beta_{\lambda, \mu} = 1$
for all 
 $\lambda, \mu \in \Delta_N$ that form
 an edge in $T$.
\end{corollary}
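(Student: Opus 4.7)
The plan is to deduce this as an immediate specialization of Proposition~\ref{prop:free}. Given the spanning tree $T$ and the value function $\psi$, I define the data in line~(\ref{eq:betadata2}) by setting $b_{\lambda,\mu}=1$ for every ordered pair $(\lambda,\mu)$ such that $\{\lambda,\mu\}$ is an edge of $T$. The only constraint imposed on these scalars in the hypothesis of Proposition~\ref{prop:free} is that $b_{\lambda,\mu} b_{\mu,\lambda}=1$ for each such edge, which holds trivially since $1\cdot 1 = 1$.

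Having verified the hypothesis, I apply Proposition~\ref{prop:free} directly. It produces a unique edge-labelling $\beta$ of $\Delta_N$ whose value function is $\psi$ and which satisfies $\beta_{\lambda,\mu}=b_{\lambda,\mu}=1$ for every ordered pair coming from an edge of $T$. This is exactly the conclusion of the corollary.

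There is no substantive obstacle; the entire content of the corollary sits inside Proposition~\ref{prop:free}, and the only thing to check is that the choice $b_{\lambda,\mu}\equiv 1$ is admissible, which is immediate. If one wanted to write the proof in a self-contained way (without citing Proposition~\ref{prop:free}), one could instead construct $\beta$ directly: set $\beta_{\lambda,\mu}=1$ on the edges of $T$, and for any other adjacent pair $\lambda,\mu$ take the unique $T$-path from $\mu$ to $\lambda$, close it up with the edge $\{\lambda,\mu\}$ to form a cycle, and solve for $\beta_{\lambda,\mu}$ using the cycle-product formula from Proposition~\ref{thm:cyclevalue} together with the prescribed $\psi$-values of the enclosed white $3$-cliques. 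But since Proposition~\ref{prop:free} has already done this work in the general setting, the cleanest presentation is simply to invoke it.
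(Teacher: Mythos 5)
Your proposal is correct and matches the paper's proof exactly: the paper likewise obtains the corollary by applying Proposition~\ref{prop:free} with $b_{\lambda,\mu}=1$ for all edges of $T$, the only check being the trivially satisfied condition $b_{\lambda,\mu}b_{\mu,\lambda}=1$.
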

\begin{proof}
Apply Proposition
\ref{prop:free},
with $b_{\lambda, \mu}=1$ for all 
$\lambda, \mu \in \Delta_N$ that form
an edge in $T$.
\end{proof}

\noindent
Assume $N\geq 2$. By Definition
\ref{def:el} we get a map
$\beta\mapsto \hat \beta$
 from the set 
of  edge-labellings of $\Delta_N$, to the set of 
value functions on $\Delta_{N-2}$. This map is
surjective by
Corollary \ref{cor:beta2}. We now consider the
issue of injectivity.

\begin{lemma}
\label{lem:simval}
Assume $N\geq 2$.
Let $\beta$ and $\beta'$ denote edge-labellings
of $\Delta_N$. Then the following are equivalent:
\begin{enumerate}
\item[\rm (i)] $\beta$ and $\beta'$ are similar;
\item[\rm (ii)] $\beta$ and $\beta'$ have the
same value function.
\end{enumerate}
\end{lemma}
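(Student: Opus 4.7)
The plan is to handle the two implications separately. For (i)$\Rightarrow$(ii), pick nonzero scalars $\{\kappa_\lambda\}_{\lambda\in\Delta_N}$ witnessing the similarity, so that $\beta'_{\lambda,\mu}=\beta_{\lambda,\mu}\kappa_\lambda/\kappa_\mu$ on every directed edge. For any white 3-clique $\lambda,\mu,\nu$ traversed clockwise, substitute this identity (and its cyclic shifts) into the product defining the clockwise $\beta'$-value: the three ratios $\kappa_\lambda/\kappa_\mu$, $\kappa_\mu/\kappa_\nu$, $\kappa_\nu/\kappa_\lambda$ telescope to $1$, so the clockwise $\beta'$-value equals the clockwise $\beta$-value. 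Since this holds at every white 3-clique, $\hat\beta=\hat\beta'$.

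For (ii)$\Rightarrow$(i), fix a spanning tree $T$ of $\Delta_N$ (Definition \ref{def:spanningT}) and a root location $\lambda_0$. I will define nonzero scalars $\kappa_\lambda\in\mathbb F$ recursively along $T$: set $\kappa_{\lambda_0}=1$, and whenever $\{\lambda,\mu\}$ is a $T$-edge with $\kappa_\lambda$ already defined and $\mu$ one step farther from $\lambda_0$, put $\kappa_\mu:=\beta_{\lambda,\mu}\kappa_\lambda/\beta'_{\lambda,\mu}$. Using the reciprocity conditions $\beta_{\lambda,\mu}\beta_{\mu,\lambda}=1$ and $\beta'_{\lambda,\mu}\beta'_{\mu,\lambda}=1$, the sought-for relation $\beta_{\lambda,\mu}\kappa_\lambda=\beta'_{\lambda,\mu}\kappa_\mu$ holds for every directed $T$-edge, regardless of which endpoint is farther from the root.

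The main step is to verify this relation on an edge $\{\lambda,\mu\}\notin T$. Adjoin this edge to the unique $T$-path from $\mu$ to $\lambda$, obtaining a cycle $\lambda=\lambda_0,\mu=\lambda_1,\lambda_2,\ldots,\lambda_n=\lambda$ in $\Delta_N$. Apply Proposition~\ref{thm:cyclevalue} to $\beta$ and to $\beta'$ separately: each cycle-value factors as the product of the (clockwise or counterclockwise, matching the cycle's orientation) values attached to the surrounded white 3-cliques. Since $\hat\beta=\hat\beta'$, these two products coincide, so the $\beta$-value and $\beta'$-value of the cycle are equal. Rewriting the resulting identity, the factors from the $T$-edges contribute $\prod_{i=2}^n \beta_{\lambda_{i-1},\lambda_i}/\beta'_{\lambda_{i-1},\lambda_i}=\kappa_\lambda/\kappa_\mu$ by telescoping the previously established $T$-edge relations, and the remaining factor yields $\beta_{\lambda,\mu}\kappa_\lambda=\beta'_{\lambda,\mu}\kappa_\mu$. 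This is the required relation, so $\beta$ and $\beta'$ are similar in the sense of Definition~\ref{def:ELsim}.

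The main obstacle is the bookkeeping in the telescoping step: one must carefully use the reciprocity to express $\kappa_{\lambda_i}/\kappa_{\lambda_{i-1}}$ as $\beta_{\lambda_{i-1},\lambda_i}/\beta'_{\lambda_{i-1},\lambda_i}$ uniformly in $i$, independent of which endpoint of the $T$-edge is farther from the root. Once this uniformity is in place, the cycle-value identity supplied by Proposition~\ref{thm:cyclevalue} does the substantive work of transferring equality of value functions into equality of cycle values, and the rest is a clean algebraic rearrangement.
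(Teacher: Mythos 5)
Your proposal is correct and follows essentially the same route as the paper: (i)$\Rightarrow$(ii) by telescoping the similarity ratios around a white 3-clique, and (ii)$\Rightarrow$(i) by defining $\kappa_\lambda$ via a spanning tree (your recursion produces exactly the paper's $\kappa_\lambda=\beta'_\omega/\beta_\omega$ for the tree path to the root) and then using Proposition~\ref{thm:cyclevalue} on the cycle formed by a non-tree edge together with its tree path. No gaps; the reciprocity bookkeeping you flag is handled correctly.
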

\begin{proof} 
${\rm (i) \Rightarrow (ii)}$  Let $\lambda, \mu, \nu$ denote
locations in $\Delta_N$ that form a white 3-clique. We show
that 
\begin{equation}
\label{eq:show}
\beta_{\lambda,\mu}
\beta_{\mu,\nu}
\beta_{\nu,\lambda}
=
\beta'_{\lambda,\mu}
\beta'_{\mu,\nu}
\beta'_{\nu,\lambda}.
\end{equation}
By Definition
\ref{def:ELsim} there exist nonzero 
$\kappa_\lambda, 
\kappa_\mu, 
\kappa_\nu \in \mathbb F$ such that
\begin{equation}
\beta_{\lambda,\mu} \kappa_\lambda = 
\beta'_{\lambda,\mu} \kappa_\mu,
\qquad
\beta_{\mu,\nu} \kappa_\mu = 
\beta'_{\mu,\nu} \kappa_\nu,
\qquad
\beta_{\nu,\lambda} \kappa_\nu = 
\beta'_{\nu,\lambda} \kappa_\lambda.
\label{eq:howadj}
\end{equation}
Line (\ref{eq:show}) is routinely verified using
(\ref{eq:howadj}).
\\
\noindent ${\rm (ii) \Rightarrow (i)}$
Let $T$ denote a spanning tree of $\Delta_N$.
Fix a location $\nu \in \Delta_N$.
For $\lambda \in \Delta_N$
(and
with reference to Definition
\ref{def:tildecalB})
define
$\kappa_\lambda = \beta'_\omega /\beta_\omega$
where $\omega$ denotes the 
 unique path in $\Delta_N$ from $\lambda$ to
$\nu$ that involves only
edges in $T$.
Note that $\kappa_\lambda \not=0$.
Using 
Proposition \ref{thm:cyclevalue} one checks
that
$\beta_{\lambda,\mu} \kappa_\lambda = 
\beta'_{\lambda,\mu} \kappa_\mu $ for all
adjacent $\lambda, \mu$ in $\Delta_N$.
Therefore $\beta, \beta'$ are similar in
view of Definition
\ref{def:ELsim}.
 \end{proof}

\noindent Assume $N\geq 2$, and let $\beta$ denote an edge-labelling of 
$\Delta_N$. Recall from Definition
\ref{def:el}
the value function $\hat \beta$ on $\Delta_{N-2}$.

\begin{proposition}
\label{prop:simVal}
With the above notation,
the map $\beta \mapsto \hat \beta$ induces a bijection
between the following two sets:
\begin{enumerate}
\item[\rm (i)] the similarity classes of edge-labellings for $\Delta_N$;
\item[\rm (ii)] the value functions on $\Delta_{N-2}$.
\end{enumerate}
\end{proposition}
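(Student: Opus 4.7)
The plan is to observe that essentially all the work has already been done in Lemma \ref{lem:simval} and Corollary \ref{cor:beta2}, so the proposition is just a packaging step.

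First I would check that the map $\beta \mapsto \hat\beta$ descends to a well-defined function from the set of similarity classes of edge-labellings of $\Delta_N$ to the set of value functions on $\Delta_{N-2}$. This is immediate from the implication ${\rm (i)} \Rightarrow {\rm (ii)}$ of Lemma \ref{lem:simval}: any two edge-labellings in the same similarity class have the same value function, so the induced map on similarity classes is well-defined. The target of this map does lie in the set of value functions on $\Delta_{N-2}$, as already noted in Definition \ref{def:el}.

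Next I would verify injectivity on similarity classes. Let $\beta,\beta'$ be edge-labellings of $\Delta_N$ with $\hat\beta=\hat\beta'$, i.e.\ having the same value function. The implication ${\rm (ii)}\Rightarrow {\rm (i)}$ of Lemma \ref{lem:simval} immediately says $\beta$ and $\beta'$ are similar, so they represent the same similarity class. This shows the induced map is injective.

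Finally I would verify surjectivity. Let $\psi$ be any value function on $\Delta_{N-2}$. Choose any spanning tree $T$ of $\Delta_N$ (which exists by Definition \ref{def:spanningT}; for instance any breadth-first tree from the $1$-corner works). By Corollary \ref{cor:beta2} there exists an edge-labelling $\beta$ of $\Delta_N$ whose value function is $\psi$. Hence the similarity class of $\beta$ is sent to $\psi$, which gives surjectivity. Combining these three points yields the claimed bijection. No step here is a real obstacle since the substantive content is in Lemma \ref{lem:simval} (whose proof uses Proposition \ref{thm:cyclevalue} to construct the scalars $\kappa_\lambda$ along a spanning tree) and in Corollary \ref{cor:beta2}.
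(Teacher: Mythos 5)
Your proposal is correct and follows essentially the same route as the paper: well-definedness and injectivity come from the two implications of Lemma \ref{lem:simval}, and surjectivity comes from Corollary \ref{cor:beta2}. The only difference is that you spell out the well-definedness step explicitly, which the paper leaves implicit.
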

\begin{proof}
By Lemma
\ref{lem:simval} the map
 $\beta \mapsto \hat \beta$ induces an injective function
from set (i) to set (ii).
The function is surjective by
Corollary \ref{cor:beta2}.
Therefore the function is a bijection.
\end{proof}

\section{ Billiard Arrays and value functions}

From now until Proposition
\ref{prop:rell}, fix
an integer $N \geq 2$.
\medskip


\noindent To motivate the next result, we review a few
points.
Let ${\rm{BA}}_N(\mathbb F)$ denote
the set of isomorphism classes of Billiard Arrays over $\mathbb F$ 
that have diameter $N$.
Let 
 ${\rm{CBA}}_N(\mathbb F)$ denote
the set of  similarity classes of Concrete Billiard Arrays over $\mathbb F$ that have 
diameter $N$.
 Let ${\rm{EL}}_N(\mathbb F)$ denote
the set of similarity classes of edge-labellings on $\Delta_N$.
 Let ${\rm{VF}}_N(\mathbb F)$ denote
the set of value functions on 
$\Delta_N$.
The map $B \mapsto \hat B$ induces a function
 ${\rm{BA}}_N(\mathbb F) \to
 {\rm{VF}}_{N-2}(\mathbb F)$ which we will denote by $\theta $.
In Lemma
\ref{lem:assocIso} we displayed a bijection
 ${\rm{CBA}}_N(\mathbb F) \to
 {\rm{BA}}_N(\mathbb F)$ which we will denote by $f$.
In Proposition
\ref{prop:simSim} we displayed a bijection
 ${\rm{CBA}}_N(\mathbb F) \to
 {\rm{EL}}_N(\mathbb F)$ which we will denote by $g$.
In Proposition
\ref{prop:simVal} 
we displayed a bijection
 ${\rm{EL}}_N(\mathbb F) \to
 {\rm{VF}}_{N-2}(\mathbb F)$ which we will denote by $h$.
\begin{lemma} 
\label{lem:dcom}
With the above notation, the following
diagram commutes:

\begin{equation*}
\begin{CD}
\mbox{ ${\rm{CBA}}_N(\mathbb F)$  } @>f> >
                \mbox{ ${\rm{BA}}_N(\mathbb F)$} 
           \\ 
          @VgVV                     @VV\theta V \\
                \mbox{${\rm{EL}}_N(\mathbb F) $} @>>h> 
                \mbox{$ {\rm{VF}}_{N-2}(\mathbb F)$}
                   \end{CD}
\end{equation*}

\noindent Moreover $\theta$ is a bijection.
\end{lemma}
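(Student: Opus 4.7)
The plan is to verify commutativity by unwinding the four maps on a single similarity class, and then deduce that $\theta$ is a bijection by expressing it as a composition of bijections already established in the paper.

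First I would check commutativity. Pick a similarity class $[\mathcal{B}] \in \mathrm{CBA}_N(\mathbb{F})$ represented by a Concrete Billiard Array $\mathcal{B}$ on some vector space $V$, and let $B$ denote the Billiard Array on $V$ corresponding to $\mathcal{B}$ in the sense of Definition \ref{def:corr}. Chasing along the top-right: $f([\mathcal{B}]) = [B]$ by the definition of $f$ from Lemma \ref{lem:assocIso}, and then $\theta([B]) = \hat{B}$, the value function of $B$ on $\Delta_{N-2}$ from Definition \ref{def:Bvaluefunction}. Chasing along the bottom-left: $g([\mathcal{B}]) = [\tilde{\mathcal{B}}]$ by Proposition \ref{prop:simSim}, and then $h([\tilde{\mathcal{B}}]) = \hat{\tilde{\mathcal{B}}}$, the value function of the edge-labelling $\tilde{\mathcal{B}}$ from Definition \ref{def:el}. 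The key point is that these two value functions agree: this is exactly the content of Lemma \ref{lem:diagcom}, which asserts that for each white 3-clique in $\Delta_N$, the $B$-value equals the $\tilde{\mathcal{B}}$-value. Thus $\hat{B} = \hat{\tilde{\mathcal{B}}}$, so the diagram commutes.

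Next I would verify that $\theta$ is well defined on isomorphism classes: if $B$ and $B'$ are isomorphic Billiard Arrays, any isomorphism $\sigma: V \to V'$ transports the maps $\tilde{B}_{\lambda,\mu}$ into the maps $\tilde{B}'_{\lambda,\mu}$, so the clockwise composition around any white 3-clique is preserved, giving $\hat{B} = \hat{B}'$. (This is essentially built in to the definition, but worth noting explicitly.)

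Finally, to show that $\theta$ is a bijection, I would invoke the three bijections already established: $f$ is a bijection by Lemma \ref{lem:assocIso}, $g$ by Proposition \ref{prop:simSim}, and $h$ by Proposition \ref{prop:simVal}. From the commuting square $\theta \circ f = h \circ g$, we get $\theta = h \circ g \circ f^{-1}$, a composition of bijections, hence a bijection.

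I do not anticipate any real obstacle: the whole point of building the auxiliary objects (Concrete Billiard Arrays, edge-labellings) and the accompanying lemmas in Sections 13--18 was precisely to route the bijection $\theta$ through this commutative square. The only subtlety is making sure that Lemma \ref{lem:diagcom} is applied with the right orientation convention for clockwise versus counterclockwise $B$-values, but since both $\hat{B}$ and $\hat{\beta}$ are defined from the clockwise value by Definitions \ref{def:BvalStraight} and \ref{def:betavalStraight}, there is nothing to check beyond quoting Lemma \ref{lem:diagcom}.
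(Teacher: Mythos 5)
Your proposal is correct and follows essentially the same route as the paper: the paper's proof is precisely that the square commutes by Lemma \ref{lem:diagcom} (equality of the $B$-value and $\tilde{\mathcal B}$-value on each white 3-clique), whence $\theta = h\circ g\circ f^{-1}$ is a bijection since $f$, $g$, $h$ are bijections by Lemma \ref{lem:assocIso}, Proposition \ref{prop:simSim}, and Proposition \ref{prop:simVal}. Your extra checks (well-definedness of $\theta$ on isomorphism classes and the explicit diagram chase) are fine but only spell out what the paper leaves implicit.
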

\begin{proof}
The diagram commutes by
Lemma \ref{lem:diagcom}. It follows that
$\theta$ is a bijection.
\end{proof}

\noindent We emphasize one aspect of Lemma
\ref{lem:dcom}.

\begin{corollary}
\label{thm:baVf}
The map $B\mapsto {\hat B}$
 induces a bijection between the
following two sets:
\begin{enumerate}
\item[\rm (i)] the isomorphism classes of Billiard Arrays over $\mathbb F$
that have
diameter $N$;
\item[\rm (ii)] the value functions on $\Delta_{N-2}$.
\end{enumerate}
\end{corollary}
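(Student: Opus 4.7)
The plan is to read Corollary \ref{thm:baVf} as an immediate repackaging of Lemma \ref{lem:dcom}. The map $B \mapsto \hat B$ on isomorphism classes of Billiard Arrays of diameter $N$ is, by construction, the function $\theta : \mathrm{BA}_N(\mathbb F) \to \mathrm{VF}_{N-2}(\mathbb F)$ named in Lemma \ref{lem:dcom}, and that lemma already asserts that $\theta$ is a bijection. So once Lemma \ref{lem:dcom} is in hand, nothing further is required.

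For completeness I would indicate why $\theta$ comes out bijective in Lemma \ref{lem:dcom}. The strategy is to realize $\theta$ as the composition of three previously established bijections, via the commutative square
\begin{equation*}
\theta \circ f \;=\; h \circ g,
\end{equation*}
where $f : \mathrm{CBA}_N(\mathbb F) \to \mathrm{BA}_N(\mathbb F)$ is the bijection of Lemma \ref{lem:assocIso} (passing from a Concrete Billiard Array to its corresponding Billiard Array), $g : \mathrm{CBA}_N(\mathbb F) \to \mathrm{EL}_N(\mathbb F)$ is the bijection of Proposition \ref{prop:simSim} ($\mathcal{B} \mapsto \tilde{\mathcal{B}}$), and $h : \mathrm{EL}_N(\mathbb F) \to \mathrm{VF}_{N-2}(\mathbb F)$ is the bijection of Proposition \ref{prop:simVal} ($\beta \mapsto \hat{\beta}$). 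From commutativity we obtain $\theta = h \circ g \circ f^{-1}$, a composition of bijections.

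The only substantive point to verify is that the square actually commutes, i.e.\ that the two notions of ``value function'' --- the one attached to a Billiard Array via Definition \ref{def:Bvaluefunction} and the one attached to an edge-labelling via Definition \ref{def:el} --- agree along the correspondence $\mathcal{B} \leftrightarrow \tilde{\mathcal{B}}$. This is precisely Lemma \ref{lem:diagcom}, which records that for any Concrete Billiard Array $\mathcal{B}$ with corresponding Billiard Array $B$, the value of each white 3-clique under $B$ equals its value under $\tilde{\mathcal{B}}$; this in turn is read off from the formula for the clockwise $B$-value in Lemma \ref{lem:bcval} together with Definition \ref{def:orientedbetaval}.

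The potential obstacle in this development is not the corollary itself but the triangle of bijections $f$, $g$, $h$: $f$ requires the injectivity machinery around Billiard Arrays (Proposition \ref{lem:dist}), $g$ rests on the explicit formula \eqref{eq:Bformula} for reconstructing a Concrete Billiard Array from its edge-labelling, and $h$ demands the spanning-tree/cycle analysis culminating in Proposition \ref{prop:free} and Lemma \ref{lem:simval}. All three are already proved above, so the corollary follows formally with no further work.
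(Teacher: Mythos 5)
Your proposal is correct and follows the paper's own route exactly: the paper proves the corollary by citing that the map $\theta$ of Lemma \ref{lem:dcom} is a bijection, which is established there via the commuting square $\theta\circ f = h\circ g$ with $f$, $g$, $h$ the bijections of Lemma \ref{lem:assocIso}, Proposition \ref{prop:simSim}, and Proposition \ref{prop:simVal}, and commutativity supplied by Lemma \ref{lem:diagcom}. Your additional unpacking of these ingredients matches the paper's argument; no gap.
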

\begin{proof} 
The map $\theta$ in Lemma
\ref{lem:dcom} is a bijection.
\end{proof}

\noindent
Summarizing the above discussion, 
we obtain a bijection between
any two of the following sets:
\begin{enumerate}
\item[$\bullet$] the isomorphism classes of Billiard Arrays over $\mathbb F$
that have diameter $N$;
\item[$\bullet$] the similarity classes of Concrete Billiard Arrays over $\mathbb F$ that have 
diameter $N$;
\item[$\bullet$] the similarity classes of edge-labellings for $\Delta_N$;
\item[$\bullet$] the value functions on $\Delta_{N-2}$.
\end{enumerate}

\noindent We have some remarks.
For the rest of this section fix $N \in \mathbb N$.
Let $V$ denote a vector space over $\mathbb F$ with
dimension $N+1$.

\begin{proposition}
\label{prop:rell}
Let $\mathcal B$ and $\mathcal B'$ denote Concrete Billiard Arrays on $V$. Then
the following are equivalent:
\begin{enumerate}
\item[\rm (i)]  $\mathcal B$ and $\mathcal B'$ are relatives;
\item[\rm (ii)]  $\mathcal B$ and $\mathcal B'$ are associates  and isomorphic. 
\end{enumerate}
\end{proposition}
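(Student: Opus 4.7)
The plan is to treat the two implications separately. The direction (i) $\Rightarrow$ (ii) will be essentially immediate: if $\mathcal B'_\lambda = \kappa\,\mathcal B_\lambda$ for a single $0\neq \kappa\in\mathbb F$ and all $\lambda\in\Delta_N$, then choosing $\kappa_\lambda=\kappa$ for every $\lambda$ verifies Definition~\ref{def:assoc}, while the scalar map $\sigma = \kappa\, I_V : V\to V$ is an $\mathbb F$-linear bijection sending $\mathcal B_\lambda$ to $\kappa\,\mathcal B_\lambda = \mathcal B'_\lambda$, exhibiting an isomorphism in the sense of Definition~\ref{def:cbaIso}.

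For the reverse implication (ii) $\Rightarrow$ (i) I would start by fixing the associator scalars $\{\kappa_\lambda\}_{\lambda\in\Delta_N}$ with $\mathcal B'_\lambda=\kappa_\lambda\,\mathcal B_\lambda$ and the isomorphism $\sigma\colon V\to V$ with $\sigma(\mathcal B_\lambda)=\mathcal B'_\lambda$. These combine to give $\sigma(\mathcal B_\lambda)=\kappa_\lambda\,\mathcal B_\lambda$, so $\sigma$ acts as the scalar $\kappa_\lambda$ on the line $B_\lambda$. The task is then to prove that all the $\kappa_\lambda$ coincide. My main tool will be Lemma~\ref{lem:dep}: for any black $3$-clique $\lambda,\mu,\nu$ in $\Delta_N$ one has
\begin{equation*}
\mathcal B_\lambda + \tilde{\mathcal B}_{\lambda,\mu}\,\mathcal B_\mu + \tilde{\mathcal B}_{\lambda,\nu}\,\mathcal B_\nu = 0.
\end{equation*}
Applying the linear map $\sigma$ to this relation and subtracting $\kappa_\lambda$ times the relation itself yields
\begin{equation*}
\tilde{\mathcal B}_{\lambda,\mu}(\kappa_\mu-\kappa_\lambda)\,\mathcal B_\mu + \tilde{\mathcal B}_{\lambda,\nu}(\kappa_\nu-\kappa_\lambda)\,\mathcal B_\nu = 0.
\end{equation*}
Since $\mu,\nu$ are collinear, the vectors $\mathcal B_\mu,\mathcal B_\nu$ are linearly independent by Definition~\ref{def:cba}(i), and the scalars $\tilde{\mathcal B}_{\lambda,\mu},\tilde{\mathcal B}_{\lambda,\nu}$ are nonzero by Definition~\ref{def:calT}. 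Therefore $\kappa_\mu=\kappa_\lambda=\kappa_\nu$, i.e.\ $\kappa_{(\cdot)}$ is constant on every black $3$-clique.

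The closing step is a connectivity argument: by Lemma~\ref{lem:edgegivesclique} every edge of $\Delta_N$ sits in a black $3$-clique, and $\Delta_N$ is edge-connected for $N\geq 1$ (and a single point when $N=0$). So the equality just established propagates along edges to show that $\kappa_\lambda$ is independent of $\lambda$; letting $\kappa$ denote this common value, one then has $\mathcal B'_\lambda=\kappa\,\mathcal B_\lambda$ for all $\lambda$, witnessing that $\mathcal B,\mathcal B'$ are relatives in the sense of Definition~\ref{def:rel}. The main potential obstacle is the middle step where I compare the $\sigma$-image of the black-clique dependency to the original one: its success hinges on having an a priori linear relation among the $\mathcal B_\lambda$ inside the clique (which is exactly what distinguishes a black $3$-clique from a white one) and on the linear independence of any two vectors in such a clique, both supplied cleanly by the Concrete Billiard Array axioms.
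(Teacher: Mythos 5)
Your proposal is correct, and in direction (i) $\Rightarrow$ (ii) it simply makes explicit what the paper gets by citing Lemma \ref{lem:fourrel}. In direction (ii) $\Rightarrow$ (i) your route differs in the supporting machinery: you apply the isomorphism $\sigma$ directly to the black-clique dependency of Lemma \ref{lem:dep}, subtract $\kappa_\lambda$ times that dependency, and use the linear independence of $\mathcal B_\mu,\mathcal B_\nu$ together with the nonvanishing of $\tilde{\mathcal B}_{\lambda,\mu},\tilde{\mathcal B}_{\lambda,\nu}$ to force $\kappa_\lambda=\kappa_\mu=\kappa_\nu$ on each black $3$-clique, and then propagate via Lemma \ref{lem:edgegivesclique} and connectedness of $\Delta_N$. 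The paper instead invokes Lemma \ref{lem:kassociates}, which gives $\tilde{\mathcal B}_{\lambda,\mu}\kappa_\lambda=\tilde{\mathcal B}'_{\lambda,\mu}\kappa_\mu$ for adjacent $\lambda,\mu$, and the fact that isomorphic Concrete Billiard Arrays share the same edge-labelling (Proposition \ref{prop:CBAisoEL}), so that $\kappa_\lambda=\kappa_\mu$ follows at once for every edge; the final step is the same implicit connectivity argument before applying Definition \ref{def:rel}. Your version is more self-contained, essentially reproving inline the invariance of the edge-labelling under isomorphism, while the paper's version buys brevity by reusing the edge-labelling machinery already developed; both arguments are sound.
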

\begin{proof}
${\rm (i) \Rightarrow (ii)}$ 
By Lemma
\ref{lem:fourrel}.
\\
\noindent ${\rm (ii) \Rightarrow (i)}$ By
Definition
\ref{def:assoc} and
since  
 $\mathcal B, \mathcal B'$ are associates,
there exist nonzero scalars
$\lbrace \kappa_\lambda\rbrace_{\lambda \in \Delta_N}$ in $\mathbb F$
such that $\mathcal B'_\lambda  = \kappa_\lambda \mathcal B_\lambda$
for all $\lambda \in \Delta_N$. 
For all adjacent $\lambda, \mu \in \Delta_N$
we have
$
\tilde {\mathcal B}_{\lambda, \mu}\kappa_\lambda 
= 
\tilde {\mathcal B}'_{\lambda, \mu}\kappa_\mu
$
by Lemma
\ref{lem:kassociates},
and 
$\tilde {\mathcal B}_{\lambda, \mu}
= 
\tilde {\mathcal B}'_{\lambda, \mu}
$
since $\mathcal B, \mathcal B'$ are isomorphic,
 so
 $\kappa_\lambda = \kappa_\mu$.
Consequently there exists $0 \not=\kappa \in \mathbb F$
such that $\kappa_\lambda = \kappa$ for all $\lambda \in \Delta_N$.
Now $\mathcal B$ and $\mathcal B'$ are relatives by Definition
\ref{def:rel}.
\end{proof}

\begin{proposition}
Let $B$ denote a Billiard Array on $V$.
For an $\mathbb F$-linear map
$\sigma :V\to V$ 
the following are equivalent:
\begin{enumerate}
\item[\rm (i)] the map $\sigma $ is an isomorphism
of Billiard Arrays from $B$ to $B$;
\item[\rm (ii)] there exists $0 \not=\kappa \in \mathbb F$
such that $\sigma = \kappa I$.
\end{enumerate}
\end{proposition}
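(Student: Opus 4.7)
The plan is to reduce to Proposition \ref{prop:rell} by lifting the automorphism $\sigma$ to the Concrete Billiard Array level. The direction (ii) $\Rightarrow$ (i) is essentially trivial: for any $0\not=\kappa \in \mathbb F$ the map $\kappa I$ is an $\mathbb F$-linear bijection $V\to V$ that fixes every subspace, so in particular sends $B_\lambda \mapsto B_\lambda$ for all $\lambda \in \Delta_N$, hence is an isomorphism of Billiard Arrays from $B$ to $B$ by Definition \ref{def:isoBA}.

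For (i) $\Rightarrow$ (ii), I would first pick any Concrete Billiard Array $\mathcal B$ on $V$ that corresponds to $B$; such a $\mathcal B$ exists by the construction above Definition \ref{def:corr}. Then I would define $\mathcal B': \Delta_N \to V$ by $\mathcal B'_\lambda = \sigma(\mathcal B_\lambda)$ for all $\lambda \in \Delta_N$. Because $\sigma$ is an $\mathbb F$-linear bijection, $\mathcal B'$ satisfies Definition \ref{def:cba} and hence is a Concrete Billiard Array on $V$. Moreover $\mathcal B'_\lambda$ spans $\sigma(B_\lambda) = B_\lambda$ for all $\lambda \in \Delta_N$ (using that $\sigma$ is an isomorphism of Billiard Arrays from $B$ to $B$), so $\mathcal B'$ corresponds to $B$. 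By the lemma stating that two Concrete Billiard Arrays are associates if and only if they correspond to the same Billiard Array, we conclude that $\mathcal B$ and $\mathcal B'$ are associates.

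At the same time, by its very construction $\sigma$ sends $\mathcal B_\lambda \mapsto \mathcal B'_\lambda$ for all $\lambda \in \Delta_N$, so $\sigma$ is an isomorphism of Concrete Billiard Arrays from $\mathcal B$ to $\mathcal B'$ in the sense of Definition \ref{def:cbaIso}; that is, $\mathcal B$ and $\mathcal B'$ are isomorphic. Having established that $\mathcal B$ and $\mathcal B'$ are both associates and isomorphic, Proposition \ref{prop:rell} now yields that they are relatives. Therefore there exists $0 \not= \kappa \in \mathbb F$ such that $\mathcal B'_\lambda = \kappa \mathcal B_\lambda$ for all $\lambda \in \Delta_N$, which reads $\sigma(\mathcal B_\lambda) = \kappa \mathcal B_\lambda$.

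Finally, since the vectors $\{\mathcal B_\lambda\}_{\lambda \in \Delta_N}$ span $V$ by Corollary \ref{lem:BspanV2CBA}, and the $\mathbb F$-linear maps $\sigma$ and $\kappa I$ agree on this spanning set, we obtain $\sigma = \kappa I$. I do not anticipate a main obstacle here: the whole argument is essentially an application of Proposition \ref{prop:rell}, and the only thing to verify carefully is that $\mathcal B'_\lambda = \sigma(\mathcal B_\lambda)$ defines a Concrete Billiard Array that still corresponds to $B$, which is immediate from $\sigma$ being an $\mathbb F$-linear bijection that preserves each $B_\lambda$.
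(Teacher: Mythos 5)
Your proof is correct and follows essentially the same route as the paper: lift $\sigma$ to a Concrete Billiard Array $\mathcal B'_\lambda=\sigma(\mathcal B_\lambda)$, observe that $\mathcal B,\mathcal B'$ are both associates and isomorphic, and apply Proposition \ref{prop:rell} to get $\mathcal B'=\kappa\mathcal B$. The only (harmless) difference is at the very end: the paper invokes the uniqueness of the isomorphism in Definition \ref{def:cbaIso} to conclude $\sigma=\kappa I$, while you conclude it by noting that $\sigma$ and $\kappa I$ agree on the spanning set $\lbrace \mathcal B_\lambda\rbrace_{\lambda\in\Delta_N}$, which amounts to the same thing.
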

\begin{proof} 
${\rm (i) \Rightarrow (ii)}$
Let $\mathcal B$ denote a  Concrete Billiard Array on $V$
that 
corresponds to $B$. Consider the Concrete Billiard Array
$\mathcal B':\Delta_N\to V$, $\lambda \mapsto \sigma(\mathcal B_\lambda)$.
By construction $\sigma$ is an isomorphism of
Concrete Billiard Arrays  from $\mathcal B$ to $\mathcal B'$;
 therefore $\mathcal B$ and $\mathcal B'$ are isomorphic.
 By construction $\mathcal B'$ corresponds to $B$; therefore
 $\mathcal B$ and $\mathcal B'$ are associates.
Now  $\mathcal B$ and $\mathcal B'$ are relatives by
Proposition \ref{prop:rell}. By Definition
\ref{def:rel}
there exists 
$0 \not=\kappa \in \mathbb F$ such that 
$\mathcal B'_\lambda = \kappa \mathcal B_\lambda$
for all $\lambda \in \Delta_N$. Now $\kappa I$
is an isomorphism of Concrete Billiard Arrays from
$\mathcal B$ to $\mathcal B'$.
By
Definition
\ref{def:cbaIso},
there does not exist another
isomorphism of Concrete Billiard Arrays from
$\mathcal B$ to $\mathcal B'$.
Therefore $\sigma=\kappa I$.
\\
\noindent
${\rm (ii) \Rightarrow (i)}$ Clear.
\end{proof}

\noindent We make a definition for later use.
\begin{definition}
\label{def:BBB}
\rm Let $\mathcal B$ denote a Concrete Billiard Array on $V$,
and let $B$ denote the corresponding Billiard Array on $V$. Let $C$
denote a white 3-clique in $\Delta_N$.
By the {\it $\mathcal B$-value} of $C$
we mean the $B$-value of $C$,
which is the same as the 
$\tilde {\mathcal B}$-value of $C$
by Lemma
\ref{lem:diagcom}.
\end{definition}


\section{Examples of Concrete Billiard Arrays}

\noindent Throughout this section the following notation is in
effect. 
Let $x, y,  z$ denote mutually commuting indeterminates.
Let $P=\mathbb F \lbrack x, y,  z\rbrack$ denote the $\mathbb F$-algebra
consisting of the polynomials in $ x,  y,  z$ 
that have all coefficients
in $\mathbb F$. For
 $n \in \mathbb N$ let $P_n$ denote
the subspace of $P$ consisting of the homogeneous polynomials
that have total degree $n$. 
The sum $P = \sum_{n\in \mathbb N} P_n$ is direct. 
Fix an integer $N\geq 1$.
In this section we give some examples of Concrete Billiard Arrays
of diameter $N$.
For these examples, the underlying vector space will be
a  subspace
of $P_N$.
\medskip

\noindent We now describe our first example.

\begin{definition}\rm
\label{def:CBAv1}
For each location $\lambda = (r,s,t) $ in $\Delta_N$ define
\begin{equation*}
{\mathcal B}_\lambda = ( x- y)^r (y-z)^s (z-x)^t.
\end{equation*}
\end{definition}
\noindent We will show that the function
$\mathcal B$ from Definition
\ref{def:CBAv1}
is a Concrete Billiard Array.

\begin{lemma}\label{lem:sumis0}
Let $\lambda$, $\mu$, $\nu$ denote locations in
$\Delta_N$ that form a black 3-clique. Then
\begin{equation}
\mathcal B_\lambda + 
\mathcal B_\mu + 
\mathcal B_\nu =0.
\label{eq:sumis0}
\end{equation}
\end{lemma}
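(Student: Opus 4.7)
The plan is to reduce the claim to the identity $(x-y) + (y-z) + (z-x) = 0$ by using the explicit parametrization of black 3-cliques given in Lemma \ref{lem:white}.

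First I would invoke Lemma \ref{lem:white} to write the black 3-clique $\lambda, \mu, \nu$ in the form
\begin{equation*}
\lambda = (r+1,s,t), \qquad \mu = (r,s+1,t), \qquad \nu = (r,s,t+1)
\end{equation*}
for some $(r,s,t) \in \Delta_{N-1}$. Applying Definition \ref{def:CBAv1} then gives
\begin{equation*}
\mathcal{B}_\lambda = (x-y)^{r+1}(y-z)^s(z-x)^t,
\end{equation*}
\begin{equation*}
\mathcal{B}_\mu = (x-y)^r(y-z)^{s+1}(z-x)^t,
\end{equation*}
\begin{equation*}
\mathcal{B}_\nu = (x-y)^r(y-z)^s(z-x)^{t+1}.
\end{equation*}

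Next I would factor out the common monomial $(x-y)^r(y-z)^s(z-x)^t$ from the sum, producing
\begin{equation*}
\mathcal{B}_\lambda + \mathcal{B}_\mu + \mathcal{B}_\nu = (x-y)^r(y-z)^s(z-x)^t \bigl[(x-y) + (y-z) + (z-x)\bigr].
\end{equation*}
Since the bracketed quantity is identically zero in $P$, the displayed expression vanishes, which is exactly \eqref{eq:sumis0}. There is no real obstacle here; the only subtle point is appealing to Lemma \ref{lem:white} to fix a convenient parametrization of the clique, after which the computation is a one-line factorization in the polynomial ring $P$.
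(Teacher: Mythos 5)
Your proposal is correct and follows exactly the same route as the paper: parametrize the black 3-clique via Lemma \ref{lem:white}, write out the three polynomials from Definition \ref{def:CBAv1}, and factor out the common monomial so that the sum reduces to $(x-y)+(y-z)+(z-x)=0$. No gaps.
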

\begin{proof} By Lemma
\ref{lem:white} we may take
\begin{eqnarray*}
\lambda = (r+1,s,t), \qquad
\mu = (r,s+1,t), \qquad
\nu = (r,s,t+1)
\end{eqnarray*}
with $(r,s,t) \in \Delta_{N-1}$.
By Definition
\ref{def:CBAv1},
\begin{eqnarray*}
&&\mathcal B_\lambda = (x-y)^{r+1}(y-z)^s (z-x)^t,
\\
&&\mathcal B_\mu = (x-y)^r(y-z)^{s+1} (z-x)^t,
\\
&&\mathcal B_\nu = (x-y)^r (y-z)^s (z-x)^{t+1}.
\end{eqnarray*}
From this we routinely obtain
(\ref{eq:sumis0}).
\end{proof}

\begin{lemma}
\label{lem:CBAcheck}
The function $\mathcal B$ from Definition
\ref{def:CBAv1} is a Concrete Billiard Array.
\end{lemma}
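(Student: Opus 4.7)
The plan is to apply Proposition \ref{lem:prelim} to the edge-labelling $\beta$ of $\Delta_N$ defined by $\beta_{\lambda,\mu}=1$ for every ordered pair of adjacent locations $\lambda,\mu$. The conditions of Definition \ref{def:EL} are then trivially satisfied, and the $\beta$-dependence condition of Definition \ref{def:betaFeas} reduces to $\mathcal B_\lambda+\mathcal B_\mu+\mathcal B_\nu=0$ for every black 3-clique, which is precisely Lemma \ref{lem:sumis0}. Thus $\mathcal B$ is automatically $\beta$-dependent.

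For the underlying vector space I will take $V$ to be the $\mathbb F$-span of $\{\mathcal B_\lambda\}_{\lambda\in\Delta_N}$ inside $P_N$. To invoke Proposition \ref{lem:prelim} I must check two things: first, that $\dim V=N+1$ so that $V$ fits the setup of Definition \ref{def:cba}; second, that the vectors $\mathcal B_{\lambda_i}$ along the path $[2,3]$ of Lemma \ref{lem:sixpathsinfo}, namely $\lambda_i=(0,N-i,i)$ with $\mathcal B_{\lambda_i}=(y-z)^{N-i}(z-x)^i$, are linearly independent. Rearranging Lemma \ref{lem:sumis0} as $\mathcal B_{(r+1,s,t)}=-\mathcal B_{(r,s+1,t)}-\mathcal B_{(r,s,t+1)}$ and inducting on the $1$-coordinate $r$ shows that every $\mathcal B_\lambda$ lies in the span of $\{\mathcal B_{\lambda_i}\}_{i=0}^N$, so both requirements collapse to a single linear independence check.

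The one computational step is that check, and the trick I would use is the $\mathbb F$-algebra specialization $P\to\mathbb F[t]$ sending $x\mapsto 0$, $y\mapsto 1+t$, $z\mapsto t$. Under it $y-z\mapsto 1$ and $z-x\mapsto t$, so $\mathcal B_{\lambda_i}\mapsto t^i$ for each $i$. Since $\{t^i\}_{i=0}^N$ are linearly independent in $\mathbb F[t]$, the $\mathcal B_{\lambda_i}$ are linearly independent in $P$. This yields $\dim V=N+1$, and then the implication (ii)$\Rightarrow$(i) of Proposition \ref{lem:prelim} immediately gives that $\mathcal B$ is a Concrete Billiard Array on $V$. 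The only genuine obstacle is guessing the specialization that collapses the boundary polynomials to a known basis; everything else is bookkeeping.
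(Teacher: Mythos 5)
Your proof is correct, but it takes a genuinely different route from the paper. The paper verifies the two conditions of Definition \ref{def:cba} directly: for condition (i) it reduces to a $1$-line, whose images are $(x-y)^{N-i}(y-z)^{i-j}(z-x)^j$, and checks their independence by setting $z=0$ and using the algebraic independence of $x,y$; condition (ii) is exactly Lemma \ref{lem:sumis0}. You instead feed Lemma \ref{lem:sumis0} into the edge-labelling machinery: with the constant labelling $\beta\equiv 1$ (trivially an edge-labelling in the sense of Definition \ref{def:EL}), $\beta$-dependence is precisely the three-term relation, and Proposition \ref{lem:prelim}(ii)$\Rightarrow$(i) then reduces everything to one independence check along the path $\lbrack 2,3\rbrack$, which you handle with the specialization $x\mapsto 0$, $y\mapsto 1+t$, $z\mapsto t$ sending $(y-z)^{N-i}(z-x)^i\mapsto t^i$. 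Both specializations are valid over an arbitrary field. What your route buys: only one line needs to be checked by hand, and you make explicit a point the paper leaves implicit, namely that the underlying space $V$ (the span of the $\mathcal B_\lambda$) has dimension $N+1$ — your induction on the $1$-coordinate via $\mathcal B_{(r+1,s,t)}=-\mathcal B_{(r,s+1,t)}-\mathcal B_{(r,s,t+1)}$ settles this, and it is needed because Definition \ref{def:cba} and Proposition \ref{lem:prelim} presuppose $\dim V=N+1$. What the paper's route buys: it is self-contained (no appeal to the Section 16 results) and directly yields independence along every line rather than inferring it from the general theory. As a bonus, your argument also identifies $\tilde{\mathcal B}=\beta\equiv 1$, which the paper only establishes afterwards in Lemma \ref{lem:CBAv1EL}.
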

\begin{proof} We show that $\mathcal B$ satisfies the
two conditions in Definition
\ref{def:cba}. Concerning Definition
\ref{def:cba}(i), let $L$ denote a line in
$\Delta_N$ and consider the locations in $L$.
We show that their images under $\mathcal B$ are
linearly independent. Without loss we may assume
that $L$ is a $1$-line. Denote the cardinality
of $L$ by $i+1$. The locations in $L$ are listed
in 
(\ref{eq:oneline}).
For these locations 
their images under $\mathcal B$ are
\begin{equation}
(x-y)^{N-i}(y-z)^{i-j}(z-x)^j \qquad \qquad 0 \leq j \leq i.
\label{eq:line2}
\end{equation}
We show that the vectors 
(\ref{eq:line2}) are linearly independent. The vectors
\begin{equation}
(y-z)^{i-j}(z-x)^j \qquad \qquad 0 \leq j \leq i
\label{eq:line3}
\end{equation}
are linearly independent; to see this set $z=0$ in
(\ref{eq:line3}) and note that $x,y$ are algebraically
independent. It follows that the vectors
(\ref{eq:line2}) are linearly independent.
We have shown that $\mathcal B$ satisfies Definition
\ref{def:cba}(i).
The function $\mathcal B$ satisfies 
Definition
\ref{def:cba}(ii) by Lemma 
\ref{lem:sumis0}.
\end{proof}

\noindent For the 
 Concrete Billiard Array $\mathcal B$ in
Definition
\ref{def:CBAv1}
 we now compute
 the corresponding edge labelling and value function.

\begin{lemma}
\label{lem:CBAv1EL}
Let $\mathcal B$ denote the Concrete Billiard Array from
Definition
\ref{def:CBAv1}.
For adjacent locations $\lambda, \mu$  in $\Delta_N$ we have
$\tilde {\mathcal B}_{\lambda,\mu} = 1$.
\end{lemma}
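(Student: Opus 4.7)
The plan is to reduce the claim to Lemma \ref{lem:concvalue} by exhibiting $\mathcal B_\lambda, \mathcal B_\mu$ as a brace for the edge $\{\lambda,\mu\}$, and then to verify the brace condition directly from Lemma \ref{lem:sumis0}.

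More concretely, fix adjacent locations $\lambda,\mu$ in $\Delta_N$. By Lemma \ref{lem:edgegivesclique}, there is a unique location $\nu\in\Delta_N$ such that $\lambda,\mu,\nu$ form a black $3$-clique; this $\nu$ is the completion of the edge in the sense used just before Definition \ref{def:shortbrace}. By Lemma \ref{lem:betanon0} the vectors $\mathcal B_\lambda$ and $\mathcal B_\mu$ are nonzero, and by Lemma \ref{lem:sumis0} applied to the black $3$-clique $\lambda,\mu,\nu$ one has
\begin{equation*}
\mathcal B_\lambda+\mathcal B_\mu=-\mathcal B_\nu\in B_\nu,
\end{equation*}
since $\mathcal B_\nu$ spans $B_\nu$. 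According to Definition \ref{def:shortbrace}, this means precisely that the pair $\mathcal B_\lambda,\mathcal B_\mu$ is a brace for the edge $\{\lambda,\mu\}$.

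Applying Lemma \ref{lem:concvalue} to this brace yields $\tilde{\mathcal B}_{\lambda,\mu}=1$, which is the desired conclusion. The proof is essentially a one-line deduction once Lemma \ref{lem:sumis0} is in hand; there is no real obstacle, since all the hard work has already been done in establishing that $\mathcal B$ is a Concrete Billiard Array (Lemma \ref{lem:CBAcheck}) and that the defining formula satisfies the black-clique identity (Lemma \ref{lem:sumis0}).
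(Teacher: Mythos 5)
Your proof is correct and essentially the same as the paper's: both rest on the identity $\mathcal B_\lambda+\mathcal B_\mu+\mathcal B_\nu=0$ from Lemma \ref{lem:sumis0} for the black 3-clique completing the edge. The only cosmetic difference is that the paper compares this dependency with the one in Lemma \ref{lem:dep} to read off the coefficient of $\mathcal B_\mu$, while you route the same observation through Definition \ref{def:shortbrace} and Lemma \ref{lem:concvalue}.
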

\begin{proof} By Lemma
\ref{lem:edgegivesclique} there exists a location
$\nu \in \Delta_N$ such that $\lambda$, $\mu$, $\nu$
form a black 3-clique. Now
$\mathcal B_\lambda + 
\mathcal B_\mu + 
\mathcal B_\nu = 0$ by Lemma
\ref{lem:sumis0}. Comparing this with
(\ref{eq:ld}) we find that in
(\ref{eq:ld}) the terms $\mathcal B_\mu$
and $\mathcal B_\nu$ have coefficient 1. 
Therefore
 $\tilde {\mathcal B}_{\lambda, \mu}=1$.
\end{proof}

\begin{proposition}  
Let $\mathcal B$ denote the Concrete Billiard Array from
Definition
\ref{def:CBAv1}.
Then each white 3-clique in $\Delta_N$ has ${\mathcal B}$-value 1.
\end{proposition}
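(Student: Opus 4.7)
The plan is to observe that the assertion is essentially an immediate consequence of the preceding lemma, together with the definitions of the $\mathcal B$-value of a white 3-clique. First, I would recall that by Definition \ref{def:BBB} (which rests on Lemma \ref{lem:diagcom}), the $\mathcal B$-value of a white 3-clique $C$ coincides with the $\tilde{\mathcal B}$-value of $C$ in the sense of Definition \ref{def:betavalStraight}. Hence it suffices to compute the $\tilde{\mathcal B}$-value.

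Let $\lambda,\mu,\nu$ denote the locations that form $C$, listed so that they run clockwise around $C$. By Definitions \ref{def:orientedbetaval} and \ref{def:betavalStraight}, the $\tilde{\mathcal B}$-value of $C$ is the scalar
\begin{equation*}
\tilde{\mathcal B}_{\lambda,\mu}\,\tilde{\mathcal B}_{\mu,\nu}\,\tilde{\mathcal B}_{\nu,\lambda}.
\end{equation*}
Now $\lambda,\mu$ are adjacent in $\Delta_N$, as are $\mu,\nu$ and $\nu,\lambda$, so Lemma \ref{lem:CBAv1EL} applies to each factor. That lemma tells us precisely that $\tilde{\mathcal B}_{\lambda',\mu'}=1$ for every adjacent pair $\lambda',\mu'\in\Delta_N$; consequently the product above equals $1\cdot 1\cdot 1=1$. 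Combining this with the first paragraph gives the desired assertion, so that $C$ has $\mathcal B$-value $1$. Since $C$ was an arbitrary white 3-clique, the proof is complete.

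There is no real obstacle here: the content of the result has already been absorbed into Lemma \ref{lem:CBAv1EL}, which itself followed cleanly from the identity $\mathcal B_\lambda+\mathcal B_\mu+\mathcal B_\nu=0$ for black 3-cliques established in Lemma \ref{lem:sumis0}. The only thing to be careful about is the bookkeeping connecting Definition \ref{def:BBB}, Definition \ref{def:betavalStraight}, and the orientation convention (clockwise versus counterclockwise), but since every edge label equals $1$, the orientation is irrelevant and the value is $1$ either way.
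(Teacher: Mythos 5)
Your argument is correct and follows essentially the same route as the paper: the paper's proof simply cites Lemma \ref{lem:bcval}, Lemma \ref{lem:CBAv1EL}, and Definition \ref{def:BBB}, which is exactly the bookkeeping you spell out (all edge labels $\tilde{\mathcal B}_{\lambda,\mu}$ equal $1$, so the product around any white 3-clique is $1$, and the orientation convention is moot). No gaps.
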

\begin{proof}
By Lemmas
\ref{lem:bcval},
\ref{lem:CBAv1EL} along with Definition
\ref{def:BBB}.
\end{proof}

\noindent We are done with our first example.  We now describe our second
example.
\medskip  
  
 \noindent 
We will use the following notation. Fix $0 \not=q \in \mathbb F$
such that $q \not=1$.
For elements $a,b$ in any $\mathbb F$-algebra, define
\begin{equation*}
(a,b;q)_n = (a-b)(a-bq)(a-bq^2)\cdots (a-bq^{n-1})
\qquad \qquad n=0,1,2,\ldots
\end{equation*}
We interpret 
$(a,b;q)_0 = 1$.

\begin{definition} 
\label{def:CBAvq}
Pick three scalars $\overline x$,
$\overline y$,
$\overline z$ in $\mathbb F$ such that
${\overline x}\,
{\overline y}\,
{\overline z} = q^{N-1}$.
For each location $\lambda = (r,s,t) $ in $\Delta_N$ define
\begin{equation*}
{\mathcal B}_\lambda = 
(x \overline x, y;q)_r 
(y \overline y, z;q)_s 
(z \overline z, x;q)_t. 
\end{equation*}
\end{definition}

\noindent We are going to show that the function
$\mathcal B$ from Definition
\ref{def:CBAvq}
is a Concrete Billiard Array.

\noindent 
Pick a location $(r,s,t) \in
\Delta_{N-1}$ and consider the corresponding
black 3-clique in $\Delta_N$ from Lemma
\ref{lem:white}: 
\begin{equation*}
\lambda = (r+1,s,t),
\qquad
\mu = (r,s+1,t),
\qquad
\nu = (r,s,t+1).
\end{equation*}

\begin{lemma}
\label{lem:weightedsum}
With the above notation,
\begin{equation}
\label{eq:weightedsum}
a \mathcal B_\lambda +
b \mathcal B_\mu +
c \mathcal B_\nu = 0
\end{equation}
where each of $a,b,c$ is nonzero and
\begin{equation}
\label{eq:abcdata}
b/a = q^r/\overline y,
\qquad \qquad
c/b = q^s/\overline z,
\qquad \qquad
a/c = q^t/\overline x.
\end{equation}
\end{lemma}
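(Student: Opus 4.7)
The plan is to reduce the identity (\ref{eq:weightedsum}) to a linear calculation by pulling out a common factor.

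First I would use the shifted-factorial recursion
\[
(u,v;q)_{n+1} = (u,v;q)_n\,(u-vq^n),
\]
which holds inside the commutative polynomial ring $P$, to compare the three vectors. Setting
\[
A = (x\overline x, y;q)_r,\qquad B=(y\overline y,z;q)_s,\qquad C=(z\overline z,x;q)_t,
\]
this recursion gives
\begin{align*}
\mathcal B_\lambda &= ABC\,(x\overline x - y q^r),\\
\mathcal B_\mu &= ABC\,(y\overline y - z q^s),\\
\mathcal B_\nu &= ABC\,(z\overline z - x q^t),
\end{align*}
since the factors in $P=\mathbb F[x,y,z]$ commute and the extra factor in each case appears at the end of its own $q$-Pochhammer block.

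Next I would observe that $ABC$ is a nonzero polynomial, so a relation $a\mathcal B_\lambda+b\mathcal B_\mu+c\mathcal B_\nu=0$ is equivalent to the \emph{linear} identity
\[
a(x\overline x - yq^r)+b(y\overline y - zq^s)+c(z\overline z - xq^t)=0
\]
in $P_1$. Reading off the coefficients of $x$, $y$, $z$ gives the three equations $a\overline x = cq^t$, $b\overline y = aq^r$, $c\overline z = bq^s$, which are exactly the ratios demanded in (\ref{eq:abcdata}). Thus any nonzero $a$ determines $b$ and $c$ uniquely via these ratios, and all three are nonzero since $q$ and $\overline x,\overline y,\overline z$ are nonzero.

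The only point requiring a little care (though hardly an obstacle) is consistency: the three ratios must multiply to $1$ around the cycle, i.e.\
\[
\frac{b}{a}\cdot\frac{c}{b}\cdot\frac{a}{c}=\frac{q^{r+s+t}}{\overline x\,\overline y\,\overline z}=1.
\]
Since $r+s+t=N-1$ and we have assumed $\overline x\,\overline y\,\overline z=q^{N-1}$, this is automatic. This is precisely where the hypothesis on the product $\overline x\,\overline y\,\overline z$ is used, and it is what guarantees that a simultaneous solution with all three scalars nonzero exists.
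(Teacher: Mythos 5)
Your proposal is correct and follows essentially the same route as the paper: factor out the common product $F=(x\overline x,y;q)_r(y\overline y,z;q)_s(z\overline z,x;q)_t$, reduce (\ref{eq:weightedsum}) to the linear identity $a(x\overline x-yq^r)+b(y\overline y-zq^s)+c(z\overline z-xq^t)=0$ in $P_1$, and use $r+s+t=N-1$ together with $\overline x\,\overline y\,\overline z=q^{N-1}$. The only difference is cosmetic: you derive the ratios (\ref{eq:abcdata}) by comparing coefficients of $x,y,z$ and check the cyclic consistency, whereas the paper verifies directly that the stated ratios make the linear combination vanish.
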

\begin{proof} By Definition 
\ref{def:CBAvq} and the construction,
\begin{eqnarray*}
&&{\mathcal B}_\lambda = 
(x \overline x, y;q)_{r+1} 
(y \overline y, z;q)_s 
(z \overline z, x;q)_t,
\\
&&{\mathcal B}_\mu = 
(x \overline x, y;q)_r 
(y \overline y, z;q)_{s+1} 
(z \overline z, x;q)_t,
\\
&&{\mathcal B}_\nu = 
(x \overline x, y;q)_r 
(y \overline y, z;q)_s 
(z \overline z, x;q)_{t+1}.
\end{eqnarray*}
Define
\begin{eqnarray*}
F = 
(x \overline x, y;q)_r 
(y \overline y, z;q)_s 
(z \overline z, x;q)_t,
\end{eqnarray*}
so
\begin{equation}
\mathcal B_\lambda = F (x \overline x - y q^r),
\qquad \quad
\mathcal B_\mu = F (y \overline y - z q^s),
\qquad \quad
\mathcal B_\nu = F (z \overline z - x q^t).
\label{eq:Flist}
\end{equation}
Using 
(\ref{eq:abcdata})
along with $r+s+t=N-1$ and 
$ \overline x \, \overline y \, \overline z = q^{N-1}$, we obtain
\begin{equation}
a (x \overline x - y q^r) +
b (y \overline y - z q^s) +
c (z \overline z - x q^t) = 0.
\label{eq:abcview}
\end{equation}
Equation
(\ref{eq:weightedsum}) follows from
(\ref{eq:Flist})
and
(\ref{eq:abcview}).
\end{proof}

\begin{lemma}
\label{lem:CBAcheck2}
The function $\mathcal B$ from Definition
\ref{def:CBAvq}
is a Concrete Billiard Array.
\end{lemma}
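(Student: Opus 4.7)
The plan is to mirror the structure of Lemma \ref{lem:CBAcheck}, verifying the two conditions of Definition \ref{def:cba} separately. Condition (ii) is essentially free: Lemma \ref{lem:weightedsum} already exhibits, for each black 3-clique in $\Delta_N$, a linear dependence $a\mathcal B_\lambda + b\mathcal B_\mu + c\mathcal B_\nu = 0$ with nonzero coefficients $a,b,c$. This immediately gives Definition \ref{def:cba}(ii).

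The content of the proof is therefore condition (i): for each line $L$ in $\Delta_N$, the vectors $\{\mathcal B_\lambda\}_{\lambda \in L}$ are linearly independent. By the cyclic symmetry of the definition of $\mathcal B$ in the three coordinates (and in the triples $(x,\overline x), (y,\overline y), (z,\overline z)$), it suffices to treat the case of a $1$-line; the $2$-line and $3$-line cases are handled by the same argument with the roles of the variables permuted. For a $1$-line of cardinality $i+1$, Example \ref{ex:line} lists its locations as $(N-i,i-j,j)$ for $0 \leq j \leq i$, and Definition \ref{def:CBAvq} gives
\begin{equation*}
\mathcal B_{(N-i,i-j,j)} = (x\overline x, y;q)_{N-i}\,(y\overline y, z;q)_{i-j}\,(z\overline z, x;q)_j.
\end{equation*}
The factor $(x\overline x, y;q)_{N-i}$ is common to all vectors on $L$ and is a nonzero polynomial in the integral domain $P$ (each linear factor $x\overline x - yq^k$ is nonzero since $\overline x \neq 0$). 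So it is enough to show that the polynomials $(y\overline y, z;q)_{i-j}\,(z\overline z, x;q)_j$, for $0 \leq j \leq i$, are linearly independent in $P$.

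The main technical step is to rule out a linear relation, which I will do by a specialization mimicking the $z \mapsto 0$ trick in Lemma \ref{lem:CBAcheck}. Setting $z = 0$, the first $q$-product becomes $(y\overline y)^{i-j}$, and the second becomes $\prod_{k=0}^{j-1}(-xq^k) = (-1)^j q^{\binom{j}{2}} x^j$, so the $j$-th specialized polynomial is $(-1)^j q^{\binom{j}{2}} \overline y^{\,i-j}\, y^{i-j} x^j$. Since $\overline x\,\overline y\,\overline z = q^{N-1}$ is nonzero, each of $\overline x, \overline y, \overline z$ is nonzero; in particular $\overline y^{\,i-j} \neq 0$ for every $j$. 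The monomials $\{y^{i-j}x^j\}_{j=0}^{i}$ are obviously $\mathbb F$-linearly independent, so the specialized polynomials are independent, hence so were the original ones. This proves condition (i) and completes the verification.

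I expect the only place where care is needed is the observation that $\overline x, \overline y, \overline z$ are all nonzero, which is where the hypothesis $\overline x\,\overline y\,\overline z = q^{N-1}$ (and $q \neq 0$) enters; without this, the specialization at $z=0$ could collapse terms and the independence argument would fail. Once that is noted, the rest is a routine adaptation of the proof of Lemma \ref{lem:CBAcheck}.
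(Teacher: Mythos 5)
Your proof is correct and follows essentially the same route as the paper: condition (ii) from Lemma \ref{lem:weightedsum}, reduction to a $1$-line, factoring out the common term $(x\overline x,y;q)_{N-i}$, and linear independence via the specialization $z=0$. You simply make explicit the details the paper leaves implicit (the cyclic-symmetry reduction, the computation of the specialized polynomials, and the need for $\overline y\neq 0$, which indeed follows from $\overline x\,\overline y\,\overline z=q^{N-1}\neq 0$).
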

\begin{proof} We show that $\mathcal B$ satisfies the
two conditions in Definition
\ref{def:cba}. Concerning Definition
\ref{def:cba}(i), let $L$ denote a line in
$\Delta_N$ and consider the locations in $L$.
We show that their images under $\mathcal B$ are
linearly independent. Without loss we may assume
that $\mathcal B$ is a $1$-line. Denote the cardinality
of $L$ by $i+1$. The locations in $L$ are listed
in 
(\ref{eq:oneline}).
For these locations 
their images under $\mathcal B$ are
\begin{equation}
(x \overline x, y;q)_{N-i} 
(y \overline y, z;q)_{i-j} 
(z \overline z, x;q)_{j} 
\qquad \qquad 0 \leq j \leq i.
\label{eq:CBAvqline2}
\end{equation}
We show that the vectors 
(\ref{eq:CBAvqline2}) are linearly independent. The vectors
\begin{equation}
(y \overline y, z;q)_{i-j} 
(z \overline z, x;q)_{j} 
\qquad \qquad 0 \leq j \leq i
\label{eq:CBAvqline3}
\end{equation}
are linearly independent; to see this set $z=0$ in
(\ref{eq:CBAvqline3}) and note that $x,y$ are algebraically
independent. It follows that the vectors
(\ref{eq:CBAvqline2}) are linearly independent.
We have shown that $\mathcal B$ satisfies Definition
\ref{def:cba}(i).
The function $\mathcal B$ satisfies 
Definition
\ref{def:cba}(ii) by Lemma 
\ref{lem:weightedsum}.
\end{proof}

\noindent For the 
 Concrete Billiard Array $\mathcal B$ in
Definition
\ref{def:CBAvq}
 we now compute
 the corresponding edge labelling and value function.

\begin{lemma}
\label{lem:CBAvqEL}
With the notation from above Lemma
\ref{lem:weightedsum},
\begin{eqnarray*}
&&
\tilde {\mathcal B}_{\lambda,\mu}  =  q^r/{\overline y},
\qquad
\tilde {\mathcal B}_{\mu,\nu}  =  q^s/{\overline z},
\qquad
\tilde {\mathcal B}_{\nu,\lambda}  = q^t/{\overline x},
\\
&&
\tilde {\mathcal B}_{\mu,\lambda}  =  {\overline y} /q^{r},
\qquad
\tilde {\mathcal B}_{\nu,\mu}  = {\overline z}/ q^{s},
\qquad
\tilde {\mathcal B}_{\lambda,\nu}  =  {\overline x} /q^{t}.
\end{eqnarray*}
\end{lemma}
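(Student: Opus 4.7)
The plan is to combine the linear dependency just established in Lemma \ref{lem:weightedsum} with the formulas of Lemma \ref{lem:3frac}, which convert coefficients in a dependency among the three vectors of a black 3-clique into the $\tilde{\mathcal{B}}$-scalars.

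More precisely, I would argue as follows. With $\lambda = (r+1,s,t)$, $\mu = (r,s+1,t)$, $\nu = (r,s,t+1)$ forming a black 3-clique, Lemma \ref{lem:weightedsum} gives nonzero $a,b,c \in \mathbb{F}$ with
\[
a\,\mathcal{B}_\lambda + b\,\mathcal{B}_\mu + c\,\mathcal{B}_\nu = 0,
\qquad
b/a = q^r/\overline{y},\qquad c/b = q^s/\overline{z},\qquad a/c = q^t/\overline{x}.
\]
By Lemma \ref{lem:3frac} applied to this very dependency, we read off
\[
\tilde{\mathcal{B}}_{\lambda,\mu} = b/a = q^r/\overline{y},\qquad
\tilde{\mathcal{B}}_{\mu,\nu} = c/b = q^s/\overline{z},\qquad
\tilde{\mathcal{B}}_{\nu,\lambda} = a/c = q^t/\overline{x},
\]
and the bottom row of the claimed table follows immediately by Lemma \ref{lem:CBAreciprocal}, which says $\tilde{\mathcal{B}}_{\mu,\lambda}$ is the reciprocal of $\tilde{\mathcal{B}}_{\lambda,\mu}$, and similarly for the other two pairs.

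There is no genuine obstacle here; the entire content is packaged into Lemma \ref{lem:weightedsum}, and the present lemma is just its translation via Lemma \ref{lem:3frac}. The only bookkeeping point worth flagging is that Lemma \ref{lem:3frac} requires $\mathcal{B}_\lambda, \mathcal{B}_\mu, \mathcal{B}_\nu$ to be the basic vectors of the Concrete Billiard Array (which they are by Definition \ref{def:CBAvq}) and the coefficients $a,b,c$ to be nonzero (which is asserted in Lemma \ref{lem:weightedsum}). Once these are noted, the proof reduces to a one-line invocation.
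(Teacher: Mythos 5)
Your proposal is correct and matches the paper's own proof, which is precisely "compare Lemma \ref{lem:3frac} and Lemma \ref{lem:weightedsum}"; reading off the top row from the dependency of Lemma \ref{lem:weightedsum} via Lemma \ref{lem:3frac} and getting the bottom row by reciprocals (which Lemma \ref{lem:3frac} in fact already supplies directly) is the same one-line argument.
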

\begin{proof} Compare
Lemma 
\ref{lem:3frac}
and
Lemma 
\ref{lem:weightedsum}.
\end{proof}

\begin{proposition}
\label{prop:VFq}
Let $\mathcal B$ denote the Concrete Billiard Array from
Definition
\ref{def:CBAvq}.
Then each white 3-clique in $\Delta_N$ has $ {\mathcal B}$-value $q$.
\end{proposition}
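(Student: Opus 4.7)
The plan is to compute the $\mathcal{B}$-value of an arbitrary white 3-clique directly from Lemma~\ref{lem:CBAvqEL}, using Definition~\ref{def:BBB} and Lemma~\ref{lem:bcval} to express the $\mathcal{B}$-value as a product of three $\tilde{\mathcal{B}}_{\cdot,\cdot}$-scalars along the edges of the clique. By Lemma~\ref{lem:black} every white 3-clique arises from some $(r,s,t)\in\Delta_{N-2}$ and consists of the locations
\begin{equation*}
A=(r,s+1,t+1),\qquad B'=(r+1,s,t+1),\qquad C=(r+1,s+1,t).
\end{equation*}
By Lemma~\ref{lem:bcval}, up to orientation the $\mathcal{B}$-value of this clique equals the product $\tilde{\mathcal{B}}_{A,B'}\tilde{\mathcal{B}}_{B',C}\tilde{\mathcal{B}}_{C,A}$.

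Next I would evaluate each factor by identifying, via Lemma~\ref{lem:edgegivesclique}, the unique black 3-clique containing the given edge, and then reading off the $\tilde{\mathcal{B}}$-scalar from Lemma~\ref{lem:CBAvqEL}. Concretely, the edge $\{A,B'\}$ lies in the black 3-clique based at $(r,s,t+1)$, the edge $\{B',C\}$ lies in the black 3-clique based at $(r+1,s,t)$, and the edge $\{C,A\}$ lies in the black 3-clique based at $(r,s+1,t)$. Substituting the appropriate base coordinates into the formulas of Lemma~\ref{lem:CBAvqEL} should yield
\begin{equation*}
\tilde{\mathcal{B}}_{A,B'}=\overline{y}/q^{r},\qquad
\tilde{\mathcal{B}}_{B',C}=\overline{z}/q^{s},\qquad
\tilde{\mathcal{B}}_{C,A}=\overline{x}/q^{t},
\end{equation*}
so the product is $\overline{x}\,\overline{y}\,\overline{z}/q^{r+s+t}$. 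Using the two constraints $r+s+t=N-2$ and $\overline{x}\,\overline{y}\,\overline{z}=q^{N-1}$ from Definition~\ref{def:CBAvq}, this simplifies to $q^{N-1}/q^{N-2}=q$.

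The only real subtlety is bookkeeping: I must make sure the triple $(A,B',C)$ is traversed in the clockwise direction (in the sense of Definition~\ref{def:orientedBval}), since by Definition~\ref{def:BvalStraight} the $\mathcal{B}$-value is the clockwise value, and an opposite choice would give $q^{-1}$ in place of $q$. Once that orientation is pinned down, the rest is a direct substitution. I would close by invoking Definition~\ref{def:BBB} to conclude that the $\mathcal{B}$-value of every white 3-clique in $\Delta_N$ equals $q$, as required.
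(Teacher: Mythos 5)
Your proposal is correct and follows essentially the same route as the paper: both read off $\tilde{\mathcal B}_{\lambda,\mu}=\overline{y}/q^{r}$, $\tilde{\mathcal B}_{\mu,\nu}=\overline{z}/q^{s}$, $\tilde{\mathcal B}_{\nu,\lambda}=\overline{x}/q^{t}$ from Lemma \ref{lem:CBAvqEL} for the white 3-clique of Lemma \ref{lem:black}, then multiply and use $r+s+t=N-2$ together with $\overline{x}\,\overline{y}\,\overline{z}=q^{N-1}$ to get $q$. The orientation point you flag is handled the same way in the paper (the ordering $(r,s+1,t+1),(r+1,s,t+1),(r+1,s+1,t)$ is the clockwise one), so there is no gap.
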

\begin{proof} Assume $N\geq 2$; otherwise
$\Delta_N$ has no white 3-clique. Let $(r,s,t) \in \Delta_{N-2}$
and consider the corresponding white 3-clique in $\Delta_N$
from Lemma
\ref{lem:black}. This 3-clique consists of the locations
\begin{eqnarray*}
\lambda = (r,s+1,t+1), \qquad \quad
\mu = (r+1,s,t+1), \qquad \quad
\nu = (r+1,s+1,t).
\end{eqnarray*}
Using Lemma
\ref{lem:CBAvqEL},
\begin{eqnarray*}
\tilde {\mathcal B}_{\lambda,\mu}  =  {\overline y}/q^r,
\qquad
\tilde {\mathcal B}_{\mu,\nu}  =  {\overline z}/q^s,
\qquad
\tilde {\mathcal B}_{\nu,\lambda}  = {\overline x}/q^t.
\end{eqnarray*}
By Lemma 
\ref{lem:bcval}
and Definition \ref{def:BBB}
the $\mathcal B$-value of the above
white 3-clique is 
$\tilde {\mathcal B}_{\lambda,\mu}
\tilde {\mathcal B}_{\mu,\nu} 
\tilde {\mathcal B}_{\nu,\lambda}$
which is equal to 
\begin{eqnarray*}
\frac{
{\overline x}\,
{\overline y}\,
{\overline z}
}{
q^{r+s+t}} = \frac{q^{N-1}}{q^{N-2}} = q.
\end{eqnarray*}
\end{proof}

\begin{corollary} For the Concrete Billiard Array
$\mathcal B$ from Definition
\ref{def:CBAvq}, the similarity class is independent of
the choice of $\overline x$, $\overline y$, $\overline z$
and depends only on $q$, $N$. 
\end{corollary}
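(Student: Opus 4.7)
The plan is to reduce the corollary to Proposition \ref{prop:VFq} via the machinery of value functions. For $N\geq 2$, the main observation is that the value function of $\mathcal B$ on $\Delta_{N-2}$ is, by Proposition \ref{prop:VFq}, the constant function with value $q$; in particular it does \emph{not} depend on the choice of $\overline x,\overline y,\overline z$ (it only sees the product $\overline x\,\overline y\,\overline z=q^{N-1}$ through the value $q$).

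Next I would invoke the chain of bijections assembled in Lemma \ref{lem:dcom}. Concretely, Proposition \ref{prop:simSim} says that the map $\mathcal B\mapsto \tilde{\mathcal B}$ sends similarity classes of Concrete Billiard Arrays to similarity classes of edge-labellings bijectively, and Proposition \ref{prop:simVal} says that $\beta\mapsto \hat\beta$ sends similarity classes of edge-labellings to value functions on $\Delta_{N-2}$ bijectively. Composing (using Lemma \ref{lem:diagcom} to identify $\hat{\tilde{\mathcal B}}$ with the $\mathcal B$-value function), two Concrete Billiard Arrays of diameter $N$ are similar if and only if their value functions coincide. Thus for any two choices $(\overline x,\overline y,\overline z)$ and $(\overline x',\overline y',\overline z')$ satisfying the constraint $\overline x\,\overline y\,\overline z=\overline x'\overline y'\overline z'=q^{N-1}$, Proposition \ref{prop:VFq} gives the same constant value function on $\Delta_{N-2}$, and therefore the two resulting Concrete Billiard Arrays are similar.

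The remaining cases $N=0,1$ are degenerate and handled separately: the hypothesis $N\geq 1$ of Section 20 excludes $N=0$, and for $N=1$ the example after Definition \ref{def:relations} observes that any two Concrete Billiard Arrays of diameter $1$ are similar, so the claim is automatic. (Equivalently, for $N=1$ the set $\Delta_{N-2}=\emptyset$ and there is a unique value function.) This covers all $N\geq 1$.

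I do not expect a genuine obstacle here; this is a direct corollary of Proposition \ref{prop:VFq} once one reads off the dictionary \emph{similarity class of $\mathcal B \leftrightarrow$ value function} provided by Propositions \ref{prop:simSim} and \ref{prop:simVal}. The only subtlety worth flagging in the write-up is that similarity of Concrete Billiard Arrays is weaker than isomorphism: the two arrays need not be defined on identical ambient vector spaces, and one should not expect the polynomials $\mathcal B_\lambda$ themselves to be rescalings of each other on the nose; rather, the underlying Billiard Arrays are isomorphic (in the sense of Definition \ref{def:isoBA}), which is exactly what the value-function invariant detects.
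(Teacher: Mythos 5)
Your proof is correct and follows essentially the same route as the paper: the paper simply cites Lemma \ref{lem:dcom} together with Proposition \ref{prop:VFq}, and Lemma \ref{lem:dcom} is exactly the package of Propositions \ref{prop:simSim}, \ref{prop:simVal} and Lemma \ref{lem:diagcom} that you unpack. Your explicit handling of the degenerate case $N=1$ (where $\Delta_{N-2}=\emptyset$ and similarity is automatic) is a harmless, slightly more careful addition, since the bijection machinery is stated only for $N\geq 2$.
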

\begin{proof} 
By Lemma
\ref{lem:dcom}
and
Proposition
\ref{prop:VFq}.
\end{proof}

\begin{corollary} For the Concrete Billiard Array
$\mathcal B$ from Definition
\ref{def:CBAvq}, the isomorphism class of the
corresponding Billiard Array is independent of
the choice of $\overline x$, $\overline y$, $\overline z$
and depends only on $q$, $N$. 
\end{corollary}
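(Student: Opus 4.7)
The plan is to deduce this corollary directly from the preceding one using the bijection between similarity classes of Concrete Billiard Arrays and isomorphism classes of Billiard Arrays established in Lemma \ref{lem:assocIso}.

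First, I would invoke the preceding corollary: the similarity class of $\mathcal B$ (as a Concrete Billiard Array) depends only on $q$ and $N$, not on the chosen scalars $\overline x, \overline y, \overline z$. This was obtained from Lemma \ref{lem:dcom} (the commutative diagram identifying similarity classes of Concrete Billiard Arrays with value functions on $\Delta_{N-2}$) together with Proposition \ref{prop:VFq}, which asserts that every white 3-clique has $\mathcal B$-value equal to $q$ regardless of the choice of $\overline x,\overline y,\overline z$.

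Next, I would appeal to Definition \ref{def:relations}: two Concrete Billiard Arrays are similar precisely when their corresponding Billiard Arrays are isomorphic. Equivalently, Lemma \ref{lem:assocIso} states that the map sending a Concrete Billiard Array to its corresponding Billiard Array induces a bijection between similarity classes of Concrete Billiard Arrays over $\mathbb F$ of diameter $N$ and isomorphism classes of Billiard Arrays over $\mathbb F$ of diameter $N$. Hence the similarity class of $\mathcal B$ determines the isomorphism class of the corresponding Billiard Array $B$.

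Combining these two facts finishes the argument: since the similarity class of $\mathcal B$ depends only on $q$ and $N$, so does the isomorphism class of $B$. There is no real obstacle here; the corollary is essentially a one-line transfer of the preceding similarity statement across the bijection of Lemma \ref{lem:assocIso}, and the only subtlety is making explicit which equivalence relation (similarity versus isomorphism) is being applied at each step.
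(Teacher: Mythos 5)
Your proof is correct and follows essentially the same route as the paper: the paper simply cites Corollary \ref{thm:baVf} together with Proposition \ref{prop:VFq}, while you pass through the preceding corollary on similarity classes and then transfer across Lemma \ref{lem:assocIso} (equivalently, Definition \ref{def:relations}), which is the same classification machinery packaged by the commutative diagram in Lemma \ref{lem:dcom}. No gap; your version is, if anything, a slightly more explicit one-line transfer of the preceding corollary.
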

\begin{proof} 
By Corollary
\ref{thm:baVf} and
Proposition \ref{prop:VFq}.
\end{proof}

\section{The Lie algebra
 $\mathfrak{sl}_2$ and the quantum algebra
 $U_q(\mathfrak{sl}_2)$}

\noindent 
In this section we use Billiard Arrays to describe
the finite-dimensional irreducible modules for the Lie
algebra 
$\mathfrak{sl}_2$ and the quantum algebra
 $U_q(\mathfrak{sl}_2)$.
We now recall
$\mathfrak{sl}_2$. We will use the equitable basis,
which was introduced in 
\cite{ht}
and comprehensively described in \cite{bt}.
Until the end of Corollary
 \ref{eq:charzero},
assume that the characteristic of
$\mathbb F$ is not 2.

\begin{definition}
\label{def:sl2A}
\rm
\cite[Lemma~3.2]{ht}
Let $\mathfrak{sl}_2$ denote the Lie algebra over $\mathbb F$
with basis $x,y,z$ and Lie bracket
\begin{equation}
\label{eq:uqrelsq1}
\lbrack{ x,y}\rbrack = 2 {x}+2{ y}, \qquad 
\lbrack{ y,z}\rbrack = 2{ y}+2 { z}, \qquad 
\lbrack{ z,x}\rbrack = 2{ z}+2{ x}.
\end{equation}
\end{definition}

\noindent 
Until the end of Corollary
 \ref{eq:charzero}
the following notation is in effect.
Fix $N \in \mathbb N$. Let $V$ denote a vector space over $\mathbb F$
with dimension $N+1$. Let $B$ denote a Billiard
Array on $V$. Assume that each white 3-clique in
$\Delta_N$ has $B$-value $1$. Using $B$ we will turn
$V$ into
a $\mathfrak{sl}_2$-module. We acknowledge that our construction
is essentially the same as
the one in
 \cite[Proposition 8.17]{bt}; the details 
 given here
 are meant to illuminate the role played by the maps $\tilde B_{\lambda, \mu}$.
Recall the
$B$-decompositions
 $\lbrack \eta, \xi\rbrack$ of $V$ from Definition
\ref{def:Bdec}.

\begin{definition}
\label{def:XYZq1}
\rm
Define 
$X,Y,Z$
in 
 ${\rm End}(V)$ such that
for $0 \leq i \leq N$,
$X-(2i-N)I$
(resp. 
$Y-(2i-N)I$)
(resp. 
$Z-(2i-N)I$)
vanishes on component $i$ of
the $B$-decomposition $\lbrack 2,3\rbrack$
(resp. $\lbrack 3,1\rbrack$)
(resp. $\lbrack 1,2\rbrack$).
\end{definition}

\noindent The next result is meant to clarify Definition
\ref{def:XYZq1}.

\begin{lemma}
\label{lem:clarXYZq1}
Pick a location $\lambda = (r,s,t)$ on the boundary of $\Delta_N$.
Then on $B_\lambda$,
\begin{eqnarray*}
&&X=(2t-N)I=(N-2s)I \qquad \qquad {\mbox{ if $r=0$}}; 
\\
&&Y=(2r-N)I=(N-2t)I \qquad \qquad {\mbox{ if $s=0$}}; 
\\
&&Z=(2s-N)I=(N-2r)I \qquad \qquad {\mbox{ if $t=0$}}.
\end{eqnarray*}
\end{lemma}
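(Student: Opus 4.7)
The plan is to simply unfold Definition \ref{def:XYZq1} using the explicit descriptions from Section 9. Specifically, Definition \ref{def:XYZq1} asserts that for each $i$ with $0 \leq i \leq N$, the map $X$ acts as the scalar $2i-N$ on component $i$ of the $B$-decomposition $\lbrack 2,3 \rbrack$, and similarly for $Y, Z$ with respect to the $B$-decompositions $\lbrack 3,1 \rbrack, \lbrack 1,2 \rbrack$. So the task reduces to identifying, for a boundary location $\lambda$, which component of the relevant $B$-decomposition contains $B_\lambda$.

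To do this I would invoke Lemma \ref{lem:BdecClar}, which says that the $i$-component of the $B$-decomposition $\lbrack \eta, \xi \rbrack$ is $B_{\lambda_i}$, where $\lambda_i$ is the $i$-component of the path $\lbrack \eta, \xi \rbrack$ in $\Delta_N$ listed in Lemma \ref{lem:sixpathsinfo}. Reading off that table:
\begin{itemize}
\item the $i$-component of $\lbrack 2,3 \rbrack$ is the location $(0, N-i, i)$;
\item the $i$-component of $\lbrack 3,1 \rbrack$ is the location $(i, 0, N-i)$;
\item the $i$-component of $\lbrack 1,2 \rbrack$ is the location $(N-i, i, 0)$.
\end{itemize}

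Now the three cases are mechanical. Suppose $\lambda = (r,s,t) \in \Delta_N$ with $r=0$. Then $s+t = N$, so $\lambda = (0, N-t, t)$, which is the $i$-component of $\lbrack 2,3 \rbrack$ for $i = t$. Therefore $X$ acts on $B_\lambda$ as $(2t - N)I$; substituting $t = N-s$ yields the alternative form $(N-2s)I$. The $s=0$ case uses $\lambda = (r, 0, N-r)$, the $i$-component of $\lbrack 3,1 \rbrack$ for $i = r$, so $Y$ acts as $(2r-N)I = (N-2t)I$. The $t=0$ case uses $\lambda = (N-s, s, 0)$, the $i$-component of $\lbrack 1,2 \rbrack$ for $i = s$, so $Z$ acts as $(2s-N)I = (N-2r)I$.

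There is no real obstacle here; the lemma is a direct bookkeeping consequence of Definition \ref{def:XYZq1} combined with the explicit coordinates in Lemma \ref{lem:sixpathsinfo}. The only care needed is to match the index $i$ correctly in each of the three cases and to notice that the two equivalent scalar expressions come from using $r+s+t = N$.
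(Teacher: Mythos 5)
Your proposal is correct and is exactly the paper's argument: the paper proves this lemma by citing Lemma \ref{lem:BdecClar} together with Definition \ref{def:XYZq1}, and your write-up simply spells out the coordinate bookkeeping (via the table in Lemma \ref{lem:sixpathsinfo}) that those citations compress. The index matching and the use of $r+s+t=N$ for the two equivalent scalar forms are all handled correctly.
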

\begin{proof} By Lemma
\ref{lem:BdecClar} and
Definition
\ref{def:XYZq1}.
\end{proof}

\noindent
Pick $(r,s,t) \in
\Delta_{N-1}$ and consider the corresponding
black 3-clique
in $\Delta_N$ from Lemma
\ref{lem:white}:
\begin{equation*}
\lambda = (r+1,s,t),
\qquad
\mu = (r,s+1,t),
\qquad
\nu = (r,s,t+1).
\end{equation*}

\begin{proposition}
\label{prop:XYZonWhiteq1}
With the above notation,
for each abrace
\begin{equation*}
u \in B_\lambda, \quad \qquad 
v \in B_\mu, \qquad \quad 
w \in B_\nu
\end{equation*}
we have
\begin{eqnarray}
&&Xu = (N-2t)v + (2s-N)w,
\label{eq:XYZonWhiteq1}
\\
&&
Yv = (N-2r)w + (2t-N)u,
\nonumber
\\
&&Zw = (N-2s)u + (2r-N)v.
\nonumber
\end{eqnarray}
\end{proposition}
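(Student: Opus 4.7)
The plan is to reduce the three stated identities to a single one by cyclic symmetry and then establish the $X$-identity by induction on $r$. Cycling the corner labels $(1,2,3)\mapsto(2,3,1)$ sends the $B$-decomposition $[2,3]$ to $[3,1]$ to $[1,2]$ and, via Definition~\ref{def:XYZq1}, correspondingly permutes $(X,Y,Z)$; the three formulas are cyclic images of one another under the induced relabeling of coordinates. Hence it suffices to prove $Xu = (N-2t)v + (2s-N)w$.

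For the base case $r=0$, both $\mu=(0,s+1,t)$ and $\nu=(0,s,t+1)$ lie on the 1-boundary of $\Delta_N$. Since $s+t+1=N$, Lemma~\ref{lem:sixpathsinfo} identifies $B_\mu$ and $B_\nu$ with the components indexed $t$ and $t+1$ of the $B$-decomposition $[2,3]$, so Definition~\ref{def:XYZq1} gives $Xv=(2t-N)v$ and $Xw=(2t+2-N)w$. A routine computation using $u=-v-w$ and $r+s+t=N-1$ with $r=0$ then produces the desired formula.

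For the inductive step $r\geq 1$, the key auxiliary object is the white 3-clique $\{A,\mu,\nu\}$ with $A=(r-1,s+1,t+1)$, whose $B$-value equals $1$ by hypothesis. The edges $\mu A$ and $\nu A$ complete (by Lemma~\ref{lem:edgegivesclique}) to the black 3-cliques $\{\mu,B,A\}$ and $\{\nu,A,C\}$ with $B=(r-1,s+2,t)$ and $C=(r-1,s,t+2)$, whose parameters $(r-1,s+1,t)$ and $(r-1,s,t+1)$ both have first coordinate $r-1<r$. Let $(v,v_B,v_A)$ and $(w,w_A,w_C)$ be the abraces for these cliques containing $v$ and $w$. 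By the inductive hypothesis, $Xv$ and $Xw$ can be written in terms of $v_B,v_A,w_A,w_C$; substituting into $Xu=-Xv-Xw$, eliminating $v_B=-v-v_A$ and $w_C=-w-w_A$ via the abrace relations, and simplifying using $r+s+t=N-1$ yields
\[
Xu=(N-2t)v+(2s-N)w+2r\,(v_A-w_A).
\]

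The main obstacle, and the step where the white-clique hypothesis enters essentially, is showing $v_A=w_A$. By construction $v_A=\tilde B_{\mu,A}(v)$ and $w_A=\tilde B_{\nu,A}(w)$. Since $v+w=-u\in B_\lambda$, the pair $(v,w)$ is a brace for the edge $\mu\nu$, giving $\tilde B_{\mu,\nu}(v)=w$. The assumed $B$-value $1$ for $\{A,\mu,\nu\}$ provides, via Definition~\ref{def:orientedBval}, the identity $\tilde B_{A,\mu}\tilde B_{\mu,\nu}\tilde B_{\nu,A}=I_{B_A}$; inverting and rearranging yields $\tilde B_{\mu,A}=\tilde B_{\nu,A}\circ\tilde B_{\mu,\nu}$, which applied to $v$ gives $v_A=\tilde B_{\nu,A}(w)=w_A$, completing the induction.
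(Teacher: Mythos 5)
Your proof is correct and follows essentially the same route as the paper: induction on $r$, with the base case handled by the eigenvalue data on the $1$-boundary, and the inductive step using the two black 3-cliques at level $r-1$ together with the value-$1$ white 3-clique $\{A,\mu,\nu\}$ to identify the two auxiliary vectors (your $v_A=w_A$ is exactly the paper's $w''=v'$, proved by the same composition of the maps $\tilde B$ around that clique). The only differences are presentational: you phrase the white-clique step as the operator identity $\tilde B_{\mu,A}=\tilde B_{\nu,A}\circ\tilde B_{\mu,\nu}$ rather than chasing braces, and you make explicit the cyclic-symmetry reduction that the paper leaves as ``similarly verified.''
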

\begin{proof}
We verify
(\ref{eq:XYZonWhiteq1}).
Our proof is by induction on $r$. First assume $r=0$, 
so that $s+t=N-1$. By construction
$\mu = (0,N-t,t)$ 
and
 $\nu = (0,s, N-s)$.
So $Xv = (2t-N)v$ and
$Xw = (N-2s)w$  in view of Lemma
\ref{lem:clarXYZq1}.
 To obtain
(\ref{eq:XYZonWhiteq1}), in the equation $u+v+w=0$
apply $X$ to each term and evaluate the result using
the above comments. We have verified
(\ref{eq:XYZonWhiteq1}) for $r=0$.
Next assume $r\geq 1$. 
For the location $(r-1,s,t+1) \in \Delta_{N-1}$ the
corresponding black 3-clique in $\Delta_N$ is
\begin{equation*}
\lambda' = (r,s,t+1),
\qquad
\mu' = (r-1,s+1,t+1),
\qquad
\nu' = (r-1,s,t+2).
\end{equation*}
Observe $\lambda'=\nu$.
By Lemma 
\ref{lem:brace2} and since $0 \not=w \in B_\nu= B_{\lambda'}$ there
exists a unique abrace
\begin{eqnarray*}
u' \in B_{\lambda'},
\qquad \quad
v' \in B_{\mu'},
\qquad \quad
w' \in B_{\nu'}
\end{eqnarray*}
such that $u'=w$.
By Definition
\ref{def:brace}
$u'+v'+w'=0$.
By induction
\begin{equation}
\label{eq:indX1okq1}
Xu' = (N-2t-2)v' +(2s-N)w'.
\end{equation}
For the location $(r-1,s+1,t) \in \Delta_{N-1}$ the
corresponding black 3-clique in $\Delta_N$ is
\begin{equation*}
\lambda'' = (r,s+1,t),
\qquad
\mu'' = (r-1,s+2,t),
\qquad
\nu'' = (r-1,s+1,t+1).
\end{equation*}
Observe $\lambda''=\mu$.
By Lemma 
\ref{lem:brace2} and since 
$0 \not=v \in B_\mu= B_{\lambda''}$ there
exists a unique abrace
\begin{eqnarray*}
u'' \in B_{\lambda''},
\qquad \quad
v'' \in B_{\mu''},
\qquad \quad
w'' \in B_{\nu''}
\end{eqnarray*}
such that $u''=v$.
By Definition
\ref{def:brace}
$u''+v''+w''=0$. By induction
\begin{equation}
\label{eq:indX2okq1}
Xu'' = (N-2t)v'' +(2s+2-N)w''.
\end{equation}
We claim that $w''=v'$.
We now prove the claim. Observe 
$\mu'=\nu''$. 
The three locations
\begin{eqnarray*}
\mu'=\nu'', \quad \qquad
\lambda'=\nu, \quad \qquad
\lambda''=\mu
\end{eqnarray*}
run clockwise around a white 3-clique in $\Delta_N$,
which we recall has $B$-value $1$.
By construction $v',u'$ is a brace for the
edge $\mu',\lambda'$ so
$\tilde B_{\mu',\lambda'}$ sends $v'\mapsto u'$.
Similarly
$w,v$ is a brace for the
edge $\nu,\mu$ so
$\tilde B_{\nu,\mu}$ sends  $w\mapsto v$.
Similarly
$u'',w''$ is a brace for the
edge $\lambda'',\nu''$ so
$\tilde B_{\lambda'',\nu''}$ sends $u''\mapsto w''$.
Consider the composition
\begin{equation*}
\begin{CD} 
B_{\mu'}  @>>  \tilde B_{\mu',\lambda'} >  
 B_{\lambda'}=B_\nu  @>> \tilde B_{\nu,\mu} > B_{\mu} = B_{\lambda''}
               @>>\tilde B_{\lambda'',\nu''} > B_{\nu''}=B_{\mu'}.
                  \end{CD}
\end{equation*}
On one hand, this composition sends
\begin{eqnarray*}
v' \mapsto u'=w \mapsto v=u''\mapsto w''.
\end{eqnarray*}
On the other hand, by Definition
\ref{def:orientedBval}
this composition is 
equal to the identity map
on $B_\mu'$. Therefore $w''=v'$ and
the claim is proved.
Now to obtain
(\ref{eq:XYZonWhiteq1}), in the equation $u+v+w=0$
apply $X$ to each term and evaluate the result using
(\ref{eq:indX1okq1}),
(\ref{eq:indX2okq1}) and nearby comments.
 We have verified equation
(\ref{eq:XYZonWhiteq1}). The remaining two equations
are similarly verified.
\end{proof}

\noindent We now reformulate
Proposition
\ref{prop:XYZonWhiteq1}.

\begin{corollary}
\label{prop:XYZonWhiteRefq1}
Referring to the notation above
Proposition
\ref{prop:XYZonWhiteq1},
 the following {\rm (i)--(iii)} hold.
\begin{enumerate}
\item[\rm (i)] On $B_\lambda$,
\begin{equation}
X = (N-2t)\tilde B_{\lambda,\mu} + (2s-N) \tilde B_{\lambda,\nu}.
\label{eq:xyq1}
\end{equation}
\item[\rm (ii)] On $B_\mu$,
\begin{equation*}
Y = (N-2r)\tilde B_{\mu,\nu} +(2t-N) \tilde B_{\mu,\lambda}.
\end{equation*}
\item[\rm (iii)] On $B_\nu$,
\begin{equation*}
Z = (N-2s)\tilde B_{\nu,\lambda} +(2r-N) \tilde B_{\nu,\mu}.
\end{equation*}
\end{enumerate}
\end{corollary}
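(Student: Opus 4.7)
The plan is to obtain Corollary \ref{prop:XYZonWhiteRefq1} as an almost immediate reformulation of Proposition \ref{prop:XYZonWhiteq1}, using the abrace characterization of the maps $\tilde B_{\lambda,\mu}$ given in Section 14.

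For part (i), I would pick an arbitrary nonzero $u \in B_\lambda$. By Lemma \ref{lem:brace2}, there exists a unique abrace $(u,v,w)$ for the black $3$-clique $\lambda,\mu,\nu$, with $v \in B_\mu$ and $w \in B_\nu$. By Lemma \ref{lem:mapsforabrace}, the maps $\tilde B_{\lambda,\mu}$ and $\tilde B_{\lambda,\nu}$ send $u \mapsto v$ and $u \mapsto w$ respectively; equivalently $v = \tilde B_{\lambda,\mu}(u)$ and $w = \tilde B_{\lambda,\nu}(u)$. Substituting these into the first equation of Proposition \ref{prop:XYZonWhiteq1} yields
\begin{equation*}
Xu = (N-2t)\tilde B_{\lambda,\mu}(u) + (2s-N)\tilde B_{\lambda,\nu}(u).
\end{equation*}
Because $B_\lambda$ is one-dimensional and $u$ is an arbitrary nonzero vector in it, this vector identity holds for every element of $B_\lambda$, which is exactly \eqref{eq:xyq1}.

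Parts (ii) and (iii) would be handled in exactly the same way. For (ii), I pick a nonzero $v \in B_\mu$, extend it to the unique abrace $(u,v,w)$ via Lemma \ref{lem:brace2}, and use Lemma \ref{lem:mapsforabrace} to recognize $w = \tilde B_{\mu,\nu}(v)$ and $u = \tilde B_{\mu,\lambda}(v)$; substitution into the second equation of Proposition \ref{prop:XYZonWhiteq1} gives the desired identity on $B_\mu$. Part (iii) is analogous, starting from a nonzero $w \in B_\nu$ and using $u = \tilde B_{\nu,\lambda}(w)$ and $v = \tilde B_{\nu,\mu}(w)$.

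Since Proposition \ref{prop:XYZonWhiteq1} has already done the inductive work on the $1$-coordinate $r$, there is essentially no obstacle here; the corollary is a pure translation from the ``abrace'' language to the ``$\tilde B_{\lambda,\mu}$'' language, with the one-dimensionality of $B_\lambda, B_\mu, B_\nu$ ensuring that an identity verified on one nonzero vector holds on the whole one-dimensional subspace.
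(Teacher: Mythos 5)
Your proposal is correct and follows essentially the same route as the paper, whose proof is precisely the combination of Lemma \ref{lem:mapsforabrace} with Proposition \ref{prop:XYZonWhiteq1}; your invocation of Lemma \ref{lem:brace2} and the one-dimensionality of $B_\lambda$, $B_\mu$, $B_\nu$ simply makes explicit the routine details the paper leaves to the reader.
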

\begin{proof} Use
Lemma
\ref{lem:mapsforabrace}
and
Proposition
\ref{prop:XYZonWhiteq1}.
\end{proof}

\noindent Recall the vectors $\alpha, \beta, \gamma$ from line
(\ref{eq:alphaBeta}).

\begin{proposition}
\label{prop:reformP2q1}
For each location $\lambda = (r,s,t) $ in $\Delta_N$ the
following hold on $B_\lambda$:
\begin{eqnarray*}
&&
X - (N-2s)I = 2r \tilde B_{\lambda, \lambda-\alpha},
\qquad  \quad
X - (2t-N)I = -2r\tilde B_{\lambda, \lambda+\gamma},
\\
&&
Y - (N-2t)I = 2s\tilde B_{\lambda,\lambda-\beta},
\qquad  \quad
Y - (2r-N)I = -2s\tilde B_{\lambda, \lambda+\alpha},
\\
&&
Z - (N-2r)I = 2t\tilde B_{\lambda, \lambda-\gamma},
\qquad  \quad
Z - (2s-N)I = -2t\tilde B_{\lambda, \lambda+\beta}.
\end{eqnarray*}
\end{proposition}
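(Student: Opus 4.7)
The plan is to derive all six identities by combining Corollary \ref{prop:XYZonWhiteRefq1} with the black-clique relation from Lemma \ref{lem:threetermv}; by symmetry I describe only the two identities involving $X$, since the $Y$ and $Z$ cases follow the same pattern after cyclically permuting $(\alpha,\beta,\gamma)$ and the triple $(r,s,t)$.

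First I would treat the boundary case $r=0$. In that case $\lambda=(0,s,t)$ is on the $1$-boundary, so by Lemma \ref{lem:clarXYZq1} both $X=(N-2s)I$ and $X=(2t-N)I$ hold on $B_\lambda$. Moreover, $\lambda-\alpha=(-1,s+1,t)$ and $\lambda+\gamma=(-1,s,t+1)$ lie outside $\Delta_N$, so $\tilde B_{\lambda,\lambda-\alpha}=0$ and $\tilde B_{\lambda,\lambda+\gamma}=0$ by the convention adopted after Lemma \ref{lem:Binverses}. The two identities then reduce to $0=0$.

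Next, for $r\geq 1$ I would consider the black 3-clique associated to $(r-1,s,t)\in\Delta_{N-1}$ via Lemma \ref{lem:white}, which consists of
\begin{equation*}
\lambda=(r,s,t),\qquad \lambda-\alpha=(r-1,s+1,t),\qquad \lambda+\gamma=(r-1,s,t+1).
\end{equation*}
Applying Corollary \ref{prop:XYZonWhiteRefq1}(i) to this clique (with $s'=s$, $t'=t$) gives, on $B_\lambda$,
\begin{equation*}
X=(N-2t)\tilde B_{\lambda,\lambda-\alpha}+(2s-N)\tilde B_{\lambda,\lambda+\gamma}.
\end{equation*}
From Lemma \ref{lem:threetermv} applied to the same black 3-clique I have, on $B_\lambda$,
\begin{equation*}
I+\tilde B_{\lambda,\lambda-\alpha}+\tilde B_{\lambda,\lambda+\gamma}=0.
\end{equation*}
Using the second identity to eliminate $\tilde B_{\lambda,\lambda+\gamma}$ from the first, the coefficient of $\tilde B_{\lambda,\lambda-\alpha}$ becomes $(N-2t)-(2s-N)=2N-2s-2t=2r$, and the constant term becomes $(N-2s)I$; this yields $X-(N-2s)I=2r\tilde B_{\lambda,\lambda-\alpha}$. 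Symmetrically, eliminating $\tilde B_{\lambda,\lambda-\alpha}$ gives $X-(2t-N)I=-2r\tilde B_{\lambda,\lambda+\gamma}$.

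The remaining four identities are obtained by the same two-step procedure applied to the black 3-cliques indexed by $(r,s-1,t)$ (for the pair of $Y$ identities, with $\lambda-\beta$ and $\lambda+\alpha$) and $(r,s,t-1)$ (for the pair of $Z$ identities, with $\lambda-\gamma$ and $\lambda+\beta$), invoking parts (ii) and (iii) of Corollary \ref{prop:XYZonWhiteRefq1}. The main obstacle is purely notational: one must verify that the root-vector shifts $\pm\alpha,\pm\beta,\pm\gamma$ line up correctly with the $\mu,\nu$ positions in Lemma \ref{lem:white} and Corollary \ref{prop:XYZonWhiteRefq1}, and that the boundary reductions via Lemma \ref{lem:clarXYZq1} match the vanishing of the corresponding $\tilde B$-maps; there is no substantive calculation beyond this bookkeeping.
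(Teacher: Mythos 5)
Your proposal is correct and follows essentially the same route as the paper: handle $r=0$ via Lemma \ref{lem:clarXYZq1}, then for $r\geq 1$ apply Corollary \ref{prop:XYZonWhiteRefq1} to the black 3-clique $\lambda$, $\lambda-\alpha$, $\lambda+\gamma$ (from Lemma \ref{lem:white}) and eliminate one of the two maps using the relation of Lemma \ref{lem:threetermv}, with the other rows done by the analogous cliques. The only cosmetic difference is in the boundary case, where the paper simply notes the right-hand sides vanish because the coefficient $2r$ is zero, rather than invoking the convention that $\tilde B_{\lambda,\mu}=0$ for out-of-range locations.
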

\begin{proof} We verify the equations  in the top row.
First assume $r=0$, so that $s+t=N$.
In each equation the left-hand side is zero by Lemma
\ref{lem:clarXYZq1}. In each equation the right-hand side
is zero since $r=0$.
Next assume $r\geq 1$.  Define
\begin{equation*}
\mu=(r-1,s+1,t)=\lambda - \alpha,
\qquad \quad 
\nu=(r-1,s,t+1)=\lambda + \gamma.
\end{equation*}
The locations $\lambda,\mu,\nu$ form a black 3-clique in $\Delta_N$
that corresponds to the location $(r-1,s,t)$ in $\Delta_{N-1}$ under
the bijection of Lemma
\ref{lem:white}.
Apply
Corollary \ref{prop:XYZonWhiteRefq1}(i) to this 3-clique
and in the resulting equation
(\ref{eq:xyq1})
eliminate 
$\tilde B_{\lambda,\mu}$ or 
$\tilde B_{\lambda,\nu}$ using
(\ref{eq:sumit}).
We have verified the equations  in the top row.
The other equations are similarly verified.
\end{proof}

\begin{corollary}
For each location $\lambda = (r,s,t)$ in $\Delta_N$,
\begin{eqnarray*}
&&
(X-(N-2s)I)B_\lambda \subseteq B_{\lambda -\alpha},
\qquad \qquad
(X-(2t-N)I)B_\lambda \subseteq B_{\lambda +\gamma},
\\
&&
(Y-(N-2t)I)B_\lambda \subseteq B_{\lambda-\beta},
\qquad \qquad
(Y-(2r-N)I)B_\lambda \subseteq B_{\lambda+\alpha},
\\
&&
(Z-(N-2r)I)B_\lambda \subseteq B_{\lambda-\gamma},
\qquad \qquad
(Z-(2s-N)I)B_\lambda \subseteq B_{\lambda+\beta}.
\end{eqnarray*}
Moreover, equality holds in each inclusion provided that
the characteristic of $\mathbb F$ is 0 or greater than $N$.
\end{corollary}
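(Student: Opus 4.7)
The plan is to derive both the inclusions and the equality statement directly from Proposition \ref{prop:reformP2q1}, treating the twelve assertions as six symmetric pairs and only writing out one case in detail.

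First I would settle the inclusions. By Proposition \ref{prop:reformP2q1}, on $B_\lambda$ the operator $X-(N-2s)I$ equals $2r\,\tilde B_{\lambda,\lambda-\alpha}$. If $r=0$ then this operator is zero on $B_\lambda$ and the inclusion is trivial. If $r\geq 1$, then $\lambda-\alpha=(r-1,s+1,t)$ lies in $\Delta_N$ (its coordinates are nonnegative and sum to $N$), so by Note \ref{note:maps} the map $\tilde B_{\lambda,\lambda-\alpha}$ sends $B_\lambda$ bijectively onto $B_{\lambda-\alpha}$; in particular $(X-(N-2s)I)B_\lambda\subseteq B_{\lambda-\alpha}$. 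In the boundary case $\lambda-\alpha\notin\Delta_N$ (i.e.\ $r=0$), Definition \ref{def:outofbounds} gives $B_{\lambda-\alpha}=0$, which matches the fact that the left side is zero as well. The other five inclusions are proved in the same way by invoking the appropriate line of Proposition \ref{prop:reformP2q1}, with the relevant coefficient $2r$, $2s$, or $2t$.

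For the equality assertion, assume $\mathbb F$ has characteristic $0$ or greater than $N$. The key observation is that $r,s,t\in\{0,1,\ldots,N\}$, so under this hypothesis the scalars $2r,2s,2t$ are either zero (when the corresponding coordinate is zero) or nonzero. Consider again $X-(N-2s)I$. If $r=0$, then $\lambda=(0,s,t)$ is on the $1$-boundary, so by Lemma \ref{lem:clarXYZq1} the operator $X-(N-2s)I$ vanishes on $B_\lambda$, and as noted $B_{\lambda-\alpha}=0$, so equality holds trivially. If $r\geq 1$, then $2r$ is a nonzero scalar in $\mathbb F$ and $\tilde B_{\lambda,\lambda-\alpha}\colon B_\lambda\to B_{\lambda-\alpha}$ is a bijection, hence the map $2r\,\tilde B_{\lambda,\lambda-\alpha}$ is surjective onto $B_{\lambda-\alpha}$. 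By Proposition \ref{prop:reformP2q1} this gives $(X-(N-2s)I)B_\lambda=B_{\lambda-\alpha}$. The remaining five equalities are handled identically using the appropriate coefficient and boundary condition.

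There is no real obstacle; the proposition is essentially a packaging of Proposition \ref{prop:reformP2q1} together with the observations that $\tilde B_{\lambda,\mu}$ is a bijection between the relevant one-dimensional subspaces and that the characteristic hypothesis prevents the integer coefficients $2r,2s,2t$ from vanishing except when they are forced to by the boundary. The only thing to keep track of is the case split at the boundary of $\Delta_N$, but Lemma \ref{lem:clarXYZq1} handles this cleanly and ensures consistency with the convention $B_\mu=0$ for $\mu\notin\Delta_{\leq N}$ from Definition \ref{def:outofbounds}.
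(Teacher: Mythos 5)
Your proof is correct and follows essentially the same route as the paper, whose proof simply cites Note \ref{note:maps} and Proposition \ref{prop:reformP2q1}; your write-up just makes explicit the boundary case ($r=0$, where both sides vanish by Definition \ref{def:outofbounds}) and the fact that the hypothesis on the characteristic (together with the standing assumption that the characteristic is not $2$) makes the coefficients $2r,2s,2t$ nonzero, so the maps are onto.
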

\begin{proof} By Note
\ref{note:maps} and Proposition
\ref{prop:reformP2q1}.
\end{proof}

\noindent Our next goal is to show that the elements
$X,Y,Z$ from Definition
\ref{def:XYZq1} satisfy the defining  relations
for 
$\mathfrak{sl}_2$ given in 
(\ref{eq:uqrelsq1}).

\begin{lemma}
\label{lem:twoviewsq1}
For each  location $\lambda = (r,s,t) $ in $\Delta_N$
the following hold on $B_\lambda$:
\begin{eqnarray*}
&&
XY-2Y=
X(2r-N)+
Y(N-2s) + (N+2)(2t-N)I
=YX+2X,
\\
&&
YZ-2Z=
Y(2s-N)+
Z(N-2t) + (N+2)(2r-N)I
=ZY+2Y,
\\
&&
ZX-2X=
Z(2t-N)+
X(N-2r) + (N+2)(2s-N)I
=XZ+2Z.
\end{eqnarray*}
\end{lemma}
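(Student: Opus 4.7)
The plan is to verify the three rows of identities, which differ only by a cyclic permutation of the coordinates and of $(X,Y,Z)$ (and of $(\alpha,\beta,\gamma)$). This substitution preserves Definition~\ref{def:XYZq1} and Proposition~\ref{prop:reformP2q1}, so it is enough to prove the first row
\[
XY-2Y\;=\;(2r-N)X+(N-2s)Y+(N+2)(2t-N)I\;=\;YX+2X
\]
on $B_\lambda$. Fix $u\in B_\lambda$. The strategy is to compute $XYu$ and $YXu$ as explicit linear combinations of the three vectors $u$, $v:=\tilde B_{\lambda,\lambda-\beta}(u)\in B_{\lambda-\beta}$, and $w_1:=\tilde B_{\lambda,\lambda-\alpha}(u)\in B_{\lambda-\alpha}$, and then check that the claimed right-hand side has the same form by expanding $Xu$ and $Yu$ via Proposition~\ref{prop:reformP2q1}.

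First I would handle $XYu$. By Proposition~\ref{prop:reformP2q1} on $B_\lambda$, $Yu=(N-2t)u+2sv$. Applying $X$: on $B_\lambda$ I use $Xu=(N-2s)u+2rw_1$, and on $B_{\lambda-\beta}$ I use the same proposition with parameters $(r,s-1,t+1)$, giving $Xv=(N-2s+2)v+2r\,w_2$, where $w_2:=\tilde B_{\lambda-\beta,(\lambda-\beta)-\alpha}(v)$; note that $(\lambda-\beta)-\alpha=\lambda+\gamma$, so $w_2\in B_{\lambda+\gamma}$. The critical identification is $w_2=-u-w_1$. Indeed $\{\lambda,\lambda-\beta,\lambda+\gamma\}$ is a white $3$-clique by Lemma~\ref{lem:black}, so by hypothesis its $B$-value is $1$; the cyclic composition of $\tilde B$-maps around this clique is therefore the identity, which forces $\tilde B_{\lambda,\lambda+\gamma}(u)=w_2$. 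On the other hand, $\{\lambda,\lambda-\alpha,\lambda+\gamma\}$ is a black $3$-clique, and the abrace at $u$ is $(u,w_1,-u-w_1)$ by Lemma~\ref{lem:threetermv}; Lemma~\ref{lem:mapsforabrace} then gives $\tilde B_{\lambda,\lambda+\gamma}(u)=-u-w_1$. Substituting $w_2=-u-w_1$ makes $XYu$ an explicit linear combination of $u,v,w_1$. Subtracting $2Yu=2(N-2t)u+4sv$ and expanding the RHS $(2r-N)Xu+(N-2s)Yu+(N+2)(2t-N)u$ by the same formulas, a coefficient-by-coefficient match (using only $r+s+t=N$) then gives the first equality.

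For the equality $YXu+2Xu=\text{RHS}$ I would argue similarly. Writing $Xu=(N-2s)u+2rw_1$ and applying $Y$ to both summands, Proposition~\ref{prop:reformP2q1} on $B_{\lambda-\alpha}$ (with parameters $(r-1,s+1,t)$) yields $Yw_1=(N-2t)w_1+2(s+1)\tilde B_{\lambda-\alpha,\lambda+\gamma}(w_1)$; here the map $\tilde B_{\lambda-\alpha,\lambda+\gamma}$ sends $w_1\mapsto-u-w_1$ directly by Lemma~\ref{lem:mapsforabrace} applied to the same black-clique abrace $(u,w_1,-u-w_1)$, with no white-clique input needed. After substituting, $YXu+2Xu$ is again an explicit combination of $u,v,w_1$, and the coefficient comparison with the RHS goes through by the same polynomial bookkeeping in $r,s,t$ modulo $r+s+t=N$.

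The main obstacle is the white-clique identification $w_2=-u-w_1$ in the $XYu$ computation: without it, $XYu$ would have an independent $B_{\lambda+\gamma}$-component that the RHS does not see, and the identity could not hold. This is precisely where the hypothesis that every white $3$-clique has $B$-value $1$ enters, and it is what ties the $\mathfrak{sl}_2$-structure to the specific Billiard Array $B$. Once this substitution is made the remaining verification is mechanical, and the cyclic symmetry of the whole setup disposes of the second and third rows.
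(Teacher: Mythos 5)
Your argument is correct, but it takes a different route from the paper. The paper proves each identity by a short factorization trick: on $B_\lambda$ one has $X-(2t-N)I=-2r\tilde B_{\lambda,\lambda+\gamma}$, and on $B_{\lambda+\gamma}$ one has $Z-(N-2r+2)I=2(t+1)\tilde B_{\lambda+\gamma,\lambda}$; since these two maps are inverses (Lemma \ref{lem:Binverses}), the product $(Z-(N-2r+2)I)(X-(2t-N)I)$ acts on $B_\lambda$ as the scalar $-4r(t+1)$, and expanding this quadratic relation (using $r+s+t=N$) gives the identity, with the case $r=0$ handled separately via Lemma \ref{lem:clarXYZq1}. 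In that approach the white-clique hypothesis enters only through Proposition \ref{prop:reformP2q1}, which is taken as already established. You instead expand $XYu$ and $YXu$ in the vectors $u$, $\tilde B_{\lambda,\lambda-\beta}u$, $\tilde B_{\lambda,\lambda-\alpha}u$ and must fold the stray $B_{\lambda+\gamma}$-component back in; to do so you re-invoke the $B$-value-$1$ hypothesis for the white clique $\lbrace \lambda,\lambda-\beta,\lambda+\gamma\rbrace$ together with the black-clique abrace relation, which essentially repeats locally a compatibility that the paper has already packaged into the proof of Proposition \ref{prop:XYZonWhiteq1}. Both routes are valid (I checked that your coefficient comparisons close up using $r+s+t=N$); the paper's is shorter and never leaves the two subspaces $B_\lambda$, $B_{\lambda+\gamma}$, while yours is more computational but makes the role of the white-clique condition visible inside this lemma. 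One small point you should add: when $r=0$ or $s=0$ the locations $\lambda-\alpha$, $\lambda-\beta$, $\lambda+\gamma$ (and the cliques you invoke) may not lie in $\Delta_N$, so Proposition \ref{prop:reformP2q1} cannot be applied there as stated; but in those cases the offending terms carry the coefficients $2r$, $2s$, or $4rs$, which vanish (cf. Definition \ref{def:outofbounds}), so the expansion degenerates harmlessly — this needs a sentence, analogous to the paper's separate treatment of $r=0$.
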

\begin{proof}
We verify the equation on the left in the bottom row.
For this equation let $M$ denote the left-hand side minus
the right-hand side. We show $M=0$ on $B_\lambda$.
First assume $r=0$, so that $s+t=N$.
By Lemma
\ref{lem:clarXYZq1}
$X=(2t-N)I$ on $B_\lambda$.
By these comments we routinely obtain $M=0$ on $B_\lambda$.
Next assume $r\geq 1$. Observe $\lambda+\gamma=(r-1,s,t+1) \in \Delta_N$.
By Lemma
\ref{lem:Binverses} the maps
$\tilde B_{\lambda, \lambda+\gamma}:B_\lambda \to B_{\lambda+\gamma}$
and
$\tilde B_{\lambda+\gamma, \lambda}:B_{\lambda+\gamma} \to B_{\lambda}$
are inverses. Using Proposition 
\ref{prop:reformP2q1} we find that on $B_\lambda$,
\begin{equation*}
X - (2t-N)I = -2r\tilde B_{\lambda, \lambda+\gamma},
\end{equation*}
and on $B_{\lambda+\gamma}$,
\begin{equation*}
Z - (N-2r+2)I = 2(t+1)\tilde B_{\lambda+\gamma, \lambda}.
\end{equation*}
Therefore on $B_\lambda$,
\begin{equation}
(Z-(N-2r+2)I)(X-(2t-N)I) = -4r(t+1)I.
\label{eq:prelimq1}
\end{equation}
Evaluating (\ref{eq:prelimq1}) using $r+s+t=N$ we find that
$M=0$ on $B_\lambda$.
We have verified the equation on the left in the bottom row.
The remaining equations
are similarly verified.
\end{proof}

\begin{proposition}
\label{lem:qWeylAltq1}
The elements $X,Y,Z$ from Definition
\ref{def:XYZq1} satisfy
\begin{eqnarray*}
XY-YX= 2X+2Y,
\qquad 
YZ-ZY= 2Y+2Z,
\qquad
ZX-XZ= 2Z+2X.
\label{eq:qWeylAltq1}
\end{eqnarray*}
\end{proposition}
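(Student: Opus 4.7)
The plan is to derive the three commutation relations directly from Lemma \ref{lem:twoviewsq1}, which has already done essentially all the work. Specifically, Lemma \ref{lem:twoviewsq1} asserts that on each subspace $B_\lambda$ with $\lambda = (r,s,t) \in \Delta_N$, the expression $X(2r-N) + Y(N-2s) + (N+2)(2t-N)I$ equals both $XY - 2Y$ and $YX + 2X$. Thus on $B_\lambda$ we have $XY - 2Y = YX + 2X$, which rearranges to $XY - YX = 2X + 2Y$. The cyclic analogues $YZ - ZY = 2Y + 2Z$ and $ZX - XZ = 2Z + 2X$ follow in exactly the same way from the second and third rows of Lemma \ref{lem:twoviewsq1}.

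To conclude that these operator identities hold on all of $V$ (not just on each $B_\lambda$), I would invoke Corollary \ref{lem:BspanV2}: the subspaces $\lbrace B_\lambda \rbrace_{\lambda \in \Delta_N}$ span $V$. Since the operators $XY - YX$, $2X + 2Y$, and so on agree on every $B_\lambda$, they agree on $V$.

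There is essentially no obstacle: the real technical content is already packaged into Lemma \ref{lem:twoviewsq1}, which was proved using the explicit action of $X,Y,Z$ on braces from Proposition \ref{prop:reformP2q1} together with the inverse relationship between $\tilde B_{\lambda,\mu}$ and $\tilde B_{\mu,\lambda}$ (Lemma \ref{lem:Binverses}). The step from Lemma \ref{lem:twoviewsq1} to the present proposition is a one-line algebraic rearrangement, so the proof will be very short — just a citation of the lemma, the subtraction of one side from the other, and an invocation of the spanning property.
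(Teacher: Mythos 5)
Your proposal is correct and matches the paper's own proof: the paper also deduces each relation on $B_\lambda$ by equating the two outer expressions in Lemma \ref{lem:twoviewsq1} and then extends to all of $V$ via the spanning property in Corollary \ref{lem:BspanV2}. Nothing is missing; the argument really is that short.
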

\begin{proof} By Lemma
\ref{lem:twoviewsq1}
these equations hold on $B_\lambda$
for all $\lambda \in \Delta_N$. The result holds
in view of Corollary
\ref{lem:BspanV2}.
\end{proof}

\begin{theorem}
\label{thm:sl2A}
Let $V$ denote a vector space over $\mathbb F$
with dimension $N+1$. Let $B$ denote a Billiard
Array on $V$. Assume that each white 3-clique in
$\Delta_N$ has $B$-value $1$.
Then there exists a unique 
$\mathfrak{sl}_2$-module structure on $V$
such that for $0 \leq i \leq N$,
${ x}-(2i-N)I$
(resp. 
${ y}-(2i-N)I$)
(resp. 
${ z}-(2i-N)I$)
vanishes on component $i$ of
the $B$-decomposition $\lbrack 2,3\rbrack$
(resp. $\lbrack 3,1\rbrack$)
(resp. $\lbrack 1,2\rbrack$).
The $\mathfrak{sl}_2$-module $V$ is irreducible, provided
that the characteristic of $\mathbb F$ is 0 or
greater than $N$.
\end{theorem}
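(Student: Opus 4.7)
For existence, let $X, Y, Z \in {\rm End}(V)$ be the operators of Definition \ref{def:XYZq1}; these are well defined because each $B$-decomposition is a direct-sum decomposition of $V$. Proposition \ref{lem:qWeylAltq1} shows that $X, Y, Z$ satisfy the defining relations \eqref{eq:uqrelsq1} of $\mathfrak{sl}_2$ from Definition \ref{def:sl2A}, so the assignments $x \mapsto X$, $y \mapsto Y$, $z \mapsto Z$ extend to a Lie algebra homomorphism $\mathfrak{sl}_2 \to {\rm End}(V)$ and give the required module structure. For uniqueness, any $\mathfrak{sl}_2$-module structure on $V$ satisfying the stated vanishing conditions must have $x$ act as $(2i-N)I$ on the $i$-component of the $B$-decomposition $[2,3]$; since these components form a direct sum of $V$, the action of $x$ on $V$ is uniquely determined, and the same reasoning applies to $y$ and $z$.

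For the irreducibility claim, assume the characteristic of $\mathbb F$ is $0$ or greater than $N$. Let $W$ be a nonzero $\mathfrak{sl}_2$-submodule of $V$. Under the characteristic assumption the scalars $2i - N$ for $0 \leq i \leq N$ are pairwise distinct in $\mathbb F$, so the $X$-eigenspace decomposition of $V$ coincides with the $B$-decomposition $[2,3]$; its one-dimensional summands are the subspaces $B_{(0, N-i, i)}$ indexed by the $1$-boundary of $\Delta_N$. Since $W$ is $X$-invariant, it decomposes as a sum of $X$-eigenspaces, hence contains $B_\lambda$ for at least one $\lambda$ on the $1$-boundary.

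Next I invoke the corollary immediately following Proposition \ref{prop:reformP2q1}, which asserts, under our characteristic assumption, that each of the six inclusions there is an equality. In particular $(Y - (2r - N)I) B_\mu = B_{\mu + \alpha}$, together with five analogous bijective transports in the directions $\pm\alpha, \pm\beta, \pm\gamma$ between adjacent $B$-subspaces whenever source and target lie in $\Delta_N$. Starting from the $B_\lambda \subseteq W$ obtained above and iterating these transports, all of which act as polynomials in $X, Y, Z$, I populate every $B_\mu$ ($\mu \in \Delta_N$) inside $W$: this is possible because the six vectors $\pm\alpha, \pm\beta, \pm\gamma$ are precisely the edge directions of $\Delta_N$, which is connected as a graph by the results of Section 4. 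Since $V = \sum_{\mu \in \Delta_N} B_\mu$ by Corollary \ref{lem:BspanV2}, we conclude $W = V$.

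The main delicacy is this transport step. One must verify that iterated application of the six operators $(X - cI)$, $(Y - cI)$, $(Z - cI)$, for the scalars $c$ prescribed by Proposition \ref{prop:reformP2q1}, yields a route inside $W$ from one chosen $B_\lambda$ on the $1$-boundary to every other $B_\mu$. A concrete plan: first apply $(Y - (2r-N)I)$ repeatedly to walk along the $3$-line of $\Delta_N$ containing $\lambda$, reaching all $B_{(r, N-i-r, i)}$ where $\lambda = (0, N-i, i)$; then apply $(Z - (2s-N)I)$ to sweep across the remaining $1$-lines. The characteristic hypothesis ensures the prefactors $2r, 2s, 2t$ from Proposition \ref{prop:reformP2q1} are nonzero exactly where needed, so every transport along the route remains bijective. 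The bookkeeping is routine once the six equalities of the corollary are in hand.
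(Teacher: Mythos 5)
Your existence and uniqueness arguments coincide with the paper's: existence is exactly Proposition \ref{lem:qWeylAltq1} applied to the operators of Definition \ref{def:XYZq1}, and uniqueness follows because the vanishing conditions pin down the action of each generator on every summand of a direct-sum decomposition of $V$. For irreducibility (which the paper dismisses as ``readily checked'') your general strategy is sound: since the eigenvalues $2i-N$ are distinct under the characteristic hypothesis (together with the standing assumption ${\rm char}\,\mathbb F\neq 2$ in this section), a nonzero submodule $W$ contains some $B_\lambda$ with $\lambda$ on the $1$-boundary, and the six identities of Proposition \ref{prop:reformP2q1} show that for any adjacent $\mu\in\Delta_N$ the relevant operator $X-cI$, $Y-cI$ or $Z-cI$ carries $B_\lambda$ onto $B_\mu$ with nonzero coefficient ($\pm 2r,\pm 2s,\pm 2t$ with the relevant coordinate at least $1$), so connectivity of $\Delta_N$ and Corollary \ref{lem:BspanV2} give $W=V$.

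The one flaw is your ``concrete plan'' in the last paragraph: it does not do what you claim. Starting from $\lambda=(0,N-i,i)$, the $+\alpha$ sweep along the $3$-line reaches only the locations $(r,N-i-r,i)$ with $r\leq N-i$, and the subsequent $+\beta$ sweep (the operator $Z-(2s-N)I$) only decreases the third coordinate; so you reach only locations with first coordinate at most $N-i$ and third coordinate at most $i$. In the extreme case $i=0$ you never leave the $3$-boundary, since the coefficient $-2t$ vanishes there. The route must also use the $-\beta$ transport $Y-(N-2t)I$ (or other directions) to increase $t$; alternatively, simply drop the explicit route and rely on the connectivity argument you already gave, transporting along an arbitrary path in $\Delta_N$ edge by edge. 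This is a repairable slip rather than a fatal gap, but as written the final verification does not cover all of $\Delta_N$.
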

\begin{proof} The 
 $\mathfrak{sl}_2$-module structure exists by
Proposition
\ref{lem:qWeylAltq1}.
It is unique by construction. The last assertion
of the theorem is readily checked.
\end{proof}

\begin{definition}
\label{def:nuxyzq1}
\rm
\cite[Section~2]{bt}
Define ${\nu_x,\nu_y,\nu_z}$  in 
$\mathfrak{sl}_2$ by
\begin{equation*}
-2 { \nu_x} ={y}+{ z},
\qquad
-2{ \nu_y}={z}+{ x},
\qquad
-2{ \nu_z} = { x}+{ y}.
\end{equation*}
\end{definition}

\noindent Our next goal is to describe the
actions of
${ \nu_x, \nu_y, \nu_z}$ on $\lbrace B_{\lambda}\rbrace_{\lambda \in \Delta_N}$.

\begin{proposition}
\label{lem:nuActionq1}
For each location $\lambda =(r,s,t)$ in  $\Delta_N$
the following hold on $B_\lambda$:
\begin{eqnarray*}
{ \nu_x} =
s\tilde B_{\lambda,\lambda+\alpha}
-t\tilde B_{\lambda,\lambda-\gamma},
\qquad 
{ \nu_y} =
t\tilde B_{\lambda,\lambda+\beta}
-r\tilde B_{\lambda,\lambda-\alpha},
\qquad
{ \nu_z} =
r\tilde B_{\lambda,\lambda+\gamma}
-s\tilde B_{\lambda,\lambda-\beta}.
\end{eqnarray*} 
\end{proposition}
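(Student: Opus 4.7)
The plan is to read off the formulas directly from Proposition \ref{prop:reformP2q1} together with Definition \ref{def:nuxyzq1}. Concretely, Definition \ref{def:nuxyzq1} gives $\nu_x = -(y+z)/2$ in $\mathfrak{sl}_2$, so after passing to the module structure on $V$ obtained in Theorem \ref{thm:sl2A}, it acts on $B_\lambda$ as $-(Y+Z)/2$. Hence the only real content is to express $Y+Z$ in terms of the maps $\tilde B_{\lambda,\lambda+\alpha}$ and $\tilde B_{\lambda,\lambda-\gamma}$.

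For this, I would invoke the two identities from Proposition \ref{prop:reformP2q1} that apply to $\lambda = (r,s,t)$ on $B_\lambda$, namely
\begin{equation*}
Y - (2r-N)I \;=\; -2s\,\tilde B_{\lambda,\lambda+\alpha},
\qquad
Z - (N-2r)I \;=\; 2t\,\tilde B_{\lambda,\lambda-\gamma}.
\end{equation*}
The key observation is that when these are added, the scalar shifts $(2r-N)I$ and $(N-2r)I$ cancel. Therefore on $B_\lambda$,
\begin{equation*}
Y+Z \;=\; -2s\,\tilde B_{\lambda,\lambda+\alpha} \,+\, 2t\,\tilde B_{\lambda,\lambda-\gamma},
\end{equation*}
and dividing by $-2$ yields the claimed formula $\nu_x = s\,\tilde B_{\lambda,\lambda+\alpha} - t\,\tilde B_{\lambda,\lambda-\gamma}$ on $B_\lambda$. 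The analogous formulas for $\nu_y$ and $\nu_z$ follow by the same argument after cyclically permuting the roles of $(x,y,z)$, of $(r,s,t)$, and of $(\alpha,\beta,\gamma)$; indeed, the remaining four identities of Proposition \ref{prop:reformP2q1} pair up in precisely the same way.

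There is no real obstacle, only a small bookkeeping point to confirm: if $\lambda + \alpha \notin \Delta_N$ (equivalently $s=0$) or $\lambda - \gamma \notin \Delta_N$ (equivalently $t=0$), then the corresponding $\tilde B$-map is zero by convention, but the coefficient $s$ or $t$ in front of it is also zero, and Proposition \ref{prop:reformP2q1} collapses consistently (e.g.\ $s=0$ forces $Y = (2r-N)I$ on $B_\lambda$, matching Lemma \ref{lem:clarXYZq1}). So the argument goes through uniformly for every $\lambda \in \Delta_N$.
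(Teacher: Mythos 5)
Your proposal is correct and matches the paper's own proof, which likewise just evaluates the formulas of Definition \ref{def:nuxyzq1} using the identities of Proposition \ref{prop:reformP2q1} (pairing the two expressions whose scalar parts cancel, exactly as you do). The boundary bookkeeping via the conventions $\tilde B_{\lambda,\mu}=0$ off $\Delta_N$ is a fine extra check but not an issue.
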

\begin{proof} Evaluate the equations in
 Definition
\ref{def:nuxyzq1} using
 Proposition
\ref{prop:reformP2q1}.
\end{proof}

\begin{corollary}
\label{cor:nuActionq1}
For each location $\lambda \in \Delta_N$,
\begin{equation*}
{\nu_x} B_{\lambda} \subseteq 
B_{\lambda + \alpha}
+
B_{\lambda -\gamma},
\qquad 
{ \nu_y} B_{\lambda} \subseteq
B_{\lambda+\beta}
+
B_{\lambda -\alpha},
\qquad 
{\nu_z} B_{\lambda} \subseteq 
B_{\lambda+\gamma}
+
B_{\lambda-\beta}.
\end{equation*}
\end{corollary}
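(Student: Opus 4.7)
The plan is to deduce the three inclusions directly from Proposition \ref{lem:nuActionq1}, using only the fact that each map $\tilde B_{\lambda,\mu}$ sends $B_\lambda$ into $B_\mu$ (or is the zero map when $\mu$ lies outside $\Delta_N$, by the convention adopted earlier).

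First I would fix a location $\lambda=(r,s,t)$ in $\Delta_N$ and consider any vector in $B_\lambda$. Applying Proposition \ref{lem:nuActionq1}, the action of $\nu_x$ on $B_\lambda$ is the linear combination $s\tilde B_{\lambda,\lambda+\alpha} - t\tilde B_{\lambda,\lambda-\gamma}$. By Note \ref{note:maps}, $\tilde B_{\lambda,\lambda+\alpha}$ maps $B_\lambda$ into $B_{\lambda+\alpha}$, and $\tilde B_{\lambda,\lambda-\gamma}$ maps $B_\lambda$ into $B_{\lambda-\gamma}$; when either $\lambda+\alpha$ or $\lambda-\gamma$ is not in $\Delta_N$, the corresponding map is zero (and the corresponding symbol $B_{\lambda+\alpha}$ or $B_{\lambda-\gamma}$ is $0$ by Definition \ref{def:outofbounds}), so the inclusion remains valid trivially. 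Hence $\nu_x B_\lambda \subseteq B_{\lambda+\alpha}+B_{\lambda-\gamma}$.

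The proofs for $\nu_y$ and $\nu_z$ are entirely analogous: cyclically permute the roles of the coordinates and apply the corresponding formulas from Proposition \ref{lem:nuActionq1} together with Note \ref{note:maps}. In each case one reads off that the image lies in the sum of two of the $B_\mu$'s at adjacent or boundary-adjacent locations.

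There is no substantive obstacle here; the corollary is simply the statement that a sum of two linear maps, each of whose image is known, has image inside the sum of the two target subspaces. The only minor point worth flagging is the need to invoke Definition \ref{def:outofbounds} so that the claim makes sense when $\lambda$ lies near the boundary of $\Delta_N$ and one of $\lambda\pm\alpha$, $\lambda\pm\beta$, $\lambda\pm\gamma$ falls outside $\Delta_N$.
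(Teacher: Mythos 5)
Your proposal is correct and follows exactly the paper's own route: the paper proves this corollary by citing Note \ref{note:maps} and Proposition \ref{lem:nuActionq1}, which is precisely the argument you spell out (including the boundary convention of Definition \ref{def:outofbounds}).
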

\begin{proof} By Note
\ref{note:maps} and Proposition
\ref{lem:nuActionq1}.
\end{proof}

\noindent Recall the $B$-flags $\lbrack 1\rbrack,
 \lbrack 2\rbrack,
 \lbrack 3\rbrack$ from
Definition \ref{lem:threeflags}.

\begin{proposition}
\label{prop:3flagsRevq1}
Assume that the characteristic of $\mathbb F$
is 0 or greater than $N$.
Then the $B$-flags 
$\lbrack 1 \rbrack $,
$\lbrack 2 \rbrack $,
$\lbrack 3 \rbrack $ are, respectively,
\begin{equation*}
\lbrace \nu^{N-i}_{ x}V\rbrace_{i=0}^N,
\qquad
\lbrace \nu^{N-i}_{ y} V\rbrace_{i=0}^N,
\qquad
\lbrace \nu^{N-i}_{ z}V\rbrace_{i=0}^N.
\end{equation*}
\end{proposition}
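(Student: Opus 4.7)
The plan is to show, for each pair $(\eta,\nu_?) \in \{(1,\nu_x),(2,\nu_y),(3,\nu_z)\}$ and each $0 \leq k \leq N$, that
\[
\nu_?^{\,k}\,V \;=\; U_{N-k},
\]
where $\{U_i\}_{i=0}^N$ denotes the $B$-flag $[\eta]$; the claim of the proposition is then the case $k=N-i$. By symmetry it suffices to treat $\nu_x$ and the $B$-flag $[1]$.

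First I would establish the inclusion $\nu_x^k V \subseteq U_{N-k}$ by a one-line induction on $k$. From Corollary~\ref{cor:nuActionq1} we have $\nu_x B_\lambda \subseteq B_{\lambda+\alpha} + B_{\lambda-\gamma}$; inspecting $\alpha = e_1-e_2$ and $-\gamma = e_1-e_3$, both target locations have $1$-coordinate exactly one greater than that of $\lambda$. By Lemmas \ref{lem:3Fclar}(i) and \ref{lem:clar}, component $j$ of the $B$-flag $[1]$ is the sum of the $B_\lambda$ over $\lambda$ whose $1$-coordinate is $\geq N-j$. Hence $\nu_x$ sends $U_j$ into $U_{j-1}$, and starting from $V=U_N$ we obtain $\nu_x^k V \subseteq U_{N-k}$.

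Next I prove equality by a dimension count. Fix a Concrete Billiard Array $\mathcal{B}$ corresponding to $B$. By Lemma~\ref{lem:BspanVCBA}, the vectors $\{\mathcal{B}_{(N-t,0,t)}\}_{t=0}^N$ along the $2$-boundary form a basis of $V$. For a location $\lambda=(r,0,t)$ on the $2$-boundary, Proposition~\ref{lem:nuActionq1} gives $\nu_x = -t\,\tilde B_{\lambda,\lambda-\gamma}$ on $B_\lambda$ (the first term vanishes since $s=0$), and $\lambda-\gamma=(r+1,0,t-1)$ is again on the $2$-boundary. An easy induction on $k$ then yields
\[
\nu_x^k\,\mathcal{B}_{(N-t,0,t)} \;=\; (-1)^k\,t(t-1)\cdots(t-k+1)\,\cdot\,\Big(\prod_{j=0}^{k-1} \tilde{\mathcal{B}}_{(N-t+j,0,t-j),(N-t+j+1,0,t-j-1)}\Big)\,\mathcal{B}_{(N-t+k,0,t-k)}
\]
for $k \leq t$ (and $0$ for $k>t$). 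In characteristic $0$ or greater than $N$ the leading integer factor is nonzero, and the $\tilde{\mathcal{B}}$-factors are nonzero by Definition~\ref{def:calT}. Hence $\mathcal{B}_{(r,0,N-r)} \in \nu_x^k V$ for each $r \in \{k,\dots,N\}$. These are $N-k+1$ linearly independent vectors (being part of the $2$-boundary basis of $V$), so $\dim \nu_x^k V \geq N-k+1 = \dim U_{N-k}$ by Lemma~\ref{lem:Bmudim}. Combined with the inclusion above, this forces $\nu_x^k V = U_{N-k}$.

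The arguments for $\nu_y$ and the $B$-flag $[2]$, and for $\nu_z$ and the $B$-flag $[3]$, are obtained by the same recipe after cyclically permuting the roles of the three boundaries and coordinates, using the appropriate lines in Proposition~\ref{lem:nuActionq1}. The main (very minor) obstacle is bookkeeping: choosing the right boundary (here the $2$-boundary, where $\nu_x$ reduces to a single $\tilde B$-term) so that the iterated action stays on a line and produces a closed-form nonzero scalar. Everything else is routine given the preceding machinery.
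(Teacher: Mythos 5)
Your proof is correct, and it is a mild variant of the paper's argument rather than a different one in substance: both hinge on Proposition \ref{lem:nuActionq1} restricted to a boundary line, where one of the two terms in the action of $\nu_x$ vanishes, together with the characteristic hypothesis to guarantee the integer coefficients are nonzero. The paper proceeds more directly: it takes the $B$-decomposition $\lbrack 1,2\rbrack$ (components $V_i$ at the locations $(N-i,i,0)$ on the $3$-boundary), notes $\nu_x V_i\subseteq V_{i-1}$ and $\nu_x V_0=0$ by Corollary \ref{cor:nuActionq1}, upgrades this to $\nu_x V_i=V_{i-1}$ by Proposition \ref{lem:nuActionq1} and the characteristic assumption, and then uses Lemma \ref{lem:decIndFlag} ($U_i=V_0+\cdots+V_i$) to get $\nu_x U_i=U_{i-1}$ and hence $U_i=\nu_x^{N-i}V$ at once. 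You instead split the equality into an upper bound (the flag-lowering inclusion $\nu_x^k V\subseteq U_{N-k}$, argued over all of $\Delta_N$ via Corollary \ref{cor:nuActionq1} and Lemmas \ref{lem:3Fclar}, \ref{lem:clar}) and a lower bound (a dimension count via the iterated one-term action of $\nu_x$ along the $2$-boundary basis of a corresponding Concrete Billiard Array), which works equally well but costs the explicit scalar bookkeeping; had you observed that the $B$-decomposition $\lbrack 1,3\rbrack$ along your chosen $2$-boundary also induces the $B$-flag $\lbrack 1\rbrack$, your computation would have yielded the paper's one-step conclusion without the separate containment and dimension arguments. All steps you use are justified by the cited results, including the nonvanishing of the $\tilde{\mathcal B}$-scalars from Definition \ref{def:calT}, so there is no gap.
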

\begin{proof}
Consider  the $B$-flag $\lbrack 1 \rbrack$.
Denote this by $\lbrace U_i\rbrace_{i=0}^N$.
Denote the $B$-decomposition $\lbrack 1,2 \rbrack$ by
$\lbrace V_i\rbrace_{i=0}^N$.
Recall from 
 Lemma
\ref{lem:BdecClar} that
for $0 \leq i \leq N$,
 $V_i$ is included in $B$
at location $(N-i,i,0)$.
Recall from Lemma
\ref{lem:decIndFlag} that $U_i = V_0+\cdots + V_i$ for
$0 \leq i \leq N$. 
By Corollary
\ref{cor:nuActionq1} we find 
${ \nu_x} V_i \subseteq V_{i-1}$ for $1 \leq i \leq N$
and ${ \nu_x} V_0=0$. 
Going back to Proposition
\ref{lem:nuActionq1}
and invoking our assumption about the characteristic of $\mathbb F$,
we see that in fact
${ \nu_x} V_i=V_{i-1}$
for $1 \leq i \leq N$. By the above comments
${ \nu_x} U_i = U_{i-1}$ for $1 \leq i \leq N$. 
Now since $U_N=V$ we obtain
$U_i = \nu^{N-i}_{ x}V$ for $0 \leq i \leq N$. We have shown
that the $B$-flag $\lbrack 1\rbrack $ is equal to
$\lbrace \nu^{N-i}_{ x}V\rbrace_{i=0}^N$.
The remaining assertions are similarly shown.
\end{proof}

\begin{corollary} \label{eq:charzero}
Assume that $\mathbb F$ has characteristic
$0$. Let $V$ denote an
irreducible 
$\mathfrak{sl}_2$-module of dimension $N+1$.
Then
\begin{enumerate}
\item[\rm (i)] the following are totally opposite flags on $V$:
\begin{equation}
\label{eq:Sl2list}
\lbrace \nu^{N-i}_xV\rbrace_{i=0}^N,
\qquad
\lbrace \nu^{N-i}_yV\rbrace_{i=0}^N,
\qquad
\lbrace \nu^{N-i}_zV\rbrace_{i=0}^N.
\end{equation}
\item[\rm (ii)] for the corresponding Billiard
Array on $V$, the value of each white 3-clique is $1$.
\end{enumerate}
\end{corollary}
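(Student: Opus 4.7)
The plan is to reverse the construction of Theorem~\ref{thm:sl2A}, using the classical uniqueness of the irreducible $\mathfrak{sl}_2$-module of a given dimension. In characteristic $0$ there is, up to isomorphism, a unique irreducible $\mathfrak{sl}_2$-module of dimension $N+1$; so if we can exhibit even one Billiard Array whose white $3$-cliques all have value $1$ and whose induced $\mathfrak{sl}_2$-module of dimension $N+1$ is irreducible, the stated Billiard Array on $V$ will arise from it by transport of structure.

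Concretely, I would first produce a model Billiard Array $B'$ of diameter $N$ on some vector space $V'$ over $\mathbb{F}$ for which every white $3$-clique has value $1$ (for instance, the Concrete Billiard Array of Definition~\ref{def:CBAv1}, for which this was already verified). By Theorem~\ref{thm:sl2A} the space $V'$ then carries an $\mathfrak{sl}_2$-module structure, and by the last sentence of that theorem this module is irreducible since $\mathrm{char}(\mathbb{F}) = 0$. By uniqueness of the irreducible $\mathfrak{sl}_2$-module of dimension $N+1$ in characteristic $0$, there exists an $\mathfrak{sl}_2$-module isomorphism $\sigma : V' \to V$. Define $B$ on $V$ by $B_\lambda = \sigma(B'_\lambda)$ for all $\lambda \in \Delta_N$; then $B$ is a Billiard Array on $V$, and because $\sigma$ is a linear bijection every white $3$-clique value is preserved, so every white $3$-clique of $B$ still has $B$-value $1$. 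This already yields part~(ii) once we have matched $B$ with the three flags in~(i).

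To identify the flags, I would apply Proposition~\ref{prop:3flagsRevq1} to $B$: since $\sigma$ intertwines the $\mathfrak{sl}_2$-actions, it carries $\nu_x^{N-i}V'$ to $\nu_x^{N-i}V$ and similarly for $\nu_y,\nu_z$, so the $B$-flags $\lbrack 1 \rbrack$, $\lbrack 2 \rbrack$, $\lbrack 3 \rbrack$ coincide with the three flags $\{\nu_x^{N-i}V\}_{i=0}^N$, $\{\nu_y^{N-i}V\}_{i=0}^N$, $\{\nu_z^{N-i}V\}_{i=0}^N$. Then Theorem~\ref{thm:forward} immediately tells us that these three $B$-flags are totally opposite, which is~(i), and the Billiard Array they determine via Theorem~\ref{thm:backward} is by construction $B$, whose white $3$-cliques all have value $1$, which is~(ii).

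The main obstacle is really a bookkeeping issue rather than a computational one: one must verify that the $\mathfrak{sl}_2$-module structure on $V$ transported from $V'$ via $\sigma$ is the \emph{given} $\mathfrak{sl}_2$-action on $V$. This is automatic from $\sigma$ being a module isomorphism, but it is the only step where one must resist re-proving something from scratch. Once this is observed, the argument reduces to pulling together Theorem~\ref{thm:sl2A}, Proposition~\ref{prop:3flagsRevq1}, Theorem~\ref{thm:forward}, and the classical uniqueness of irreducible $\mathfrak{sl}_2$-modules in characteristic $0$.
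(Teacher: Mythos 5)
Your proposal is correct and follows essentially the same route as the paper: both invoke the uniqueness of the irreducible $\mathfrak{sl}_2$-module of dimension $N+1$ in characteristic $0$, identify $V$ (via a module isomorphism) with the module produced by Theorem~\ref{thm:sl2A} from a Billiard Array whose white $3$-cliques have value $1$, and then conclude with Proposition~\ref{prop:3flagsRevq1} and Theorem~\ref{thm:forward}. The only difference is cosmetic: you make the existence of the model explicit (Definition~\ref{def:CBAv1}) and spell out the transport of structure, where the paper simply says the two modules are identified via the isomorphism.
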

\begin{proof} 
By
\cite[Theorem 7.2]{humphreys},
up to isomorphism there exists a unique irreducible
$\mathfrak{sl}_2$-module with dimension $N+1$.
The 
$\mathfrak{sl}_2$-module from Theorem
\ref{thm:sl2A} is irreducible with dimension $N+1$.
Therefore the 
$\mathfrak{sl}_2$-module $V$ is isomorphic
to the 
$\mathfrak{sl}_2$-module from Theorem
\ref{thm:sl2A}.
Via the isomorphism we identify these 
$\mathfrak{sl}_2$-modules.
Consider the Billiard Array $B$ on $V$ from
Theorem
\ref{thm:sl2A}.
By Theorem 
\ref{thm:forward}
and Proposition
\ref{prop:3flagsRevq1}, the three sequences
in line
(\ref{eq:Sl2list}) are totally opposite flags on $V$,
and $B$ is the corresponding Billiard Array. By the assumption
of Theorem
\ref{thm:sl2A}, the $B$-value of each white $3$-clique is 1.
\end{proof}

\noindent 
We are done discussing 
 $\mathfrak{sl}_2$.
For the rest of this section assume
the field $\mathbb F$ is arbitrary.
Fix a nonzero $q \in \mathbb F$ such that
$q^2 \not=1$. We recall 
the quantum algebra $U_q(\mathfrak{sl}_2)$.
We will use the equitable presentation, which was introduced in
\cite{equit}.

\begin{definition}
\label{def:uqA}
\rm 
\cite[Theorem~2.1]{equit}
Let 
$U_q(\mathfrak{sl}_2)$ denote the associative $\mathbb F$-algebra
with generators $x,y^{\pm 1},z$ and  relations
$yy^{-1}=1$, $y^{-1}y=1$,
\begin{equation}
\frac{qxy-q^{-1}yx}{q-q^{-1}} = 1, \qquad
\frac{qyz-q^{-1}zy}{q-q^{-1}} = 1, \qquad
\frac{qzx-q^{-1}xz}{q-q^{-1}} = 1.
\label{eq:uqrels}
\end{equation}
\end{definition}

\noindent For the rest of this section
the following notation is in effect.
Fix $N \in \mathbb N$.
Let $V$ denote a vector space over $\mathbb F$
with dimension $N+1$. Let $B$ denote a Billiard
Array on $V$. Assume that each white 3-clique in
$\Delta_N$ has $B$-value $q^{-2}$. Using $B$ we will turn
$V$ into
a $U_q(\mathfrak{sl}_2)$-module. Recall the
$B$-decompositions
 $\lbrack \eta, \xi\rbrack$ of $V$ from Definition
\ref{def:Bdec}.

\begin{definition}
\label{def:XYZ}
\rm
Define 
$X,Y,Z$
in 
 ${\rm End}(V)$ such that
for $0 \leq i \leq N$,
$X-q^{N-2i}I$
(resp. 
$Y-q^{N-2i}I$)
(resp. 
$Z-q^{N-2i}I$)
vanishes on component $i$ of
the $B$-decomposition $\lbrack 2,3\rbrack$
(resp. $\lbrack 3,1\rbrack$)
(resp. $\lbrack 1,2\rbrack$).
Note that each of $X,Y,Z$ is invertible.
\end{definition}

\noindent The next result is meant to clarify Definition
\ref{def:XYZ}.

\begin{lemma}
\label{lem:clarXYZ}
Pick a location $\lambda = (r,s,t)$ on the boundary of $\Delta_N$.
Then on $B_\lambda$,
\begin{eqnarray*}
&&X=q^{N-2t}I=q^{2s-N}I \qquad \qquad {\mbox{ if $r=0$}}; 
\\
&&Y=q^{N-2r}I=q^{2t-N}I \qquad \qquad {\mbox{ if $s=0$}}; 
\\
&&Z=q^{N-2s}I=q^{2r-N}I \qquad \qquad {\mbox{ if $t=0$}}.
\end{eqnarray*}
\end{lemma}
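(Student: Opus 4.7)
The plan is to prove this lemma by directly unpacking Definition \ref{def:XYZ} using the explicit description of the $B$-decompositions $\lbrack \eta, \xi \rbrack$ supplied by Lemma \ref{lem:BdecClar} together with the location table in Lemma \ref{lem:sixpathsinfo}. The content of the lemma is purely a bookkeeping exercise: a boundary location $\lambda$ with one zero coordinate lies on exactly one of the boundary paths $\lbrack 2,3 \rbrack$, $\lbrack 3,1 \rbrack$, $\lbrack 1,2 \rbrack$, so the relevant component index in the corresponding $B$-decomposition can be read off directly, and then the definition of $X$, $Y$, $Z$ pins down the scalar by which each acts on $B_\lambda$.

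Concretely, I would handle the three cases uniformly. Take the first: assume $r = 0$, so $\lambda = (0, s, t)$ with $s + t = N$. By Lemma \ref{lem:sixpathsinfo} the path $\lbrack 2, 3 \rbrack$ visits the locations $\lambda_i = (0, N-i, i)$ for $0 \le i \le N$, so $\lambda = \lambda_t$. By Lemma \ref{lem:BdecClar} the $t$-component of the $B$-decomposition $\lbrack 2, 3 \rbrack$ is $B_\lambda$. Definition \ref{def:XYZ} then gives $X = q^{N-2t}I$ on $B_\lambda$, and since $s = N - t$ we also have $q^{N-2t} = q^{2s-N}$, which yields the second equality. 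The cases $s = 0$ (using the path $\lbrack 3, 1 \rbrack$ with $\lambda_i = (i, 0, N-i)$, so $\lambda = \lambda_r$, giving $Y = q^{N-2r}I = q^{2t-N}I$) and $t = 0$ (using the path $\lbrack 1, 2 \rbrack$ with $\lambda_i = (N-i, i, 0)$, so $\lambda = \lambda_s$, giving $Z = q^{N-2s}I = q^{2r-N}I$) are entirely analogous.

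There is no real obstacle here: the lemma is a direct translation step whose sole purpose is to make subsequent computations (like the base case $r=0$ of an induction in the spirit of Proposition \ref{prop:XYZonWhiteq1}) easy to apply. The only thing to be careful about is matching each pair $(\eta, \xi)$ in the definition of $X$, $Y$, $Z$ with the correct row of the table in Lemma \ref{lem:sixpathsinfo} and keeping the relation $r + s + t = N$ in mind when converting between the two displayed forms of each scalar.
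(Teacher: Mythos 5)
Your proof is correct and follows the paper's own route: the paper's proof is the one-line citation of Lemma \ref{lem:BdecClar} and Definition \ref{def:XYZ}, and your argument simply spells out that citation, using the location table of Lemma \ref{lem:sixpathsinfo} (which Lemma \ref{lem:BdecClar} already invokes) to identify the component index in each case. The case-by-case bookkeeping, including the conversions $q^{N-2t}=q^{2s-N}$ etc.\ via $r+s+t=N$, is accurate.
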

\begin{proof} By Lemma
\ref{lem:BdecClar} and
Definition
\ref{def:XYZ}.
\end{proof}

\noindent
Pick $(r,s,t) \in
\Delta_{N-1}$ and consider the corresponding
black 3-clique
in $\Delta_N$ from Lemma
\ref{lem:white}:
\begin{equation*}
\lambda = (r+1,s,t),
\qquad
\mu = (r,s+1,t),
\qquad
\nu = (r,s,t+1).
\end{equation*}

\begin{proposition}
\label{prop:XYZonWhite}
With the above notation,
for each abrace
\begin{equation*}
u \in B_\lambda, \quad \qquad 
v \in B_\mu, \qquad \quad 
w \in B_\nu
\end{equation*}
we have
\begin{eqnarray}
&&Xu = -q^{N-2t}v - q^{2s-N}w,
\label{eq:XYZonWhite}
\\
&&
Yv = -q^{N-2r}w - q^{2t-N}u,
\nonumber
\\
&&Zw = -q^{N-2s}u - q^{2r-N}v.
\nonumber
\end{eqnarray}
\end{proposition}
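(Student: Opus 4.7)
My plan is to mirror the proof of Proposition \ref{prop:XYZonWhiteq1}, carrying the scalar $q^{-2}$ (the common $B$-value of each white 3-clique) through the argument. By the cyclic symmetry built into Definition \ref{def:XYZ} under $(x,y,z,r,s,t)\mapsto(y,z,x,s,t,r)$, it will suffice to prove the first identity $Xu=-q^{N-2t}v-q^{2s-N}w$; the other two are then immediate by relabelling. The main induction will be on $r$.

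For the base case $r=0$, both $\mu=(0,N-t,t)$ and $\nu=(0,s,N-s)$ lie on the $1$-boundary, so Lemma \ref{lem:clarXYZ} gives $Xv=q^{N-2t}v$ and $Xw=q^{2s-N}w$; applying $X$ to the abrace relation $u+v+w=0$ then yields the claim at once.

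For the inductive step $r\ge 1$, I will introduce the two black 3-cliques associated (via Lemma \ref{lem:white}) to $(r-1,s,t+1)$ and $(r-1,s+1,t)$ in $\Delta_{N-1}$,
\begin{equation*}
\lambda'=(r,s,t+1)=\nu,\quad \mu'=(r-1,s+1,t+1),\quad \nu'=(r-1,s,t+2),
\end{equation*}
\begin{equation*}
\lambda''=(r,s+1,t)=\mu,\quad \mu''=(r-1,s+2,t),\quad \nu''=(r-1,s+1,t+1),
\end{equation*}
and use Lemma \ref{lem:brace2} to normalize abraces at these cliques so that $u'=w$ and $u''=v$. Applying the inductive hypothesis (whose $r$-coordinate is now $r-1$) will express $Xu'$ and $Xu''$ as explicit linear combinations of $v',w'$ and $v'',w''$ respectively. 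The three locations $\mu'=\nu''$, $\lambda'=\nu$, $\lambda''=\mu$ form a white 3-clique, and the composition
\begin{equation*}
\begin{CD}
B_{\mu'} @>\tilde B_{\mu',\lambda'}>> B_\nu @>\tilde B_{\nu,\mu}>> B_{\mu}=B_{\lambda''} @>\tilde B_{\lambda'',\nu''}>> B_{\mu'}
\end{CD}
\end{equation*}
sends $v'\mapsto u'=w\mapsto v=u''\mapsto w''$. By Definition \ref{def:orientedBval}, this composition equals (the clockwise or counterclockwise $B$-value of the clique) times the identity; since each white 3-clique has $B$-value $q^{-2}$, I conclude $w''=q^{\pm 2}v'$, with the sign determined by the orientation of $\mu',\nu,\mu$. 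Finally, writing $Xu=-Xv-Xw=-Xu''-Xu'$, substituting the two inductive formulas, and eliminating $w',w'',v'$ using $u'+v'+w'=0$, $u''+v''+w''=0$, and $w''=q^{\pm 2}v'$, all terms in $v',w',w''$ should cancel, leaving exactly $-q^{N-2t}v-q^{2s-N}w$.

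The main obstacle will be precisely this cancellation together with pinning down the correct sign in $w''=q^{\pm 2}v'$: determining the orientation convention against Definition \ref{def:orientedBval} is the delicate point, and the resulting $q$-power bookkeeping is where one could easily slip. The base case provides a useful consistency check once the induction is in place. Apart from this, the argument is a direct $q$-deformation of the $\mathfrak{sl}_2$ proof, with integer coefficients replaced throughout by powers of $q$ and the clique value $1$ replaced by $q^{-2}$.
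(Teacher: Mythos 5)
Your proposal follows the paper's proof essentially verbatim: induction on $r$, the base case via Lemma \ref{lem:clarXYZ}, and the inductive step via the two auxiliary black 3-cliques (normalized so that $u'=w$, $u''=v$) together with the key claim relating $w''$ and $v'$. The one point you leave open resolves exactly as you suspect: the sequence $\mu'=\nu''$, $\lambda'=\nu$, $\lambda''=\mu$ runs clockwise around the white 3-clique, and since the $B$-value is by Definition \ref{def:BvalStraight} the clockwise value, the composition is $q^{-2}$ times the identity, so $w''=q^{-2}v'$; with this choice the cancellation you describe does yield $-q^{N-2t}v-q^{2s-N}w$.
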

\begin{proof}
Our proof is similar to the proof of Proposition
\ref{prop:XYZonWhiteq1}; we give the details for the sake of clarity. 
We verify
(\ref{eq:XYZonWhite}).
Our proof is by induction on $r$. First assume $r=0$, 
so that $s+t=N-1$. By construction
$\mu = (0,N-t,t)$ 
and
 $\nu = (0,s, N-s)$.
So $Xv = q^{N-2t}v$ and
$Xw = q^{2s-N}w$  in view of Lemma
\ref{lem:clarXYZ}.
 To obtain
(\ref{eq:XYZonWhite}), in the equation $u+v+w=0$
apply $X$ to each term and evaluate the result using
the above comments. We have verified
(\ref{eq:XYZonWhite}) for $r=0$.
Next assume $r\geq 1$. 
For the location $(r-1,s,t+1) \in \Delta_{N-1}$ the
corresponding black 3-clique in $\Delta_N$ is
\begin{equation*}
\lambda' = (r,s,t+1),
\qquad
\mu' = (r-1,s+1,t+1),
\qquad
\nu' = (r-1,s,t+2).
\end{equation*}
Observe $\lambda'=\nu$.
By Lemma 
\ref{lem:brace2} and since $0 \not=w \in B_\nu= B_{\lambda'}$ there
exists a unique abrace
\begin{eqnarray*}
u' \in B_{\lambda'},
\qquad \quad
v' \in B_{\mu'},
\qquad \quad
w' \in B_{\nu'}
\end{eqnarray*}
such that $u'=w$.
By Definition
\ref{def:brace}
$u'+v'+w'=0$.
By induction
\begin{equation}
\label{eq:indX1ok}
Xu' = -q^{N-2t-2}v' - q^{2s-N}w'.
\end{equation}
For the location $(r-1,s+1,t) \in \Delta_{N-1}$ the
corresponding black 3-clique in $\Delta_N$ is
\begin{equation*}
\lambda'' = (r,s+1,t),
\qquad
\mu'' = (r-1,s+2,t),
\qquad
\nu'' = (r-1,s+1,t+1).
\end{equation*}
Observe $\lambda''=\mu$.
By Lemma 
\ref{lem:brace2} and since 
$0 \not=v \in B_\mu= B_{\lambda''}$ there
exists a unique abrace
\begin{eqnarray*}
u'' \in B_{\lambda''},
\qquad \quad
v'' \in B_{\mu''},
\qquad \quad
w'' \in B_{\nu''}
\end{eqnarray*}
such that $u''=v$.
By Definition
\ref{def:brace}
$u''+v''+w''=0$. By induction
\begin{equation}
\label{eq:indX2ok}
Xu'' = -q^{N-2t}v'' - q^{2s+2-N}w''.
\end{equation}
We claim that $w''=q^{-2}v'$.
We now prove the claim. Observe 
$\mu'=\nu''$. 
The three locations
\begin{eqnarray*}
\mu'=\nu'', \quad \qquad
\lambda'=\nu, \quad \qquad
\lambda''=\mu
\end{eqnarray*}
run clockwise around a white 3-clique in $\Delta_N$,
which we recall has $B$-value $q^{-2}$.
By construction $v',u'$ is a brace for the
edge $\mu',\lambda'$ so
$\tilde B_{\mu',\lambda'}$ sends $v'\mapsto u'$.
Similarly
$w,v$ is a brace for the
edge $\nu,\mu$ so
$\tilde B_{\nu,\mu}$ sends  $w\mapsto v$.
Similarly
$u'',w''$ is a brace for the
edge $\lambda'',\nu''$ so
$\tilde B_{\lambda'',\nu''}$ sends $u''\mapsto w''$.
Consider the composition
\begin{equation*}
\begin{CD} 
B_{\mu'}  @>>  \tilde B_{\mu',\lambda'} >  
 B_{\lambda'}=B_\nu  @>> \tilde B_{\nu,\mu} > B_{\mu} = B_{\lambda''}
               @>>\tilde B_{\lambda'',\nu''} > B_{\nu''}=B_{\mu'}.
                  \end{CD}
\end{equation*}
On one hand, this composition sends
\begin{eqnarray*}
v' \mapsto u'=w \mapsto v=u''\mapsto w''.
\end{eqnarray*}
On the other hand, by Definition
\ref{def:orientedBval}
this composition is 
equal to $q^{-2}$ times the identity map
on $B_\mu'$. Therefore $w''=q^{-2}v'$ and
the claim is proved.
Now to obtain
(\ref{eq:XYZonWhite}), in the equation $u+v+w=0$
apply $X$ to each term and evaluate the result using
(\ref{eq:indX1ok}),
(\ref{eq:indX2ok}) and nearby comments.
 We have verified equation
(\ref{eq:XYZonWhite}). The remaining two equations
are similarly verified.
\end{proof}

\noindent We now reformulate
Proposition
\ref{prop:XYZonWhite}.

\begin{corollary}
\label{prop:XYZonWhiteRef}
Referring to the notation above
Proposition
\ref{prop:XYZonWhite},
 the following {\rm (i)--(iii)} hold.
\begin{enumerate}
\item[\rm (i)] On $B_\lambda$,
\begin{equation}
X = - q^{N-2t}\tilde B_{\lambda,\mu} - q^{2s-N} \tilde B_{\lambda,\nu}.
\label{eq:xy}
\end{equation}
\item[\rm (ii)] On $B_\mu$,
\begin{equation*}
Y = - q^{N-2r}\tilde B_{\mu,\nu} - q^{2t-N} \tilde B_{\mu,\lambda}.
\end{equation*}
\item[\rm (iii)] On $B_\nu$,
\begin{equation*}
Z = - q^{N-2s}\tilde B_{\nu,\lambda} - q^{2r-N} \tilde B_{\nu,\mu}.
\end{equation*}
\end{enumerate}
\end{corollary}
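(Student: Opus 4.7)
The plan is to simply translate the vector-level identities in Proposition \ref{prop:XYZonWhite} into operator identities on the one-dimensional subspaces $B_\lambda$, $B_\mu$, $B_\nu$, using Lemma \ref{lem:mapsforabrace} to recognize the vectors $v$ and $w$ as images of $u$ under the appropriate $\tilde B$-maps (and analogously for cyclic permutations).

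Concretely, to prove (i): fix a nonzero $u \in B_\lambda$. By Lemma \ref{lem:brace2}, $u$ is contained in a unique abrace for the black 3-clique $\lambda,\mu,\nu$; write the abrace as $(u,v,w)$ with $v \in B_\mu$ and $w \in B_\nu$. By Lemma \ref{lem:mapsforabrace}, the maps $\tilde B_{\lambda,\mu}$ and $\tilde B_{\lambda,\nu}$ send $u \mapsto v$ and $u \mapsto w$, respectively. Substituting this into the first equation of Proposition \ref{prop:XYZonWhite} gives
\begin{equation*}
Xu = -q^{N-2t}\,\tilde B_{\lambda,\mu}(u) - q^{2s-N}\,\tilde B_{\lambda,\nu}(u).
\end{equation*}
Since $B_\lambda$ is one-dimensional and spanned by $u$, this is precisely the operator identity (\ref{eq:xy}) on $B_\lambda$.

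Parts (ii) and (iii) follow by the same reasoning applied to the other two equations of Proposition \ref{prop:XYZonWhite}. For (ii), we start with a nonzero $v \in B_\mu$ and its unique abrace $(u,v,w)$; Lemma \ref{lem:mapsforabrace} gives $\tilde B_{\mu,\nu}(v)=w$ and $\tilde B_{\mu,\lambda}(v)=u$, so the identity $Yv = -q^{N-2r}w - q^{2t-N}u$ translates directly into the claimed formula on $B_\mu$. Part (iii) is identical, starting from a nonzero $w \in B_\nu$ and using $\tilde B_{\nu,\lambda}(w)=u$ and $\tilde B_{\nu,\mu}(w)=v$. There is no genuine obstacle here; the whole content is the observation that, once one chooses the correct abrace representative, the $\tilde B$-maps convert the vector identities of Proposition \ref{prop:XYZonWhite} into identities between endomorphisms of $B_\lambda$, $B_\mu$, $B_\nu$.
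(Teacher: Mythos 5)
Your proof is correct and is essentially the paper's own argument: the paper's proof of this corollary is exactly "use Lemma \ref{lem:mapsforabrace} and Proposition \ref{prop:XYZonWhite}," and you have simply spelled out the details (choice of abrace via Lemma \ref{lem:brace2}, identification of $v,w$ as images under the $\tilde B$-maps, and the observation that checking on one spanning vector of a one-dimensional space suffices). Nothing further is needed.
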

\begin{proof} Use
Lemma
\ref{lem:mapsforabrace}
and
Proposition
\ref{prop:XYZonWhite}.
\end{proof}


\begin{proposition}
\label{prop:reformP2}
For each location $\lambda = (r,s,t) $ in $\Delta_N$ the
following hold on $B_\lambda$:
\begin{eqnarray*}
&&
X - q^{2s-N}I = (q^{2s-N}-q^{N-2t})\tilde B_{\lambda, \lambda-\alpha},
\qquad  \quad
X - q^{N-2t}I = (q^{N-2t}-q^{2s-N})\tilde B_{\lambda, \lambda+\gamma},
\\
&&
Y - q^{2t-N}I = (q^{2t-N}-q^{N-2r})\tilde B_{\lambda,\lambda-\beta},
\qquad  \quad
Y - q^{N-2r}I = (q^{N-2r}-q^{2t-N})\tilde B_{\lambda, \lambda+\alpha},
\\
&&
Z - q^{2r-N}I = (q^{2r-N}-q^{N-2s})\tilde B_{\lambda, \lambda-\gamma},
\qquad  \quad
Z - q^{N-2s}I = (q^{N-2s}-q^{2r-N})\tilde B_{\lambda, \lambda+\beta}.
\end{eqnarray*}
\end{proposition}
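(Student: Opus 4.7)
The proof will parallel Proposition~\ref{prop:reformP2q1} (the $\mathfrak{sl}_2$ analogue), with Corollary~\ref{prop:XYZonWhiteRef} replacing Corollary~\ref{prop:XYZonWhiteRefq1}. It suffices to verify the two equations in the top row; the four remaining equations follow by cyclically permuting the roles of the three coordinates and the three symbols $x,y,z$.

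First I would dispose of the boundary case $r=0$. In this case $s+t=N$, so $q^{2s-N}=q^{s-t}=q^{N-2t}$, hence the scalar coefficient on the right-hand side vanishes, and $\lambda-\alpha = (-1,s+1,t)\notin \Delta_N$ and $\lambda+\gamma=(-1,s,t+1)\notin \Delta_N$, so the maps $\tilde B_{\lambda,\lambda-\alpha}$ and $\tilde B_{\lambda,\lambda+\gamma}$ are zero by the convention after Note~\ref{note:maps}. On the left-hand side, $\lambda$ lies on the $1$-boundary of $\Delta_N$, so by Lemma~\ref{lem:clarXYZ}, $X = q^{N-2t}I = q^{2s-N}I$ on $B_\lambda$, and both left-hand sides vanish as well.

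Next assume $r\geq 1$. The key move is to recognize the black $3$-clique in $\Delta_N$ corresponding, under the bijection of Lemma~\ref{lem:white}, to the element $(r-1,s,t)\in \Delta_{N-1}$: namely the triple
\begin{equation*}
\lambda = (r,s,t), \qquad \mu = (r-1,s+1,t) = \lambda-\alpha, \qquad \nu = (r-1,s,t+1) = \lambda+\gamma.
\end{equation*}
Apply Corollary~\ref{prop:XYZonWhiteRef}(i) to this clique, with the role of $(r,s,t)$ played by $(r-1,s,t)$, to obtain, on $B_\lambda$,
\begin{equation*}
X \;=\; -\,q^{N-2t}\,\tilde B_{\lambda,\lambda-\alpha} \;-\; q^{2s-N}\,\tilde B_{\lambda,\lambda+\gamma}.
\end{equation*}

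Finally, invoke Lemma~\ref{lem:threetermv} applied to the same black $3$-clique: on $B_\lambda$,
\begin{equation*}
I + \tilde B_{\lambda,\lambda-\alpha} + \tilde B_{\lambda,\lambda+\gamma} = 0.
\end{equation*}
To get the first equation of the top row, I would substitute $\tilde B_{\lambda,\lambda+\gamma} = -I - \tilde B_{\lambda,\lambda-\alpha}$ into the expression for $X$ above and collect terms; the $I$-terms rearrange into $q^{2s-N}I$ on the left and the coefficient $(q^{2s-N}-q^{N-2t})$ emerges on $\tilde B_{\lambda,\lambda-\alpha}$. For the second equation, I would instead eliminate $\tilde B_{\lambda,\lambda-\alpha} = -I - \tilde B_{\lambda,\lambda+\gamma}$ and rearrange analogously. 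This is a routine one-line computation. The main (modest) obstacle is just bookkeeping: making sure the parameter shift $(r,s,t)\leftrightarrow (r-1,s,t)$ feeds the correct exponents into Corollary~\ref{prop:XYZonWhiteRef}(i), and checking that the $r=0$ boundary case, where the maps $\tilde B_{\lambda,\lambda\pm\cdot}$ are out of bounds, matches the identically vanishing scalar coefficient.
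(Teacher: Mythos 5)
Your proposal is correct and follows essentially the same route as the paper's own proof: handle $r=0$ via Lemma \ref{lem:clarXYZ} and the vanishing coefficient $q^{2s-N}=q^{N-2t}$, then for $r\geq 1$ apply Corollary \ref{prop:XYZonWhiteRef}(i) to the black 3-clique $\lambda,\ \lambda-\alpha,\ \lambda+\gamma$ corresponding to $(r-1,s,t)$ and eliminate one of the two maps using (\ref{eq:sumit}). The bookkeeping you flag is harmless, since the exponents in Corollary \ref{prop:XYZonWhiteRef}(i) involve only $s,t$, which are unchanged by the shift $r\mapsto r-1$.
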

\begin{proof} We verify the equations  in the top row.
First assume $r=0$, so that $s+t=N$.
In each equation the left-hand side is zero by Lemma
\ref{lem:clarXYZ}. In each equation the right-hand side
is zero since $2s-N=N-2t$.
Next assume $r\geq 1$.  Define
\begin{equation*}
\mu=(r-1,s+1,t)=\lambda - \alpha,
\qquad \quad 
\nu=(r-1,s,t+1)=\lambda + \gamma.
\end{equation*}
The locations $\lambda,\mu,\nu$ form a black 3-clique in $\Delta_N$
that corresponds to the location $(r-1,s,t)$ in $\Delta_{N-1}$ under
the bijection of Lemma
\ref{lem:white}.
Apply
Corollary \ref{prop:XYZonWhiteRef}(i) to this 3-clique
and in the resulting equation
(\ref{eq:xy})
eliminate 
$\tilde B_{\lambda,\mu}$ or 
$\tilde B_{\lambda,\nu}$ using
(\ref{eq:sumit}).
We have verified the equations  in the top row.
The other equations are similarly verified.
\end{proof}

\begin{corollary}
For each location $\lambda = (r,s,t)$ in $\Delta_N$,
\begin{eqnarray*}
&&
(X-q^{2s-N}I)B_\lambda \subseteq B_{\lambda -\alpha},
\qquad \qquad
(X-q^{N-2t}I)B_\lambda \subseteq B_{\lambda +\gamma},
\\
&&
(Y-q^{2t-N}I)B_\lambda \subseteq B_{\lambda-\beta},
\qquad \qquad
(Y-q^{N-2r}I)B_\lambda \subseteq B_{\lambda+\alpha},
\\
&&
(Z-q^{2r-N}I)B_\lambda \subseteq B_{\lambda-\gamma},
\qquad \qquad
(Z-q^{N-2s}I)B_\lambda \subseteq B_{\lambda+\beta}.
\end{eqnarray*}
Moreover, equality holds in each inclusion provided that
$q^{2i}\not=1$ for $1 \leq i \leq N$. 
\end{corollary}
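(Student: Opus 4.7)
The plan is to derive both the inclusions and the equality statement as essentially immediate consequences of Proposition \ref{prop:reformP2}, combined with the interpretation of $\tilde B_{\lambda,\mu}$ given in Note \ref{note:maps} and the convention of Definition \ref{def:outofbounds}.

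For each of the six inclusions, I would simply read off the corresponding identity in Proposition \ref{prop:reformP2}. For instance, the first identity there says that on $B_\lambda$ one has $X - q^{2s-N}I = (q^{2s-N} - q^{N-2t})\tilde B_{\lambda, \lambda - \alpha}$. By Note \ref{note:maps}, $\tilde B_{\lambda,\lambda-\alpha}$ takes $B_\lambda$ into $B_{\lambda-\alpha}$ (interpreted via Definition \ref{def:outofbounds} to be $0$ when $\lambda - \alpha \notin \Delta_N$), so the inclusion $(X - q^{2s-N}I)B_\lambda \subseteq B_{\lambda-\alpha}$ is immediate. The other five inclusions follow the same way from the remaining five equations in Proposition \ref{prop:reformP2}.

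For the equality statement, I would analyze the scalar coefficient appearing in each identity of Proposition \ref{prop:reformP2}. Consider the first one: the coefficient factors as $q^{2s-N}-q^{N-2t} = q^{2s-N}(1 - q^{2N-2s-2t}) = q^{2s-N}(1-q^{2r})$. When $\lambda - \alpha \in \Delta_N$ we have $1 \leq r \leq N$, so the hypothesis $q^{2i}\neq 1$ for $1 \leq i \leq N$ forces this coefficient to be nonzero. Since $\tilde B_{\lambda,\lambda-\alpha}$ is a bijection $B_\lambda \to B_{\lambda-\alpha}$ by Note \ref{note:maps}, a nonzero scalar multiple of it is also a bijection, whence the inclusion becomes an equality. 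When $r=0$, both sides are zero -- the right-hand side by Definition \ref{def:outofbounds}, the left-hand side by Lemma \ref{lem:clarXYZ} (which gives $X = q^{2s-N}I$ on $B_\lambda$) -- so equality holds trivially. The remaining five equalities are handled identically: in each case the coefficient from Proposition \ref{prop:reformP2} factors as $\pm q^k(1 - q^{\pm 2j})$ for a suitable integer $j \in \{1,\ldots,N\}$ dictated by the coordinate of $\lambda$ that governs whether $\lambda \pm \alpha$ (respectively $\lambda \pm \beta$, $\lambda \pm \gamma$) lies in $\Delta_N$.

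There is no real obstacle here -- the result is a pure translation of Proposition \ref{prop:reformP2}. The only points that require (trivial) attention are (a) keeping track of the convention $B_\lambda = 0$ outside $\Delta_{\leq N}$ from Definition \ref{def:outofbounds}, so that the boundary cases do not need separate statements, and (b) verifying in each of the six cases that the exponent on $q$ in the factor $1 - q^{\pm 2j}$ lies in the range $\{1,\ldots,N\}$, so that the stated hypothesis indeed rules out its vanishing.
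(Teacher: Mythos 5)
Your proposal is correct and follows essentially the same route as the paper, whose proof simply cites Note \ref{note:maps} and Proposition \ref{prop:reformP2}; your coefficient factorization (e.g. $q^{2s-N}-q^{N-2t}=q^{2s-N}(1-q^{2r})$) and the boundary-case check via Lemma \ref{lem:clarXYZ} and Definition \ref{def:outofbounds} just make explicit the details the paper leaves to the reader.
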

\begin{proof} By Note
\ref{note:maps} and Proposition
\ref{prop:reformP2}.
\end{proof}


\noindent Our next goal is to show that the elements
$X,Y,Z$ from Definition
\ref{def:XYZ} satisfy the defining  relations
for 
$U_q(\mathfrak{sl}_2)$ given in 
(\ref{eq:uqrels}).

\begin{lemma}
\label{lem:twoviews}
For each  location $\lambda = (r,s,t) $ in $\Delta_N$
the following hold on $B_\lambda$:
\begin{eqnarray*}
&&
q(I-XY)=
-q^{N-2r+1}(X-q^{N-2t}I) - 
q^{2s-N-1}(Y-q^{2t-N}I)=
q^{-1}(I-YX),
\\
&&
q(I-YZ)=
-q^{N-2s+1}(Y-q^{N-2r}I) - 
q^{2t-N-1}(Z-q^{2r-N}I)
=q^{-1}(I-ZY),
\\
&&
q(I-ZX)=
-q^{N-2t+1}(Z-q^{N-2s}I) - 
q^{2r-N-1}(X-q^{2s-N}I)
=q^{-1}(I-XZ).
\end{eqnarray*}
\end{lemma}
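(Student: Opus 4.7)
The three relations are linked by the cyclic symmetry $(X,Y,Z;r,s,t)\to(Y,Z,X;s,t,r)$, which follows from the way the $B$-decompositions $\lbrack 2,3\rbrack,\lbrack 3,1\rbrack,\lbrack 1,2\rbrack$ (and hence the operators $X,Y,Z$) permute under cyclic relabelling of coordinates. So it suffices to verify the bottom relation
$q(I-ZX)=-q^{N-2t+1}(Z-q^{N-2s}I)-q^{2r-N-1}(X-q^{2s-N}I)=q^{-1}(I-XZ)$
on $B_\lambda$ for every $\lambda=(r,s,t)\in\Delta_N$; the identity on all of $V$ will then follow from Corollary \ref{lem:BspanV2}. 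I will follow the template of Lemma \ref{lem:twoviewsq1}, verifying the two equalities separately via a boundary case and a general case in which the key device is an inverse pair of $\tilde B$-maps.

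For the left equality, when $r=0$ we have $s+t=N$, so $N-2t=2s-N$ and Lemma \ref{lem:clarXYZ} gives $X=q^{2s-N}I=q^{N-2t}I$ on $B_\lambda$. The term $X-q^{N-2t}I$ vanishes and $q(I-ZX)=qI-q^{2s-N+1}Z$; expanding the middle expression and reducing $q^{2N-2t-2s+1}$ to $q$ via $r=0$ matches this. When $r\ge 1$, I consider the adjacent location $\lambda+\gamma=(r-1,s,t+1)\in\Delta_N$. By Proposition \ref{prop:reformP2},
$X-q^{N-2t}I=(q^{N-2t}-q^{2s-N})\,\tilde B_{\lambda,\lambda+\gamma}$ on $B_\lambda$,
and, applied at $\lambda+\gamma$,
$Z-q^{2r-N-2}I=(q^{2r-N-2}-q^{N-2s})\,\tilde B_{\lambda+\gamma,\lambda}$ on $B_{\lambda+\gamma}$.
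Since $\tilde B_{\lambda,\lambda+\gamma}$ and $\tilde B_{\lambda+\gamma,\lambda}$ are inverse bijections by Lemma \ref{lem:Binverses}, the composition $(Z-q^{2r-N-2}I)(X-q^{N-2t}I)$ equals $(q^{N-2t}-q^{2s-N})(q^{2r-N-2}-q^{N-2s})I$ on $B_\lambda$. Expanding and solving for $ZX$ gives an explicit formula whose powers of $q$ collapse using $r+s+t=N$ (in particular $2N-2t-2s=2r$ and $2s+2r-2N=-2t$); direct comparison with the middle expression completes this case.

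For the right equality, the boundary case is $t=0$: then $r+s=N$, Lemma \ref{lem:clarXYZ} yields $Z=q^{N-2s}I=q^{2r-N}I$ on $B_\lambda$, the first summand of the middle vanishes, and verification is routine. When $t\ge 1$, I use the adjacent location $\lambda-\gamma=(r+1,s,t-1)\in\Delta_N$ and the analogous pair from Proposition \ref{prop:reformP2}:
$Z-q^{2r-N}I=(q^{2r-N}-q^{N-2s})\,\tilde B_{\lambda,\lambda-\gamma}$ on $B_\lambda$, and
$X-q^{N-2t+2}I=(q^{N-2t+2}-q^{2s-N})\,\tilde B_{\lambda-\gamma,\lambda}$ on $B_{\lambda-\gamma}$.
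Again these $\tilde B$-maps are mutual inverses, so $(X-q^{N-2t+2}I)(Z-q^{2r-N}I)$ acts as a scalar on $B_\lambda$, yielding a formula for $XZ$; the same identities $2N-2t-2s=2r$, $2s+2r-2N=-2t$ reduce $q^{-1}(I-XZ)$ to the middle expression. The remaining two relations of the lemma are obtained by applying the cyclic symmetry.

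The main technical obstacle is the exponent bookkeeping: aligning the scalars produced by the inverse composition with those in the middle expression requires systematic use of $r+s+t=N$ to fold pairs such as $q^{2N-2t-2s+1}$ and $q^{2s+2r-2N-1}$ into $q^{2r+1}$ and $q^{-2t-1}$. This is routine once the correct adjacent location ($\lambda+\gamma$ for the left equality, $\lambda-\gamma$ for the right) is chosen, but it is the only place where one must be attentive; the conceptual structure is identical to the classical $\mathfrak{sl}_2$ case handled in Lemma \ref{lem:twoviewsq1}.
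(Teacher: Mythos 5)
Your proposal is correct and follows essentially the same route as the paper: verify the bottom row via the boundary case from Lemma \ref{lem:clarXYZ} and, for the interior case, the composition of the inverse pair $\tilde B_{\lambda,\lambda\pm\gamma}$, $\tilde B_{\lambda\pm\gamma,\lambda}$ supplied by Proposition \ref{prop:reformP2} and Lemma \ref{lem:Binverses}, which acts as an explicit scalar and is then simplified using $r+s+t=N$. The only cosmetic differences are that you treat the second equality of the bottom row explicitly via $\lambda-\gamma$ and dispatch the other rows by the (legitimate) cyclic relabelling symmetry, where the paper verifies one equality and declares the rest ``similarly verified.''
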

\begin{proof}
We verify the equation on the left in the bottom row.
For this equation let $M$ denote the left-hand side minus
the right-hand side. We show $M=0$ on $B_\lambda$.
First assume $r=0$, so that $s+t=N$.
By Lemma
\ref{lem:clarXYZ}
$X=q^{N-2t}I$ on $B_\lambda$.
By these comments we routinely obtain $M=0$ on $B_\lambda$.
Next assume $r\geq 1$. Observe $\lambda+\gamma=(r-1,s,t+1) \in \Delta_N$.
By Lemma
\ref{lem:Binverses} the maps
$\tilde B_{\lambda, \lambda+\gamma}:B_\lambda \to B_{\lambda+\gamma}$
and
$\tilde B_{\lambda+\gamma, \lambda}:B_{\lambda+\gamma} \to B_{\lambda}$
are inverses. Using Proposition 
\ref{prop:reformP2} we find that on $B_\lambda$,
\begin{equation*}
X - q^{N-2t}I = (q^{N-2t}-q^{2s-N})\tilde B_{\lambda, \lambda+\gamma},
\end{equation*}
and on $B_{\lambda+\gamma}$,
\begin{equation*}
Z - q^{2r-2-N}I = (q^{2r-2-N}-q^{N-2s})\tilde B_{\lambda+\gamma, \lambda}.
\end{equation*}
Therefore on $B_\lambda$,
\begin{equation}
(Z-q^{2r-2-N}I)(X-q^{N-2t}I) = (q^{2r-2-N}-q^{N-2s})(q^{N-2t}-q^{2s-N})I.
\label{eq:prelim}
\end{equation}
Evaluating (\ref{eq:prelim}) using $r+s+t=N$ we find that
$M=0$ on $B_\lambda$.
We have verified the equation on the left in the bottom row.
The remaining equations
are similarly verified.
\end{proof}

\begin{lemma}
\label{lem:qWeylAlt}
The elements $X,Y,Z$ from Definition
\ref{def:XYZ} satisfy
\begin{eqnarray*}
&&q(I-XY)= q^{-1}(I-YX),
\\
&&q(I-YZ)= q^{-1}(I-ZY),
\\
&&q(I-ZX)= q^{-1}(I-XZ).
\label{eq:qWeylAlt}
\end{eqnarray*}
\end{lemma}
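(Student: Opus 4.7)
The plan is to reduce the global identities to pointwise identities that have already been established. Specifically, Lemma \ref{lem:twoviews} packages exactly the content we need: for every location $\lambda = (r,s,t) \in \Delta_N$, the expressions $q(I-XY)$ and $q^{-1}(I-YX)$ are both equal, on $B_\lambda$, to the same common quantity $-q^{N-2r+1}(X-q^{N-2t}I) - q^{2s-N-1}(Y-q^{2t-N}I)$, and the analogous statement holds for the other two pairs $\{YZ,ZY\}$ and $\{ZX,XZ\}$. Hence each of the three displayed equations of the lemma holds on the subspace $B_\lambda$ for every $\lambda \in \Delta_N$.

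From there, the passage to the whole of $V$ is immediate. By Corollary \ref{lem:BspanV2}, the subspaces $\{B_\lambda\}_{\lambda \in \Delta_N}$ span $V$, so any $\mathbb F$-linear endomorphism of $V$ that vanishes on every $B_\lambda$ vanishes on $V$. Applying this to the three operators
\[
q(I-XY) - q^{-1}(I-YX),\qquad q(I-YZ) - q^{-1}(I-ZY),\qquad q(I-ZX) - q^{-1}(I-XZ),
\]
each of which is zero on every $B_\lambda$ by Lemma \ref{lem:twoviews}, yields the three identities on all of $V$.

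This mirrors exactly the argument used to prove Proposition \ref{lem:qWeylAltq1} from Lemma \ref{lem:twoviewsq1} in the $\mathfrak{sl}_2$ setting. There is no genuine obstacle here: all the nontrivial work was done in establishing Lemma \ref{lem:twoviews}, whose proof relied on Proposition \ref{prop:reformP2} together with the inverse property of the maps $\tilde B_{\lambda,\mu}$ from Lemma \ref{lem:Binverses}. The present lemma is simply the global reformulation, assembled via the spanning property of the $B_\lambda$.
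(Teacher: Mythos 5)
Your proposal is correct and follows exactly the paper's own proof: the identities hold on each $B_\lambda$ by Lemma \ref{lem:twoviews}, and since the subspaces $\lbrace B_\lambda\rbrace_{\lambda \in \Delta_N}$ span $V$ by Corollary \ref{lem:BspanV2}, they hold on all of $V$. Nothing further is needed.
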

\begin{proof} By Lemma
\ref{lem:twoviews}
these equations hold on $B_\lambda$
for all $\lambda \in \Delta_N$. The result holds
in view of Corollary
\ref{lem:BspanV2}.
\end{proof}

\begin{proposition}
\label{lem:qWeyl}
The elements $X,Y,Z$ from Definition
\ref{def:XYZ} satisfy
\begin{equation*}
\frac{qXY-q^{-1}YX}{q-q^{-1}} = I, \qquad
\frac{qYZ-q^{-1}ZY}{q-q^{-1}} = I, \qquad
\frac{qZX-q^{-1}XZ}{q-q^{-1}} = I.
\label{eq:qWeyl}
\end{equation*}
\end{proposition}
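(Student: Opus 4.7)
The proposition follows from Lemma~\ref{lem:qWeylAlt} by a routine algebraic rearrangement, so the plan is essentially to show that the two sets of identities are equivalent. I would start from the three equations of Lemma~\ref{lem:qWeylAlt}, which say that $q(I-XY) = q^{-1}(I-YX)$ and the two cyclic analogues hold on $V$.

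From the first equation I would expand both sides as $qI - qXY = q^{-1}I - q^{-1}YX$, then move the terms involving $X,Y$ to one side and the scalars to the other, giving $qXY - q^{-1}YX = (q - q^{-1})I$. Since $q^2 \neq 1$ we have $q - q^{-1} \neq 0$, so dividing yields the first relation of the proposition. The second and third relations are obtained identically from the second and third equations of Lemma~\ref{lem:qWeylAlt} by the same one-line manipulation, using the cyclic symmetry $(X,Y)\to(Y,Z)\to(Z,X)$.

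There is no real obstacle here: Lemma~\ref{lem:twoviews} (together with Corollary~\ref{lem:BspanV2}) has already done the substantive work of establishing the quantum Weyl-type identities on every $B_\lambda$, and all that remains for Proposition~\ref{lem:qWeyl} is the observation that the normalisation $(q - q^{-1})^{-1}$ is permitted by the standing hypothesis $q^2 \neq 1$ on $q$. Thus the proof is a two-line computation, and the statement can be read as a cosmetic reformulation of Lemma~\ref{lem:qWeylAlt} into the standard form of the equitable presentation given in Definition~\ref{def:uqA}.
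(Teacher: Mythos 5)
Your proof is correct and matches the paper's: Proposition \ref{lem:qWeyl} is proved there simply as a reformulation of Lemma \ref{lem:qWeylAlt}, exactly the rearrangement and division by $q-q^{-1}\neq 0$ that you carry out. Nothing is missing.
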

\begin{proof} These equations are a reformulation of
the equations in
Lemma
\ref{lem:qWeylAlt}.
\end{proof}

\begin{theorem}
\label{thm:uqsl2}
Let $V$ denote a vector space over $\mathbb F$
with dimension $N+1$. Let $B$ denote a Billiard
Array on $V$. Assume that each white 3-clique in
$\Delta_N$ has $B$-value $q^{-2}$.
Then there exists a unique 
$U_q(\mathfrak{sl}_2)$-module structure on $V$
such that for $0 \leq i \leq N$,
$x-q^{N-2i}I$
(resp. 
$y-q^{N-2i}I$)
(resp. 
$z-q^{N-2i}I$)
vanishes on component $i$ of
the $B$-decomposition $\lbrack 2,3\rbrack$
(resp. $\lbrack 3,1\rbrack$)
(resp. $\lbrack 1,2\rbrack$).
The $U_q(\mathfrak{sl}_2)$-module $V$ is irreducible, provided that
$q^{2i}\not=1$ for $1 \leq i \leq N$. 
\end{theorem}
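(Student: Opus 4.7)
The plan is to mirror the proof of Theorem \ref{thm:sl2A}. First I define $X,Y,Z \in \mathrm{End}(V)$ exactly as in Definition \ref{def:XYZ}; each is invertible since its spectrum is $\{q^{N-2i}\}_{i=0}^N \subseteq \mathbb{F}\setminus\{0\}$. Proposition \ref{lem:qWeyl} already establishes that $X,Y,Z$ satisfy the three defining equitable relations of Definition \ref{def:uqA}. By the universal property of the equitable presentation, there is then a unique $\mathbb{F}$-algebra homomorphism $U_q(\mathfrak{sl}_2) \to \mathrm{End}(V)$ sending $x \mapsto X$, $y^{\pm 1} \mapsto Y^{\pm 1}$, $z \mapsto Z$, giving the claimed $U_q(\mathfrak{sl}_2)$-module structure on $V$. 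This structure is unique because the three vanishing conditions force the actions of $x,y,z$ on $V$ to equal $X,Y,Z$ respectively, and these generate $U_q(\mathfrak{sl}_2)$.

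For irreducibility, assume $q^{2i} \neq 1$ for $1 \leq i \leq N$, and let $0 \neq W \subseteq V$ be a $U_q(\mathfrak{sl}_2)$-submodule; I shall show $W=V$. Write $V = \bigoplus_{i=0}^N V_i$ for the $B$-decomposition $\lbrack 2,3\rbrack$, where by Lemma \ref{lem:BdecClar} each $V_i$ equals $B_\lambda$ with $\lambda=(0,N-i,i)$. The operator $X$ acts on $V_i$ as the scalar $q^{N-2i}$, and under the standing hypothesis these eigenvalues are pairwise distinct. The associated spectral projections $V \to V_i$ are therefore Lagrange-interpolation polynomials in $X$, so $W$ is stable under them and decomposes as $W = \bigoplus_i (W \cap V_i)$. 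Since some $W \cap V_i$ is nonzero and $\dim V_i = 1$, we obtain $B_\lambda \subseteq W$ for at least one location $\lambda$ on the $1$-boundary of $\Delta_N$.

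Next I propagate this membership to every $B_\mu \subseteq W$. The Corollary immediately after Proposition \ref{prop:reformP2} asserts, under the present hypothesis on $q$, that the six inclusions there are in fact equalities whenever the target location lies in $\Delta_N$: for instance $(X-q^{N-2t}I)B_\nu = B_{\nu+\gamma}$ and $(Y-q^{N-2r}I)B_\nu = B_{\nu+\alpha}$. Because any two locations of $\Delta_N$ are connected by a walk whose steps are the translations $\pm \alpha, \pm\beta, \pm\gamma$, iterated application of the appropriate operators $X - cI$, $Y - cI$, $Z - cI$ carries the known $B_\lambda \subseteq W$ to $B_\mu \subseteq W$ for every $\mu \in \Delta_N$. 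Corollary \ref{lem:BspanV2} then gives $W = V$, proving irreducibility.

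The main obstacle is the irreducibility step: one must verify that the distinctness of the eigenvalues $\{q^{N-2i}\}_{i=0}^N$ is exactly equivalent to the stated root-of-unity hypothesis (the difference $q^{N-2i}-q^{N-2j}$ vanishes iff $q^{2(i-j)}=1$, and $|i-j|$ ranges over $1,\dots,N$), and that the equality cases of the Corollary after Proposition \ref{prop:reformP2} hold uniformly along the connectivity walk. Both points are straightforward but essential; everything else, including the existence and uniqueness of the module structure, is formal from Proposition \ref{lem:qWeyl}.
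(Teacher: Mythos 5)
Your proposal is correct and follows the paper's own route: existence comes from Proposition \ref{lem:qWeyl}, uniqueness from the fact that the vanishing conditions force $x,y,z$ to act as $X,Y,Z$, which generate the algebra. The only difference is that the paper dismisses irreducibility as ``readily checked,'' while you supply the standard verification (spectral projections in $X$ isolate some $B_\lambda\subseteq W$, then the equality cases of the corollary after Proposition \ref{prop:reformP2} propagate this along the connected graph $\Delta_N$, and Corollary \ref{lem:BspanV2} gives $W=V$), and that verification is sound.
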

\begin{proof} The 
 $U_q(\mathfrak{sl}_2)$-module structure exists by
Proposition
\ref{lem:qWeyl}. It is unique by construction. The last assertion
of the theorem is readily checked.
\end{proof}

\begin{definition}
\label{def:nuxyz}
\rm
\cite[Definition~3.1]{uawe}
Let $\nu_x, \nu_y, \nu_z$ denote the following elements in
 $U_q(\mathfrak{sl}_2)$:
\begin{eqnarray*}
&&
\nu_x = q(1-yz) = 
q^{-1}(1-zy),
\\
&&
\nu_y = q(1-zx) = 
q^{-1}(1-xz),
\\
&&
\nu_z = q(1-xy) = 
q^{-1}(1-yx).
\end{eqnarray*}
\end{definition}

\noindent Our next goal is to describe the
actions of
$\nu_x, \nu_y, \nu_z$ on $\lbrace B_{\lambda}\rbrace_{\lambda \in \Delta_N}$.

\begin{proposition}
\label{lem:nuAction}
For each location $\lambda =(r,s,t)$ in  $\Delta_N$
the following hold on $B_\lambda$:
\begin{eqnarray*}
&&\nu_x =
q^{t-2s-1}(q^t-q^{-t})\tilde B_{\lambda,\lambda-\gamma}-
q^{2t-s+1}(q^s-q^{-s})\tilde B_{\lambda,\lambda+\alpha},
\\
&&
\nu_y =
q^{r-2t-1}(q^r-q^{-r})\tilde B_{\lambda,\lambda-\alpha}-
q^{2r-t+1}(q^t-q^{-t})\tilde B_{\lambda,\lambda+\beta},
\\
&&
\nu_z =
q^{s-2r-1}(q^s-q^{-s})\tilde B_{\lambda,\lambda-\beta}-
q^{2s-r+1}(q^r-q^{-r})\tilde B_{\lambda,\lambda+\gamma}.
\end{eqnarray*} 
\end{proposition}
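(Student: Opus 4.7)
The plan is to prove the formula for $\nu_x$ by direct computation using the form $\nu_x = q^{-1}(1 - zy)$ from Definition \ref{def:nuxyz}; the formulas for $\nu_y$ and $\nu_z$ then follow from the evident cyclic symmetry of the setup under the simultaneous permutation $(x,y,z)\to(y,z,x)$, $(\alpha,\beta,\gamma)\to(\beta,\gamma,\alpha)$, $(r,s,t)\to(s,t,r)$, which cycles $X\to Y\to Z\to X$ in Definition \ref{def:XYZ} and $\nu_x\to\nu_y\to\nu_z\to\nu_x$ in Definition \ref{def:nuxyz}. So we focus on the formula for $\nu_x$ on $B_\lambda$ with $\lambda=(r,s,t)$.

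First I would use Proposition \ref{prop:reformP2} to expand $Y$ and $Z$ on $B_\lambda$ in the branches whose off-diagonal images land in the predicted targets $B_{\lambda+\alpha}$ and $B_{\lambda-\gamma}$, namely
\[
Y\big|_{B_\lambda} = q^{N-2r}I + (q^{N-2r}-q^{2t-N})\,\tilde B_{\lambda,\lambda+\alpha}, \qquad
Z\big|_{B_\lambda} = q^{2r-N}I + (q^{2r-N}-q^{N-2s})\,\tilde B_{\lambda,\lambda-\gamma}.
\]
Composing $ZY$ also requires the action of $Z$ on $B_{\lambda+\alpha}$; since $\lambda-\gamma=(\lambda+\alpha)+\beta$, Proposition \ref{prop:reformP2} yields
\[
Z\big|_{B_{\lambda+\alpha}} = q^{N-2s+2}I + (q^{N-2s+2}-q^{2r-N+2})\,\tilde B_{\lambda+\alpha,\lambda-\gamma}.
\]
Expanding $ZY$ on $B_\lambda$ produces four terms; three are immediate scalar multiples of $I$, $\tilde B_{\lambda,\lambda+\alpha}$, and $\tilde B_{\lambda,\lambda-\gamma}$, while the fourth involves the composition $\tilde B_{\lambda+\alpha,\lambda-\gamma}\circ\tilde B_{\lambda,\lambda+\alpha}\colon B_\lambda\to B_{\lambda-\gamma}$. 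The crux is that $\lambda,\lambda+\alpha,\lambda-\gamma$ form a white 3-clique (Lemma \ref{lem:black}, base $(r,s-1,t-1)\in\Delta_{N-2}$) with $B$-value $q^{-2}$ by hypothesis; checking from the triangular layout that this cyclic order runs clockwise, Definition \ref{def:orientedBval} together with Lemma \ref{lem:Binverses} gives $\tilde B_{\lambda+\alpha,\lambda-\gamma}\circ\tilde B_{\lambda,\lambda+\alpha}=q^{-2}\tilde B_{\lambda,\lambda-\gamma}$.

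Substituting this identity and collecting terms using $r+s+t=N$ collapses $ZY|_{B_\lambda}$ to
\[
I + (q^{-2s}-q^{2t-2s})\,\tilde B_{\lambda,\lambda-\gamma} + (q^{2t+2}-q^{2t-2s+2})\,\tilde B_{\lambda,\lambda+\alpha};
\]
multiplying $I-ZY$ by $q^{-1}$ and factoring as $q^{t-2s-1}(q^t-q^{-t})$ and $-q^{2t-s+1}(q^s-q^{-s})$ respectively then produces the stated formula for $\nu_x$, and cyclic symmetry finishes the proof. The main obstacle is the orientation bookkeeping --- confirming that the cyclic order $\lambda,\lambda+\alpha,\lambda-\gamma$ is clockwise in the paper's triangular layout so that the white-clique hypothesis contributes a factor $q^{-2}$ rather than $q^{+2}$ in the composition identity. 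Once this sign is pinned down, everything else is a mechanical expansion.
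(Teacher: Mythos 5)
Your computation is correct and reaches the stated formulas, but it takes a genuinely different route from the paper. The paper's proof is a one-line substitution: it evaluates the identities of Lemma \ref{lem:twoviews} (e.g.\ $q^{-1}(I-ZY)= -q^{N-2s+1}(Y-q^{N-2r}I)-q^{2t-N-1}(Z-q^{2r-N}I)$ on $B_\lambda$) using Proposition \ref{prop:reformP2} and Definition \ref{def:nuxyz}; note that Lemma \ref{lem:twoviews} was itself proved using only the inverse relation of Lemma \ref{lem:Binverses}, so at that stage no further appeal to the white-clique value is needed. You instead bypass Lemma \ref{lem:twoviews} and expand $Z\circ Y$ on $B_\lambda$ directly from Proposition \ref{prop:reformP2}, which forces you to evaluate the cross term $\tilde B_{\lambda+\alpha,\lambda-\gamma}\circ\tilde B_{\lambda,\lambda+\alpha}$ via the white 3-clique $\lambda,\lambda+\alpha,\lambda-\gamma$; your orientation bookkeeping is right, since the successive steps of your cycle are $\alpha,\beta,\gamma$, exactly the pattern the paper declares clockwise in the proof of Proposition \ref{prop:XYZonWhite}, so the factor is indeed $q^{-2}$, and your collected coefficients $(q^{-2s}-q^{2t-2s})$ and $(q^{2t+2}-q^{2t-2s+2})$ check out. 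What the paper's route buys is that the orientation argument and the boundary cases were already absorbed into earlier lemmas; what your route buys is a self-contained derivation that makes visible exactly where the hypothesis ``each white 3-clique has $B$-value $q^{-2}$'' enters. The one small gap to patch: when $s=0$ or $t=0$ the base $(r,s-1,t-1)$ does not lie in $\Delta_{N-2}$ and the cited white 3-clique does not exist; you should note that in these cases $\tilde B_{\lambda,\lambda+\alpha}$ (resp.\ $\tilde B_{\lambda+\alpha,\lambda-\gamma}$, $\tilde B_{\lambda,\lambda-\gamma}$) is zero by convention and the corresponding coefficients in Proposition \ref{prop:reformP2} vanish as well, so the offending term is absent and the formula still holds (the factors $q^s-q^{-s}$ or $q^t-q^{-t}$ vanish accordingly). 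The cyclic-symmetry reduction of the $\nu_y,\nu_z$ cases is legitimate, since a cyclic relabelling of the coordinates is a rotation of the triangle and therefore preserves clockwise orientation and hence the clique values.
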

\begin{proof} Evaluate the equations in
Lemma
\ref{lem:twoviews} using
 Proposition
\ref{prop:reformP2} and Definition
\ref{def:nuxyz}.
\end{proof}

\begin{corollary}
\label{cor:nuAction}
For each location $\lambda \in \Delta_N$,
\begin{equation*}
\nu_x B_{\lambda} \subseteq
B_{\lambda + \alpha}+
B_{\lambda -\gamma},
\qquad 
\nu_y B_{\lambda} \subseteq
B_{\lambda+\beta}+
B_{\lambda -\alpha},
\qquad 
\nu_z B_{\lambda} \subseteq
B_{\lambda+\gamma}+
B_{\lambda-\beta}.
\end{equation*}
\end{corollary}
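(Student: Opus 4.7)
The plan is to observe that this corollary follows essentially immediately from Proposition \ref{lem:nuAction} combined with Note \ref{note:maps}. First I would invoke Proposition \ref{lem:nuAction} to write $\nu_x$, $\nu_y$, $\nu_z$ on $B_\lambda$ as explicit $\mathbb F$-linear combinations of the maps $\tilde B_{\lambda,\mu}$ for certain adjacent $\mu \in \lbrace \lambda\pm\alpha, \lambda\pm\beta, \lambda\pm\gamma\rbrace$. Specifically, the expression for $\nu_x$ on $B_\lambda$ involves only $\tilde B_{\lambda,\lambda-\gamma}$ and $\tilde B_{\lambda,\lambda+\alpha}$, the expression for $\nu_y$ involves only $\tilde B_{\lambda,\lambda-\alpha}$ and $\tilde B_{\lambda,\lambda+\beta}$, and the expression for $\nu_z$ involves only $\tilde B_{\lambda,\lambda-\beta}$ and $\tilde B_{\lambda,\lambda+\gamma}$.

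Next I would apply Note \ref{note:maps}: for adjacent locations $\lambda,\mu$ in $\Delta_N$, the map $\tilde B_{\lambda,\mu}$ takes $B_\lambda$ into $B_\mu$. Therefore $\tilde B_{\lambda,\lambda-\gamma}(B_\lambda) \subseteq B_{\lambda-\gamma}$ and $\tilde B_{\lambda,\lambda+\alpha}(B_\lambda) \subseteq B_{\lambda+\alpha}$, yielding $\nu_x B_\lambda \subseteq B_{\lambda+\alpha} + B_{\lambda-\gamma}$; the inclusions for $\nu_y$ and $\nu_z$ follow by the same reasoning.

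The only mild subtlety will be the boundary cases: when $\lambda = (r,s,t)$ lies on the boundary of $\Delta_N$, one of the ``neighbors'' $\lambda\pm\alpha$, $\lambda\pm\beta$, $\lambda\pm\gamma$ may fail to lie in $\Delta_N$. I would handle this by noting that the scalar coefficients in Proposition \ref{lem:nuAction} involve factors $q^r-q^{-r}$, $q^s-q^{-s}$, $q^t-q^{-t}$ that vanish precisely when the corresponding coordinate is zero; in those cases the offending term drops out, so the inclusion remains valid (and one may also consistently invoke the convention $\tilde B_{\lambda,\mu}=0$ for $\mu\notin \Delta_N$ together with $B_\mu=0$ from Definition \ref{def:outofbounds}). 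There is no serious obstacle here; the argument is a one-step corollary once the formulas and the target-space property of $\tilde B_{\lambda,\mu}$ are in hand.
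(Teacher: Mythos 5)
Your proposal is correct and is essentially the paper's own proof: the paper also deduces the corollary directly from Proposition \ref{lem:nuAction} together with Note \ref{note:maps}. Your remark on the boundary cases (vanishing coefficients $q^r-q^{-r}$, $q^s-q^{-s}$, $q^t-q^{-t}$ and the conventions $\tilde B_{\lambda,\mu}=0$, $B_\mu=0$ off $\Delta_N$) is a correct and harmless elaboration of what the paper leaves implicit.
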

\begin{proof} By Note
\ref{note:maps} and Proposition
\ref{lem:nuAction}.
\end{proof}

\noindent Recall the $B$-flags $\lbrack 1\rbrack,
 \lbrack 2\rbrack,
 \lbrack 3\rbrack$ from
Definition \ref{lem:threeflags}.

\begin{proposition}
\label{prop:UqTOF}
Assume that $q^{2i} \not=1$ for $1 \leq i \leq N$.
Then the $B$-flags 
$\lbrack 1 \rbrack $,
$\lbrack 2 \rbrack $,
$\lbrack 3 \rbrack $ are, respectively,
\begin{equation*}
\lbrace \nu^{N-i}_xV\rbrace_{i=0}^N,
\qquad
\lbrace \nu^{N-i}_yV\rbrace_{i=0}^N,
\qquad
\lbrace \nu^{N-i}_zV\rbrace_{i=0}^N.
\end{equation*}
\end{proposition}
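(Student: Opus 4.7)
The approach is to mimic the proof of Proposition \ref{prop:3flagsRevq1}, substituting the $U_q(\mathfrak{sl}_2)$ elements $\nu_x,\nu_y,\nu_z$ for their $\mathfrak{sl}_2$ analogues and replacing appeals to Proposition \ref{lem:nuActionq1}/Corollary \ref{cor:nuActionq1} by appeals to Proposition \ref{lem:nuAction}/Corollary \ref{cor:nuAction}. By the symmetric roles played by $\nu_x, \nu_y, \nu_z$ and the decompositions $\lbrack 1,2\rbrack, \lbrack 2,3\rbrack, \lbrack 3,1\rbrack$, it suffices to establish the identification for the $B$-flag $\lbrack 1\rbrack$; the other two then follow by the same argument with the relevant cyclic relabellings.

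First I would denote the $B$-flag $\lbrack 1\rbrack$ by $\lbrace U_i\rbrace_{i=0}^N$ and the $B$-decomposition $\lbrack 1,2\rbrack$ by $\lbrace V_i\rbrace_{i=0}^N$. By Lemma \ref{lem:BdecClar}, $V_i$ is included in $B$ at the location $\lambda_i=(N-i,i,0)$; by Lemma \ref{lem:decIndFlag}, $U_i = V_0+V_1+\cdots+V_i$. Applying Corollary \ref{cor:nuAction} at $\lambda_i$ one checks that $\lambda_i-\gamma=(N-i+1,i,-1)$ falls outside $\Delta_N$, so $B_{\lambda_i-\gamma}=0$ by Definition \ref{def:outofbounds}, while $\lambda_i+\alpha=(N-i+1,i-1,0)$ is precisely the location of $V_{i-1}$. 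This gives $\nu_x V_i\subseteq V_{i-1}$ for $1\le i\le N$, together with $\nu_x V_0=0$.

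To upgrade this inclusion to equality, I would evaluate Proposition \ref{lem:nuAction} at $\lambda_i$ with $r=N-i$, $s=i$, $t=0$. The first term vanishes since $q^t-q^{-t}=0$, and the second term reduces to $-q^{1-i}(q^i-q^{-i})\,\tilde B_{\lambda_i,\lambda_i+\alpha}$. The hypothesis $q^{2i}\not=1$ for $1\le i\le N$ guarantees that $q^i-q^{-i}\not=0$, so $\nu_x$ restricts to a bijection $V_i\to V_{i-1}$ (recall from Note \ref{note:maps} that $\tilde B_{\lambda_i,\lambda_i+\alpha}$ is itself a bijection onto $V_{i-1}$). Summing yields $\nu_x U_i=U_{i-1}$ for $1\le i\le N$, and because $U_N=V$, induction on $N-i$ gives $U_i=\nu_x^{N-i}V$ for $0\le i\le N$. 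The identifications of the $B$-flags $\lbrack 2\rbrack,\lbrack 3\rbrack$ with $\lbrace\nu_y^{N-i}V\rbrace_{i=0}^N, \lbrace\nu_z^{N-i}V\rbrace_{i=0}^N$ are obtained the same way, using the $B$-decompositions $\lbrack 2,3\rbrack$ and $\lbrack 3,1\rbrack$ in place of $\lbrack 1,2\rbrack$, and the appropriate cyclic variant of Proposition \ref{lem:nuAction}.

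The main obstacle is the step that promotes the inclusion $\nu_x V_i\subseteq V_{i-1}$ to an equality, where the hypothesis $q^{2i}\not=1$ for $1\le i\le N$ is used in an essential way; without it, the leading coefficient in Proposition \ref{lem:nuAction} could collapse to zero and the flag would no longer be generated by iterated powers of $\nu_x$. Beyond this, the argument amounts to careful bookkeeping: checking which neighboring locations of $\lambda_i=(N-i,i,0)$ lie inside $\Delta_N$ and which do not, so that the apparently two-term action given by Corollary \ref{cor:nuAction} collapses to a single term along the $\lbrack 1,2\rbrack$ boundary path.
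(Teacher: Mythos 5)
Your proposal is correct and follows exactly the route the paper intends: its proof of this proposition is literally "similar to the proof of Proposition \ref{prop:3flagsRevq1}," and your adaptation (locations $(N-i,i,0)$, vanishing of the $\tilde B_{\lambda,\lambda-\gamma}$ term via Definition \ref{def:outofbounds}, and nonvanishing of the coefficient $q^{1-i}(q^i-q^{-i})$ under the hypothesis $q^{2i}\neq 1$) is precisely the intended argument. The cyclic treatment of the flags $\lbrack 2\rbrack$, $\lbrack 3\rbrack$ via the decompositions $\lbrack 2,3\rbrack$, $\lbrack 3,1\rbrack$ is also as in the paper.
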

\begin{proof}
Similar to the proof of
Proposition \ref{prop:3flagsRevq1}.
\end{proof}

\begin{corollary} \label{eq:qnotroot}
Assume that $q$ is not a root of unity.
 Let $V$ denote an
irreducible 
 $U_q(\mathfrak{sl}_2)$-module with dimension $N+1$.
Then
\begin{enumerate}
\item[\rm (i)] the following are totally opposite flags on $V$:
\begin{equation}
\label{eq:Uqsl2list}
\lbrace \nu^{N-i}_xV\rbrace_{i=0}^N,
\qquad
\lbrace \nu^{N-i}_yV\rbrace_{i=0}^N,
\qquad
\lbrace \nu^{N-i}_zV\rbrace_{i=0}^N.
\end{equation}
\item[\rm (ii)] for the corresponding Billiard
Array on $V$, the value of each white 3-clique is $q^{-2}$.
\end{enumerate}
\end{corollary}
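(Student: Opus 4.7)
The plan is to mimic the argument used for Corollary \ref{eq:charzero}, replacing $\mathfrak{sl}_2$ by $U_q(\mathfrak{sl}_2)$ throughout and using Theorem \ref{thm:uqsl2} and Proposition \ref{prop:UqTOF} in place of Theorem \ref{thm:sl2A} and Proposition \ref{prop:3flagsRevq1}. In other words, I would build a concrete model of an $(N+1)$-dimensional irreducible $U_q(\mathfrak{sl}_2)$-module out of a Billiard Array with white 3-clique value $q^{-2}$, and then transport the desired conclusions back along the isomorphism to the given abstract module $V$.

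More precisely, first I would invoke the standard classification of finite-dimensional irreducible $U_q(\mathfrak{sl}_2)$-modules (for $q$ not a root of unity) to conclude that, up to isomorphism, there is a unique irreducible module of dimension $N+1$ of the type considered here. Next I would fix any vector space $W$ of dimension $N+1$ over $\mathbb F$ and any Billiard Array $B$ on $W$ whose value function is identically $q^{-2}$; such a $B$ exists because by Corollary \ref{thm:baVf} (and the classification of value functions used in Sections 18--19) every value function on $\Delta_{N-2}$ is realized, in particular the constant value $q^{-2}$. Then Theorem \ref{thm:uqsl2} turns $W$ into a $U_q(\mathfrak{sl}_2)$-module, and the hypothesis that $q$ is not a root of unity guarantees irreducibility, giving an isomorphism $W \cong V$ of $U_q(\mathfrak{sl}_2)$-modules.

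Transport $B$ along this isomorphism to obtain a Billiard Array on $V$, still with each white 3-clique having $B$-value $q^{-2}$; this is statement (ii). For statement (i), apply Proposition \ref{prop:UqTOF} to the transported Billiard Array: its $B$-flags $\lbrack 1 \rbrack, \lbrack 2 \rbrack, \lbrack 3 \rbrack$ are exactly the three sequences in line \eqref{eq:Uqsl2list}. By Theorem \ref{thm:forward} these $B$-flags are totally opposite, which is statement (i). The only delicate bookkeeping is checking that the correspondence $B \leftrightarrow$ (module structure) supplied by Theorem \ref{thm:uqsl2} is compatible with the $\nu$-flag description in Proposition \ref{prop:UqTOF}, so that the abstract $\nu_x, \nu_y, \nu_z$ on $V$ really do produce the same flags as the transported $B$.

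The main obstacle is this compatibility/uniqueness step: one must ensure that on the abstract irreducible $V$, the three flags built from iterated images of $\nu_x, \nu_y, \nu_z$ are intrinsic to the module structure, so that no matter which isomorphism $W \to V$ one chooses, these flags correspond to the $B$-flags $\lbrack 1 \rbrack, \lbrack 2 \rbrack, \lbrack 3 \rbrack$ of the transported Billiard Array. This follows because $\nu_x, \nu_y, \nu_z$ are fixed elements of $U_q(\mathfrak{sl}_2)$ and module isomorphisms intertwine their actions, so the sequences $\lbrace \nu^{N-i}_\ast V\rbrace_{i=0}^N$ are invariants of the isomorphism class; combined with Proposition \ref{prop:UqTOF} applied to the transported Billiard Array, this identifies these sequences with the $B$-flags and completes the proof.
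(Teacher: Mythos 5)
Your overall route is the same as the paper's: realize a Billiard Array with constant value function $q^{-2}$ (existence via Corollary \ref{thm:baVf}), use Theorem \ref{thm:uqsl2} to make its underlying space an irreducible $U_q(\mathfrak{sl}_2)$-module, identify it with $V$ by uniqueness, and then read off (i), (ii) from Theorem \ref{thm:forward} and Proposition \ref{prop:UqTOF}. The compatibility point you raise at the end is handled correctly: $\nu_x,\nu_y,\nu_z$ are fixed elements of the algebra, so module isomorphisms carry the flags $\lbrace \nu_\ast^{N-i}V\rbrace_{i=0}^N$ to each other.

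However, there is a genuine gap at the uniqueness step. For $q$ not a root of unity there is not a unique irreducible $U_q(\mathfrak{sl}_2)$-module of dimension $N+1$ up to isomorphism: there are two, distinguished by a type $\varepsilon\in\lbrace 1,-1\rbrace$, where each of $x,y,z$ acts on $V$ with eigenvalues $\lbrace \varepsilon q^{N-2i}\rbrace_{i=0}^N$. The module produced by Theorem \ref{thm:uqsl2} has type $1$, so if the given $V$ has type $-1$ your claimed isomorphism $W\cong V$ does not exist, and your argument does not cover that case (your phrase ``of the type considered here'' either silently restricts to type $1$, which is weaker than the stated corollary, or asserts a false uniqueness). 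The paper closes this gap as follows: it first observes that replacing $x,y,z$ by $\varepsilon x,\varepsilon y,\varepsilon z$ turns $V$ into a type $1$ module while leaving $\nu_x,\nu_y,\nu_z$ unchanged (they are quadratic in the generators, e.g.\ $\nu_z=q(1-xy)$ and $(\varepsilon x)(\varepsilon y)=xy$). Since conclusions (i) and (ii) are phrased entirely in terms of $\nu_x,\nu_y,\nu_z$, this reduction to type $1$ is harmless, and only then does the uniqueness-plus-identification argument you describe go through. Adding this $\varepsilon$-twist step would repair your proof.
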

\begin{proof} By 
\cite[Theorem~2.1]{equit}
and
\cite[Theorem~2.6]{jantzen},
each of $x,y,z$
is diagonalizable on $V$. Moreover by
\cite[Theorem~2.1]{equit} and
\cite[Theorem~2.6]{jantzen},
there exists
$\varepsilon \in \lbrace 1,-1\rbrace$ such that
for each of $x,y,z$ the eigenvalues on $V$
are $\lbrace \varepsilon q^{N-2i} \rbrace_{i=0}^N$.
The 
$U_q(\mathfrak{sl}_2)$-module $V$ has type $\varepsilon$ in the sense of
\cite[Section~5.2]{jantzen}. Replacing $x,y,z$
by $\varepsilon x, 
\varepsilon y, 
\varepsilon z$ respectively, the type becomes 1
and $\nu_x, \nu_y,\nu_z$ are unchanged.
By 
\cite[Theorem~2.6]{jantzen}, up to isomorphism
there exists a unique irreducible
$U_q(\mathfrak{sl}_2)$-module with type 1 and 
dimension $N+1$.
The 
$U_q(\mathfrak{sl}_2)$-module from Theorem
\ref{thm:uqsl2}
is irreducible, with type 1 and dimension $N+1$.
Therefore the 
$U_q(\mathfrak{sl}_2)$-module $V$ is isomorphic
to the 
$U_q(\mathfrak{sl}_2)$-module from Theorem
\ref{thm:uqsl2}. Via the isomorphism we identify
these 
$U_q(\mathfrak{sl}_2)$-modules.
Consider the Billiard Array $B$ on $V$ from Theorem
\ref{thm:uqsl2}.
By Theorem
\ref{thm:forward}
and Proposition 
\ref{prop:UqTOF},
the three sequences in line (\ref{eq:Uqsl2list})
are totally opposite flags on $V$, and $B$
is the corresponding Billiard Array. By the
assumption of Theorem
\ref{thm:uqsl2}, the $B$-value of each white 
3-clique is $q^{-2}$.
\end{proof}

\section{Acknowledgments}
The author thanks Jae-ho Lee and Kazumasa Nomura 
for giving this paper a close reading and offering  valuable
suggestions.

\noindent Paul Terwilliger \hfil\break
\noindent Department of Mathematics \hfil\break
\noindent University of Wisconsin \hfil\break
\noindent 480 Lincoln Drive \hfil\break
\noindent Madison, WI 53706-1388 USA \hfil\break
\noindent email: {\tt terwilli@math.wisc.edu }\hfil\break


\end{document}